\newtheorem{mtheo}{Theorem}
\newtheorem{mcoro}{Corollary}[mtheo]
\newtheorem{theo}{Theorem}[section]
\newtheorem{lemma}[theo]{Lemma}
\newtheorem{claim}[theo]{Claim}
\newtheorem*{claim*}{Claim}
\newtheorem{corollary}[theo]{Corollary}
\newtheorem{proposition}[theo]{Proposition}
\theoremstyle{definition}
\newtheorem{definition}[theo]{Definition}
\newtheorem{example}[theo]{Example}
\newtheorem{remark}[theo]{Remark}
\newtheorem{questions}[theo]{Questions}
\newtheorem*{remark*}{Remark}
\newcommand{\eqdef}{\stackrel{\scriptscriptstyle\rm def}{=}}
\DeclareMathOperator{\welim}{we-lim}
\DeclareMathOperator{\card}{card}
\DeclareMathOperator{\interior}{int}
\DeclareMathOperator{\Lip}{Lip}
\DeclareMathOperator{\Per}{Per}
\DeclareMathOperator{\freq}{freq}
\DeclareMathOperator{\conv}{conv}
\DeclareMathOperator{\cconv}{\overline{\conv}}
\def\blambda{\bm{\lambda}}
\def\ukappa{\kappa_1}
\def\okappa{\kappa_2}
\def\c{{\rm c}}
\def\s{{\rm s}}
\def\u{{\rm u}}
\def\bN{\mathbb{N}}
\def\bZ{\mathbb{Z}}
\def\bR{\mathbb{R}}
\def\cA{\EuScript{A}}
\def\cN{\EuScript{N}}
\def\cO{\EuScript{O}}
\def\cP{\EuScript{P}}
\def\cN{\EuScript{N}}
\def\cW{\mathscr{W}}
\def\cM{\EuScript{M}}
\def\sX{\mathcal{X}}
\def\eA{\mathscr{A}}
\def\eJ{\mathcal{J}}
\def\eW{\mathcal{W}}
\def\c{{\rm c}}
\def\s{{\rm s}}
\def\u{{\rm u}}
\def\cF{\mathscr{F}}
\numberwithin{equation}{section}
\DeclareMathSymbol{\varnothing}{\mathord}{AMSb}{"3F}
\renewcommand{\emptyset}{\varnothing}
\thanks{This research has been supported [in part] by (CAPES) - Finance Code 001, by CNPq-grants and CNPq Projeto Universal , by INCT-FAPERJ (Brazil), and by National Science Centre grants 2014/13/B/ST1/01033 and 2019/33/B/ST1/00275 (Poland). The authors acknowledge the hospitality of IMPAN, IM-UFRJ, and PUC-Rio. The authors also thank D. Kwietniak for helpful conversations.}
\begin{document}

\title[Mingled hyperbolicities]{Mingled hyperbolicities:\\ ergodic properties and bifurcation phenomena\\(an approach using concavity)}
\author[L.~J.~D\'iaz]{L. J. D\'\i az}
\address{Departamento de Matem\'atica PUC-Rio, Marqu\^es de S\~ao Vicente 225, G\'avea, Rio de Janeiro 22451-900, Brazil}
\email{lodiaz@mat.puc-rio.br}
\author[K.~Gelfert]{K.~Gelfert}
\address{Instituto de Matem\'atica Universidade Federal do Rio de Janeiro, Av. Athos da Silveira Ramos 149, Cidade Universit\'aria - Ilha do Fund\~ao, Rio de Janeiro 21945-909,  Brazil}\email{gelfert@im.ufrj.br}
\author[M.~Rams]{M. Rams} \address{Institute of Mathematics, Polish Academy of Sciences, ul. \'{S}niadeckich 8,  00-656 Warszawa, Poland}
\email{rams@impan.pl}

\begin{abstract}
We consider skew-products with concave interval fiber maps over a certain subshift obtained as the projection of orbits staying in a given region. It generates a new type of (essentially) coded shift. The fiber maps have expanding and contracting regions which dynamically interact. The dynamics also exhibits pairs of horseshoes of different type of hyperbolicity which, in some cases, are cyclically related. 

The space of ergodic measures on the base is an entropy-dense Poulsen simplex. Those measures lift canonically to ergodic measures for the skew-product. We explain when and how the spaces of (fiber) contracting and expanding ergodic measures glue along the nonhyperbolic ones. A key step is the approximation (in the weak$\ast$ topology and in entropy) of nonhyperbolic measures by ergodic ones, obtained only by means of concavity. 
Concavity is not merely a technical artificial hypothesis, but it prevents the presence of additional independent subsystems.
The description of homoclinic relations is also a key instrument.   

These skew-products are embedded in non-decreasing entropy one-parameter family of diffeomorphisms  stretching from a heterodimensional cycle to a collision of homoclinic classes. Associated bifurcation phenomena involve a jump of the space of ergodic measures and, in some cases, of entropy.
\end{abstract}

\keywords{entropy,
concave maps,
coded systems,
disintegration of measures,
heterodimensional cycles,
homoclinic classes,
hyperbolic and nonhyperbolic ergodic measures,
Lyapunov exponents,
Poulsen simplex,
skew-product,
variational principle}
\subjclass[2000]{%
37B10, 
37C29, 
37D25, 
37D35, 
37D30, 
28D20, 
28D99
}  

\maketitle
\tableofcontents

\section{Introduction} 

\subsection{Context and motivations}

In dimension greater than $2$, one mechanism that prevents hyperbolicity is the simultaneous occurrence of saddles of different types of hyperbolicity inside a topologically transitive set, often referred to as \emph{index-variability}. 
Index-variability may occur when there are two regions with different type of hyperbolicity which are mingled by the dynamics, that is, there are orbits going from one region to the other and \emph{vice versa}. 
Here we explore what consequences  the coexistence of such types of regions has, in particular on the hyperbolicity, the ergodic level, and the topology of the space of measures. 

The mere existence of such regions does not, in general, lead to index-variability, unless there occurs recurrence in between those regions. This can be exemplified, oversimplifying, by the quadratic family $x\mapsto \lambda x(1-x)$ which has contraction in a neighborhood of the critical point and expansion near the fixed point $x=0$ for $\lambda>1$. Here the behavior of the orbit of the critical point plays a key role for the type of dynamics of this map. 

The quadratic family is an exhaustively studied model for the so-called critical dynamics. The fact that it is very specific did not prevent it to be a ``guiding example". This is justified, besides by its rich dynamics, by its simplicity. Moreover, it also serves as a ``local plug'' which reappears in many phenomena, for instance in the renormalization at homoclinic tangencies \cite[Chapter 3.4]{PalTak:93}.  It also illustrates the passage from trivial dynamics to full chaos \cite{MilThu:88}. We propose a somewhat analogous model in a non-critical partially hyperbolic context
to analyze mingled regions of hyperbolicity. Our focus is on a very simple setting that captures its essence and at the same time displays all its dynamical richness.

 

The general scenery of non-critical partially hyperbolic dynamics is very ample. A first issue here is the dimension of the central direction. We will focus on the one-dimensional case.  In what follows, the terms \emph{contraction} and \emph{expansion} refer to what occurs in the central direction. 
Examples having expanding and contracting regions which are mingled by the dynamics are derived from-Anosov diffeomorphisms \cite{Man:78} and the skew-products in \cite{BonDia:96,GorIly:00}. In these examples, the dynamics is transitive and displays index-variability. 
Another very simple one in \cite{GorIlyKleNal:05} is also one of the motivations of this paper. It consists of a skew product over the full shift and two circle diffeomorphisms as fiber maps, one being a rotation and the other one a Morse-Smale map. One disadvantage of these examples is their global nature in the sense that they enclose all their dynamics which prevents their use as local plugs.


 Finally, somewhat more appropriate for a semi-local analysis, there are heterodimensional cycles where the contracting and expanding regions are neighborhoods of saddles of different indices and are mingled by a cyclic intersection of the invariant manifolds of the saddles. Still, the dynamics originated from a heterodimensional cycle may or may not lead to index-variability  \cite{Dia:95,DiaRoc:97} (see further discussion in Section \ref{sec:homscen}). Observe that the constructions in all the above cited examples involve the explicit knowledge of appropriate contracting and expanding regions.


\begin{figure}[h] 
 \begin{overpic}[scale=.30]{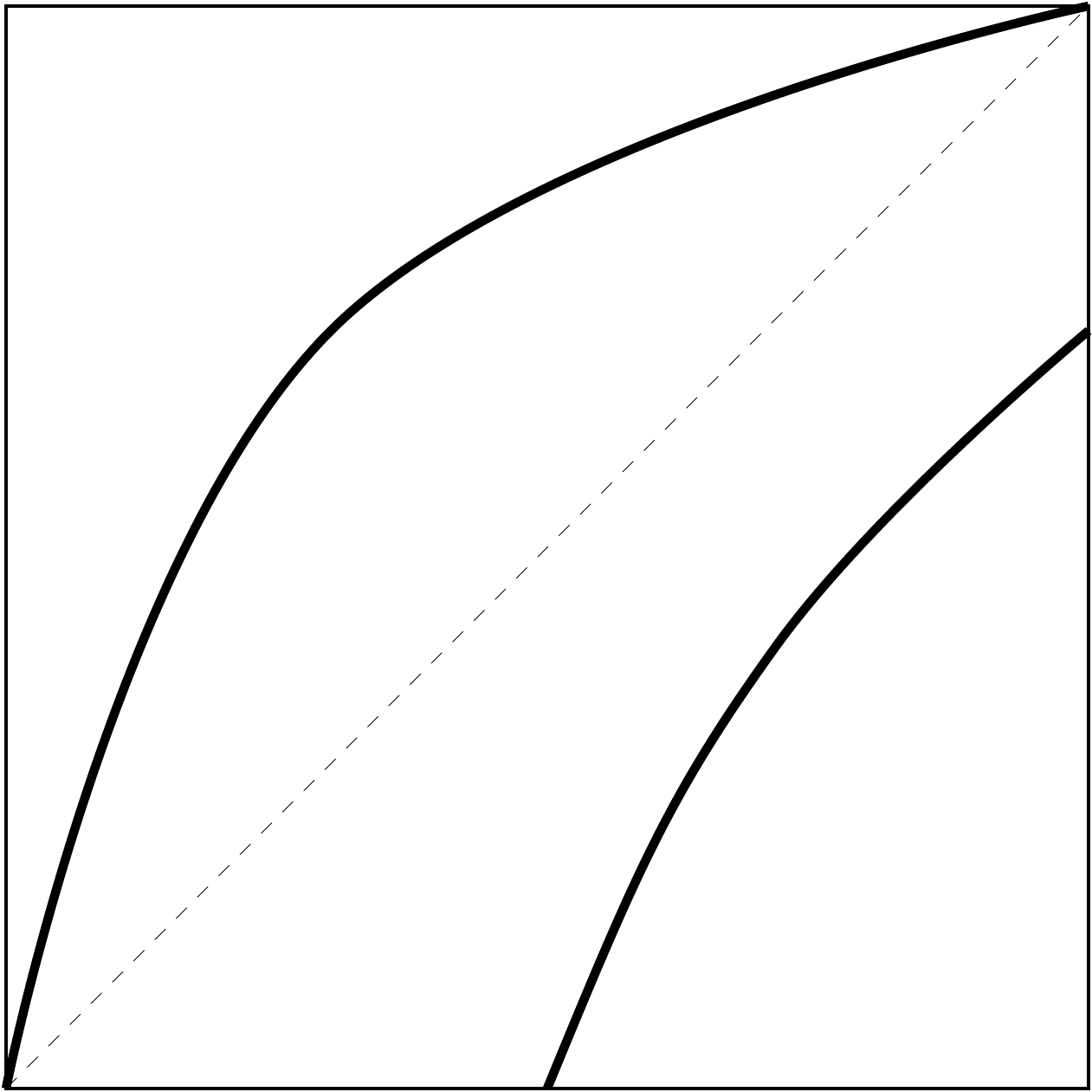}
 	\put(50,74){\small $f_0$}
 	\put(80,44){\small $f_1$}
 	\put(48,-7){\small $d$}
 	\put(0,-7){\small $0$}
 	\put(98,-7){\small $1$}
 \end{overpic}
\hspace{.8cm}  
 \begin{overpic}[scale=.3]{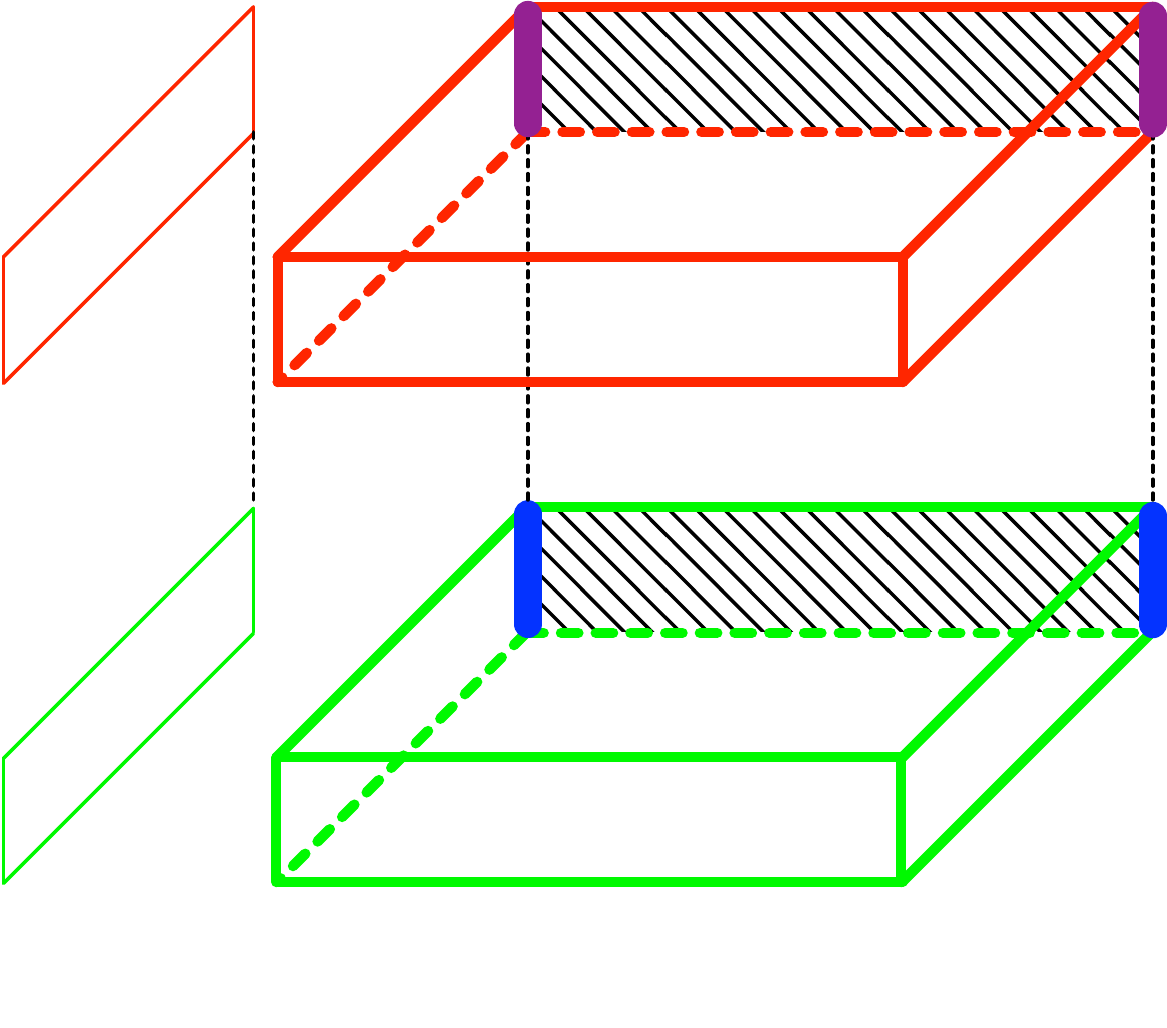}
	\put(-10,60){\small $[1]$}
	\put(-10,20){\small $[0]$}
  \end{overpic}
\hspace{0.2cm}  
\begin{overpic}[scale=.30]{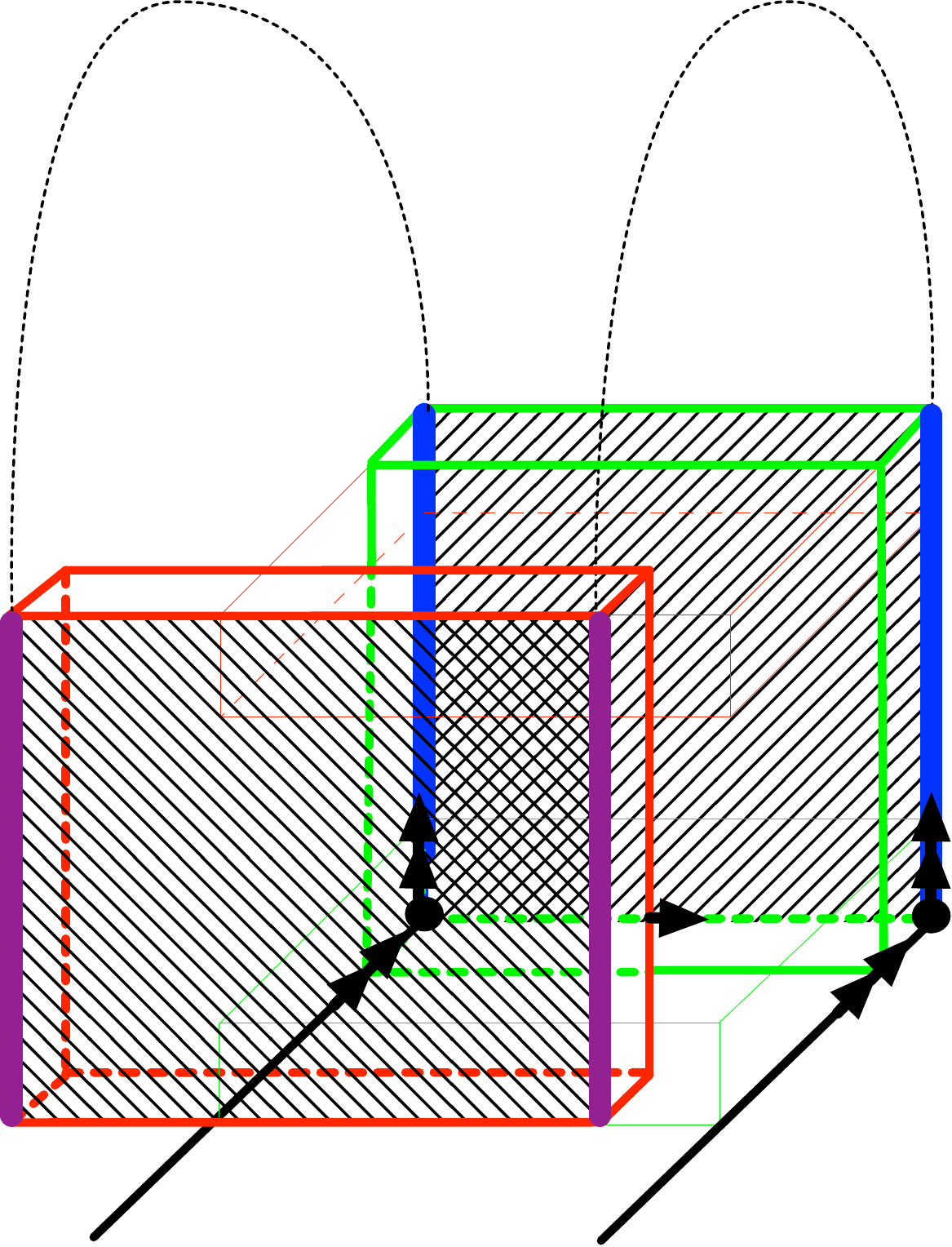}
	\put(74,20){\small $P$}
	\put(34,20){\small $Q$}
  \end{overpic}
 \caption{Fiber maps $f_0,f_1$ and the skew-product $\tilde F$}
 \label{fig.0}
\end{figure}

Here we focus on the essence of the dynamical interaction
between a contracting and an expanding region. These regions are neighborhoods of saddles of different type of hyperbolicity.
We consider a skew-product over a shift with interval-fiber maps, one map with a repeller and an attractor as extremal points and another one providing the interaction between these points (compare Figure \ref{fig.0} and hypotheses (H1) and (H2)). No cycle is \emph{a priori} involved, although heterodimensional cycles may appear. Hyperbolicity is only determined by the dynamics in the fibers. We assume that the fiber maps are concave. Here concavity is not merely a technical artificial hypothesis, but it prevents the presence of additional independent subsystems%
\footnote{Here we mean, using Conley's approach \cite{Con:78}, sets which are separated from the initial saddles by a filtrating neighborhood. More details are given in Section \ref{sec:bif}.}.    
Besides its simplicity, as in the case of the quadratic family, this model captures the essential features of non-hyperbolic partial hyperbolicity with one-dimensional center. 
It also displays the transition from trivial to fully chaotic dynamics. The orbits of $1$ and $0$ play a key role for the dynamics, somewhat as the orbit of the critical point does in the quadratic family.
We do not rely on topological properties such as minimality, existence of blenders, accessibility, specification-like properties, or arguments  based upon synchronization, as \emph{a priori} it is unclear if they hold true.

In our setting, the examples do not \emph{a priori} satisfy any transitivity. Two sorts of dynamics can arise:  hyperbolic one with two horseshoes of contracting and expanding type, respectively, or nonhyperbolic one with two homoclinic classes (associated to saddles of different type) with nonempty intersection. The latter can have several possibilities, two extremal ones are: the homoclinic classes coincide or the homoclinic classes intersect in a single parabolic periodic orbit. Here the \emph{homoclinic class} of a hyperbolic periodic point is just the closure of the intersection of its invariant sets (no transversality is required in our setting), see Remark \ref{rem:homhet} and Section \ref{ss.homoclinicclasses}.

\subsection{Setting}

Consider a skew-product over a two-symbol shift space $\Sigma_2=\{0,1\}^\bZ$ with concave interval fiber maps, a map $f_0$ with two fixed points, one expanding and one contracting, and a map $f_1$ which forces some ``interaction" between them. The two crucial hypotheses we always consider are the following:
\begin{itemize}
\item[(H1)] $f_0\colon [0,1]\to [0,1]$ and $f_1\colon [d,1]\to [0,1]$, $d\in(0,1)$, are $C^1$ increasing maps such that $f_0$ is onto, $f_0'(0)>1$, $f_0'(1)\in(0,1)$, $f_0(x)>x$ for every $x\in(0,1)$,  $f_1(d)=0$, $f_1(x)<x$ for every $x\in[d,1]$, and $f_1'(1)>0$.
\item[(H2)] $f_0'$ is strictly decreasing and $f_1'$ is not increasing.
\end{itemize}
(compare Figure \ref{fig.0}).
Note that $f_0$ has two fixed points: $0$ being expanding and $1$ being contracting. The ``interaction" above occurs, for instance, when the ``forward orbit" of the contracting point $1$ accumulates at the contracting point $0$; a very special case happens when $0$ is contained in that orbit when a so-called heterodimensional cycle exists (see Remark \ref{rem:herocycle} for further discussion). 

To define the actual skew-product we will study, consider any pair of strictly increasing differentiable extensions $\tilde f_0,\tilde f_1\colon \bR\to\bR$ of $f_0,f_1$ to the real line and let 
\begin{equation}\label{eq:parental}
	\tilde F\colon \Sigma_2\times\bR \to \Sigma_2\times\bR, \quad
	(\xi,x) \mapsto \tilde F(\xi,x)\eqdef (\sigma(\xi), \tilde f_{\xi_0}(x)).
\end{equation}
We define the maximal invariant set $\Gamma$ of $\tilde F$ in $\Sigma_2\times [0,1]$,
\begin{equation}\label{def:Gamma}
	\Gamma
	\eqdef \bigcap_{n\in\bZ}\tilde F^n(\Sigma_2\times[0,1]).
\end{equation}
We study the topological dynamics and the ergodic properties of $\tilde F$ on $\Gamma$. Observe that $\Gamma$ is a locally maximal%
\footnote{Given a compact metric space $X$ and a continuous map $T\colon X\to X$, we say that a subset $Y\subset X$ is \emph{locally maximal} if there exists an open neighborhood $V\subset X$ of $Y$ such that $Y=\bigcap_{n\in\bZ}T^n(V)$.}%
, compact, and $\tilde F$-invariant set whose dynamical properties do not depend on the chosen extensions of $f_0,f_1$. We denote by $F$ the restriction of $\tilde F$ to $\Gamma$. 

We will see that assumptions (H1) and (H2) imply the existence of pairs of horseshoes of fiber contracting and fiber expanding type, respectively. The crucial point is the interaction between them.
To comment a little more our hypotheses, note that, after changing the reference interval accordingly, (H1) and the first hypothesis in (H2) persist under $C^2$ perturbations (the second hypothesis in (H2) also persists if we would require that $f_1'$ is strictly decreasing). In some results we will also use the following slightly stronger version of (H2), assuming additionally that $f_1'$ is strictly decreasing: 
\begin{itemize}
\item[(H2+)] There exists $M>1$ such that for every $x<y$ we have
\[	M^{-1}(y-x)
	\le \log f_i'(x)-\log f_i'(y)
	\le M(y-x),
	\quad i=0,1
\]
\end{itemize}
Note that (H2+) also persists under $C^2$ perturbations.

We follow two \emph{a priori} independent and complementary approaches. On the one hand, the system has associated a one-dimensional \emph{iterated function system} (IFS). Though observe that only certain concatenations are allowed (for instance, if $x\in[0,d)$ then only $f_0$ can be applied) giving rise to a certain subshift (defined in~\eqref{eq:defSigma}) which describes precisely the dynamics in between the saddles. This subshift, from the point of view of ergodic measures, completely encodes ergodic and entropic properties of the skew-product. In general, this subshift is not of finite type and does not satisfy specification. There are no present-day tools available to study it. Although we are able to show that it is essentially coded and as such is, following the present day classification of shift spaces, just beyond the class of transitive shifts with the specification property. As this coded shift appears naturally in a, to a certain extent, unusual context, it may serve as a good testing ground for the theory of coded systems.
On the other hand, we can view $\Gamma$ as a locally maximal invariant set of a ``three-dimensional partially hyperbolic diffeomorphism with one-dimensional center''. In such a case, the study of the so-called homoclinic classes contained in $\Gamma$ gives substantial dynamical information that we will explore. 

Let us continue to discuss our motivations. The system above can be viewed as a plug in a semi-local analysis of higher dimensional dynamics where horseshoes of different type of hyperbolicity coexist or/and are intermingled, see for instance \cite{Dia:95,BonDia:96,DiaRoc:01}. Its flavor is somewhat similar to so-called blenders (see, for instance, \cite{BonDia:96}). Here we replace the expanding-and-covering property by just concavity.   For appropriate choices (see, for instance, \cite{DiaRoc:97}) this plug models also the bifurcation of heterodimensional cycles (we will explore this in Section \ref{sec:bif}). 
Such heterodimensional cycles appear in many nonhyperbolic contexts. One yet less explored context is, for instance, the study of ${\rm SL}(2,\bR)$-matrix cocycles, following the approach in \cite[Section 11]{DiaGelRam:19}. To see how such a plug appears, consider the projective action of two $2\times 2$ real matrices, one of them hyperbolic giving rise to a map similar to $f_0$ and another one producing $f_1$. The case when there is some (admissible) concatenation sending $1$ to $0$, when a heterodimensional cycle occurs, is precisely  the situation studied in the boundary case in \cite[Theorem 4.1]{AviBocYoc:10}. Our analysis includes such boundary situations, but also goes beyond.

Let us also observe that the set $\Gamma$ has a fractal nature that fits into the category of graph- and bony-like sets introduced in \cite{Kud:10}, that is, measurable (partially multi-valued) graphs which are (with respect to certain measures) graphs from an ergodic point of view but contain continua on a set of zero measure. This is intimately related with the (atomic) disintegration of ergodic measures that we also explore.  

Another motivation is the point of view of IFSs. In general, the study of their statistical properties assumes some type of contraction or contraction-on-average. See for instance \cite{DiaFree:99} for an overview. For contracting-on-average IFS \cite{BarDemEltGer:88} establishes the uniqueness of the stationary measure. 
Examples which are beyond any contraction-like hypotheses are studied in \cite{FanSimTot:06} from the point of view of stationary measures (see also \cite{AlsMis:14}). Note that the IFS generated by $\{f_0,f_1\}$ is genuinely non-contracting and \cite{FanSimTot:06,AlsMis:14} can be seen as a boundary case of our setting, providing perhaps new perspectives. 

\subsection{Summary of results}

In what follows, \emph{hyperbolicity} refers only to expansion (resp. contraction) of the associated fiber maps and a closed $F$-invariant subset $\Lambda\subset\Gamma$ is \emph{hyperbolic of expanding type} if there are constants $C>0$ and $\alpha>0$ such that for every $(\xi,x)\in\Lambda$ and for every $n\ge1$ we have
\[
	\lvert(f_{\xi_{n-1}}\circ\ldots\circ f_{\xi_0})'(x)\rvert
	\ge Ce^{n\alpha}.
\]
\emph{Hyperbolicity of contracting type} is defined analogously considering backward iterates. 

A special case of a hyperbolic set is a hyperbolic (of either expanding or contracting type) periodic orbit. In our setting, the  orbit of a periodic point $R=(\xi,r)=F^n(R)$  is either hyperbolic or \emph{parabolic}, that is, 
\[
	(f_{\xi_{n-1}}\circ\ldots\circ f_{\xi_0})'(r)
	= 1.
\]
There exist two designated fixed points for $F$,
\begin{equation}\label{eq:fixedpoints}
	Q
	\eqdef (0^\bZ,0),
		\quad
	P
	\eqdef (0^\bZ,1),	
\end{equation}
which are hyperbolic of   expanding and contracting type, respectively. The map $f_1$ introduces an ``interaction'' between these two points and gives rise to rich topological dynamics and ergodic properties. 

Given an ergodic probability measure $\mu$ (with respect to $F$), its \emph{(fiber) Lyapunov exponent} is
\[
	\chi(\mu)
	\eqdef \int\log\,f_{\xi_0}'(x)\,d\mu(\xi,x).
\]
We say that $\mu$ is \emph{hyperbolic} if $\chi(\mu)\ne0$ and \emph{nonhyperbolic} otherwise. The measure is \emph{hyperbolic of contracting type} if $\chi(\mu)<0$ and \emph{hyperbolic of expanding type} if $\chi(\mu)>0$. In this way, the space $\cM_{\rm erg}(\Gamma)$ of ergodic measures (with respect to $F$ on $\Gamma$) splits as
\begin{equation}\label{eq:splittMGamma}
	\cM_{\rm erg}(\Gamma)
	= \cM_{\rm erg,<0}(\Gamma) 
	\cup \cM_{\rm erg,0}(\Gamma) 
	\cup \cM_{\rm erg,>0}(\Gamma), 
\end{equation}
into the sets of ergodic measures with negative, zero, and positive Lyapunov exponent, respectively. We explore the interplay between these three sets.
Hypothesis (H1) implies that  $\cM_{\rm erg,<0}(\Gamma)$ and $\cM_{\rm erg,>0}(\Gamma)$ both are nonempty and we show they are twin-like.
Whether or not $\cM_{\rm erg,0}(\Gamma)$ is empty depends on further analysis, and both possibilities can occur. 
In the case when the set $\cM_{\rm erg,0}(\Gamma)$ is nonempty, we prove that the closures of  $\cM_{\rm erg,<0}(\Gamma)$ and $\cM_{\rm erg,>0}(\Gamma)$ nicely glue along it, see Theorem \ref{teo:accum}. 

We first analyze the projection $\Sigma$ of the set $\Gamma$ to the base $\Sigma_2$ and the shift dynamics on it. We prove that $\Sigma$ is coded (besides some dynamically irrelevant part which can be empty), see Theorem~\ref{teo:coded}.
We also prove that the set $\cM (\Sigma)$ of invariant measures on $\Sigma$ is an entropy-dense Poulsen simplex, see Theorem~\ref{teo:poulsen}. 
We then turn to the skew-product $F$ in order to study its topological properties and the sets supporting ergodic measures. Theorem~\ref{teo:homoclinicclasses} claims that the nonwandering set of $F$ is the union of the homoclinic classes of $P$ and $Q$ and that their disjointness is equivalent to their hyperbolicity.
 
The next step is to study the interplay between the set $\cM_{\rm erg}(\Sigma)$ of ergodic measures in $\Sigma$ and $\cM_{\rm erg}(\Gamma)$.
The set $\cM_{\rm erg}(\Gamma)$ projects onto $\cM_{\rm erg}(\Sigma)$ and any  measure in $\cM_{\rm erg}(\Sigma)$ can be lifted to $\cM_{\rm erg}(\Gamma)$.
Theorem \ref{teo:1} characterizes hyperbolicity of the measures in $\cM_{\rm erg}(\Gamma)$ in terms of these projections and lifts: either a measure in $\cM_{\rm erg}(\Sigma)$ lifts to a unique measure in $\cM_{\rm erg}(\Gamma)$ which turns out to be nonhyperbolic,  or it lifts to exactly two measures which are hyperbolic of different type of hyperbolicity.
Theorem~\ref{teo:2} states that measures in $\cM_{\rm erg}(\Gamma)$ have atomic  disintegration which is described in dynamical terms and provides the graph-like structure of $\Gamma$ in Corollary \ref{cor1}.  

Theorem~\ref{teo:accum} claims that every nonhyperbolic ergodic measure
 in $\cM_{\rm erg}(\Gamma)$ is simultaneously approached in the weak$\ast$ topology and in entropy by ergodic  measures of contracting type and also by ergodic measures of expanding type. This result also extends to any nonergodic measure whose ergodic decomposition has measures of one type of hyperbolicity, see Corollary \ref{cor:accumoneside}. The gluing of $\cM_{\rm erg,<0}(\Gamma)$ and $\cM_{\rm erg,>0}(\Gamma)$ is described in Corollary \ref{cor:arcwiseconn} claiming  
 that the set $\cM_{\rm erg} (\Gamma)$ is arcwise connected if and only if $\cM_{\rm erg,0} (\Gamma)$ is nonempty. This provides meaningful information about the the gluing of  $\cM_{\rm erg,<0}(\Gamma)$ and $\cM_{\rm erg,>0}(\Gamma)$ and it refines in a natural way the variational principle for the entropy, see Corollary \ref{cor:VP}. In the case when $\cM_{\rm erg,0} (\Gamma)$ is empty then the set of ergodic measures consists of two connected components $\cM_{\rm erg,<0}(\Gamma)$ and $\cM_{\rm erg,>0}(\Gamma)$.

Finally, we study the dynamics of globally defined maps $\tilde F$ in \eqref{eq:parental} and discuss bifurcation scenarios. Let us give a rough idea of our constructions, the precise statements are in Section \ref{sec:bif}. We present models of one-parameter families of maps $\tilde F_t$, $t\in [t_{\rm h}, t_{\rm c}]$, induced by families of interval maps $\{\tilde f_0,\tilde f_{1,t}\}$. Here for the boundary parameters $t_{\rm h}$ and $t_{\rm c}$ the map $\tilde f_{1,t}$ is a ``limit case'' of our hypotheses (H1)--(H2). For each parameter $t$ we consider the maximal invariant set $\Gamma^{(t)}$ associated to $\tilde F_t$ defined as in  \eqref{def:Gamma}. These families \emph{completely unfold heterodimensional cycles}. An initial cycle associated to $P$ and $Q$ occurs for $t=t_{\rm h}$. While usually in bifurcation theory one considers parameters $t$ close to the cycle parameter, here we study an entire range of parameters. Similarly to what happens in the quadratic family, we go from essentially trivial dynamics at the cycle parameter $t_{\mathrm{h}}$ (say, dynamics at the boundary of Morse-Smale systems) up to a completely chaotic dynamics for the parameter $t_{\mathrm{c}}$ (with ``full entropy'' $\log2$). Figure~\ref{fig.bif} a) corresponds to the parameter $t_{\rm h}$ where the invariant sets of the fixed points $P$ and $Q$ (of different type of hyperbolicity) intersect cyclically giving rise to a heterodimensional cycle. Figures~\ref{fig.bif} b) and c) depict the two main possibilities for those complete unfoldings.   Figure~\ref{fig.bif} b) depicts the case when the parameter $t_{\rm c}$ corresponds to an intersection between the ``strong unstable" and the stable sets of $Q$ and $\Sigma_2\times\{0\}\subset\Gamma^{(t_{\rm c})}$. Figure~\ref{fig.bif} c) depicts the case when the parameter $t_{\rm c}$ corresponds to an intersection between the unstable and the ``strong stable''  sets of $P$ and $\Sigma_2\times\{1\}\subset\Gamma^{(t_{\rm c})}$. In  Figure~\ref{fig.bif}  b) and c) the set $\Gamma^{(t_{\rm c})}$ contains a horseshoe. See Remark \ref{rem:fig.bif} for a complete description. 
\begin{figure}[h] 
 \begin{overpic}[scale=.34]{P_0.pdf}
	\put(-10,60){\small $[1]$}
	\put(-10,20){\small $[0]$}
  \end{overpic}
  \hspace{1cm}
 \begin{overpic}[scale=.34]{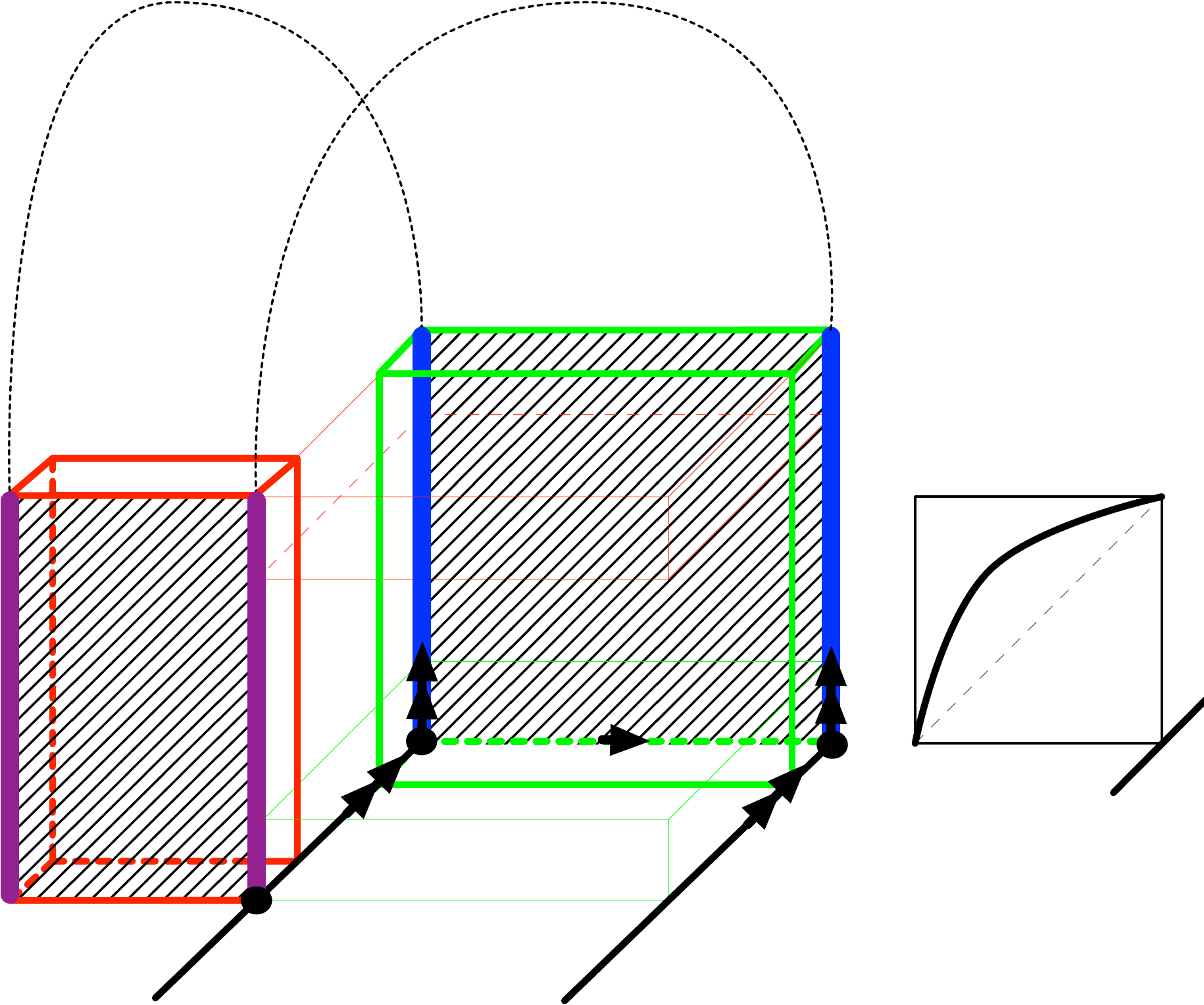}
 	\put(0,0){ a)}
	\put(23,5){\small $A$}
	\put(68,15){\small $P$}
	\put(33,15){\small $Q$}
	\put(76,10){\small fiber maps}
  \end{overpic}
\\[0.5cm]
 \begin{overpic}[scale=.34]{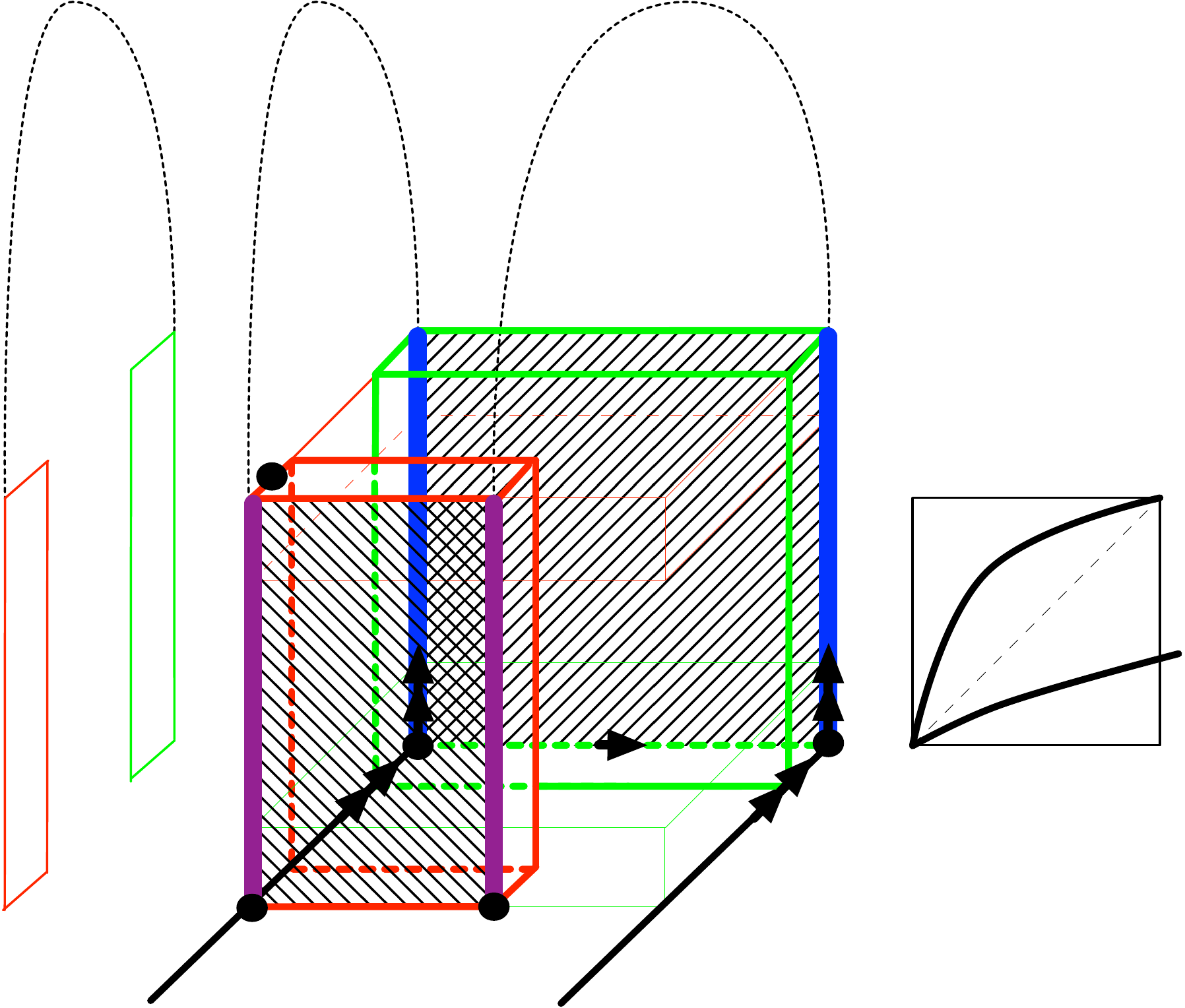}
 	\put(0,0){ b)}
	\put(21,3){\small $B$}
	\put(42,3){\small $C$}
	\put(68,15){\small $P$}
	\put(33,15){\small $Q$}
	\put(16.5,44){\small $S$}
	\put(76,10){\small fiber maps}
  \end{overpic}
  \hspace{0.5cm}
 \begin{overpic}[scale=.34]{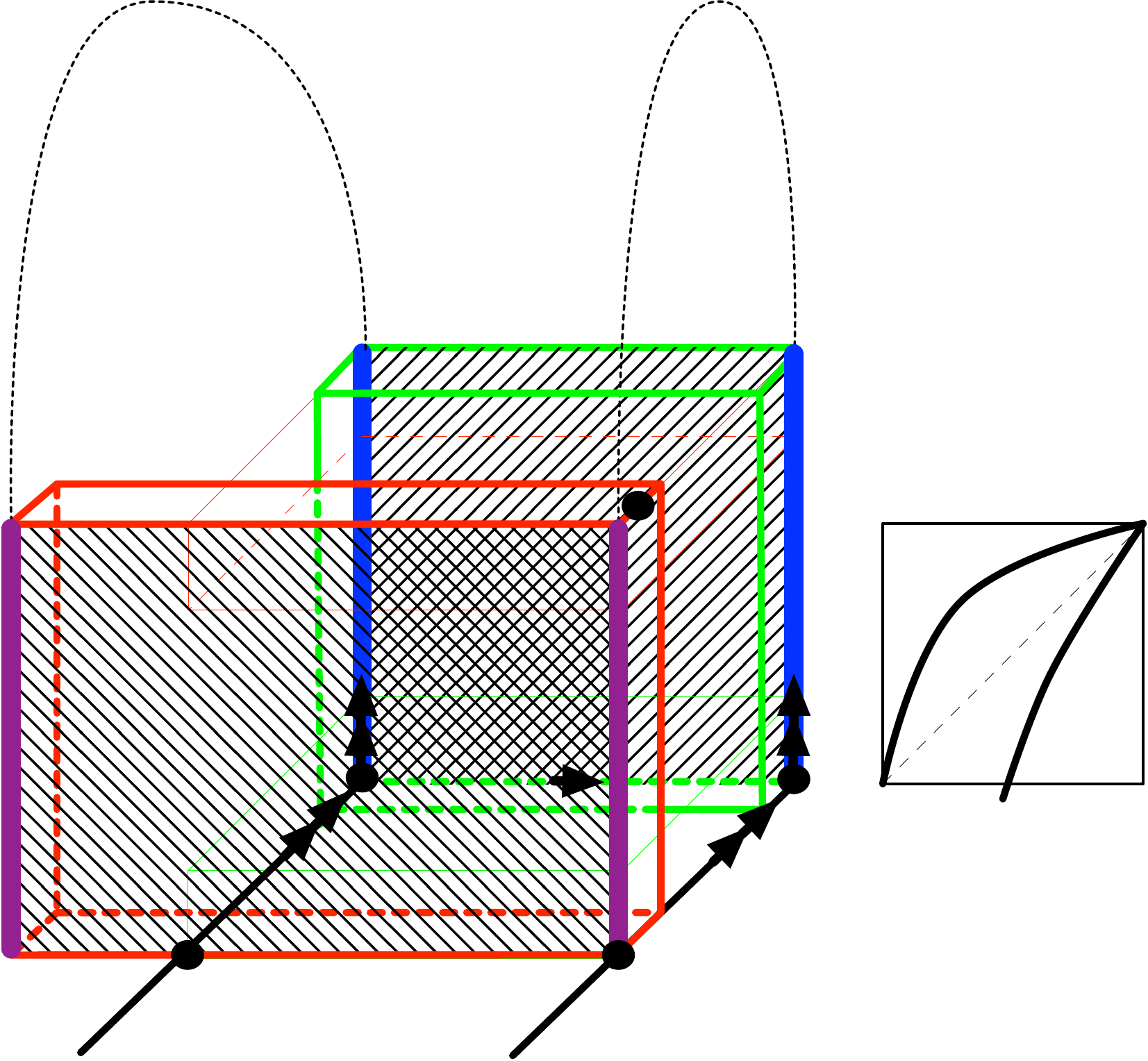}
 	\put(-5,0){ c)}
	\put(16,3){\small $D$}
	\put(54,3){\small $E$}
	\put(58,46){\small $R$}
	\put(67,17){\small $P$}
	\put(30,17){\small $Q$}
 	\put(76,10){\small fiber maps}
 \end{overpic}
 \caption{Complete unfolding of a heterodimensional cycle}
 \label{fig.bif}
 \end{figure}
 
Finally, the parameter $t_{\mathrm{c}}$ can be also seen as a parameter of collision between  $\Gamma^{(t_{\mathrm{c}})}$ and another $\tilde F_{t_{\rm c}}$-maximal invariant set from outside $\Sigma_2\times[0,1]$, leading to a collision of homoclinic classes in the spirit of \cite{DiaSan:04,DiaRoc:07}. This gives rise to explosions of topological entropy and of the space of invariant measures of $\tilde F_{t_{\rm c}}|_{\Gamma^{(t_{\rm c})}}$, see Propositions~\ref{prop:explosionsequences} and \ref{pro:entropyjump}.  

\section{Statement of results}

\subsection{Topological (and hyperbolic) dynamics}

Observe first that the restricted domain of $f_1$ will require the consideration of admissible sequences $\xi\in\Sigma_2$. Denote by 
\begin{equation}\label{eq:defpi}
	\pi\colon\Sigma_2\times[0,1]\to\Sigma_2, \quad
	\pi(\xi,x)\eqdef\xi
\end{equation}
the natural projection and, recalling the definition of $\Gamma$ in \eqref{def:Gamma}, define by
\begin{equation}\label{eq:defSigma}
	\Sigma
	\eqdef \pi(\Gamma)
\end{equation}
the (compact and $\sigma$-invariant) set of \emph{admissible} sequences. Our first result is a description of $\Sigma$. In symbolic dynamics a \emph{coded system} is a transitive subshift which is the closure of the union of an increasing family of transitive subshifts of finite type (SFT). 
Define 
\begin{equation}\label{eq:xiPQ}
	 \Sigma^{\rm het}
	 \eqdef 
	 \{\xi\in\Sigma\colon \xi= (\ldots 000 \tau_1\ldots\tau_n 000\ldots)\text{ and }
	 	(f_{\tau_n}\circ\ldots\circ f_{\tau_1})(1)=0\}
\end{equation}
and
\begin{equation}\label{eq:xiPQbis}
	\Sigma^{\rm cod}
	\eqdef \Sigma\setminus\Sigma^{\rm het}.
\end{equation}

\begin{mtheo}\label{teo:coded}
	Assume (H1). The subshift $\Sigma$ is decomposed as
\[
	\Sigma
	= \Sigma^{\rm cod} \cup \Sigma^{\rm het},
\]	
where $ \Sigma^{\rm cod}$ is compact, $\sigma$-invariant, topologically mixing, and coded and $ \Sigma^{\rm het}$ is an at most countable union of isolated points. Moreover, $h_{\rm top}(\sigma,\Sigma^{\rm het})=0$ and there is no $\sigma$-invariant probability measure supported on $\Sigma^{\rm het}$.
\end{mtheo}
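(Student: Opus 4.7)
The decomposition $\Sigma = \Sigma^{\rm cod} \cup \Sigma^{\rm het}$ is handled in two independent steps.

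\emph{Step 1 (rigidity of $\Sigma^{\rm het}$).} Fix $\xi = (\ldots 000\tau_1\ldots\tau_n000\ldots) \in \Sigma^{\rm het}$ and set $\psi := f_{\tau_n}\circ\cdots\circ f_{\tau_1}$, with maximal domain of admissibility $A\subseteq[0,1]$. Hypotheses (H1)--(H2) force every $f_i$ to be strictly increasing (since $f_0'>0$ on $[0,1]$ by (H2) combined with $f_0'(1)>0$, and $f_1'>0$ on $[d,1]$ since $f_1'$ is non-increasing with $f_1'(1)>0$), so $\psi$ is strictly increasing on $A$. The image $\psi(A)$ lies in $[0,1]$, and the combination $\psi(1)=0$ with strict monotonicity forces $A\cap[0,1]=\{1\}$: any $x<1$ in $A$ would yield $\psi(x)<0$, incompatible with $\psi(x)\in[0,1]$. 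Hence the unique orbit in $\Gamma$ over $\xi$ is the heteroclinic orbit from $P$ to $Q$, with fiber coordinate $1$ for $i\le 0$ and $0$ for $i\ge n$. For any $\xi'\in\Sigma$ coinciding with $\xi$ on positions $-1,0,\ldots,n$, the same rigidity applied to $\psi$ forces $x_0'=1$. Backward propagation gives $x_{-k}'=1$ as long as the past symbols are $0$, and any symbol $1$ in the past of $\xi'$ would require $f_1(y)=1$ for some $y\in[d,1]$, contradicting $f_1(x)<x$ on $[d,1]$; forward propagation gives $x_{n+k}'=0$, and any $1$ in the future is ruled out by $0\notin[d,1]$. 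Thus $\xi'=\xi$, proving isolation.

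\emph{Step 2 (smallness of $\Sigma^{\rm het}$).} Countability is immediate from the parameterization of $\Sigma^{\rm het}$ by finite words $\tau_1\ldots\tau_n$ satisfying $\psi(1)=0$. Each orbit in $\Sigma^{\rm het}$ is wandering: a cylinder $U$ around $\xi$ pinning the entire middle block is disjoint from $\sigma^kU$ for $k\ne 0$, since a point of $\sigma^kU\cap U$ would carry two disjoint ``heteroclinic middle blocks'' whose simultaneous admissibility would require the fiber orbit to jump from state $0$ back to state $1$---impossible, since from $x=0$ only $f_0$ applies and $f_0(0)=0$. By Poincar\'e recurrence no $\sigma$-invariant probability measure can be supported on $\Sigma^{\rm het}$, and the variational principle gives $h_{\rm top}(\sigma,\Sigma^{\rm het})=0$.

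\emph{Step 3 (structure of $\Sigma^{\rm cod}$).} Isolation of $\Sigma^{\rm het}$ in $\Sigma$ makes it open there, so $\Sigma^{\rm cod}$ is closed, hence compact, and $\sigma$-invariance is inherited from that of $\Sigma^{\rm het}$. A key preliminary observation is that the set of admissible starting states of any admissible word $v$ in $\Sigma$ is an interval of the form $[a_v,1]$: only the lower-bound constraints $x_i\ge d$ (imposed by the symbols $1$) are binding, so the upper endpoint is always $1$. Consequently, after a sufficiently long zero-block the fiber state---pushed close to $1$ by iterations of $f_0$---always lies in the admissible set of the next admissible word. I would use this bridging to build a nested family of transitive SFTs $\Sigma_N\subseteq\Sigma^{\rm cod}$ from finitely many admissible ``code blocks'' of length $\le N$ buffered by long zero-strings, and verify $\overline{\bigcup_N\Sigma_N}=\Sigma^{\rm cod}$ by cut-and-paste approximation. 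Topological mixing of $\Sigma^{\rm cod}$ follows from the same bridging through the fixed point $0^\bZ\in\Sigma^{\rm cod}$: any two cylinders meeting $\Sigma^{\rm cod}$ can be connected by a zero-string of length $n$ for every sufficiently large $n$. The main obstacle lies in this last step: while bridging handles cleanly the sequences whose admissible fiber orbit stays bounded away from $0$, the ``boundary'' of $\Sigma^{\rm cod}$ may contain sequences whose admissible orbit approaches $0$ asymptotically on one side without producing a true heteroclinic connection, and capturing these inside the SFT hierarchy (and not accidentally inside $\Sigma^{\rm het}$) is the most delicate aspect.
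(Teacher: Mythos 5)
Your treatment of $\Sigma^{\rm het}$ (Steps 1 and 2) is correct and is essentially the paper's argument: $f_{[\tau]}(1)=0$ together with monotonicity forces $\tau$ to be forward admissible only at $1$, so the cylinder over $\tau$ meets $\Sigma$ in the single point $0^{-\bN}\tau 0^{\bN}$ (this is Lemma \ref{lemcla:isola}); isolation plus non-periodicity gives wandering, hence no invariant measure, and countability gives zero entropy. Two minor remarks: the theorem assumes only (H1), and strict monotonicity is already contained there, so your appeal to (H2) is unnecessary; and ``countable $\Rightarrow$ zero entropy'' is the cleaner route than invoking a variational principle on the non-compact set $\Sigma^{\rm het}$.

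The genuine gap is the one you flag yourself in Step 3: the density of the increasing family of transitive SFTs in $\Sigma^{\rm cod}$ is asserted, not proved, and the difficulty you describe is in fact a red herring whose resolution is precisely the missing idea. Density only has to be checked on cylinders, i.e.\ on finite central words $\tau=(\xi_{-m}\ldots\xi_m)$ of points $\xi\in\Sigma^{\rm cod}$, and for a finite admissible word that is not all zeros there is a clean dichotomy. Either $f_{[\tau]}(1)=0$, in which case your own Step 1 shows that $[\tau]\cap\Sigma$ is a single point of $\Sigma^{\rm het}$, so this cylinder never occurs for a point of $\Sigma^{\rm cod}$; or $f_{[\tau]}(1)\in(0,1)$ (the value is automatically $<1$ once $\tau$ contains a $1$), in which case one picks $a<1$ with $b\eqdef f_{[\tau]}(a)>0$ and $k$ with $f_0^k(b)>a$, and the two words $\tau 0^k,\tau 0^{k+1}$ generate an infinite SFT inside $[\tau]\cap\Sigma$ --- this is the content of Lemma \ref{lem:111}. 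The asymptotics of the infinite admissible orbit of $\xi$ (your ``boundary sequences approaching $0$'') never enter: every finite truncation of such a $\xi$ has $f_{[\tau]}(1)$ strictly positive and is therefore captured by some SFT, and $\xi$ is then a limit of SFT points. What remains is to merge the SFTs obtained from an enumeration of $\eW^{\rm cod}(\Sigma)$ into a single increasing transitive family; this is exactly the zero-buffer calibration you sketch (choose $N_1$ with $f_0^{N_1}(a)>b$ for an interval $[a,b]$ containing all relevant admissible states) and is carried out in Proposition \ref{proclalem:222}. So the architecture of your Step 3 is the paper's, but as written the coded property is not established.
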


For ``generic choices" of maps $f_0,f_1$, the set $\Sigma^{\rm het}$ is empty and hence $\Sigma$ is coded, see Remarks \ref{rem:21Gammahet} and \ref{rem:KuSm}. 
Accordingly, the set $\Gamma$ in general splits naturally into two subsets
\[
	\Gamma^{\rm cod}\eqdef\pi^{-1}(\Sigma^{\rm cod})
	\quad\text{ and }\quad
	\Gamma^{\rm het}\eqdef\pi^{-1}(\Sigma^{\rm het}).
\]	
Recall that a point $X\in\Gamma$ is \emph{non-wandering} for $(\Gamma,F)$ if any neighbourhood of $X$ (in $\Gamma$) contains points whose forward orbit returns to it. The set of all non-wandering points, denoted by $\Omega(\Gamma,F)$, is $F$-invariant and closed.%
\footnote{Recall that any non-wandering point is recurrent and that the converse implication is in general false. Notice the non-wandering set not only depends on the mapping but also on its domain and thus the non-wandering sets for $\tilde F$ and for $F$ may differ. For instance, the set $\Gamma^{\rm het}$ is wandering for $F$ and is non-wandering for $\tilde F$.}
Note that $\Gamma$ may contain points which fail to be non-wandering points or recurrent points, see Remark \ref{rem:examples}. Indeed, if the set $\Sigma^{\rm het}$ is nonempty then it consists of isolated points which are not recurrent, see Theorem \ref{teo:coded}.

An important structure in differentiable dynamics is the one of homoclinic relation which we adapt to our setting. 

\begin{remark}[Homoclinic and heteroclinic relations]\label{rem:homhet}
Following \cite[Chapter 9.5]{Rob:95}, two hyperbolic periodic points $A$ and $B$ are \emph{heteroclinically related} if the invariant sets of their orbits intersect cyclically. In our setting, there are two types of heteroclinic relations. If $A$ and $B$ have different type of hyperbolicity we say that they form a \emph{heterodimensional cycle}, otherwise we say that they are \emph{homoclinically related} (in agreement with the terminology by Newhouse, though here transversally is not needed). The latter defines an equivalence relation among hyperbolic periodic points of the same type of hyperbolicity. Given a hyperbolic periodic point $A$, we call the closure of its equivalence class a \emph{homoclinic class} and denote it by $H(A,F)$, see Section \ref{ss.homoclinicclasses}. In some cases we  will also consider the globally defined map $\tilde F$ and use the corresponding concepts.
\end{remark}

\begin{remark}[The set $\Gamma^{\rm het}$ and heterodimensional cycles]\label{rem:21Gammahet}
The above defined set $\Gamma^{\rm het}$ is indeed the set of points which are heteroclinic to the points $P$ and $Q$ defined in \eqref{eq:fixedpoints}.  In relation to what we defined above, note that any point $(0^\bZ,x)$, $x\in(0,1)$, is forward asymptotic to $P$ and backward asymptotic to $Q$. The nontrivial part to form a heterodimensional cycle associated to $P$ and $Q$ is precisely provided by points in $\Gamma^{\rm het}$ (if nonempty).  See Remark \ref{rem:herocycle} for further details and Figure \ref{fig.het}.
\end{remark}

\begin{figure}[h] 
 \begin{overpic}[scale=.4]{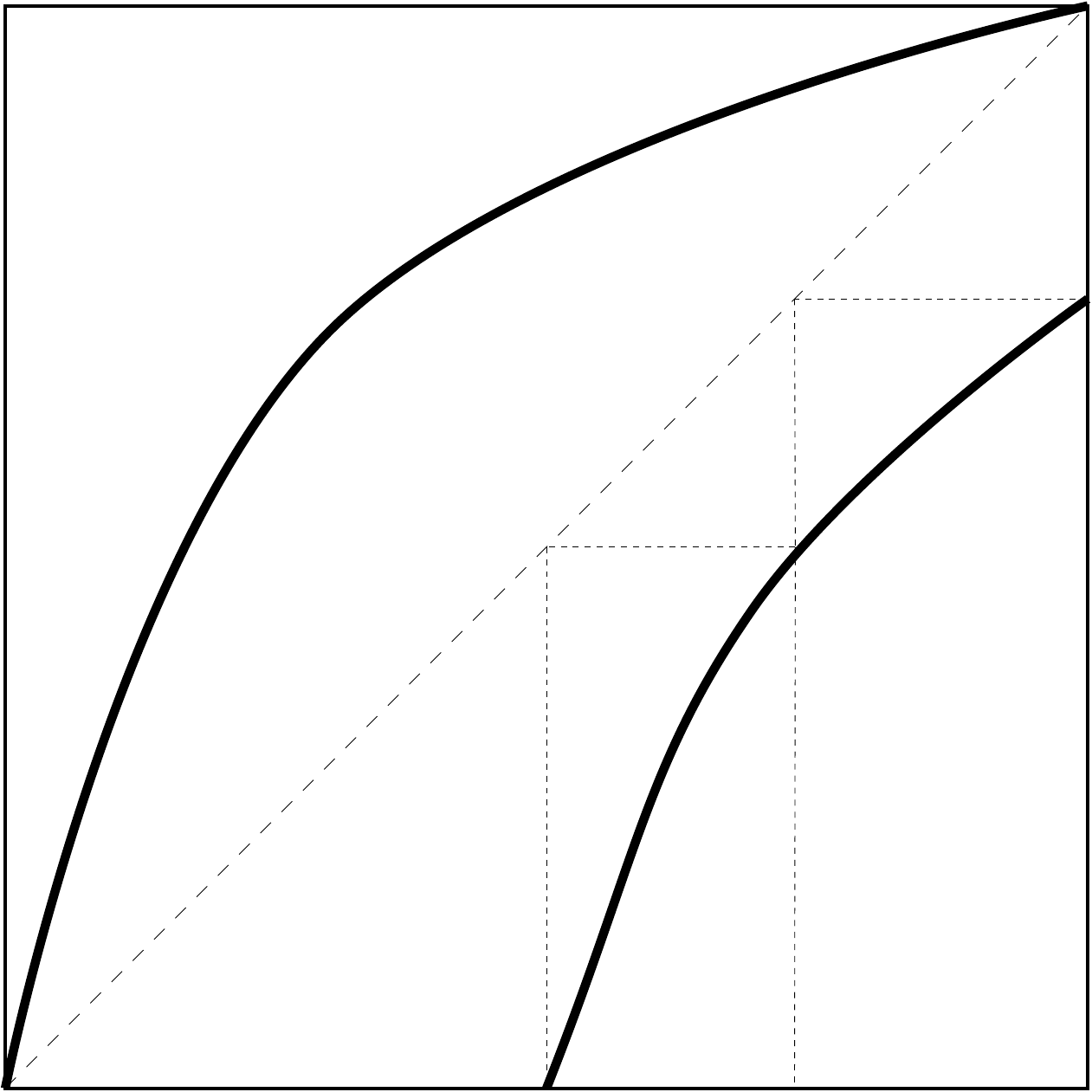}
 	\put(50,88){\small $f_0$}
 	\put(85,56){\small $f_1$}
 	\put(42,-6){\small $f_1^2(1)$}
 	\put(0,-6){\small $0$}
 	\put(98,-6){\small $1$}
 	\put(68,-6){\small $f_1(1)$}
  \end{overpic}
  \hspace{1cm}
 \begin{overpic}[scale=.40]{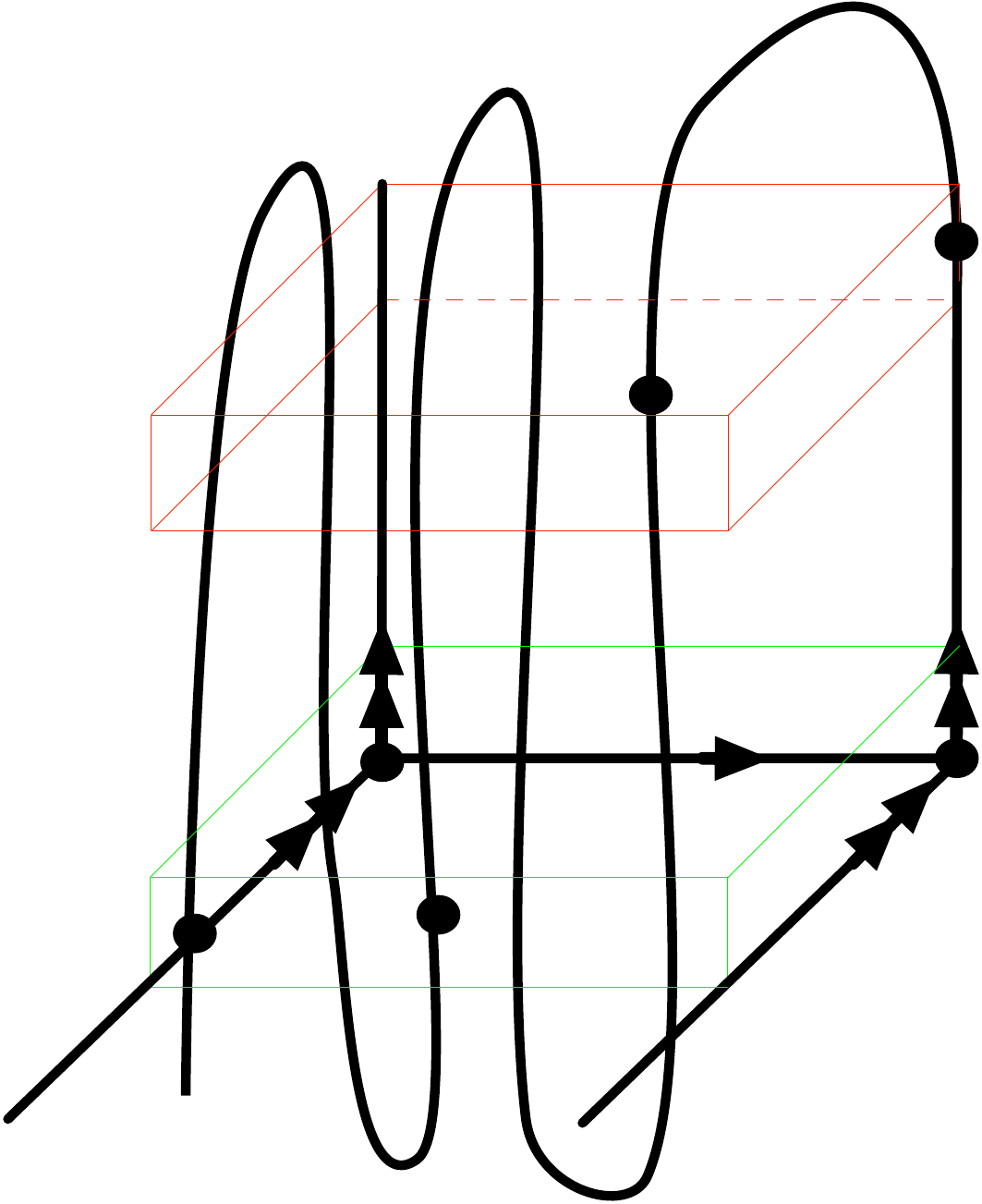}
 	\put(82.5,78){\small $R$}
 	\put(82,34){\small $P$}
	\put(21,36){\small $Q$}
  \end{overpic}
 \caption{Example when $\Gamma^{\rm het}$  is nonempty ($\Gamma ^{\rm het}$ contains the point  $R=((0^{-\bN}.110^\bN),1)$  and its $F$-orbit). }
 \label{fig.het}
\end{figure}

\begin{remark}\label{rem:KuSm}
By a Kupka-Smale genericity-like result, generically there are no heterodimensional cycles associated to $P$ and $Q$ and hence $\Gamma^{\rm het}$ is empty.
\end{remark}

\begin{remark}[Basic sets and horseshoes]
	In what follows, we call \emph{basic} any set which is locally maximal, compact, $F$-invariant, topologically transitive, and has uniform contraction (or uniform expansion) in the fiber direction. When the basic set is uncountable (not a periodic orbit) and topologically mixing, we call it a \emph{horseshoe}.
\end{remark}

\begin{remark}[Dynamics of $\Gamma^{\rm cod}$]
	In general, assuming only (H1), the topological dynamics of the map $F$ on $\Gamma^{\rm cod}$	
 may exhibit a huge variety (also reflected on the  ergodic level). A more detailed discussion is done in Section \ref{sec:homscen}.  Assuming additionally (H2), then hyperbolic periodic points of the same type of hyperbolicity are homoclinically related (see Proposition \ref{pro.l.homoclinicallyrelated}) and hence we have only two homoclinic classes.
 Still, under hypothesis (H2), we observe two main ``opposed'' types of dynamical behaviors that can occur:
\begin{itemize}
\item[1)] The map $F|_{\Gamma^{\rm cod}}$ is transitive and hence not hyperbolic (in this case, we may have either $\Gamma^{\rm het}=\emptyset$ or $\Gamma^{\rm het}\ne\emptyset$). 
\item[2)] The map $F|_{\Gamma^{\rm cod}}$ is not transitive and the set of nonwandering points of $F|_{\Gamma^{\rm cod}}$ is hyperbolic and consists of two disjoint  horseshoes, one expanding and the other one contracting in the fiber direction. In this case, $\Gamma^{\rm cod}$ also contains wandering points, for instance the subset $\{0^\bZ\}\times(0,1)$.%
\footnote{Note that this situation is compatible with $\Gamma^{\rm het}\ne\emptyset$, in which case we have that $\Omega(F)$ is hyperbolic but $\Gamma^{\rm het}\subset\Omega(\tilde F)$ and hence $\Omega(\tilde F)$ is not hyperbolic.}
\end{itemize}
Moreover, there are also a somewhat intermediate scenario:
\begin{itemize}
\item[3)] The map $F|_{\Gamma^{\rm cod}}$ is not transitive and $\Gamma^{\rm cod}$ contains ``touching" or ``overlapping" com\-po\-nents of different type of hyperbolicity.
\end{itemize}
\end{remark}

To be able to describe any further structure, in what follows, besides (H1) and (H2) we will invoke the (uniform concavity) hypothesis (H2+) strengthening (H2). Under conditions (H1)--(H2+), we can prove that every {\emph{non-wandering point of $F$}} in   $\Gamma^{\mathrm{cod}}$ (or, in view of Theorem~\ref{teo:coded}, any non-isolated non-wandering point in $\Gamma$) is approximated by hyperbolic periodic points. Indeed, we have a more accurate statement:

\begin{mtheo}\label{teo:homoclinicclasses}
Assume (H1)--(H2+). The non-wandering set $\Omega(\Gamma,F)$ is contained in $\Gamma^{\rm cod}$ and satisfies
\[
	\Omega(\Gamma,F)
	= H(P,F) \cup H(Q,F)
	= {\rm closure}\{A\in\Gamma\colon A\text{ hyperbolic and periodic}\}.
\]
Moreover, the sets $H(P,F)$ and $H(Q,F)$ both are hyperbolic  if and only if they are disjoint.
\end{mtheo}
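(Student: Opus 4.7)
The statement has three parts, which I would establish in turn. First, to show $\Omega(\Gamma,F)\subset\Gamma^{\rm cod}$, note that by Theorem~\ref{teo:coded} every $\xi\in\Sigma^{\rm het}$ is an isolated sequence of the form $\ldots 000\tau_1\cdots\tau_n 000\ldots$, so any $(\xi,x)\in\Gamma^{\rm het}$ is an orbit heteroclinic from $Q$ to $P$: its forward and backward iterates eventually enter $\{0^\bZ\}\times[0,1]$, where the fiber coordinate is driven to $0$ or $1$ by $f_0$. A small symbolic cylinder about $\xi$ together with the attracting/repelling nature of $P$ and $Q$ then yields a wandering neighbourhood of $(\xi,x)$. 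The inclusion $H(P,F)\cup H(Q,F)\subset\Omega(\Gamma,F)$ is immediate, and by the cited Proposition~\ref{pro.l.homoclinicallyrelated} every hyperbolic periodic orbit in $\Gamma$ lies in one of the two classes according to whether its fiber Lyapunov exponent is negative or positive, which identifies the closure of the hyperbolic periodic orbits with $H(P,F)\cup H(Q,F)$.

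The core step is then to show that every $X=(\xi,x)\in\Omega(\Gamma,F)$ is a limit of hyperbolic periodic points. Since $\Sigma^{\rm cod}$ is coded (Theorem~\ref{teo:coded}), its periodic sequences are dense; and since $X$ is non-wandering, I would run a Birkhoff-type closing argument to produce, for every $\varepsilon>0$, a periodic sequence $\hat\xi\in\Sigma^{\rm cod}$ of some period $n$, $\varepsilon$-close to $\xi$, such that $F^n$ maps a small fibered rectangle at $X$ to itself. The induced fiber map $g=f_{\hat\xi_{n-1}}\circ\cdots\circ f_{\hat\xi_0}$ is strictly concave by (H2+), so it has at most two fixed points in its admissible domain; combined with the extremal behaviour at $0$ and $1$ inherited from (H1), this forces exactly two fixed points, one expanding and one contracting, and the non-wandering hypothesis places one of them near $x$. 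In the exceptional parabolic case where the two fixed points coalesce, I would slightly modify $\hat\xi$ within the coded alphabet (adjoining a block of $0$'s or substituting a single symbol of the closing word) to split them; the quantitative concavity in (H2+) makes the split effective.

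For the final equivalence, the direction ``hyperbolic $\Rightarrow$ disjoint'' is immediate, since a common point would have a fiber dynamics simultaneously uniformly contracting and uniformly expanding. For ``disjoint $\Rightarrow$ hyperbolic'', suppose, say, that $H(P,F)$ is not uniformly contracting. Then there is a sequence of contracting periodic orbits $R_k\subset H(P,F)$ whose fiber Lyapunov exponents tend to $0$; by compactness they accumulate on an invariant subset $\Lambda\subset H(P,F)$ carrying either a parabolic periodic orbit or a nonhyperbolic ergodic measure. Applying the perturbation argument of the preceding paragraph to the (eventually periodic) limit data produces, in every neighbourhood of $\Lambda$, hyperbolic periodic orbits of expanding type, placing $\Lambda$ in $H(Q,F)\cap H(P,F)$ and contradicting disjointness.

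The main obstacle will be the closing step in the core paragraph: ensuring that the closing word $\hat\xi$ can always be taken inside $\Sigma^{\rm cod}$, that the composed fiber map $g$ admits two fixed points in the actual admissible domain (not merely abstractly), and that the degenerate parabolic case can be resolved by a controlled symbolic perturbation. All of this hinges on combining the coded structure of $\Sigma^{\rm cod}$ from Theorem~\ref{teo:coded} with the uniform concavity bounds of (H2+); the latter is precisely what allows one to quantify how the derivative of $g$ at its fixed point varies when a single symbol of the closing word is modified, and so to guarantee that the split of a parabolic fixed point into a hyperbolic pair actually occurs.
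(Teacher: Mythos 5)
Your overall architecture matches the paper's, and the first part (points of $\Gamma^{\rm het}$ are wandering, hyperbolic periodic points fall into the two classes by Proposition \ref{pro.l.homoclinicallyrelated}) is fine. But the core closing step has a genuine gap. You assert that strict concavity ``combined with the extremal behaviour at $0$ and $1$'' forces the return map $g=f_{[\hat\xi_0\ldots\hat\xi_{n-1}]}$ to have exactly two fixed points, one of them near $x$. This is false as stated: by Lemma \ref{lem:fixpots} a concave composition may have two, one (parabolic), or \emph{no} fixed point in its admissible domain, and the last case really occurs for the return words produced by the non-wandering hypothesis (the graph of $g$ can lie strictly below the diagonal while still satisfying $\lvert g(y)-y\rvert\le 2\varepsilon$ at the return point). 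The paper must treat this case separately (Case 3b of its proof): one manufactures a fixed point by composing $g^{\ell}\circ f_0^k\circ f_{[\eta_0\ldots\eta_{2n-1}]}$ for suitable $k,\ell$, using that $f_0$ is contracting at $1$ to force the derivative of the composition below $1$. Moreover, even when $g$ does have a fixed point, placing it \emph{near} $x$ requires a quantitative statement of the form ``$\lvert g(y)-y\rvert<\varepsilon$ implies $y$ is $h(\varepsilon)$-close to a fixed point of $g$'' uniformly in the word defining $g$; this is the paper's Lemma \ref{lem:close0}, whose proof is exactly where the uniform bound $M$ of (H2+) enters. You flag these as ``the main obstacle'' but do not resolve them, and they are the substance of the theorem. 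Your resolution of the parabolic degeneracy (inserting a block of $0$'s) is the right idea and matches the paper's Lemma \ref{lem:parpoiacc}.

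The final equivalence also has a gap in the direction ``disjoint $\Rightarrow$ hyperbolic''. The step ``$H(P,F)$ not uniformly contracting $\Rightarrow$ there are contracting periodic orbits with exponents tending to $0$'' is not justified, and the subsequent step — producing expanding periodic orbits in every neighbourhood of the limit set by ``the perturbation argument of the preceding paragraph'' — cannot work, because the limit object is a nonhyperbolic \emph{ergodic measure}, not eventually periodic data. Approximating such a measure by hyperbolic periodic orbits of both types is precisely Theorem \ref{teo:accum}, a separate major result of the paper proved via the skeleton construction; the paper's proof of Theorem \ref{teo:homoclinicclasses} explicitly invokes it here. In addition, even after showing that every ergodic measure on $H(P,F)$ is hyperbolic of contracting type, uniform hyperbolicity of $H(P,F)$ does not follow formally: there exist compact invariant sets that are nonhyperbolic although all their ergodic measures are hyperbolic (the paper cites examples), so one needs the total-probability criterion of Alves--Ara\'ujo--Saussol to conclude. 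Both ingredients are missing from your sketch.
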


\begin{questions}\label{rem:forgetquest}
A consequence of concavity is that hyperbolic periodic points appear in contracting--expanding pairs. Moreover, if the Lyapunov exponents of the contracting-type periodic points is bounded away from zero then the same holds for the ones of expanding-type (see Theorem \ref{teo:1}). This occurs, for example, if  $H(P,F)$ is hyperbolic. In particular, hyperbolicity of $H(P,F)$ implies that every periodic point has exponent away from zero and hence all  ergodic measures are hyperbolic (by Theorem \ref{teo:accum} and Remark \ref{Katokforevery}).

We do not know if hyperbolicity of one homoclinic class implies its local maximality.
We also do not know if hyperbolicity of one class implies hyperbolicity of the other one. If, for example, $H(P,F)$ is hyperbolic and $H(Q,F)$ is not, then all ergodic measures supported on $H(Q,F)$  are hyperbolic (\emph{a priori} either of contracting or of expanding type) with exponent bounded away from zero. It is a challenging problem if this case can occur and what further properties there are.%
\footnote{There are nonhyperbolic systems whose ergodic measures are all hyperbolic. Let us cite a few examples of different flavors.  The simplest example is $x\mapsto 4x(1-x)$ on the unit interval.  More related to diffeomorphisms, the example in \cite{BalBonSch:99} is based on an elaboration of Bowen's ``eye-like'' attractor. In \cite{CaoLuzRio:06}, the authors analyze the boundary of hyperbolicity in horseshoes with internal tangencies. Finally, homoclinic classes with internal heterodimensional cycles studied in \cite{DiaHorRioSam:09} are more in the direction of systems in this paper.} 
 
In general, it is unclear what further dynamical properties (e.g. entropy and existence of nonhyperbolic ergodic measures) the (nonempty and nontrivial) intersection $H(P,F)\cap H(Q,F)$ can have. 
\end{questions}


\begin{questions}[Interplay between symbolic and global dynamics]
A natural question is if the structure of $H(P,F)$ and $H(Q,F)$ provides further information about the symbolic space $\Sigma^{\rm cod}$. If both $H(P,F)$ and $H(Q,F)$ are basic sets, then $\Sigma^{\rm cod}$ is a SFT. However, in general, this is unknown. For instance, if $Q\in H(P,F)$ (and hence this set is not hyperbolic) then $\Sigma^{\rm cod}$ is not SFT.%
\footnote{Indeed, in this case there are ``admissible words" $(\xi_0\ldots\xi_k)$ such that $f_{[\xi_0\ldots\,\xi_k]}(1)=\varepsilon_k\to0^+$ as $k\to\infty$. This then implies that $(\xi_0\ldots\xi_k0^m1)$ is admissible if and only if $m\ge m(\varepsilon_k)$, where $m(\varepsilon_k)\to\infty$ as $k\to\infty$, the latter preventing $\Sigma$ to be a SFT.} 
See also the discussion in Section \ref{sec:homscen}.
\end{questions}

\begin{remark}\label{rem:time-reversal}
	Observe that the IFS of the inverse fiber maps $\{f_0^{-1},f_1^{-1}\}$, after a change of coordinates $x\mapsto 1-x$, again satisfies (H1)--(H2). Moreover, if the original system satisfied (H2+) then the inverse system will also do so. Hence, for many statements, there is a certain symmetry with respect to time reversal.
\end{remark}

The ergodic counterpart of the results above is done in the following section, see in particular Corollary \ref{cor1}.

\subsection{Structure of the measure space and Lyapunov exponents}

Given a compact metric space $X$ and a continuous map $T$ on $X$, we denote by $\cM(X)$  the space of $T$-invariant Borel probability measures on $X$ and by $\cM_{\rm erg}(X)$ the subspace of ergodic measures. We equip $\cM(X)$ with the Wasserstein distance, denoted by $W_1$, that induces the weak$\ast$ topology (we recall its definition and some properties in Appendix \ref{App:A}). We denote by $h(\nu)$ the \emph{metric entropy} of a measure $\nu\in\cM(X)$.

The space $\cM(X)$  is a Choquet simplex whose extreme points are  the ergodic measures (see \cite[Chapter 6.2]{Wal:82}). If $\cM(X)$ is not a singleton and if the set of ergodic measures $\cM_{\rm erg}(X)$ is dense in its closed convex%
\footnote{Recall that the \emph{convex hull} of a set $\cN\subset\cM(X)$ is the smallest convex set containing $\cN$, denoted by $\conv(\cN)$, and that the \emph{closed convex hull} of $\cN$ is the smallest closed convex set containing $\cN$, denoted by $\cconv(\cN)$. By \cite[Theorem 5.2 (i)--(ii)]{Sim:11}, we have $\overline{\conv(\cN)}=\cconv(\cN)$, where $\overline{\cN}$ denotes the weak$\ast$ closure of $\cN$. }
 hull $\cM(X)$, then one refers to it as a \emph{Poulsen simplex} (see also \cite{LinOlsSte:78}).
Moreover, $\cM(X)$ is an \emph{entropy-dense Poulsen simplex} if for every $\mu\in\cM(X)$,  neighbourhood $U$ of $\mu$ in $\cM(X)$, and  $\varepsilon>0$ there exists $\nu\in\cM_{\rm erg}(X)\cap U$ such that $h(\nu)>h(\mu)-\varepsilon$.

\begin{mtheo}\label{teo:poulsen}
	Assume (H1)--(H2). The space $\cM(\Sigma)$ is an entropy-dense Poulsen simplex.
\end{mtheo}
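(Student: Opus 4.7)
The plan is to reduce the statement to the coded piece $\Sigma^{\mathrm{cod}}$ and then exploit the approximation by topologically mixing SFTs to perform an explicit gluing of ergodic measures. Since by Theorem~\ref{teo:coded} the set $\Sigma^{\mathrm{het}}$ carries no invariant measure, we have $\cM(\Sigma)=\cM(\Sigma^{\mathrm{cod}})$ and it suffices to argue on the coded factor. By the coded structure, write $\Sigma^{\mathrm{cod}}=\overline{\bigcup_{n\ge 1}X_n}$ as the closure of an increasing family of transitive SFTs; topological mixing of $\Sigma^{\mathrm{cod}}$ together with a standard enlargement argument allows us to take the $X_n$ topologically mixing as well, so each $X_n$ enjoys the specification property of Bowen.

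Next, I would establish weak$\ast$ density of $\cM_{\mathrm{erg}}(\Sigma)$ and entropy density simultaneously. Given $\mu\in\cM(\Sigma^{\mathrm{cod}})$ and $\varepsilon>0$, the ergodic decomposition combined with affinity of entropy lets us produce a finite convex combination $\hat\mu=\sum_{i=1}^k\alpha_i\mu_i$ with $\mu_i\in\cM_{\mathrm{erg}}(\Sigma^{\mathrm{cod}})$, rational weights $\alpha_i$, $W_1(\mu,\hat\mu)<\varepsilon$, and $h(\hat\mu)>h(\mu)-\varepsilon$ (the latter uses upper semicontinuity of $h$ on the subshift $\Sigma$, which is expansive). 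For each ergodic component $\mu_i$, the Shannon--McMillan--Breiman theorem together with Birkhoff's theorem provides, for all sufficiently large $N_i$, a word $w_i$ of length $N_i$ whose empirical distribution is $\varepsilon$-close to $\mu_i$ in $W_1$ and such that the cylinder $[w_i]$ has $\mu_i$-measure at least $e^{-N_i(h(\mu_i)+\varepsilon)}$; in particular, the number of such $\mu_i$-generic words grows like $e^{N_i(h(\mu_i)-\varepsilon)}$.

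Since the coded subshift has $\Sigma^{\mathrm{cod}}=\overline{\bigcup X_n}$, pick $n$ large enough that every word $w_i$ occurs in $X_n$, and apply specification in $X_n$: there exist connecting words of bounded length $p=p(n)$ that allow us to concatenate $N\alpha_1$-many copies of $w_1$, followed by $N\alpha_2$-many copies of $w_2$, and so on (for large $N$), into a single periodic word $W$ admissible in $X_n$. Choose $N$ so large that the combined contribution of the specification gap words is negligible with respect to $N\sum\alpha_iN_i$. The periodic orbit through $W$ supports an ergodic (indeed periodic) measure $\nu$ on $X_n\subset\Sigma^{\mathrm{cod}}$ whose empirical distribution is $O(\varepsilon)$-close in $W_1$ to $\sum\alpha_i\mu_i$ and whose entropy is non-negative. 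To boost the entropy to $h(\mu)-O(\varepsilon)$, one replaces each block $w_i$ by a random selection among the $\asymp e^{N_i(h(\mu_i)-\varepsilon)}$ available $\mu_i$-generic words, and counts the resulting orbit type: a standard entropy count on $X_n$, followed by the variational principle restricted to the SFT orbit closure of such concatenations, yields an ergodic measure $\nu'\in\cM_{\mathrm{erg}}(X_n)\subset\cM_{\mathrm{erg}}(\Sigma)$ with $W_1(\nu',\mu)<C\varepsilon$ and $h(\nu')>h(\mu)-C\varepsilon$.

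The main obstacle is this last step: one must orchestrate the gluing so that the output is genuinely ergodic (not merely invariant) while retaining the entropy count. The cleanest way is to construct an auxiliary SFT on the alphabet of $\mu_i$-generic words of length $N_i$ (with transitions dictated by the specification gaps and the weights $\alpha_i$), verify mixing of this auxiliary SFT, and use its measure of maximal entropy: it pushes forward under the concatenation map to an ergodic measure in $\cM_{\mathrm{erg}}(X_n)$ whose entropy is the weighted sum $\sum\alpha_i h(\mu_i)$ up to an error controlled by $p/N_i$ and $\varepsilon$. Care is needed in choosing $N_i,N,n$ in the correct order so that the specification parameter $p(n)$ does not blow up faster than the $N_i$ grow, which is where the increasing union structure of the coded presentation is essential.
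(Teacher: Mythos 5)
Your route is genuinely different from the paper's: you try to stay entirely inside the symbolic space and run a specification-based gluing over the coded presentation $\Sigma^{\rm cod}=\overline{\bigcup_n X_n}$, whereas the paper lifts each ergodic $\nu_i$ to an ergodic measure on $\Gamma$ with nonnegative fiber exponent (Lemma \ref{lem:existsemihypmeas}), approximates it weak$\ast$ and in entropy by basic sets inside the skew-product (Theorem \ref{teopro:intermediate}, built on skeletons and concavity), merges those basic sets into a single horseshoe via homoclinic relations (Corollary \ref{c.horseshoes}), and only then invokes entropy-density of ergodic measures for a transitive SFT before projecting back down. Your version has a genuine gap, and it sits exactly where you flag that ``care is needed'': the quantifiers between the block lengths $N_i$ and the specification constant $p(n)$ cannot be ordered consistently. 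The $\mu_i$-generic words of length $N_i$ are only guaranteed to occur in $X_n$ for $n=n(N_1,\dots,N_k)$ chosen \emph{after} the $N_i$, and the transition constants $p(n)$ are genuinely unbounded in $n$: to continue an admissible word $w$ by an admissible word $v$ in $\Sigma$ one needs a run $0^k$ with $f_0^k\bigl(f_{[w]}(1)\bigr)\ge a_{[v]}$, and $f_{[w]}(1)$ can tend to $0$ along allowed words (this is precisely the phenomenon in the paper's footnote showing $\Sigma^{\rm cod}$ need not be an SFT when $Q\in H(P,F)$), forcing $k$ to grow with $\lvert w\rvert$. Gaps comparable to $N_i$ drag the empirical measure of the concatenation a definite distance toward $\delta_{0^\bZ}$, destroying the $W_1$-estimate, and deflate the per-symbol entropy count. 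This is not a removable technicality: $\Sigma^{\rm cod}$ does not have specification in general, no general theorem gives entropy-dense Poulsen simplices for coded systems, and the introduction states explicitly that no present-day symbolic tools apply to this subshift.

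The paper's resolution of exactly this difficulty is the skeleton construction (Definition \ref{def:skeleton}, Proposition \ref{pro:skeleton}): using Poincar\'e recurrence for the \emph{lifted} measure on $\Gamma$, one selects the generic orbit segments so that they start in $\Sigma\times(0,a)$ and end in $\Sigma\times(a,1)$ for an $a$ depending only on the measure, whence a connecting block $0^s$ with $s=\ell+O(k\varepsilon)$ (and $\ell$ depending only on $a$) suffices both for admissibility and to create uniform fiber contraction; the concavity estimates of Section \ref{sec:concavemaps} then control the distortion along the glued orbits. Any repair of your argument must incorporate an equivalent control on where $f_{[w_i]}$ sends its admissible interval, and to produce exponentially many such controlled words you need the fiber dynamics — at which point you are reproving Theorem \ref{teopro:intermediate}. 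Two smaller issues: upgrading the transitive SFTs $X_n$ to mixing ones requires an argument (a transitive SFT has a period that must be killed), and the ergodic component $\delta_{0^\bZ}$, whose generic words $0^N$ are excluded from every $X_n$ by construction, must be handled separately.
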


\begin{remark}[Entropy map]
For the step skew-product map $F$, the entropy map $\mu\mapsto h(\mu)$  is upper semi-continuous on $\cM(\Gamma)$. Indeed, the map $\tilde F$ when seen as a partially hyperbolic diffeomorphism with one-dimensional central bundle is $h$-expansive (see  \cite{DiaFisPacVie:12}). Hence \cite{Bow:72} implies upper semi-continuity. Thus, by Theorem \ref{teo:poulsen}, every $\mu\in\cM(\Sigma)$ can be approximated weak$\ast$ and in entropy by ergodic measures.
\end{remark}

\begin{remark}\label{movethere}
Denote by $\pi_\ast\mu\eqdef \mu\circ\pi^{-1}$ the pushforward of a measure $\mu$ by the projection $\pi$ defined in \eqref{eq:defpi}. 
Note that for every $\mu\in\cM_{\rm erg}(\Gamma)$, the measure $\pi_\ast\mu$ is ergodic. On the other hand, given $\nu\in\cM_{\rm erg}(\Sigma)$, for every $\mu\in\cM(\Gamma)$ satisfying $\pi_\ast\mu=\nu$, every ergodic component $\mu'$ of $\mu$ satisfies $\pi_\ast\mu'=\nu$.

We  observe the following well-known fact%
\footnote{Just observe that $h_{\rm top}(F,\pi^{-1}(\xi))=0$ for every $\xi\in\Sigma$. Hence, by the Ledrappier-Walters formula \cite{LedWal:77}, $h(\nu)=\sup_{\mu\colon\pi_\ast\mu=\nu}h(\mu)$.}
\begin{equation}\label{eqlem:projent}
	h(\pi_\ast\mu)=h(\mu)
	\quad\text{  for every }\quad
	\mu\in\cM(\Gamma).
\end{equation}
\end{remark}

\begin{mtheo}\label{teo:1}
	Assume  (H1)--(H2+). There are  continuous functions $\ukappa,\okappa\colon(0,\infty)\to(0,\infty)$ which are increasing and satisfy 
	$$
	\lim_{D\to0}\ukappa(D)=0=\lim_{D\to0}\okappa(D)
	$$
	such that, given any $\nu\in\cM_{\rm erg}(\Sigma)$, one of the following two cases occurs:
\begin{itemize}
\item[a)] There exist exactly two measures $\mu_1,\mu_2\in\cM_{\rm erg}(\Gamma)$  such that $\pi_\ast\mu_1=\nu=\pi_\ast\mu_2$. In this case, both measures are hyperbolic and have fiber Lyapunov exponents with different signs. More precisely, the Wasserstein distance $D\eqdef W_1(\mu_1,\mu_2)>0$   between $\mu_1$ and $\mu_2$ satisfies	
\[
	D
	= \int x\,d\mu_2 (\xi,x)-\int x\,d\mu_1(\xi,x)
\]	
and 
\begin{equation}\label{eq:eaqaul}
	-\okappa(D) 
	\le \chi(\mu_2)
	\le -\ukappa(D)
	<0
	< \ukappa(D)
	\le \chi(\mu_1)
	\le \okappa(D).
\end{equation}
\item[b)] There exists only one measure $\mu\in\cM(\Gamma)$ such that $\pi_\ast\mu=\nu$. In this case, $\mu$ is ergodic and satisfies $\chi(\mu)=0$. 
\end{itemize}
\end{mtheo}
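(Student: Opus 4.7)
The approach is to use the monotone, strictly concave structure of the fiber maps to identify all ergodic lifts of $\nu$ via the extremal invariant graphs of $\Gamma$, and then to pin down the Lyapunov exponents via the mean value theorem and Birkhoff's theorem.

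\emph{Step 1 (fiber structure and canonical lifts).} First I would show that for each $\xi\in\Sigma$ the fiber $\Gamma_\xi:=\{x\in[0,1]:(\xi,x)\in\Gamma\}$ is a non-empty closed interval $[a(\xi),b(\xi)]$, with semi-continuous endpoints $a,b\colon\Sigma\to[0,1]$: the fiber maps are monotone, so iterated images of $[0,1]$ remain intervals, and $\Gamma_\xi$ is the intersection of such a nested family. Invariance of $\Gamma$ translates into $a(\sigma\xi)=f_{\xi_0}(a(\xi))$ and $b(\sigma\xi)=f_{\xi_0}(b(\xi))$, so the push-forwards $\mu_a:=(\mathrm{id},a)_\ast\nu$ and $\mu_b:=(\mathrm{id},b)_\ast\nu$ are $F$-invariant and ergodic (as isomorphic images of $\nu$), and project to $\nu$.

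\emph{Step 2 (uniqueness via the normalized fiber position).} The crux is to show these are the only ergodic lifts. On $\{(\xi,x)\in\Gamma:a(\xi)<b(\xi)\}$ define
\[
r(\xi,x):=\frac{x-a(\xi)}{b(\xi)-a(\xi)}.
\]
Invariance of $a,b$ together with strict concavity of $f_{\xi_0}$ yields
\[
r(F(\xi,x))=\frac{\int_{a(\xi)}^{x}f'_{\xi_0}(t)\,dt}{\int_{a(\xi)}^{b(\xi)}f'_{\xi_0}(t)\,dt}>r(\xi,x)\quad\text{whenever}\quad a(\xi)<x<b(\xi),
\]
because the average of the strictly decreasing function $f'_{\xi_0}$ over the left sub-interval exceeds the average over the right. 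On the extremal graphs $r\circ F=r$. For any invariant lift $\mu$ of $\nu$, invariance gives $\int(r\circ F-r)\,d\mu=0$; the non-negative integrand then forces $\mu(\{a<x<b\})=0$. Ergodicity restricts $\mu$ to one of the $F$-invariant sets $\{x=a(\xi)\}$ or $\{x=b(\xi)\}$, so $\mu\in\{\mu_a,\mu_b\}$.

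\emph{Step 3 (dichotomy and signs of exponents).} Since $\{a<b\}\subset\Sigma$ is $\sigma$-invariant, ergodicity yields the dichotomy: $\nu(\{a<b\})=0$ produces $\mu_a=\mu_b$ (unique lift, case (b), $D=0$), while $\nu(\{a<b\})=1$ produces $\mu_a\neq\mu_b$ (two lifts, case (a), $D>0$). The formula $W_1(\mu_a,\mu_b)=\int(b-a)\,d\nu$ comes from combining the optimal fiber coupling (upper bound) with the $1$-Lipschitz test function $(\xi,x)\mapsto x$ (lower bound). In case (a), apply the MVT to $f_{\xi_0}\colon[a(\xi),b(\xi)]\to[a(\sigma\xi),b(\sigma\xi)]$ to obtain $c(\xi)\in(a(\xi),b(\xi))$ with $\log L(\sigma\xi)-\log L(\xi)=\log f'_{\xi_0}(c(\xi))$, where $L:=b-a$. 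Birkhoff applied to the bounded observable $\log f'_{\xi_0}(c)$ gives
\[
\int\log f'_{\xi_0}(c(\xi))\,d\nu=\lim_{n\to\infty}\frac{1}{n}\bigl(\log L(\sigma^n\xi)-\log L(\xi)\bigr)=0\quad \nu\text{-a.s.},
\]
since $L\le 1$ and, by ergodicity, $L\circ\sigma^n>D/2$ along a positive-density subsequence. Strict monotonicity of $\log f'_{\xi_0}$ together with $c(\xi)\in(a(\xi),b(\xi))$ then yield $\chi(\mu_b)<0<\chi(\mu_a)$. Case (b) follows by a limiting argument: approximate $\nu$ weak$\ast$ by case-(a) ergodic measures $\nu_n$ with $D(\nu_n)\to 0$ (using upper semi-continuity of $L$), and combine the upper bound from Step~4 with weak$\ast$ continuity of $\chi$ to obtain $\chi(\mu)=0$.

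\emph{Step 4 (quantitative bounds via (H2+)).} The uniform bi-Lipschitz decrease of $\log f'_i$ from (H2+) gives
\[
\chi(\mu_a)-\chi(\mu_b)=\int[\log f'(a)-\log f'(b)]\,d\nu\in[M^{-1}D,MD],
\]
hence the upper bound $|\chi(\mu_i)|\le MD=:\kappa_2(D)$ using the opposite signs established in Step~3. The individual lower bound $\kappa_1(D)>0$ is the main obstacle: the MVT point $c(\xi)$ can drift close to $a(\xi)$ or to $b(\xi)$ depending on the shape of $f_{\xi_0}$, so the naive bound $\chi(\mu_a)\ge M^{-1}\int(c-a)\,d\nu$ is not automatically a positive function of $D$ alone. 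The resolution is to refine the MVT by splitting $[a(\xi),b(\xi)]$ at a controlled intermediate point (e.g.\ the midpoint, where (H2+) guarantees a definite gap in $\log f'$), and combine this with the constraint $\int\log f'(c)\,d\nu=0$ to force both $\chi(\mu_a)$ and $-\chi(\mu_b)$ to be bounded below by an increasing function $\kappa_1(D)$ of $D$ alone, with $\kappa_1(D)\to 0$ as $D\to 0$.
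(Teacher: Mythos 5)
Your treatment of case a) is correct in outline and takes a genuinely different route from the paper. The paper works with $\mu_i$-generic points in a common fiber and controls the growth of the ``moving intervals'' $I^n=f_\xi^n(I^0)$ via rescaled maps and a positive-density set of times where $\lvert I^n\rvert\ge D/3$; you instead work globally with the extremal graphs $a=x_{\xi^+}$, $b=x_{\xi^-}$, use the normalized position $r(\xi,x)=(x-a(\xi))/(b(\xi)-a(\xi))$ as a strict Lyapunov function to exclude ergodic lifts in the fiber interiors (the paper's Lemma on at most two lifts is obtained instead from the contraction-of-moving-intervals Lemma), and extract the signs from the identity $\int\log f_{\xi_0}'(c(\xi))\,d\nu=0$ for the mean-value point $c$. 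This is clean and buys a more conceptual proof of ``exactly two lifts, of opposite hyperbolicity''; note only that your Step~4 lower bound is a sketch, and that making it precise amounts to re-proving Lemma \ref{lem:obv}: for $L(\xi)\ge D/3$ one has $\log f'(a)-\log f'(c)\ge C_1(D/3)$, and $\nu(L\ge D/3)\ge 2D/3$ by Chebyshev, which yields $\ukappa(D)=\tfrac{2D}{3}C_1(D/3)$. So the quantitative input from (H2+) is the same in both proofs.

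The genuine gap is case b). You propose to approximate a case-b) measure $\nu$ weak$\ast$ by case-a) ergodic measures $\nu_n$ with $D(\nu_n)\to0$ and pass to the limit. Upper semi-continuity of $L$ only tells you that \emph{if} ergodic $\nu_n\to\nu$ exist then $D(\nu_n)\to0$; it gives no reason for case-a) measures to accumulate on $\nu$ at all, and if the $\nu_n$ happen to be in case b) themselves you learn nothing. Worse, in this paper the existence of nearby hyperbolic (case-a)) ergodic measures is itself one of the main results (Theorem \ref{teo:accum}, proved via skeletons), and its proof \emph{uses} Theorem \ref{teo:1}\,b) through Lemma \ref{pro:simple}; so your argument is circular relative to the only available source for that approximation. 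A direct argument is needed, and the paper supplies one: if the lift $\mu$ is unique and $\chi(\mu)>0$, pick a generic $(\xi,x)$ with $(f_\xi^n)'(x)\ge e^{n\varepsilon}$ and any $y\in(x,1]$; uniqueness of the lift forces $(\xi,y)$ to be $\mu$-generic as well, whence $\frac1n\sum_{k<n}(f_\xi^k(y)-f_\xi^k(x))\to0$, and the distortion estimate of Lemma \ref{lem:distortion} then propagates the expansion $e^{n\varepsilon/2}$ to all of $[x,y]$, making $\lvert f_\xi^n([x,y])\rvert$ blow up --- a contradiction. You should replace your limiting argument by something of this kind (it fits naturally into your framework: in case b) the point $(\xi,y)$ with $y>x_\xi$ is still forward admissible, and uniqueness of the invariant lift is exactly what your Step~2 provides).
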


Corollary \ref{cor1} below  complements Theorem \ref{teo:1} in stating a  complete relation between the spaces of ergodic measures in $\Gamma$ and $\Sigma$. For this we need some further definitions.

Notice that the fixed points $Q$ and $P$ in \eqref{eq:fixedpoints} are contained in the common fiber $\{0^\bZ\}\times [0,1]$. In our setting this picture repeats on other periodic fibers. Given $\xi\in\Sigma_2$, write $\xi=\xi^-.\xi^+$ to denote its forward and backward one-sided sequences. For $\xi\in\Sigma\eqdef\pi(\Gamma)$, define its \emph{spine} by
\[
	\eJ_\xi
	\eqdef \big(\{\xi\}\times[0,1]\big) \cap\Gamma
	= \pi^{-1}(\xi)\cap\Gamma.
\]
Spines are intimately related to the homoclinic structure  and there are two possibilities: either  $\eJ_\xi$ is a continuum of the form $\eJ_\xi=\{\xi\}\times I_\xi$, where $I_\xi=[x_{\xi^+},x_{\xi^-}]$ or  $\eJ_\xi=\{\xi\}\times\{x_\xi\}$ is a singleton. In the latter case, we define $x_{\xi^+}=x_{\xi^-}\eqdef x_\xi$.
In particular, the following two sets are $\sigma$-invariant
\begin{equation}\label{eq:singspine}
\begin{split}
	\Sigma^{\rm spine}
	&\eqdef \{\xi\in\pi(\Gamma)\colon \eJ_\xi\text{ is a continuum}\},\\
	\Sigma^{\rm sing}
	&\eqdef \{\xi\in\pi(\Gamma)\colon \eJ_\xi\text{ is a singleton}\}.
\end{split}
\end{equation}
If $\xi\in\Sigma^{\rm spine}$ is periodic then $\eJ_\xi$ is bounded by two hyperbolic points $(\xi,x_{\xi^\pm})$. If $\xi\in\Sigma^{\rm sing}$ is periodic then $\eJ_\xi=(\xi,x_\xi)$ is a point on a parabolic periodic  orbit, see  Section \ref{sec:3per}.

\begin{remark}[Boundary graphs]\label{rem:graphlikestructure}
Observe that the two naturally associated functions $\xi\mapsto x_{\xi^+}$ and $\xi\mapsto x_{\xi^-}$ are measurable (see \cite[Proposition 3.1.21 and Theorem 5.3.1]{Sri:98}). 
We also observe that our construction provides that for every $\xi\in\Sigma^{\rm cod}$, it holds
\[
	(\xi,x_{\xi^+})\in H(Q,F)
	\quad\text{ and }\quad
	(\xi,x_{\xi^-})\in H(P,F),
\]
 see Proposition \ref{prol.extremalpoints}. The graphs of the functions $\xi\mapsto x_{\xi^+}$ and $\xi\mapsto x_{\xi^-}$ bound precisely $\Gamma$ and the region in between plays a role somewhat similar to a Conley pair in the study of recurrent sets (see \cite[Chapter IX]{Rob:95}). 
This structure also resembles the one of a trapping region in skew-products studied, for instance, in \cite{Kud:10,KleVol:14b,GhaHom:16}, and the structure of the two graphs provides a spiny (bony in the terminology of \cite{Kud:10,KleVol:14b}) structure. In some (necessarily nonhyperbolic) cases, the spines are contained in the nonwandering set $\Omega(\Gamma,F)$. 
\end{remark}

In our setting, concavity forces that ergodic measures are only supported on the ``boundary of that region" in the following sense. The direct product structure provides a disintegration $\{\mu_\xi\}_{\xi\in\Sigma}$ for every measure in $\cM(\Gamma)$, called the \emph{disintegration} of $\mu$, as follows: if $\mu\in\cM_{\rm erg}(\Gamma)$ then $\nu\eqdef \pi_\ast\mu\in\cM_{\rm erg}(\Sigma)$ and  for every measurable set $B\subset\Gamma$ we have
\[
	\mu(B)
	= \int_\Sigma\mu_\xi(B\cap \eJ_\xi)\,d\nu(\xi).
\]
Observe that $\mu_\xi(I_\xi)=1$ for $\nu$-almost every $\xi$. 
This disintegration is \emph{atomic} if $\mu_\xi$ is atomic $\nu$-almost everywhere.

\begin{mtheo}[Atomic disintegration]\label{teo:2}
	Assume  (H1)--(H2+).
	The disintegration of any  $\mu\in\cM_{\rm erg}(\Gamma)$  is atomic. Moreover, 
\begin{itemize}
\item $\mu$ is hyperbolic if and only if $\pi_\ast\mu(\Sigma^{\rm spine})=1$,
\item $\mu$ is nonhyperbolic if and only if $\pi_\ast\mu(\Sigma^{\rm sing})=1$.
\end{itemize}
More precisely, letting $I_\xi=[x_{\xi^+},x_{\xi^-}]$ and denoting by $\delta_x$ the Dirac measure at $x$,
\begin{itemize}
\item if $\mu\in\cM_{\rm erg,>0}(\Gamma)$, then $\mu = \int_\Sigma\delta_{x_{\xi^+}}\,d\pi_\ast\mu(\xi)$,
\item if $\mu\in\cM_{\rm erg,<0}(\Gamma)$, then $\mu = \int_\Sigma\delta_{x_{\xi^-}}\,d\pi_\ast\mu(\xi)$,
\item if $\mu\in\cM_{\rm erg,0}(\Gamma)$, then $x_{\xi^\pm}=x_\xi$ for $\pi_\ast\mu$-almost every $\xi$ and $\mu = \int_\Sigma\delta_{x_{\xi}}\,d\pi_\ast\mu(\xi)
$.
\end{itemize}
\end{mtheo}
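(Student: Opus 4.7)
My plan is to construct explicitly two canonical candidate lifts for each ergodic base measure $\nu\in\cM_{\rm erg}(\Sigma)$ and then identify them with the ergodic lifts supplied by Theorem~\ref{teo:1}. Using the measurability of $\xi\mapsto x_{\xi^\pm}$ recorded in Remark~\ref{rem:graphlikestructure} and noting that $\nu(\Sigma^{\rm het})=0$ by Theorem~\ref{teo:coded}, define
\[
	\mu^+ \eqdef \int_\Sigma \delta_{(\xi,x_{\xi^+})}\,d\nu(\xi),
	\qquad
	\mu^- \eqdef \int_\Sigma \delta_{(\xi,x_{\xi^-})}\,d\nu(\xi)
\]
as Borel probability measures on $\Gamma$. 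By construction each $\mu^\pm$ is a graph measure supported on the image of a measurable section $\Phi^\pm\colon\Sigma\to\Gamma$, $\Phi^\pm(\xi)\eqdef(\xi,x_{\xi^\pm})$, and therefore trivially admits an atomic fiber-wise disintegration consisting of a single Dirac mass placed at $x_{\xi^\pm}$ in each fiber.

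Next, I would verify $F$-invariance and ergodicity of $\mu^\pm$. The fact that each fiber map is a monotone bijection whose inverse is well-defined on $[0,1]$ implies that $F$ restricts to a bijection $\eJ_\xi\to\eJ_{\sigma\xi}$ between spines; combined with monotonicity of $f_{\xi_0}$, this forces endpoints to map to endpoints, i.e.,
\[
	f_{\xi_0}(x_{\xi^+})
	= x_{(\sigma\xi)^+},
	\qquad
	f_{\xi_0}(x_{\xi^-})
	= x_{(\sigma\xi)^-}.
\]
Therefore $\Phi^\pm\circ\sigma=F\circ\Phi^\pm$, so $(\Gamma,F,\mu^\pm)$ is an isomorphic extension of $(\Sigma,\sigma,\nu)$ and consequently $\mu^\pm$ are ergodic. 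Moreover, by (H2+) both $f_0'$ and $f_1'$ are strictly decreasing, so whenever $x_{\xi^+}<x_{\xi^-}$ on a set of positive $\nu$-measure, integrating the strict pointwise inequality $\log f'_{\xi_0}(x_{\xi^+})>\log f'_{\xi_0}(x_{\xi^-})$ yields the strict fiber Lyapunov inequality $\chi(\mu^+)>\chi(\mu^-)$.

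The proof is then concluded by a dichotomy driven by Theorem~\ref{teo:1}. Since $\Sigma^{\rm spine}$ and $\Sigma^{\rm sing}$ are $\sigma$-invariant and partition $\Sigma$, ergodicity of $\nu$ forces exactly one of them to have full $\nu$-measure. If $\nu(\Sigma^{\rm sing})=1$, then $x_{\xi^+}=x_{\xi^-}$ $\nu$-a.e.\ and $\mu^+=\mu^-$, producing a single ergodic lift that must coincide with the unique (nonhyperbolic) lift of case (b) of Theorem~\ref{teo:1}. If instead $\nu(\Sigma^{\rm spine})=1$, then $\mu^+\ne\mu^-$ are two distinct ergodic lifts, so by the \emph{exactly two} assertion of case (a) they are the two lifts there; the strict inequality $\chi(\mu^+)>\chi(\mu^-)$ together with the opposite-sign conclusion of case (a) pins down $\chi(\mu^+)>0>\chi(\mu^-)$. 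Given any $\mu\in\cM_{\rm erg}(\Gamma)$, setting $\nu\eqdef\pi_\ast\mu\in\cM_{\rm erg}(\Sigma)$ and matching the sign of $\chi(\mu)$ to the appropriate member of $\{\mu^+,\mu^-\}$ yields the three bulleted formulas and the atomic disintegration.

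The step requiring the most care is the endpoint identity $f_{\xi_0}(x_{\xi^+})=x_{(\sigma\xi)^+}$ and its companion. This is not purely tautological from the one-sided notation $x_{\xi^\pm}$: one must check that the forward-orbit admissibility condition (constraining $x$ from below through the restricted domain of $f_1$) and the backward-orbit admissibility condition (constraining $x$ from above through the inverse of $f_1$) decouple, so that the lower endpoint of $\eJ_\xi$ is characterized purely by the forward one-sided sequence $\xi^+$ and the upper endpoint purely by $\xi^-$; this is essentially a monotonicity argument using (H1). The remainder is a clean matching between the two graph lifts $\mu^\pm$ and the dynamical classification in Theorem~\ref{teo:1}; no approximation or Katok-type construction of lifts is required, because concavity—already encoded in Theorem~\ref{teo:1}—performs the heavy lifting.
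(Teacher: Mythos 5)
Your proposal is correct and follows the same overall strategy as the paper: define the boundary-graph measures $\mu^\pm=\int_\Sigma\delta_{x_{\xi^\pm}}\,d\nu$, split according to whether the $\sigma$-invariant set $\Sigma^{\rm spine}$ or $\Sigma^{\rm sing}$ carries $\nu$, and let Theorem~\ref{teo:1} identify $\mu^\pm$ with the (at most two) ergodic lifts. The one place where you diverge is the proof that $\mu^\pm$ are ergodic. The paper argues by extremality: for any invariant $\mu'$ with $\pi_\ast\mu'=\nu$ one has $\int x\,d\mu^+\le\int x\,d\mu'\le\int x\,d\mu^-$, so $\mu^\pm$ optimize the functional $\mu'\mapsto\int x\,d\mu'$ over the fiber $\pi_\ast^{-1}(\nu)$ and therefore cannot have a nontrivial ergodic decomposition (every ergodic component again projects to $\nu$ by Remark~\ref{movethere}). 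You instead prove the equivariance $f_{\xi_0}(x_{\xi^\pm})=x_{(\sigma\xi)^\pm}$, so that $\Phi^\pm$ are measurable $\sigma$-to-$F$ equivariant sections and $(\Gamma,F,\mu^\pm)$ is isomorphic to $(\Sigma,\sigma,\nu)$; ergodicity is then inherited. Both are valid. Your route has to justify the endpoint identity carefully (which you correctly flag as the delicate step, and which does hold: $I_{\xi^+}=[x_{\xi^+},1]$ depends only on $\xi^+$, $I_{\xi^-}=[0,x_{\xi^-}]$ only on $\xi^-$, and monotone bijectivity of $F\colon\eJ_\xi\to\eJ_{\sigma\xi}$ sends lower endpoint to lower endpoint), but in exchange it yields slightly more — a measure-theoretic isomorphism with the base, not just invariance plus ergodicity — and it makes the $F$-invariance of $\mu^\pm$, which the paper dismisses as clear, fully explicit. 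Your derivation of the sign pattern ($\chi(\mu^+)>\chi(\mu^-)$ from strict monotonicity of $\log f_i'$ under (H2+), combined with the opposite-sign conclusion of Theorem~\ref{teo:1}\,a)) is also a legitimate alternative to reading the signs off the ordering of $\int x\,d\mu_i$ in the statement of Theorem~\ref{teo:1}.
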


The above implies that, besides the natural partition in \eqref{eq:splittMGamma} concerning  hyperbolicity, we have a partition of $\cM_{\rm erg}(\Sigma)$ as follows. 
As the sets $\Sigma^{\rm spine}$ and $\Sigma^{\rm sing}$ both are $\sigma$-invariant, if $\nu\in\cM_{\rm erg}(\Sigma)$ is ergodic then only one of them has full measure. Accordingly, the space of ergodic measures splits  into the two subsets
\[
	\cM_{\rm erg}^\dagger(\Sigma)
	\eqdef \{\nu\in\cM_{\rm erg}(\Sigma)\colon \nu(\Sigma^\dagger)=1\},\quad
	\dagger\in\{\text{spine},\text{sing}\}.
\]
Hence the projection $\pi_\ast$ provides bijections between the spaces
$\cM_{\rm erg,>0}(\Gamma)$, $\cM_{\rm erg,<0}(\Gamma)$, and $\cM_{\rm erg}^{\rm spine}(\Sigma)$
and between 
$\cM_{\rm erg,0}(\Gamma)$ and $\cM_{\rm erg}^{\rm sing}(\Sigma)$. 
Moreover, as ``there is no entropy in the fibers" (see  \eqref{eqlem:projent}), this also extends to the entropy of the measures.
We collect these facts in the following corollary, without further explicit proof. 

\begin{mcoro}[Twin-measures and symmetry of measure spaces]\label{cor1}
	Assume   (H1)--(H2+).
	Then for every $\mu\in\cM_{\rm erg}(\Gamma)$ and $\nu\in\cM_{\rm erg}(\Sigma)$ satisfying $\nu=\pi_\ast\mu$ we have 
\[
	h(\nu)=h(\mu).
\]	
	Moreover, exactly one of the following two possibilities holds:
\begin{itemize}
\item[a)] We have $\mu\in\cM_{\rm erg,<0}(\Gamma)\cup\cM_{\rm erg,>0}(\Gamma)$ and $\nu\in\cM_{\rm erg}^{\rm spine}(\Sigma)$. In this case, there exists exactly one other ergodic measure $\mu'$ also satisfying $\pi_\ast\mu'=\nu$. Moreover, the Lyapunov exponents of $\mu$ and $\mu'$ satisfy \eqref{eq:eaqaul} and, in particular, $\mu$ and $\mu'$ have opposite type of hyperbolicity.
\item[b)] We have $\mu\in\cM_{\rm erg,0}(\Gamma)$ and $\nu\in\cM_{\rm erg}^{\rm sing}(\Sigma)$, and $\pi^{-1}_\ast\nu=\{\mu\}$. 
\end{itemize}
\end{mcoro}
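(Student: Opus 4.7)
The plan is to package Remark \ref{movethere} together with Theorems \ref{teo:1} and \ref{teo:2}; no new analysis is needed, so the task is to check that the pieces fit.

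First I would dispatch the entropy identity. Remark \ref{movethere} already records \eqref{eqlem:projent}, i.e.\ $h(\pi_\ast\mu)=h(\mu)$ for every $\mu\in\cM(\Gamma)$, as a consequence of the Ledrappier--Walters formula applied to the factor $\pi$ and of the fact that each fiber $\pi^{-1}(\xi)$ carries zero topological entropy. Specialized to an ergodic $\mu$, this gives $h(\nu)=h(\mu)$ with no further work.

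Next I would translate the hyperbolicity of $\mu$ into a statement about where $\nu$ sits. By Remark \ref{movethere}, $\nu=\pi_\ast\mu$ is ergodic. The sets $\Sigma^{\rm spine}$ and $\Sigma^{\rm sing}$ introduced in \eqref{eq:singspine} are disjoint, $\sigma$-invariant, and exhaust $\Sigma$ by construction, so ergodicity of $\nu$ forces exactly one of them to carry full $\nu$-measure. Theorem \ref{teo:2} then identifies this dichotomy with the sign of $\chi(\mu)$: the case $\mu\in\cM_{\rm erg,<0}(\Gamma)\cup\cM_{\rm erg,>0}(\Gamma)$ is equivalent to $\nu\in\cM_{\rm erg}^{\rm spine}(\Sigma)$, and the case $\mu\in\cM_{\rm erg,0}(\Gamma)$ is equivalent to $\nu\in\cM_{\rm erg}^{\rm sing}(\Sigma)$.

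Then I would apply Theorem \ref{teo:1} to $\nu$ in order to count the ergodic lifts and read off the conclusion. In the hyperbolic branch, Theorem \ref{teo:1}(a) provides exactly two ergodic lifts $\mu_1,\mu_2$ of $\nu$ whose fiber Lyapunov exponents are constrained by \eqref{eq:eaqaul} and therefore have opposite signs; the given $\mu$ must coincide with one of them, and I would define $\mu'$ as the other, producing case (a) of the corollary. In the nonhyperbolic branch, Theorem \ref{teo:1}(b) yields a unique lift in $\cM(\Gamma)$ which is automatically ergodic and has $\chi=0$; since uniqueness there is asserted among all invariant lifts (not merely ergodic ones), this is precisely the assertion $\pi_\ast^{-1}\{\nu\}=\{\mu\}$ in case (b).

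There is essentially no obstacle. The two alternatives are mutually exclusive by the value of $\chi(\mu)$ and are exhaustive via the splitting \eqref{eq:splittMGamma}. The only piece of bookkeeping worth mentioning is the consistent pairing of the two branches in Theorem \ref{teo:1} with the two branches in Theorem \ref{teo:2}, but this consistency is automatic because both theorems characterize the same dichotomy (hyperbolic lift versus zero-exponent lift). I would therefore present the proof as a short synthesis: \eqref{eqlem:projent} for the entropy, Theorem \ref{teo:2} to read off the support of $\nu$ from the hyperbolicity of $\mu$, and Theorem \ref{teo:1} to count lifts and supply the exponent estimates.
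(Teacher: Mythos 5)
Your synthesis is correct and is exactly the argument the paper intends: the corollary is stated there ``without further explicit proof'' precisely because it is the combination of \eqref{eqlem:projent} for the entropy identity, Theorem \ref{teo:2} for matching the hyperbolicity of $\mu$ with the spine/singleton dichotomy for $\nu$, and Theorem \ref{teo:1} for the count of ergodic lifts and the exponent bounds \eqref{eq:eaqaul}. Nothing is missing.
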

 
In item a) in the above corollary, we call the measures $\mu$ and $\mu'$ \emph{twin-measures}.  Similar phenomena are observed in \cite{DiaGelMarRam:,RodRodTahUre:12}.

\begin{remark}
There is a close relation of our results with the existence of bony attractors in, for example, \cite{Kud:10,KleVol:14b}, though here we face two essentially different properties: On one hand, we do not have ``trapping regions" (key ingredient in \cite{KleVol:14b}) and our system has rather a saddle-type nature. On the other hand, we have to deal with admissible sequences and restricted domains of the fiber maps. 
By Corollary \ref{cor1}, given $\mu\in\cM_{\rm erg,0}(\Gamma)$,  the set $\Gamma$ is a \emph{bony graph} with respect to $\nu=\pi_\ast\mu$ in the sense that its intersection with $\nu$-almost every fiber is a single point (that is $\nu(\Sigma^{\rm sing})=1$) but is a continuum ``otherwise" (that is, $\Sigma^{\rm spine}$ is nonempty but has zero measure). It is possible that $\nu$ has full support $\Sigma$, justifying the comparison with bony graphs.
\end{remark}

\subsection{Weak$\ast$ and entropy approximation of ergodic measures}\label{ss:weaentapp}

The following theorem is well-known for hyperbolic ergodic measures of diffeomorphisms (see Remark \ref{Katokforevery} below), and hence stated only for nonhyperbolic ones. 

To fix some terminology, given a compact $F$-invariant set $\Xi$, denote by $h_{\rm top}(F,\Xi)$ the \emph{topological entropy} of $F$ on a set $\Xi$ (see \cite{Wal:82} for its definition). 

\begin{remark}
As there is no entropy in the fibers (see \eqref{eqlem:projent}), the variational principle for topological entropy of $F$ on a compact $F$-invariant set $\Xi$ immediately implies that   
\[
	h_{\rm top}(\sigma,\pi(\Xi))=h_{\rm top}(F,\Xi).
\]
\end{remark}

An invariant probability measure is \emph{periodic} if it is supported on a periodic orbit.

\begin{mtheo}[Hyperbolic approximation of nonhyperbolic measures]\label{teo:accum}
	Assume  (H1)--(H2+).
	Then for every measure $\mu\in\cM_{\rm erg,0}(\Gamma)$,  every $\varepsilon_E>0$, and every $\varepsilon_H\in(0,h(\mu))$ there exist a basic set $\Gamma^+\subset\Gamma$ with uniform fiber expansion  and a basic set $\Gamma^-\subset\Gamma$ with uniform fiber contraction such that their topological entropies satisfy
\[
	h_{\rm top}(F,\Gamma^+), h_{\rm top}(F,\Gamma^-) 
	\in[h(\mu)-\varepsilon_H,h(\mu)+\varepsilon_H].
\]
Moreover, every measure $\mu^\pm\in\cM(\Gamma^\pm)$ is $\varepsilon_E$-close to $\mu$ in the Wasserstein metric. In particular, there are hyperbolic measures $\mu^+,\mu^-\in\cM_{\rm erg}(\Gamma)$ with
\[
	\chi(\mu^+)\in(0,\varepsilon_E)
	\quad\text{ and }\quad
	\chi(\mu^-)\in(-\varepsilon_E,0)
\]
and
\[
	h(\mu^\pm)\in[h(\mu)-\varepsilon_H,h(\mu)+\varepsilon_H].
\]

If $h(\mu)>0$ then $\Gamma^\pm$ are horseshoes, otherwise they are hyperbolic periodic orbits.

In particular, every measure in $\cM_{\rm erg}(\Gamma)$ is weak$\ast$ accumulated by hyperbolic periodic measures.
\end{mtheo}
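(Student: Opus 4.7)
The idea is to reduce the problem to the base symbolic space $\Sigma$ and then lift along the two boundary graphs $\xi\mapsto x_{\xi^\pm}$ of Remark~\ref{rem:graphlikestructure} to obtain the two basic sets $\Gamma^\pm$. Let $\nu\eqdef\pi_\ast\mu$. By Theorem~\ref{teo:2}, $\nu\in\cM_{\rm erg}^{\rm sing}(\Sigma)$, so the spine $\eJ_\xi$ collapses to a singleton for $\nu$-a.e. $\xi$; in particular, $x_{\xi^+}=x_{\xi^-}$ on a set of full $\nu$-measure. This is the crucial geometric feature that will allow the weak$\ast$ approximation to work, despite the lifted basic sets consisting of distinct boundary points.

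The symbolic step combines Theorems~\ref{teo:poulsen} and~\ref{teo:coded}. Entropy-density in the Poulsen simplex $\cM(\Sigma)$ yields ergodic measures weak$\ast$-close to $\nu$ with entropy close to $h(\nu)=h(\mu)$ (via \eqref{eqlem:projent}). Since $\Sigma^{\rm cod}$ is coded, hence an increasing union of topologically transitive SFTs, a standard Birkhoff-closing argument provides a topologically mixing SFT $\Xi\subset\Sigma^{\rm cod}$ lying in a prescribed weak$\ast$ neighborhood of $\nu$ with $h_{\rm top}(\sigma,\Xi)\in[h(\mu)-\varepsilon_H,h(\mu)+\varepsilon_H]$. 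When $h(\mu)=0$, $\Xi$ reduces to a single long periodic orbit chosen along a Birkhoff-typical segment for $\nu$. I then lift to $\Gamma$ via
\[
	\Gamma^+\eqdef \overline{\{(\xi,x_{\xi^+}):\xi\in\Xi\}},\qquad
	\Gamma^-\eqdef \overline{\{(\xi,x_{\xi^-}):\xi\in\Xi\}}.
\]
By Proposition~\ref{prol.extremalpoints}, $\Gamma^+\subset H(Q,F)$ and $\Gamma^-\subset H(P,F)$, and on every periodic fiber of $\Xi$ Theorem~\ref{teo:1} provides either a pair of hyperbolic periodic points (one expanding, one contracting) or a single parabolic one; the latter is ruled out for the generating periodic words of $\Xi$ by slightly perturbing inside the mixing SFT. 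Both $\Gamma^\pm$ are compact, $F$-invariant, topologically transitive, project onto $\Xi$, and inherit $h_{\rm top}(F,\Gamma^\pm)=h_{\rm top}(\sigma,\Xi)$ from \eqref{eqlem:projent}.

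The technical core is uniform hyperbolicity of $\Gamma^\pm$ together with $W_1$-closeness to $\mu$. On each periodic $\xi\in\Xi$ the orbit-average fiber gap $D(\xi)$ between the upper and lower lifts equals the Wasserstein distance between the two associated periodic ergodic measures, and Theorem~\ref{teo:1} then gives the exponent bracket $\ukappa(D(\xi))\le|\chi|\le\okappa(D(\xi))$. The strategy is to tune the closing segments of $\nu$-typical orbits so that $D(\xi)$ lies in a small prescribed window $[\eta,2\eta]$, with $\eta>0$ chosen so that $\okappa(2\eta)<\varepsilon_E$ and simultaneously so that the vertical spread of $\Gamma^\pm$ over $\Xi$ is smaller than $\varepsilon_E$. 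This is possible because, $\nu$-typically, $x_{\xi^+}$ and $x_{\xi^-}$ are arbitrarily close, so Birkhoff averages of $(\xi,x)\mapsto x$ coincide on the two lifts in the limit; the closing words can then be selected within this regime. The concavity (H2+) propagates the lower bound $\ukappa(\eta)$ from periodic orbits to a uniform expansion/contraction rate on all of $\Gamma^\pm$ via compactness and transitivity.

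The main obstacle is the delicate three-way tuning of $\Xi$: it must be weak$\ast$-small enough to make $W_1(\mu^\pm,\mu)<\varepsilon_E$ for every $\mu^\pm\in\cM(\Gamma^\pm)$, have boundary-graph separation uniformly bounded away from zero to provide hyperbolicity, and retain enough combinatorial complexity to hit the entropy window. Once this is arranged, the exponent bounds $\chi(\mu^+)\in(0,\varepsilon_E)$ and $\chi(\mu^-)\in(-\varepsilon_E,0)$ follow from $|\chi(\mu^\pm)|\le\okappa(2\eta)<\varepsilon_E$; the dichotomy between horseshoes ($h(\mu)>0$) and hyperbolic periodic orbits ($h(\mu)=0$) reflects the corresponding dichotomy for $\Xi$. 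The final ``in particular'' assertion reduces, for nonhyperbolic $\mu$, to picking periodic orbits inside the just-constructed horseshoes, and for hyperbolic $\mu$ to a standard Katok horseshoe argument (available thanks to the $h$-expansivity of $F$ noted after Theorem~\ref{teo:poulsen}).
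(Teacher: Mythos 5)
Your overall architecture (project to $\Sigma$, find a symbolic model close to $\nu=\pi_\ast\mu$ in the weak$\ast$ topology and in entropy, lift, and use uniqueness of the lift of $\nu$ to force the lifted measures to converge to $\mu$) matches the paper's, and the last step — Theorem \ref{teo:1} b) plus the continuity argument of Lemma \ref{pro:simple} — is exactly how the paper closes. But there is a genuine gap at the technical core you yourself flag: you never actually produce \emph{uniform} fiber expansion or contraction on $\Gamma^\pm$. Lifting an SFT $\Xi$ along the boundary graphs $\xi\mapsto x_{\xi^\pm}$ and invoking the bracket $\ukappa(D)\le|\chi|\le\okappa(D)$ of Theorem \ref{teo:1} only controls Lyapunov exponents of \emph{ergodic measures}, not a uniform rate on the set; to upgrade you would at least need a uniform positive lower bound on $D$ over all ergodic measures carried by $\Gamma^\pm$ together with something like \cite[Corollary E]{AlvAraSau:03}. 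And the proposed mechanism for that lower bound — ``tune the closing segments so that $D(\xi)\in[\eta,2\eta]$'' — is not a construction but a restatement of the difficulty: since $x_{\xi^+}=x_{\xi^-}$ for $\nu$-a.e.\ $\xi$, forcing weak$\ast$-closeness to $\nu$ pushes $D$ toward $0$, so keeping the twin-gap bounded below while staying close is precisely what must be engineered, and nothing in the coded/Poulsen structure supplies it. A secondary issue is that the closure of a merely measurable graph over $\Xi$ need not be a graph, and local maximality of $\Gamma^\pm$ (required for a basic set) is not addressed.

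The paper resolves this with a concrete and different device: \emph{skeletons} (Definition \ref{def:skeleton}), i.e.\ exponentially many orbit segments of length $k$ that are Birkhoff-close to $\nu$, have finite-time fiber derivative in $(e^{-k\varepsilon},e^{k\varepsilon})$, and travel from $(0,a)$ to $(a,1)$. Each such word is then concatenated with a block $0^{s}$ long enough that the orbit enters and lingers in the zone $Z$ near $x=1$ where $f_0'\le e^{-L}<1$; the resulting composed maps $g_i$ satisfy $(g_i)'\le e^{-k\varepsilon}(f_0'(0))^{\ell}<1$ on a common invariant interval, so the IFS is \emph{uniformly} contracting by construction and the associated SFT of concatenations yields the contracting basic set directly (the expanding one follows by time reversal, Remark \ref{rem:time-reversal}). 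The appended $0^s$ block is short relative to $k$, so it costs only $O(\varepsilon)$ in entropy and in the Wasserstein estimate. This explicit ``park the orbit near the attracting fixed point of $f_0$'' step is the missing idea in your argument; without it, or an equivalent uniform mechanism, the hyperbolicity of $\Gamma^\pm$ does not follow.
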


The following is an immediate consequence of Theorem \ref{teo:accum}, stated without proof.

\begin{mcoro}[Variational principle for entropy]\label{cor:VP}
	Assume (H1)--(H2+). Then
\[
	h_{\rm top}(F,\Gamma)
	= \sup_{\mu\in\cM_{\rm erg,<0}(\Gamma)}h(\mu)
	= \sup_{\mu'\in\cM_{\rm erg,>0}(\Gamma)}h(\mu').	
\]
\end{mcoro}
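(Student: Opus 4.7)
The plan is to combine the classical variational principle with the two key ingredients already in hand: the weak$\ast$-and-entropy approximation of nonhyperbolic ergodic measures by hyperbolic ones of both signs (Theorem~\ref{teo:accum}), and the twin-measure symmetry with matching entropy provided by Corollary~\ref{cor1}. The latter guarantees that the two hyperbolic strata $\cM_{\rm erg,<0}(\Gamma)$ and $\cM_{\rm erg,>0}(\Gamma)$ have identical entropy suprema, and the former fills in the (possibly nonempty) nonhyperbolic stratum.

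First I would invoke the standard variational principle for $F$ on the compact invariant set $\Gamma$, which gives
\[
    h_{\rm top}(F,\Gamma)
    = \sup_{\mu\in\cM_{\rm erg}(\Gamma)} h(\mu),
\]
and then use the partition \eqref{eq:splittMGamma} of $\cM_{\rm erg}(\Gamma)$ into the three strata of negative, zero, and positive Lyapunov exponent. Since the right-hand side is at least $\sup\{h(\mu):\mu\in\cM_{\rm erg,>0}(\Gamma)\}$, and analogously for the contracting stratum, it suffices to show the reverse inequalities; that is, that the entropy of any $\mu\in\cM_{\rm erg,0}(\Gamma)\cup\cM_{\rm erg,<0}(\Gamma)$ can be approximated arbitrarily well from below by entropies of measures in $\cM_{\rm erg,>0}(\Gamma)$, and symmetrically with the two hyperbolic strata swapped.

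For the nonhyperbolic case, given $\mu\in\cM_{\rm erg,0}(\Gamma)$ and $\varepsilon_H>0$, Theorem~\ref{teo:accum} directly supplies ergodic measures $\mu^+\in\cM_{\rm erg,>0}(\Gamma)$ and $\mu^-\in\cM_{\rm erg,<0}(\Gamma)$ with $h(\mu^\pm)\geq h(\mu)-\varepsilon_H$, which handles the zero-exponent stratum for both target suprema. For the cross-comparison between the two hyperbolic strata, let $\mu\in\cM_{\rm erg,>0}(\Gamma)$; Corollary~\ref{cor1}(a) produces a twin-measure $\mu'\in\cM_{\rm erg,<0}(\Gamma)$ with $\pi_\ast\mu=\pi_\ast\mu'$, and the fiber-trivial-entropy identity \eqref{eqlem:projent} then yields
\[
    h(\mu')
    = h(\pi_\ast\mu')
    = h(\pi_\ast\mu)
    = h(\mu),
\]
so the supremum over $\cM_{\rm erg,>0}(\Gamma)$ is bounded above by the supremum over $\cM_{\rm erg,<0}(\Gamma)$; the reverse inequality is obtained by starting instead from a measure in $\cM_{\rm erg,<0}(\Gamma)$. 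Putting the three observations together gives
\[
    \sup_{\mu\in\cM_{\rm erg}(\Gamma)} h(\mu)
    = \sup_{\mu\in\cM_{\rm erg,>0}(\Gamma)} h(\mu)
    = \sup_{\mu\in\cM_{\rm erg,<0}(\Gamma)} h(\mu),
\]
which combined with the variational principle is the stated identity. There is no genuine obstacle; the proof is pure bookkeeping once Theorem~\ref{teo:accum} and Corollary~\ref{cor1} are available, with the former handling the nonhyperbolic fringe and the latter enforcing a perfect entropy symmetry between the two hyperbolic strata.
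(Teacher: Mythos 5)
Your argument is correct and is exactly the one the paper intends (the corollary is stated there without proof as an ``immediate consequence'' of Theorem~\ref{teo:accum}): variational principle, plus Theorem~\ref{teo:accum} to push the nonhyperbolic stratum into each hyperbolic one, plus the twin-measure entropy identity of Corollary~\ref{cor1} to equate the suprema over $\cM_{\rm erg,<0}(\Gamma)$ and $\cM_{\rm erg,>0}(\Gamma)$. You rightly observe that Theorem~\ref{teo:accum} alone does not suffice and that the cross-comparison between the two hyperbolic strata genuinely needs Corollary~\ref{cor1} (equivalently, Theorem~\ref{teo:1} together with \eqref{eqlem:projent}); the only degenerate case, $h(\mu)=0$, is trivially covered since $\cM_{\rm erg,>0}(\Gamma)$ and $\cM_{\rm erg,<0}(\Gamma)$ are nonempty.
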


\begin{remark}[Katok's horseshoes]\label{Katokforevery}
	For every hyperbolic measure $\mu\in\cM_{\rm erg}(\Gamma)$, it is well-known that in our ``partially hyperbolic setting" there exists a sequence of horseshoes (with uniform hyperbolicity) $(\Gamma_n)_n$ such that $\cM(\Gamma_n)$ converges weak$\ast$ to $\mu$ and also $h_{\rm top}(F,\Gamma_n)$ converges to $h(\mu)$. Here one can use Katok's horseshoe constructions either assuming $C^2$ \cite[Chapter S.4]{KatHas:95} or assuming $C^1$ regularity plus domination \cite{Cro:11,Gel:16}. In particular, every periodic measure in $\Gamma_n$ is weak$\ast$-close to $\mu$.
\end{remark}

Comparing Theorem \ref{teo:accum} with previous results, the analogous one was obtained in \cite{DiaGelRam:17} for transitive step skew-products with fiber maps being circle diffeomorphisms assuming the existence of so-called expanding/contracting blending intervals and forward/back\-ward minimality of the underlying IFS. Those are quite strong (though natural) properties and they describe the somewhat intermingled structure of two types of hyperbolicity.
The constructions in \cite{DiaGelRam:17} was extended in \cite{DiaGelSan:19,YanZha:} to some partially hyperbolic diffeomorphisms with minimal strong foliations, following the strategy outlined in \cite[Section 8.3]{DiaGelRam:17}. Here \emph{a priori} we do not have these hypotheses and in many cases they indeed fail. 

The main tool to prove Theorem \ref{teo:accum} are the so-called skeletons that rely only on ergodic properties, see Definition \ref{def:skeleton}. Their existence only requires (H1), see Proposition \ref{pro:skeleton}. Assuming additionally (H2), the main step towards the proof of Theorem \ref{teo:accum} is the following result.  

\begin{mtheo}[Shadowplay]\label{teopro:intermediate}
	Assume (H1)--(H2). For every $\mu\in\cM_{\rm erg}(\Gamma)$ there exists a sequence of basic sets $(\Upsilon_n)_n\subset\Gamma$ such that for every sequence $(\mu_n)_n$, $\mu_n\in\cM(\Upsilon_n)$, the corresponding sequence $(\nu_n)_n$, $\nu_n=\pi_\ast\mu_n$, converges weak$\ast$ to $\pi_\ast\mu$ and $h_{\rm top}(F,\Upsilon_n)$ converges to $h(\pi_\ast\mu)=h(\mu)$.
\end{mtheo}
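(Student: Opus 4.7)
Set $\nu \eqdef \pi_\ast\mu$, so that $\nu\in\cM_{\rm erg}(\Sigma)$ and $h(\nu)=h(\mu)$ by \eqref{eqlem:projent}. My plan is to carry out the construction first in the base $\Sigma$ and then lift it to $\Gamma$ through the boundary-graph structure enforced by concavity. The entry point is the skeleton construction of Proposition~\ref{pro:skeleton} (which requires only (H1)): for every $\varepsilon>0$ and all large $n$ it produces a finite family $\sW_n=\{W_1,\ldots,W_{K_n}\}$ of admissible words of a common length $\ell_n$ such that the empirical measure of each $W_i$ is within Wasserstein distance $\varepsilon$ of $\nu$ and $K_n \ge \exp(\ell_n(h(\mu)-\varepsilon))$. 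I would further engineer the skeleton so that every $W_i$ begins and ends with a block $0^{p(\varepsilon)}$, with $p(\varepsilon)$ chosen so large that the corresponding fiber trajectory begins and ends in a prescribed small neighbourhood $U$ of the attracting fixed point $1$ of $f_0$. Since $U\subset(d,1]$, every symbol is applicable on $U$, which creates a uniform \emph{admissibility hub} through which the words of $\sW_n$ can be concatenated without restriction.

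With this in hand, let $\Sigma_n\subset\Sigma$ be the subshift consisting of all bi-infinite admissible sequences obtained by concatenating words from $\sW_n$ (possibly separated by a uniformly bounded zero-padding, which perturbs entropies by $O(1/\ell_n)$). By construction $\Sigma_n$ is topologically conjugate, up to a bounded Markovian bookkeeping, to a full shift on $K_n$ letters, hence a topologically mixing subshift of finite type with
\[
	h_{\rm top}(\sigma,\Sigma_n) \ge \frac{\log K_n}{\ell_n+O(1)} \ge h(\mu) - \varepsilon - o(1).
\]
To lift $\Sigma_n$ to a basic set in $\Gamma$ I invoke the boundary-graph structure of Remark~\ref{rem:graphlikestructure} and set, for example,
\[
	\Upsilon_n \eqdef \overline{\{(\xi,x_{\xi^+})\colon \xi\in\Sigma_n\}}
\]
(the analogous set built from $\xi\mapsto x_{\xi^-}$ would give a basic set of opposite fiber hyperbolicity, and either choice suffices). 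Thanks to (H2), strict concavity of $f_0$ and $f_1$ forces the dynamics on each extremal graph to be uniformly hyperbolic, so $\Upsilon_n$ is compact, $F$-invariant, topologically mixing, locally maximal, and has uniform fiber expansion; this is the structural mechanism responsible for the twin-measure dichotomy of Theorem~\ref{teo:1}, now applied to the SFT $\Sigma_n$ in place of all of $\Sigma$.

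The two required conclusions then follow. For any $\mu_n\in\cM(\Upsilon_n)$ the projection $\nu_n=\pi_\ast\mu_n$ is supported on $\Sigma_n$, and since $\Sigma_n$ is generated by free concatenation of the $W_i$'s, every such $\nu_n$ is a weak$\ast$-limit of Birkhoff averages of Dirac masses along concatenations whose per-word empirical distribution is within $\varepsilon$ of $\nu$; hence $W_1(\nu_n,\nu)\to 0$ as $\varepsilon\to 0$ and $n\to\infty$. The lower entropy bound $h_{\rm top}(F,\Upsilon_n)\ge h(\mu)-\varepsilon-o(1)$ is immediate from the $K_n$-count together with $h_{\rm top}(F,\Upsilon_n)=h_{\rm top}(\sigma,\Sigma_n)$, while the matching upper bound comes from upper semi-continuity of the entropy map applied to any weak$\ast$-accumulation point $\mu_\infty$ of $\{\mu_n\}$: one has $\pi_\ast\mu_\infty=\nu$ by continuity of $\pi_\ast$, whence $h(\mu_\infty)=h(\nu)=h(\mu)$ by \eqref{eqlem:projent}. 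The principal technical obstacle I anticipate is the \emph{skeleton-with-hub step}: producing $\exp(\ell_n h(\mu))$ many words that both represent $\nu$-typical behaviour and begin and end deep in the basin of $P$. This requires combining the Shannon--McMillan count underlying the skeleton with the fact that $\nu$-typical orbits visit any symbolic neighbourhood of $0^\bZ$ with positive frequency, a property that must be extracted from (H1) alone; once this is achieved, the transition to a bona fide basic set in $\Gamma$ is a purely structural consequence of concavity through the boundary graphs $\xi\mapsto x_{\xi^\pm}$.
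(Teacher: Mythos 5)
Your architecture matches the paper's: skeleton words from Proposition~\ref{pro:skeleton}, zero-padding to create an admissibility hub, a free-concatenation SFT in the base, and a lift to a fiber-hyperbolic basic set; the weak$\ast$ and entropy bookkeeping are also handled essentially as in the paper. But there is a genuine gap at the decisive step, where you assert that ``strict concavity of $f_0$ and $f_1$ forces the dynamics on each extremal graph to be uniformly hyperbolic.'' This is false for a general sub-subshift of $\Sigma$: by Theorem~\ref{teo:2} the extremal graphs $\xi\mapsto x_{\xi^{\pm}}$ carry \emph{every} ergodic measure, including the nonhyperbolic ones (for $\xi\in\Sigma^{\rm sing}$ the two graphs coincide and the exponent is zero), and a subshift built from your words may well contain periodic sequences whose composed fiber map has a parabolic fixed point. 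If the extremal graph over any mixing SFT inside $\Sigma$ were automatically uniformly hyperbolic, Theorem~\ref{teo:accum} would be vacuous.

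The reason your particular $\Sigma_n$ does not escape this problem is quantitative: the skeleton only guarantees $\lvert\log (f_{\xi^{(i)}}^{k})'(x_i)\rvert<k\varepsilon$, so along a skeleton word the fiber derivative can be as large as $e^{k\varepsilon}$, growing with $k$, whereas your padding blocks have length $p(\varepsilon)$ \emph{independent of $k$} and therefore contribute only a bounded contraction factor. Nothing then prevents the composed map over a concatenation from being expanding, or parabolic, somewhere on its domain. The paper's proof spends most of its effort exactly here: the appended block has length $s=\ell+\ell'$ with $\ell'=\lceil 2k\varepsilon/L\rceil$ growing linearly in $k\varepsilon$, chosen so that the time spent in the region $Z=\{z\colon f_0'(z)\le e^{-L}\}$ yields a contraction $e^{-2k\varepsilon}$ that beats the possible expansion $e^{k\varepsilon}$ of the skeleton word (Lemma~\ref{lclaimlem:unicon}); concavity is used only to propagate this bound from $x_i$ to the whole invariant interval $[p^-,1]$, and the uniformly contracting IFS so obtained is what defines the basic set, via the fixed points of the composed maps rather than via the boundary graphs of the ambient $\Gamma$. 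This longer padding is also why the paper needs the entropy corrections \eqref{eqdef:n} and \eqref{eq:matter} rather than your $O(1/\ell_n)$ loss. To repair your argument you must let the padding length grow proportionally to $k\varepsilon$ and prove uniform contraction of the resulting composed IFS directly; the ``skeleton-with-hub'' step you flag as the main obstacle is, by contrast, already delivered by Proposition~\ref{pro:skeleton} together with the monotonicity remarks of Section~\ref{sec:admcom}.
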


Theorem \ref{teo:accum} will be an almost immediate consequence of Theorem \ref{teopro:intermediate}. Indeed, it only remains to show the convergence of the measures $\mu_n$ to $\mu$. For that we require Theorem \ref{teo:1} (invoking additionally (H2+)).

Finally, returning to the structure of the space of ergodic measures, in correspondence to Theorem \ref{teo:poulsen}, we state how the entropy-dense Poulsen structure of $\cM(\Sigma)$ lifts to $\cM(\Gamma)$. For that we consider the notation $\cM_{\rm erg,\le0}(\Gamma)=\cM_{\rm erg,<0}(\Gamma)\cup\cM_{\rm erg,0}(\Gamma)$ and $\cM_{\rm erg,\ge0}(\Gamma)=\cM_{\rm erg,0}(\Gamma)\cup\cM_{\rm erg,>0}(\Gamma)$. 

\begin{mcoro}\label{cor:accumoneside}
Assume (H1)--(H2+).
	Any $\mu\in\cM(\Gamma)$ having an ergodic decomposition $\mu=\int\mu'\,d\blambda(\mu')$ with ergodic measures $\mu'\in\cM_{\rm erg,\ge0}(\Gamma)$ is weak$\ast$ accumulated by periodic measures in $\cM_{\rm erg,>0}(\Gamma)$. Moreover, there is a sequence of ergodic measures in $\cM_{\rm erg,>0}(\Gamma)$ which converges weak$\ast$ and in entropy to $\mu$. 
	
	The analogous result holds for measures in $\cM_{\rm erg,\le0}(\Gamma)$ and accumulation and convergence in $\cM_{\rm erg,<0}(\Gamma)$.
\end{mcoro}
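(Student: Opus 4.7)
The plan is to approximate $\mu$ by ergodic measures supported on a single uniformly fiber-expanding horseshoe inside $H(P,F)$, on which the classical entropy-dense Poulsen and specification properties apply. That horseshoe will be built by fusing finitely many expanding basic sets that approximate the ergodic components of $\mu$, using the homoclinic relations guaranteed by Proposition \ref{pro.l.homoclinicallyrelated}. First I would use the ergodic decomposition $\mu=\int\mu'\,d\blambda(\mu')$ together with the affinity of entropy and its upper semicontinuity on $\cM(\Gamma)$ to perform a finite truncation: for every $\varepsilon>0$, a standard partitioning of the space of ergodic components (controlling the integral $\int h(\mu')\,d\blambda(\mu')=h(\mu)$) produces a finite convex combination
\[
	\mu^{(k)}
	= \sum_{i=1}^k\lambda_i\mu'_i,
	\qquad \mu'_i\in\cM_{\rm erg,\ge0}(\Gamma),
\]
with $W_1(\mu^{(k)},\mu)<\varepsilon$ and $\lvert h(\mu^{(k)})-h(\mu)\rvert<\varepsilon$.

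Next, for each component $\mu'_i$ I would extract a basic set $\Upsilon_i\subset\Gamma$ of uniform fiber expansion such that $\cM(\Upsilon_i)$ is weak$\ast$-close to $\mu'_i$ and $h_{\rm top}(F,\Upsilon_i)$ is close to $h(\mu'_i)$: if $\mu'_i\in\cM_{\rm erg,>0}(\Gamma)$ this is Katok's construction (Remark \ref{Katokforevery}); if $\mu'_i\in\cM_{\rm erg,0}(\Gamma)$ this is precisely what Theorem \ref{teo:accum} provides (a horseshoe if $h(\mu'_i)>0$, a hyperbolic periodic orbit otherwise). Picking ergodic $\hat\mu_i\in\cM_{\rm erg}(\Upsilon_i)\subset\cM_{\rm erg,>0}(\Gamma)$ with $h(\hat\mu_i)$ near $h_{\rm top}(F,\Upsilon_i)$ yields a convex combination $\hat\mu=\sum_i\lambda_i\hat\mu_i$ of expanding ergodic measures that is $O(\varepsilon)$-close to $\mu^{(k)}$ weak$\ast$ and in entropy.

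The key step is to fuse $\Upsilon_1,\dots,\Upsilon_k$ into a single uniformly fiber-expanding horseshoe $\Upsilon\subset H(P,F)$ containing them all. By Proposition \ref{pro.l.homoclinicallyrelated}, any two hyperbolic periodic points of expanding type in $\Gamma$ are homoclinically related; combined with the uniform fiber hyperbolicity of each $\Upsilon_i$, a standard shadowing / spectral-decomposition argument then produces such a $\Upsilon$. On the uniformly hyperbolic horseshoe $\Upsilon$, specification holds, so $\cM(\Upsilon)$ is an entropy-dense Poulsen simplex with weak$\ast$-dense periodic measures. Consequently $\hat\mu\in\cM(\Upsilon)$ is approximated weak$\ast$ and in entropy by ergodic measures in $\cM_{\rm erg}(\Upsilon)\subset\cM_{\rm erg,>0}(\Gamma)$; a diagonal argument over $\varepsilon\to0$ then delivers the claimed sequence in $\cM_{\rm erg,>0}(\Gamma)$ converging weak$\ast$ and in entropy to $\mu$, and replacing each such approximator by a nearby periodic measure on $\Upsilon$ yields the weak$\ast$ approximation by periodic measures. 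The case of $\cM_{\rm erg,\le 0}(\Gamma)$ and $\cM_{\rm erg,<0}(\Gamma)$ will follow by the time-reversal symmetry recorded in Remark \ref{rem:time-reversal}.

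The hard part will be the fusing step: while the homoclinic relations among the periodic points of the $\Upsilon_i$'s are granted, ensuring that the resulting basic set $\Upsilon$ retains uniform fiber expansion requires controlling the fiber derivatives along the transition orbits connecting the $\Upsilon_i$'s, which a priori may pass through regions where fiber expansion deteriorates. The uniform bound in hypothesis (H2+), together with choosing the transition orbits sufficiently long within a fixed hyperbolic neighborhood of $\bigcup_i\Upsilon_i$, is what makes this standard shadowing argument go through.
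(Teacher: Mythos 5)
Your proposal is correct and follows essentially the same route as the paper: a finite truncation of the ergodic decomposition (Remark \ref{rem:BocAvi}), Katok horseshoes resp.\ Theorem \ref{teo:accum} for each component, fusion of these basic sets into a single fiber-expanding horseshoe via the homoclinic relations of Proposition \ref{pro.l.homoclinicallyrelated} (this is exactly Corollary \ref{c.horseshoes}, which the paper invokes in Proposition \ref{cor:Poulsen}), and then the entropy-dense Poulsen property of transitive SFTs plus a diagonal argument. The only slip is cosmetic: a uniformly fiber-expanding horseshoe sits inside $H(Q,F)$, not $H(P,F)$.
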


Corollary \ref{cor:accumoneside} can be seen as an extended version of \cite[Theorem 2]{BocBonGel:18} where it is assumed that in the ergodic decomposition $\mu=\int\mu'\,d\blambda(\mu')$ almost every measure is in $\cM_{\rm erg,>0}(\Gamma)$. A key point in \cite{BocBonGel:18} is the study of local unstable manifolds using Pesin theory, which here we replace by concavity. This allows us to incorporate  nonhyperbolic ergodic measures into the decomposition.

In general, in comparable settings, convex combinations of ergodic measure of \emph{different type} of hyperbolicity may not be approached by ergodic ones (weak$\ast$ and in entropy). For example, this applies to the pair of the two measures of maximal entropy of different type of hyperbolicity in the case of proximality in \cite[Corollary 3]{DiaGelRam:19}. Even though, \cite{DiaGelRam:19} presents many similarities with this paper,  it relies on some essential tools which are not available here: a) the space of admissible sequences is $\Sigma_2$, b) the fibers are circles, and c) synchronisation techniques using that the measures of maximal entropy project to a Bernoulli measure on $\Sigma_2$. 

To complete this section, we point out some further properties that immediately follow from the above results (see for instance the methods in \cite{GorPes:17,DiaGelRam:17b} that apply here \emph{ipsis litteris}), hence stated without proof.

\begin{mcoro}[Ergodic arcwise connectedness]\label{cor:arcwiseconn}
	Assume (H1)--(H2+).
	Then each of the sets $\cM_{\rm erg,<0}(\Gamma)$ and $\cM_{\rm erg,>0}(\Gamma)$ is arcwise connected.
	Moreover, $\cM_{\rm erg,0}(\Gamma)$ is nonempty if and only if the space $\cM_{\rm erg}(\Gamma)$ is arcwise connected.
\end{mcoro}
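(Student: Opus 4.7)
The plan is to address the corollary in two parts: (I) arcwise connectedness of each of $\cM_{\rm erg,<0}(\Gamma)$ and $\cM_{\rm erg,>0}(\Gamma)$ separately, and (II) the equivalence concerning $\cM_{\rm erg,0}(\Gamma)$. By the time-reversal symmetry of Remark~\ref{rem:time-reversal}, Part~(I) reduces to a single case, which I treat by a telescoping construction in the spirit of \cite{GorPes:17,DiaGelRam:17b}.

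For Part~(I), fix $\mu_-,\mu_+\in\cM_{\rm erg,>0}(\Gamma)$. Invoking Remark~\ref{Katokforevery} (Katok-type horseshoes in our partially hyperbolic $C^1$-plus-domination setting), for each $\iota\in\{-,+\}$ and each $n\ge 1$ I produce a horseshoe $\Lambda_n^\iota\subset\Gamma$ of expanding type with $\cM(\Lambda_n^\iota)$ contained in the $W_1$-ball of radius $2^{-n}$ about $\mu_\iota$, and fix a periodic measure $\mu_n^\iota\in\cM(\Lambda_n^\iota)$. By Proposition~\ref{pro.l.homoclinicallyrelated}, a consequence of (H2), any two hyperbolic expanding periodic points of $F$ are homoclinically related; the $\lambda$-lemma/shadowing then glues $\Lambda_n^\iota$ and $\Lambda_{n+1}^\iota$ into a larger expanding basic set $\tilde\Lambda_n^\iota$, which by choosing sufficiently long sojourn times in $\Lambda_n^\iota\cup\Lambda_{n+1}^\iota$ can be arranged so that every invariant measure on it is within $W_1$-distance $2^{1-n}$ of $\mu_\iota$. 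Since $F|_{\tilde\Lambda_n^\iota}$ has specification, $\cM(\tilde\Lambda_n^\iota)$ is a Poulsen simplex by Sigmund's theorem, and hence its ergodic part is arcwise connected by the Lindenstrauss-Olsen-Sternfeld result \cite{LinOlsSte:78}. Choose a path $\beta_n^\iota$ in $\cM_{\rm erg}(\tilde\Lambda_n^\iota)$ from $\mu_n^\iota$ to $\mu_{n+1}^\iota$; by construction it stays within $2^{1-n}$ of $\mu_\iota$. As $\mu_1^-$ and $\mu_1^+$ are themselves homoclinically related, a single path $\delta$ joins them inside a common expanding basic set. Concatenating $\{\beta_n^-\}_{n\ge 1}$ on $[0,1/4]$ with arc-lengths rescaled to $2^{-n-2}$, then $\delta$ on $[1/4,3/4]$, then the reversed $\{\beta_n^+\}_{n\ge 1}$ on $[3/4,1]$, yields a continuous arc from $\mu_-$ to $\mu_+$ in $\cM_{\rm erg,>0}(\Gamma)$, with continuity at the endpoints ensured by the $2^{1-n}$-Wasserstein control.

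For Part~(II), the direction ($\Leftarrow$) is short: Hypothesis (H1) guarantees that $\delta_P\in\cM_{\rm erg,<0}(\Gamma)$ and $\delta_Q\in\cM_{\rm erg,>0}(\Gamma)$ are both nonempty, so arcwise connectedness furnishes a continuous arc $\gamma\colon[0,1]\to\cM_{\rm erg}(\Gamma)$ between them; the Lyapunov-exponent functional $\chi(\mu)=\int\log f'_{\xi_0}(x)\,d\mu$ is continuous on $\cM(\Gamma)$ because the integrand is continuous and bounded on $\Gamma$, so the intermediate value theorem yields $t^\ast$ with $\gamma(t^\ast)\in\cM_{\rm erg,0}(\Gamma)$. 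For ($\Rightarrow$), fix $\mu_0\in\cM_{\rm erg,0}(\Gamma)$ and apply Theorem~\ref{teo:accum} to produce, for each $n$, basic sets $\Gamma_n^\pm\subset\Gamma$ of expanding and contracting type with $\cM(\Gamma_n^\pm)\subset B_{W_1}(\mu_0,2^{-n})$. Repeat the Part~(I) telescoping procedure with $\mu_0$ in place of $\mu_\iota$: within each type of hyperbolicity, consecutive periodic approximations can be glued by paths inside common basic sets of the same type, whose invariant measures remain close to $\mu_0$. This yields continuous arcs from $\mu_0$ into $\cM_{\rm erg,<0}(\Gamma)$ and into $\cM_{\rm erg,>0}(\Gamma)$; combined with Part~(I), every pair in $\cM_{\rm erg}(\Gamma)$ can be joined through $\mu_0$.

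The principal obstacle is the quantitative Wasserstein control at the telescoping step: for the concatenated paths to converge rather than wander, one must arrange that \emph{every} invariant measure supported on the bridging basic set $\tilde\Lambda_n^\iota$ (or its Part~(II) analogue around $\mu_0$) is already $W_1$-close to the target. This requires choosing heteroclinic transition words so that their Birkhoff contribution is dominated by the long returns spent in the approximating horseshoes $\Lambda_n^\iota$, $\Lambda_{n+1}^\iota$. Making this quantitative is precisely the technical core of \cite{GorPes:17,DiaGelRam:17b}, and with Theorem~\ref{teo:accum}, Remark~\ref{Katokforevery}, and Proposition~\ref{pro.l.homoclinicallyrelated} at our disposal, the argument transfers essentially verbatim to the present setting.
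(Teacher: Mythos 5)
Your proposal is correct and follows essentially the route the paper intends: the corollary is stated without proof precisely because the telescoping-horseshoe argument of \cite{GorPes:17,DiaGelRam:17b} applies verbatim once one has Remark~\ref{Katokforevery}, Proposition~\ref{pro.l.homoclinicallyrelated} (via Corollary~\ref{c.horseshoes}), and Theorem~\ref{teo:accum}, and your write-up is a faithful expansion of exactly that argument, including the intermediate-value-theorem step for the reverse implication.
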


\subsection{Bifurcation settings and homoclinic scenarios}

Returning to the idea that the map $F\colon\Gamma\to\Gamma$ may serve as a plug in a semi-local analysis of the dynamics, we see in Section \ref{sec:bif} how this plug acts and interacts with other pieces of dynamics of the global map $\tilde F$.
We consider a family of maps $\tilde f_0$ and $\tilde f_{1,t}$ (for simplicity we assume that $\tilde f_0$ does not depend on $t$)
satisfying our hypotheses and study the corresponding globally defined 
one-parameter family of
skew-products $\tilde F_t$. 
For each parameter $t$ we define the maximal invariant set $\Gamma^{(t)}$ similarly as
 in \eqref{def:Gamma} and the space of admissible sequences $\Sigma^{(t)}$ as in 
\eqref{eq:defSigma}. 
This family has
two distinguished parameters $t_{\rm h}<t_{\rm c}$, corresponding to a heterodimensional cycle associated to $P$ and $Q$ and to the collision of a pair of homoclinic classes, respectively. When $t$ varies from $t_{\mathrm{h}}$ to
$t_{\mathrm{c}}$ the maps $\tilde F_t$ ``unfolds completely''   the heterodimensional cycle, in the sense that the topological entropy of $\tilde F_t |_{\Gamma^{(t)}}$ 
goes from zero at $t_{\mathrm{h}}$ to full $\log 2$ entropy (the maximal possible entropy) at $t_{\mathrm{c}}$ in a nondecreasing way.

We prove that the space of admissible sequences $\Sigma^{(t)}$
converges  to $\Sigma_2$ as $t\to t_{\mathrm{c}}$ in Hausdorff distance. To each family $(\tilde F_t)_{t\in [t_{\mathrm{h}}, t_{\mathrm{c}}]}$ there is naturally associated a constant $C(t)$ relating the derivatives of the fiber maps at $1$, see \eqref{eq:entjumpCt}. If $C(t_{\mathrm{c}})<\infty$ then there is an  explosion of the set $\Sigma^{(t)}$  at the collision parameter $t=t_{\mathrm{c}}$. Moreover, if $C(t_{\mathrm{c}})<1$ then there is also an explosion of the space of invariant measures on $\Sigma^{(t)}$ and a jump of the topological entropy of $\Sigma^{(t)}$ (and hence of $\Gamma^{(t)}$), see Propositions~\ref{prop:explosionsequences} and \ref{pro:entropyjump}. We provide an interpretation for those explosions and examples illustrating the different dynamical scenarios that may occur. We also discuss the ``twinning" and ``merging" of hyperbolic and nonhyperbolic ergodic measures corresponding to Theorem \ref{teo:1} for $\tilde F_{t_{\rm c}}$, see Proposition \ref{p.FSTmeasures}.

Regarding the dynamics at the collision parameter $t=t_{\rm c}$, we recall that collisions of homoclinic classes were studied from the merely topological point of view in \cite{DiaSan:04,DiaRoc:07}. 
We also observe an IFS somewhat similar to the one associated to the maps $\tilde f_0, \tilde f_{1,t_{\mathrm{c}}}$
considered here also appears in \cite{FanSimTot:06,AlsMis:14,AlsMis:15}, where different questions were being asked.

In Section \ref{sec:homscen} we discuss the role of our concavity hypothesis and observe the possible appearance of further homoclinic classes when this hypothesis fails. 

\subsection{Organization of the paper}	

Sections~\ref{sec:undsymspa} and ~\ref{ss.homoclinicclasses} deal with two underlying key ingredients: the symbolic space $\Sigma$ and its associated IFS and homoclinic relations and classes, respectively. Theorem~\ref{teo:coded} is proved in Section~\ref{sec:codedsystems} which is dedicated to the coded nature of $\Sigma$. In Section~\ref{sec:concavemaps} we prove some auxiliary results about concave maps. In Section \ref{sec:proofofteo:homoclinicclasses} we prove Theorem~\ref{teo:homoclinicclasses} dealing with the decomposition of the non-wandering set into homoclinic classes. Section~\ref{sec:splitt} is dedicated to the study of ``decompositions"  of the space of measures. We prove Theorem \ref{teo:1} about the structure of the space of measures in Section \ref{sec:proofTheorem1} and  Theorem~\ref{teo:2} about disintegration of measures  in Section \ref{sec:proof:teo2}. Theorem~\ref{teo:accum} about approximation of nonhyperbolic ergodic measures by hyperbolic ones is proved in Section~\ref{sec:accum}. We will prove Corollary \ref{cor:accumoneside} at the end of Section \ref{sec:weakstarentropy}. Theorem \ref{teo:poulsen} about the Poulsen structure of $\cM(\Sigma)$ is proved in Section~\ref{sec:teo:poulsen}.
In Section~\ref{sec:bif} we explore bifurcation scenarios. In Section \ref{sec:homscen} we discuss homoclinic classes and the importance of the concavity hypothesis. The paper closes with Appendix~\ref{App:A} about the Wasserstein distance.

\section{Underlying structures: symbolic space and the IFS}\label{sec:undsymspa}

In this section we assume (H1). In Section \ref{sec:admcom} hypothesis (H2) is not required and we will additionally assume (H2) only in Section \ref{sec:3per}. Hypothesis (H2+) is not required. 

In the following, we consider the shift space $\Sigma_2\eqdef\{0,1\}^\bZ$ equipped with the metric 
\begin{equation}\label{eqdef:distanceSigma}
	d_1(\xi,\eta)\eqdef e^{-n(\xi,\eta)}, 
	\quad\text{ where }\quad
	n(\xi,\eta)\eqdef\sup\{\lvert\ell\rvert\colon \xi_i=\eta_i\text{ for }i=-\ell,\ldots,\ell\}.
\end{equation}
We use the notation $\xi=(\xi_i)_{i\in \bZ}=(\xi^-.\xi^+)\in\Sigma_2$, where $\xi^+=(\xi_0\xi_1\ldots)\in\Sigma_2^+\eqdef \{0,1\}^{\bN_0}$ and $\xi^-=(\ldots\xi_{-2}\xi_{-1})\in\Sigma_2^-\eqdef\{0,1\}^{-\bN}$. 
We equip $\Sigma_2\times\bR$ with the metric 
\[
	d((\xi,x),(\eta,y))\eqdef\max\{ d_1(\xi,\eta),\lvert x-y\rvert\}. 
\]
Consider the projections 
\[
	\pi \colon \Sigma_2 \times [0,1]\to \Sigma_2, \quad\pi(\xi,x)\eqdef\xi, 
	\quad\text{ and }\quad
	\varrho \colon \Sigma_2 \times [0,1]\to [0,1], \quad\varrho(\xi,x)\eqdef x. 
\]

\subsection{Admissible compositions}\label{sec:admcom}

Given $n\ge1$, call $\tau=(\tau_1\ldots\tau_n)\in\{0,1\}^n$ a \emph{word} and  $\lvert\tau\rvert\eqdef n$ its \emph{length}; a \emph{subword} of $\tau$ is a word of the form $(\tau_i\ldots\tau_k)$ with $1\le i\le k\le n$. 
Given words $(\xi_{-m}\ldots\xi_{-1})$ and $(\tau_0\ldots\tau_{n-1})$, we denote the corresponding \emph{cylinders} by
\[\begin{split}
	[\xi_{-m}\ldots\xi_{-1}.]
	&\eqdef \{\eta\colon \eta_k=\xi_k,k=-m,\ldots,-1\},\\
	[\tau_0\ldots\tau_{n-1}]
	&\eqdef \{\eta\colon \eta_k=\tau_k,k=0,\ldots,n-1\}
	.
\end{split}\]

Given a point $x\in [0,1]$, a word $(\xi_0\ldots\xi_{n-1})\in\{0,1\}^n$ is \emph{forward admissible for} $x$ if for every $k=0,\ldots,n-1$ the map 
\[
	f_{[\xi_0\ldots\, \xi_k]}
	\eqdef f_{\xi_k}\circ \cdots \circ f_{\xi_0} 
\]	
is well defined at $x$. We denote by $I_{[\xi_0\ldots \,\xi_{n-1}]}$ the set of points $x\in[0,1]$ for which $(\xi_0\ldots\xi_{n-1})$ is admissible.
Given $\xi\in\Sigma_2$ and $n\ge1$ such that $x\in I_{[\xi_0\ldots\,\xi_{n-1}]}$, sometimes we will also adopt the notation 
\[
	f_\xi^n
	\eqdef f_{[\xi_0\ldots\,\xi_{n-1}]}.
\]
Analogously we adopt the notations $I_{[\xi_{-m}\ldots\, \xi_{-1}.]}$ and
\[
	f^{-m}_\xi
	\eqdef f_{[\xi_{-m}\ldots\, \xi_{-1}.]}
	\eqdef  f_{\xi_{-m}}^{-1} \circ \cdots \circ f_{\xi_{-1}}^{-1}.
\]

\begin{remark}\label{rem:notationnn}
Monotonicity of the maps $f_0,f_1$ implies that each of the sets $I_{[\xi_0\ldots \,\xi_n]}$ and $I_{[\xi_{-m}\ldots\, \xi_{-1}.]}$, when nonempty, is a (possibly degenerate) interval and of the form $[a_{[\xi_0\dots\, \xi_n]},1]$ and $[0,b_{[\xi_{-m}\dots\, \xi_0.]}]$, respectively, where 
\begin{equation}\label{eq:formula}
	f_{[\xi_0\dots\,\xi_n]}(a_{[\xi_0\ldots\,\xi_n]})=0
	\quad\text{ and }\quad
	f_{[\xi_{-m}\dots\,\xi_{-1}.]} (b_{[\xi_{-m}\ldots\,\xi_0.]})=1.
\end{equation}
\end{remark} 

We say that $\xi^+\in\Sigma_2^+$ is \emph{admissible} for $x$ if $(\xi_0\ldots\xi_{n})$ is forward admissible for $x$ for every $n\ge1$, analogously for $\xi^-\in\Sigma_2^-$. We say that a bi-infinite sequence $\xi\in \Sigma_2$ is \emph{admissible} for $x$ if $\xi^+$ and $\xi^-$ are both are admissible for $x$.
Denote by $I_{\xi^+}$ the set of points $x$ such that $\xi^+$ is admissible for $x$, analogously for $I_{\xi^-}$. Note that, given $\xi=(\xi^-.\xi^+)\in\Sigma_2$, the families of intervals $\{I_{ [\xi_0\ldots \,\xi_{n-1}]}\}_{n\ge1}$ and $\{I_{[\xi_{-m}\ldots\, \xi_{-1}.]}\}_{m\ge1}$ both are nested. We have
\[
	\bigcap_{n\ge1}I_{ [\xi_0\ldots \,\xi_{n-1}]} = I_{\xi^+},
	\quad
	\bigcap_{m\ge1}I_{[\xi_{-m}\ldots\, \xi_{-1}.]} = I_{\xi^-},
	\quad\text{ and }\quad
	I_\xi 
	\eqdef I_{\xi^-}\cap I_{\xi^+}.
\]
 By writing $(\xi,x)$ we assume that $\xi^+$ and $\xi^-$ are admissible for $x$, hence $x\in I_\xi$.

\begin{remark}\label{rem:onesidspin}
By the previous comments, we have $I_{\xi^+}=[x_{\xi^+},1]$ and $I_{\xi^-} = [0,x_{\xi^-}]$ for some $x_{\xi^\pm}\in[0,1]$ (provided these intervals are nonempty). Therefore, if $I_\xi\ne\emptyset$ then $I_\xi = [x_{\xi^+},x_{\xi^-}]$ (and $x_{\xi^+}\le x_{\xi^-}$).
\end{remark}

\begin{remark}\label{rem:admprehet}
Any word which is forward admissible for $0$ is of the form $0^k$ and any word which is backward admissible for $1$ is of the form $0^\ell$. 
\end{remark}

\begin{remark}\label{rem:constseqs}
	Note that 
\[\begin{split}
	\xi_n=1 &\quad\text{ if and only if }\quad a_{[\xi_0\dots\, \xi_{n-1}]}<a_{[\xi_0\dots\, \xi_n]},\\
	\xi_{-m}=1 &\quad\text{ if and only if }\quad b_{[\xi_{-m}\ldots\,\xi_0.]}<b_{[\xi_{-m+1}\ldots\,\xi_0.]}.
\end{split}\]	
\end{remark}

Given $x\in[0,1]$ and $n\ge1$, let 
\[
	\Sigma^+(n,x)
	\eqdef \{ (\xi_0\ldots\, \xi_{n-1})\colon \text{admissible for }x\},
\] 
analogously $\Sigma^-(n,x)$. We denote by $\Sigma (x)\subset\Sigma_2$ the  set of (infinite) sequences which are admissible for $x$ and by $\Sigma^+(x)$ and $\Sigma^-(x)$ the corresponding sets of admissible one-sided sequences. Note that the set $\Sigma$ defined in \eqref{eq:defSigma} coincides with the set of all admissible sequences
\[
	\Sigma
	= \bigcup_{x\in [0,1]} \Sigma(x).
\]
With the definition \eqref{def:Gamma}, we have 
\[
	\Gamma
	= \{(\xi,x) \in \Sigma_2 \times [0,1]\colon x\in [0,1]\text{ and }\xi \in \Sigma(x)\}.
\]

\begin{remark}\label{rem:different}
Clearly, it follows from (H1) that $\Sigma^+(x)\subset\Sigma^+(1)$ and $\Sigma^-(x)\subset\Sigma^-(0)$. Hence, it holds
\[
	\Sigma 
	\subset \{(\xi^-.\xi^+) \colon \xi^- \in \Sigma^-(0) \text{ and } \xi^+\in \Sigma^+(1)\},
\]
and this inclusion is in general strict. 
Moreover, for every $x$
\[
	\Sigma(x)
	= \{(\xi^-.\xi^+) \colon \xi^-\in\Sigma^-(x),\xi^+\in\Sigma^+(x)\}.
\]
\end{remark}

\begin{remark}\label{rem:magenta}
	If $\xi=(\xi_0\ldots\xi_{n-1})^\bZ$ is an admissible periodic sequence, then $f_{[\xi_0\ldots\,\xi_{n-1}]}$ has one fixed point with nonpositive Lyapunov exponent and one with nonnegative Lyapunov exponent. Note that these points may coincide and then this point is parabolic.
\end{remark}

\begin{remark}\label{rem:admissperio01seq}
	Given $\xi^+\in\Sigma^+(1)$ such that $\xi_n=1$, for the new sequence obtained inserting one $0$ in the $n$th position, we have $(\xi_0\ldots\xi_{n-1}01\xi_{n+1}\ldots)\in\Sigma^+(1)$. Hence, inductively,  $(\xi_0\ldots\xi_{n-1}0^k1\xi_{n+1}\ldots)\in\Sigma^+(1)$, for every $k\ge1$.
	
	To see why this is so, just note that $f_0$ is defined on $[0,1]$, hence $f_{[\xi_0\ldots \,\xi_{n-1}0]}(1)$ is well defined, and $f_{[\xi_0\ldots\, \xi_{n-1}0\xi_n]}(1)>  f_{[\xi_0\ldots\, \xi_{n-1}\xi_n]}(1)$. The above now follows
from the monotonicity of $f_0$ and $f_1$.

Similarly, given $\xi^+\in\Sigma^+(1)$ such that $\xi_n=1$, the  sequence obtained exchanging $\xi_n=1$ for $\xi_n=0$ and keeping all remaining terms also belongs to $\Sigma^+(1)$.
\end{remark}

Recall that by (H1) the point $d\in(0,1)$ is defined by $f_1(d)=0$.

\begin{remark}\label{rem:someseq}
	For $k\ge1$ sufficiently large, the periodic sequence $(10^k)^\bZ$ belongs to $\Sigma$. Indeed, for $\varepsilon>0$ small and $k\ge1$ sufficiently large, we have $(f_0^k\circ f_1)([d-\varepsilon,1])\subset [d,1]$. Hence, there is $x\in[d,1]$ such that $x=(f_0^k\circ f_1)(x)$ and thus the periodic sequence $\xi=(10^k)^\bZ$ is admissible for $x$.
\end{remark}

\begin{remark}\label{rem:monotonously}
	Combining Remarks \ref{rem:admissperio01seq} and \ref{rem:someseq} and also applying Remark \ref{rem:constseqs}, the following is now immediate. 
	For $k\ge1$ sufficiently large, we have $\xi=(0^k1)^\bZ\in\Sigma$ and
\[
	a_{[0^k1]}
	\le a_{[0^k10^k1\ldots \,0^k1]}
	\le \ldots 
	< x_{(0^k1)^\bN}
	= x_{\xi^+}.
\]	
	On the other hand, for every sequence  of positive integers $(\ell_n)_{n\ge1}$ satisfying $\ell_n\le k$ for every $n\ge1$ we have
\[
	a_{[0^k10^k1\ldots \,0^k1]}
	\le a_{[0^{\ell_1}10^{\ell_2}1\ldots\,0^{\ell_n}1]},
\]	
whenever $I_{[0^{\ell_1}10^{\ell_2}1\ldots\,0^{\ell_n}1]}\ne\emptyset$.
\end{remark}

\begin{remark}\label{rem:numones}[Consecutive $1$'s in admissible sequences]
	Define $k_0\ge1$ to be the integer such that $f_{1}^{k_0-1}(1) \in [d,1]$ and $f_{1}^{k_0}(1) < d$. Note that this number is well defined since $f_1(x)<x$ for all $x\in [d,1]$.
	
		Consider a word $(\xi_0\dots \xi_n)$ which is forward admissible for $x$. The definition of $k_0$ implies that it has at most $k_0$ consecutive $1$'s. Moreover, $(\xi_0\dots \xi_n\,0)$ is also forward admissible for $x$ (just observe that the domain of $f_0$ is the whole interval $[0,1]$). 
\end{remark}

The \emph{forward orbit} of a point $x\in[0,1]$ by the IFS is defined as
\[
	\cO^+(x)
	\eqdef \bigcup_{n\ge0}\bigcup_{(\xi_0\ldots\xi_{n-1})\in\Sigma^+(n,x)}
		f_{[\xi_0\ldots\,\xi_{n-1}]}(x).
\]
The \emph{backward orbit} of $x$, $\cO^-(x)$, is defined analogously. The \emph{orbit} of $x$ by the IFS is
\[
	\cO(x)
	\eqdef \bigcup_{\xi\in\Sigma(x)}\Big(\bigcup_{n\ge0}f_{[\xi_0\ldots\,\xi_{n-1}]}(x)
							\cup\bigcup_{m\ge1}f_{[\xi_{-m}\ldots\,\xi_{-1}.]}(x)
							\Big).
\]	
In view of Remark \ref{rem:different},
\[
	\cO(x)
	= \cO^-(x)\cup\cO^+(x).
\]

\begin{lemma}
The set $\cO^-(0)$ is dense in $[0,1]$ if and only if for every $x,y\in [0,1]$, $x\neq y$, we have  $\Sigma^+(x)\ne\Sigma^+(y)$. Similarly, the set $\cO^+(1)$ is dense in $[0,1]$ if and only if for every $x,y\in [0,1], x\neq y$, we have $\Sigma^-(x)\ne\Sigma^-(y)$.
\end{lemma}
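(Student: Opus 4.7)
My plan is to prove the first equivalence (the second follows by a symmetric argument, reversing time and exchanging $0\leftrightarrow 1$) by identifying a single subset $B\subset[0,1]$ whose density in $[0,1]$ is equivalent to each side of the equivalence.

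The first step is immediate from Remark~\ref{rem:onesidspin}: whenever $I_{\xi^+}$ is nonempty, it is of the form $[x_{\xi^+},1]$, so $\xi^+\in\Sigma^+(x)$ if and only if $x\ge x_{\xi^+}$. Consequently, for $x<y$ the inclusion $\Sigma^+(x)\subseteq\Sigma^+(y)$ is strict precisely when some $x_{\xi^+}$ lies in $(x,y]$. Hence the condition ``$\Sigma^+(x)\ne\Sigma^+(y)$ for all $x\ne y$'' is equivalent to density in $[0,1]$ of
\[
    B\eqdef\{x_{\xi^+}\colon\xi^+\in\Sigma_2^+,\,I_{\xi^+}\ne\emptyset\}.
\]

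The second step is to identify $\overline{B}$ with $\overline{\cO^-(0)}$. One inclusion is easy: formula \eqref{eq:formula} gives $a_{[\xi_0\ldots\xi_{n-1}]}=f_{\xi_0}^{-1}\circ\cdots\circ f_{\xi_{n-1}}^{-1}(0)$, which read as a backward trajectory starting from $0$ places $a_{[\xi_0\ldots\xi_{n-1}]}$ into $\cO^-(0)$; since $x_{\xi^+}=\lim_{n\to\infty} a_{[\xi_0\ldots\xi_{n-1}]}$ by the nested-intervals description of $I_{\xi^+}$, this yields $B\subseteq\overline{\cO^-(0)}$.

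The main (and essentially only) subtle step is the reverse inclusion. The key input is that (H1) forces $f_0(0)=0$ (because $f_0$ is increasing and onto $[0,1]$ with $f_0(x)>x$ on $(0,1)$), so appending zeros on the right of a forward-admissible word does not change the associated $a$-value. Given $a=f_{\xi_0}^{-1}\circ\cdots\circ f_{\xi_{n-1}}^{-1}(0)\in\cO^-(0)$, the sequence $\eta^+\eqdef(\xi_0\ldots\xi_{n-1}0^\bN)$ then satisfies $a_{[\xi_0\ldots\xi_{n-1}0^k]}=a$ for all $k\ge0$, so $x_{\eta^+}=a$ and $\cO^-(0)\subseteq B$. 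Combining the two steps gives the first equivalence. The second is handled by the time-reversed version, using $f_0(1)=1$ together with $b_{[\xi_{-m}\ldots\xi_0.]}=f_{\xi_{-1}}\circ\cdots\circ f_{\xi_{-m}}(1)\in\cO^+(1)$ to match $\{x_{\xi^-}\}$ with $\cO^+(1)$.
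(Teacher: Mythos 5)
Your proof is correct and rests on the same mechanism as the paper's: the points at which forward admissibility changes are exactly the backward images of $0$, via $f_{[\xi_0\ldots\,\xi_n]}(a_{[\xi_0\ldots\,\xi_n]})=0$ together with $f_0(0)=0$. You package this through the endpoint set $B=\{x_{\xi^+}\}$ and the sandwich $\cO^-(0)\subseteq B\subseteq\overline{\cO^-(0)}$, whereas the paper argues directly with finite words and a minimal length $n$ locating $f_{[\xi_0\ldots\,\xi_n.]}(0)$ in $(x,y)$, but the content is the same.
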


\begin{proof}
Note first that for any two points $x,y\in [0,1], x<y$, the set $\Sigma^+(x)$ differs from $\Sigma^+(y)$ if and only if $\Sigma^+(n,x)\neq \Sigma^+(n,y)$ for some $n$. Assume $\Sigma^+(n,x)\neq \Sigma^+(n,y)$ for some $n$ and assume that $n$ is the smallest one. Then there exists a word $(\xi_0\ldots \xi_{n-1})$ forward admissible for $y$ but not for $x$, while the word $(\xi_0\ldots\xi_{n-2})$ is forward admissible for both $x$ and $y$. Hence, by Remark \ref{rem:constseqs}, we have $\xi_{n-1}=1$  and $f_{[\xi_0\ldots\,\xi_{n}]}(x) <d \leq f_{[\xi_0\ldots\,\xi_{n}]}(y)$. Thus, $f_{[\xi_0 \dots \xi_n.]}(0) \in (x,y)$.

In the other direction, if $\cO^-(0)\in (x,y)$, similarly there is $(\xi_0\ldots \xi_{n-1})\in \Sigma^-(n,0)$ such that
  $x<f_{[\xi_{n-1}\ldots \xi_{0}.]}(0)<y$.
This implies that $(\xi_0\ldots\xi_{n-1}) \in \Sigma^+(n,y) \setminus \Sigma^+(n,x)$.

The second part of the lemma for forward orbits is obtained in an analogous way.
\end{proof}

\subsection{Hyperbolic and parabolic periodic points}\label{sec:3per}

In this section, we will assume (H1)--(H2).

\begin{lemma}\label{lem:fixpots}
	Let $(\xi_0\dots \xi_n)$ be a word  such that $I_{[\xi_0\ldots\,\xi_{n-1}]}\ne\emptyset$ and $g=f_{[\xi_0\ldots\,\xi_{n-1}]}$. There are the following possibilities:
\begin{itemize}
\item[(1)] If $g$ has some fixed point then $\xi=(\xi_0\ldots \xi_{n-1})^\bZ\in \Sigma$ and there are two cases:
\begin{itemize}
\item[(1a)] $g$ has exactly two fixed points $p_{[\xi_0\ldots\,\xi_{n-1}]}^+<p_{[\xi_0\ldots\,\xi_{n-1}]}^-$ and they are repelling and contracting, respectively. In this case, $I_\xi=[p_{[\xi_0\ldots\,\xi_{n-1}]}^+,p_{[\xi_0\ldots\,\xi_{n-1}]}^-]$. 
\item[(1b)] $g$ has exactly one fixed point $p_{[\xi_0\ldots\,\xi_{n-1}]}$ and it is parabolic. In this case, $I_\xi=\{p_{[\xi_0\ldots\,\xi_{n-1}]}\}$. 
\end{itemize}
\item[(2)] If $g$ has no fixed point then $(\xi_0\ldots \xi_{n-1})^\bZ\not\in \Sigma$.
\end{itemize}
\end{lemma}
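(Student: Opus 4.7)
The plan is to analyze the map $g\eqdef f_{[\xi_0\ldots\,\xi_{n-1}]}$ by combining a brief case analysis on the word with the one-dimensional dynamics of a concave increasing map. First I would observe that $g$ is $C^1$, strictly increasing, and concave, as a composition of $C^1$ increasing concave maps: the inequality $(f\circ \tilde f)''=f''(\tilde f)\,(\tilde f')^2+f'(\tilde f)\,\tilde f''\le 0$ holds because $f',\tilde f'>0$ and $f'',\tilde f''\le 0$. Moreover, by (H2), whenever the word $(\xi_0\ldots\xi_{n-1})$ contains at least one symbol $0$, the factor $f_0'$ appearing in the chain-rule expression of $g'$ is strictly decreasing while the remaining factors are strictly positive and non-increasing, so $g$ is in fact strictly concave.

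The two constant words are settled directly. If the word is $0^n$, then $g=f_0^n$ is defined on $[0,1]$ with $0$ and $1$ as its only fixed points, which by (H1) and the chain rule are repelling and contracting, respectively; this gives case (1a) with $\xi=0^\bZ\in\Sigma$ and $I_\xi=[0,1]$. If the word is $1^n$, then (H1) gives $g(x)<x$ wherever defined, so $g$ has no fixed point; any forward $g$-orbit is strictly decreasing and cannot converge to a fixed point inside the domain, so it exits in finitely many steps, which shows $(1^n)^\bZ\notin\Sigma$, matching case~(2).

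For a mixed word, I would set $h(x)\eqdef g(x)-x$ on $I_{[\xi_0\ldots\,\xi_{n-1}]}=[a,1]$ where $a=a_{[\xi_0\ldots\,\xi_{n-1}]}>0$. By construction $g(a)=0$, hence $h(a)=-a<0$, and a short induction using $f_0(1)=1$, $f_1(1)<1$, and monotonicity yields $g(1)<1$ and thus $h(1)<0$. Since $h$ is strictly concave by Step~1, it has $0$, $1$, or $2$ zeros on $[a,1]$, and these correspond exactly to the three conclusions of the lemma: no zero gives case~(2); two zeros $p^+<p^-$ force $h'(p^+)>0$ and $h'(p^-)<0$, hence $g'(p^+)>1$ (repelling) and $g'(p^-)<1$ (contracting), matching case~(1a); and a unique zero $p$ is necessarily tangent, so $h'(p)=0$, i.e., $g'(p)=1$, giving the parabolic fixed point of case~(1b).

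Finally, the identification of $I_\xi$ and the claim $\xi=(\xi_0\ldots\xi_{n-1})^\bZ\in\Sigma$ are both deduced from the dynamics of $g$. Whenever $g$ has a fixed point, the periodic word is admissible for that point, so $\xi\in\Sigma$. In case~(1a), for $x\in[a,p^+)$ the strictly decreasing orbit $g^k(x)$ has no fixed point below $p^+$ to converge to and must leave $[a,1]$; by a symmetric argument (using $g(1)<1$) backward iterates starting from $x\in(p^-,1]$ also leave the domain; hence $I_\xi=[p^+,p^-]$. The main obstacle is the parabolic case~(1b): here $h$ has a double zero, so I would argue carefully that no $x\ne p$ admits a full bi-infinite orbit. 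This uses the strict inequality $g(x)<x$ for $x\ne p$, which forces the forward orbit of any $x<p$ and the backward orbit of any $x>p$ to be strictly monotone and bounded away from any fixed point, hence to exit the domain in finitely many steps; only $p$ itself remains bi-infinitely admissible, so $I_\xi=\{p\}$.
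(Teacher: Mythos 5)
Your argument is correct and follows essentially the same route as the paper: a fixed point of $g$ immediately yields $\xi\in\Sigma$; any word admitting a fixed point must contain a $0$, which by (H2) forces $g'$ to be strictly decreasing and hence limits $g$ to at most two fixed points of the stated types; and the no-fixed-point case follows from the graph of $g$ lying below the diagonal. The one blemish is your appeal to $(f\circ\tilde f)''$: hypotheses (H1)--(H2) only provide $C^1$ maps with monotone derivatives, so second derivatives need not exist --- but your subsequent chain-rule observation (each factor of $g'$ is positive and non-increasing, and the $f_0'$ factor is strictly decreasing) is exactly the paper's argument and renders that computation unnecessary. Your explicit identification of $I_\xi$ via monotone forward/backward orbits is more detailed than the paper, which leaves that step implicit.
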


\begin{proof}
	Let $p$ be a fixed point of $g$. Then $((\xi_0\ldots \xi_{n-1})^\bZ,p)\in \Gamma$ and the first assertion follows.
	By monotonicity and $f_1(x)<x$ (hypothesis (H1)) it follows that $(\xi_0\ldots\,\xi_{n-1})$ contains at least one $0$ and hence, since $f_0'$ is strictly decreasing and $f_1'$ is nonincreasing (hypothesis (H2)), $g$ has strictly decreasing derivative. This immediately implies the two possibilities (1a) and (1b) claimed in the lemma.
	
	Case (2) is an immediate consequence of the monotonicity of the maps $f_0,f_1$ and of the fact that the graph of $g$ is below the diagonal. 
\end{proof}

\section{Coded systems}\label{sec:codedsystems}

In this section we only assume (H1), hypotheses (H2)--(H2+) are not required. 

The goal of this section is to prove Theorem \ref{teo:coded}.

Let us first recall some standard definitions, see for example \cite{LinMar:95} for details. 
We only consider two-sided sequence spaces.
Given a subset $S\subset\Sigma_2$, define 
\[
	\eW_n(S)\eqdef \{\tau
		\colon \lvert\tau\rvert=n,\tau=(\xi_{k+1}\ldots\xi_{k+n}) 
				\text{ for some }\xi\in S\text{ and some }k\in\bZ\}
\]				 
the set of all \emph{allowed words of length $n$ in $S$} and let 
\[
	\eW(S)\eqdef\bigcup_{n\ge0}\eW_n(S),
\]	 
where $\eW_0(S)=\emptyset$ by convention. 
A \emph{subshift} is a $\sigma$-invariant set in $\Sigma_2$.
A subset $S\subset\Sigma_2$ is a \emph{subshift of finite type} (\emph{SFT}) if it is specified by finitely many ``forbidden" words, all of finite length, that is, if there exists a finite family $\cF\subset\eW(\Sigma_2)$ so that
\[
	S
	=\Sigma_\cF
	\eqdef\{\xi\in\Sigma_2
		\colon \eW(\{\xi\})\cap\cF=\emptyset\}.
\]
Equivalently, there exist $n\ge1$ and a finite family of words of equal length $n$, $\cF'\subset\eW_n(\Sigma_2)$, such that $S=\Sigma_{\cF'}$. It follows that any SFT is $\sigma$-invariant and that
\[		
	\Sigma_\cF
	=\{\xi\in\Sigma_2
		\colon (\xi_{k+1}\ldots \xi_{k+n})\not\in\cF
		\text{ for all }k\in\bZ\}.
\]

Let us introduce the concept of coded systems, though we will skip the original definition (see, for example, \cite[Chapter 13.5]{LinMar:95} and references therein) and instead use the characterization by Krieger in  \cite{Kri:00}. By \cite{Kri:00}, a transitive subshift $S\subset\Sigma_2$ is \emph{coded} if and only if there is an increasing family of irreducible SFTs whose union is dense in $S$.

Recall the definition of the compact and $\sigma$-invariant set $\Sigma\subset\Sigma_2$ in \eqref{eq:defSigma}. Consider the sets
\[
	\eW^{\rm het}(\Sigma)
	\eqdef \{\tau\colon \tau\in\eW(\Sigma),f_{[\tau]}(1)=0\},
	\quad
	\eW^0(\Sigma)
	\eqdef \{0^k\colon k\ge1\},
\]
and let 
\[	
	\eW^{\rm cod}(\Sigma)
	\eqdef \eW(\Sigma)\setminus (\eW^{\rm het}(\Sigma)\cup \eW^0(\Sigma)).
\]
Note that with this notation, the set $\Sigma^{\rm het}$ defined in \eqref{eq:xiPQ} is precisely
\[
	\Sigma^{\rm het}
	= \{0^{-\bN}\tau \,0^\bN\colon \tau\in\eW^{\rm het}(\Sigma)\}
	= \{\sigma^k(\xi)\colon \xi=(0^{-\bN}.\tau \,0^\bN),\tau\in\eW^{\rm het}(\Sigma),k\in\bZ\}.
\]

Let us now prepare the proof of Theorem \ref{teo:coded}. Recall notations in Section \ref{sec:undsymspa}.

\begin{proposition}\label{proclalem:222}
	For any two disjoint SFTs $S_i\subset\Sigma$, $i=1,2$, not containing $0^\bZ$, there exists a transitive SFT $S_3\subset\Sigma$ such that $S_1\cup S_2\subset S_3$ and $0^\bZ\not\in S_3$. 
\end{proposition}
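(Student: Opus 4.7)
The plan is to join $S_1$ and $S_2$ into a transitive SFT $S_3$ by connecting them with uniform blocks $0^{k_0}$, while bounding the maximum $0$-run length in $S_3$ to exclude $0^\bZ$.

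\emph{Compatibility and transition.} First I would show that for every $u \in \eW(S_i)$ one has $f_{[u]}(1) > 0$: otherwise, $I_{[u]} = [a_{[u]},1]$ with $f_{[u]}(a_{[u]})=0$ would reduce to $\{1\}$, and by Remark~\ref{rem:admprehet} (only $f_0$ is defined at $0$) any $\xi \in S_i$ containing $u$ as a subword would be forced to have a $0^\bN$-tail, giving $0^\bZ \in S_i$ by $\sigma$-invariance, a contradiction. Next, choose $N$ large enough that $S_1, S_2$ are $N$-step SFTs, $\eW_N(S_1) \cap \eW_N(S_2) = \emptyset$ (using compact disjointness), and $0^N \notin \eW_N(S_1) \cup \eW_N(S_2)$. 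By finiteness, $y_{\min} \eqdef \min_u f_{[u]}(1) > 0$ and $a_{\max} \eqdef \max_v a_{[v]} < 1$ over $u, v \in \eW_N(S_1) \cup \eW_N(S_2)$. Since $1$ attracts $(0,1]$ under $f_0$, pick $k_0 \geq N$ so that $f_0^{k_0}(y_{\min}) \geq a_{\max}$. Then every word $u 0^k v$ with $u, v \in \eW_N(S_1) \cup \eW_N(S_2)$ and $k \geq k_0$ is admissible in $\Sigma$ at $x = 1$.

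\emph{Construction.} Let $\tilde S_3$ be the vertex-shift SFT on the finite alphabet
\[
	V = \eW_N(S_1) \sqcup \eW_N(S_2) \sqcup \{c_1, \ldots, c_{k_0}\},
\]
with transitions given by the $N$-block graphs of $S_1, S_2$ together with connector paths $u \to c_1 \to \cdots \to c_{k_0} \to v$ between any $u \in \eW_N(S_i)$ and $v \in \eW_N(S_j)$ (all $i, j \in \{1,2\}$). Label each $u \in \eW_N(S_i)$ by its first letter and each $c_\ell$ by $0$. Since $k_0 \geq N$, any $0^{k_0}$ substring in a label sequence must come from a connector stretch; combined with $\eW_N(S_1) \cap \eW_N(S_2) = \emptyset$ and $0^N \notin \eW_N(S_1) \cup \eW_N(S_2)$, the labels over any window of length $N + k_0$ uniquely identify the underlying state. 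Hence the labeling factor is a conjugacy, and its image $S_3 \subset \{0,1\}^\bZ$ is a genuine SFT.

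\emph{Verification.} By construction $S_3 \supset S_1 \cup S_2$. Strong connectedness of the graph (the connectors link everything) yields transitivity. The connector set is a disjoint union of finite paths, so no walk remains there indefinitely and the maximum $0$-run in a label sequence is bounded by $2(N-1)+k_0$; in particular $0^\bZ \notin S_3$. Finally, $S_3 \subset \Sigma$: any bi-infinite walk decodes to a concatenation $\ldots \alpha^{(-1)} 0^{k_0} \alpha^{(0)} 0^{k_0} \alpha^{(1)} \ldots$ of $\eW(S_{i_j})$-blocks and $0^{k_0}$-connectors, and the nested-interval description $I_\xi = \bigcap_R I_{[\xi_{-R}\ldots \xi_R]}$ gives $1 \in I_\xi$, each finite truncation being admissible at $x=1$ by the transition lemma and the admissibility of the $S_i$-blocks. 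The main technical hurdle is ensuring the labeling descends to an SFT rather than a merely sofic shift; this is handled precisely by the state-identification property above, which itself relies on the careful choices of $N$ (to separate $\eW_N(S_1)$ from $\eW_N(S_2)$ and to exclude $0^N$) and $k_0$ (to force $0^{k_0}$-runs to arise only from connectors).
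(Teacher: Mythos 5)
Your overall architecture matches the paper's (separate the two SFTs from $0^\bZ$, glue them with long blocks $0^{k_0}$, bound the $0$-runs to exclude $0^\bZ$), and your opening observation that $f_{[u]}(1)>0$ for every $u\in\eW(S_i)$ is correct. The gap is in the admissibility verification. Your transition lemma controls only words of length $N$: you set $y_{\min}=\min f_{[u]}(1)$ and $a_{\max}=\max a_{[v]}$ over $u,v\in\eW_N(S_1)\cup\eW_N(S_2)$ and choose $k_0$ with $f_0^{k_0}(y_{\min})\ge a_{\max}$. But a point of your $S_3$ decomposes as $\ldots\alpha^{(0)}0^{k_0}\alpha^{(1)}\ldots$ with blocks $\alpha^{(j)}\in\eW(S_{i_j})$ of \emph{arbitrary} length (the walk must be allowed to stay inside the $S_i$-component indefinitely, since $S_3\supset S_i$), and for long words neither bound is available: by Remark \ref{rem:constseqs} the left endpoint $a_{[\alpha]}$ of the admissibility interval increases with the length of $\alpha$ and can exceed $a_{\max}$, and the exit value $f_{[\alpha]}(\cdot)$ of a long block is not bounded below by $y_{\min}$. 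So $f_0^{k_0}$ need not carry the exit point of one block into the admissibility interval of the next. What is needed --- and what the paper's proof spends most of its effort on, via Remarks \ref{rem:constseqs} and \ref{rem:monotonously} and the reference sequences $(0^{N_0-1}1)^{\pm\bN}$ --- is a bound $0<a\le x_{\xi^+}\le x_{\xi^-}\le b<1$ that is \emph{uniform over all of} $S_1\cup S_2$; from it every word of $\eW(S_i)$, of any length, is forward admissible at $b$ with image in $[a,1)$, and one chooses $k_0$ with $f_0^{k_0}(a)>b$. This uniformization over the whole languages is the missing idea.

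Two further problems. First, the claim $1\in I_\xi$ is false whenever $\xi^-$ contains a $1$: membership $1\in I_{\xi^-}$ means $1$ can be pulled back through $\xi^-$, but $f_1^{-1}$ is only defined on $[0,f_1(1)]$ and $f_1(1)<1$, so $1\in I_{\xi^-}$ forces $\xi^-=0^{-\bN}$. The conclusion $S_3\subset\Sigma$ can be rescued without a two-sided check --- each truncation $(\xi_{-R}\ldots\xi_R)$, once shown forward admissible at $1$, can be padded by $0^{-\bN}$ and $0^{\bN}$ to produce a point of $\Sigma$ in the cylinder $[\xi_{-R}\ldots\xi_{-1}.\xi_0\ldots\xi_R]$, and compactness of $\Sigma$ finishes the argument (this is exactly how the paper proves $S\subset\Sigma$ in Section 7) --- but that still presupposes the forward admissibility of the truncations, i.e.\ the first gap. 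Second, your labeling of the vertex shift is not a conjugacy: the last block state before a connector (and the block state entered right after one) contributes only its first letter to the label sequence, so distinct walks can have the same image; the image is therefore a priori only sofic, and your state-identification argument does not repair this. The paper avoids the issue entirely by defining $S_3$ directly through a finite list of allowed words of a fixed length $2N_1$, which makes it an SFT by definition.
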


\begin{proof}
	As by assumption we have $0^\bZ\not\in (S_1\cup S_2)$ and since the sets $S_i$ are compact, there is $N_0\ge1$ such that $[0^{N_0}]\cap (S_1\cup S_2)=\emptyset$. By Remark \ref{rem:someseq}, without loss of generality, we can assume that $N_0$ also satisfies that $(0^{N_0-1}1)^\bZ\in\Sigma$. 

By the choice of $N_0$, $0^{N_0}$ is a ``forbidden word" in $S_i$, $i=1,2$, and, in particular, every sequence in $S_i$ must be of the type $\xi=\xi^-.\xi^+$, with $\xi^+=(0^{\ell_1}1^{m_1}0^{\ell_2}1^{m_2}\ldots)$ such that $\ell_k\le N_0-1$ and $1\le m_k\le N$ for some $N\ge1$  for all $k\in\bZ$ (recall Remark \ref{rem:numones}). 
Considering the points in \eqref{eq:formula} and letting $a_k=a_{[0^{\ell_1}1^{m_1}\ldots \,0^{\ell_n}1^{m_k}]}$, we get a nested sequence of intervals $[a_k,1]$ such that $a_k\le x_{\xi^+}$ and $a_k$ monotonically converges to $x_{\xi^+}$ as $k\to\infty$. Note that for every $k\ge1$ we have
\[
	a_{[0^{\ell_1}10^{\ell_2}1\ldots \,0^{\ell_k}1]}
	\le a_{[0^{\ell_1}1^{m_1}0^{\ell_2}1^{m_2}\ldots \,0^{\ell_k}1^{m_k}]}.
\]
Moreover, by Remark \ref{rem:monotonously}, we have
\[
	a_{[0^{N_0-1}1\ldots \,0^{N_0-1}1]}
	\le a_{[0^{\ell_1}1\ldots \,0^{\ell_k}1]}.
\]
This allows us to conclude that  for every $\xi\in S_i$, $i=1,2$, we have 
\[
	0
	< a
	\eqdef x_{(0^{N_0-1}1)^\bN}
	\le x_{\xi^+}
\]
The argument for $x_{\xi^-}$ is analogous, and we let $b
	\eqdef x_{(0^{N_0-1}1)^{-\bN}}$, where $(0^{N_0-1}1)^{-\bN}=(\ldots 0^{N_0-1}10^{N_0-1}1)$. Hence, for $i=1,2$  
\begin{equation}\label{eq:admdefb}
	0
	< a
	= x_{(0^{N_0-1}1)^\bN}
	\le 	\min_{\xi\in S_i}x_{\xi^+}
	\le 
	\max_{\xi\in S_i}x_{\xi^-}
	\le 
	x_{(0^{N_0-1}1)^{-\bN}}
	= b
	< 1.
\end{equation}
 
\begin{claim}\label{cla:eq:nowgo}
Every $\tau\in\eW(S_1)\cup\eW(S_2)$ is forward admissible for $b$ and backward admissible for $a$ and satisfies
\[
	0
	< a
	\le f_{[\tau]}(b)
	< 1.
\]
\end{claim}

\begin{proof}
By the definition of admissibility, given $\xi\in S_i$, $i=1,2$, we have that $\xi$ is admissible for $x$ if and only if $x_{\xi^+}\le x\le x_{\xi^-}$. Hence, by the above, $a\le x_{\xi^+}\le x\le x_{\xi^-}\le b$ and, in particular, for every $n\in \bZ$ we have 
\[
	a
	\le f_\xi^n(x)
	\le b.
\]  
Therefore, every $\tau\in\eW(S_i)$ is forward admissible for $b$.
The proof of backward admissibility is analogous.
\end{proof}	

The naive idea for the construction of the SFT $S_3$ is to choose some appropriate $N_1$ and to consider all concatenations of words of lengths at least $N_1$ which come from the subshifts $S_i$, $i=1,2$, which are separated by words $0^{N_1}$. The precise definition is $S_3=\Sigma_\cF$, where a $\cF$ is a certain finite set of forbidden words of length $2N_1$. Instead of describing $\cF$, we define its complement in the set $\eW_{2N_1}(\Sigma_2)$ of allowed words of length $2N_1$.

We determine $N_1> 2N_0$ as follows. Let $k_i\ge1$ be such that $S_i$ is a SFT generated by a family of words of length $k_i$, $i=1,2$. Without loss of generality we can assume that $k_1=k_2\ge N_0$. Moreover, we can also assume that any pair of cylinders of length $N_1$ in $S_i$ for $i=1,2$, respectively, are disjoint. 
Now, as $0<a\le b<1$, we can choose $N_1\ge k_1(=k_2)$ such that
\[
	f_{[0^{N_1}]}(a)
	= f_0^{N_1}(a)
	> b.
\]
Hence, for every $x\in[a,b]$ we also have 
\begin{equation}\label{eq:admisss1}
	f_0^{-N_1}(x)
	< 
	a
	\le b
	< f_0^{N_1}(x).
\end{equation}

The complement of the set $\cF$ in the set $\eW_{2N_1}(\Sigma_2)$ is defined as follows: 
\begin{itemize}
\item[i)] any word of length $2N_1$ allowed either in $S_1$ or in $S_2$,
\item[ii)] any word $0^\ell v$, with $\ell\in\{1,\ldots,N_1\}$, $\lvert v\rvert = 2N_1-\ell$, and $v\in\eW(S_1)\cup\eW(S_2)$,
\item[iii)] any word $v 0^\ell$, with $\ell\in\{1,\ldots,N_1\}$, $\lvert v\rvert = 2N_1-\ell$, and $v\in\eW(S_1)\cup\eW(S_2)$,
\item[iv)] any word of the form $v 0^{N_1} w$, with $\lvert v\rvert\ge1$, $\lvert w\rvert\ge1$, $\lvert v\rvert+\lvert w\rvert=N_1$, and $v,w\in\eW(S_1)\cup\eW(S_2)$.
\end{itemize}

\begin{claim}
	We have $0^\bZ\not\in S_3=\emptyset$ and $S_1\cup S_2\subset S_3$.
\end{claim}

\begin{proof}
	By items ii)--iii), $0^{2N_1}$ is not allowed, as by the above $0^{N_1}$ is not allowed neither in $S_1$ nor in $S_2$. It is also not allowed by i) nor by iv) (the latter -- because one of the words $v,w$ has length at least $N_1/2>N_0$).
Hence, we have $[0^{2N_1}]\cap S_3=\emptyset$, getting the first claim.	
	By item i), we immediately get $\eW_{2N_1}(S_i)\subset\eW_{2N_1}(S_3)$, $i=1,2$, and hence $S_1\cup S_2\subset S_3$. 
\end{proof}
	
\begin{claim}
	$S_3$ is transitive.
\end{claim}	

\begin{proof}
	It is enough to check that for every pair of words $v,w\in\eW(S_3)$ there exists a  word $\eta\in\eW(\Sigma_2)$ such that $v\eta w\in\eW(S_3)$. Without loss of generality, we can assume $\lvert v\rvert,\lvert w\rvert>N_1$. There are three possible cases:
\begin{itemize}
\item[(1)] if $v$ ends with $1$ and $w$ begins with $1$, then take $\eta=0^{N_1}$,
\item[(2)] if $v$ ends with $1$ and $w$ begins with $0^\ell$ for some $\ell\in\{1,\ldots,N_1-1\}$, $\ell$ being maximal with this property, then take $\eta= 0^{N_1-\ell}$, analogously for the reversed case,
\item[(3)] if $v$ ends with $0^\ell$ and $w$ begins with $0^m$ for some $\ell,m\in\{1,\ldots,N_1-1\}$, $\ell$ and $m$ being maximal with these properties,
\begin{itemize}
\item if $\ell+m< N_1$, then take $\eta=0^{N_1-\ell-m}$,
\item if $\ell+m\ge N_1$, then take $\eta=\emptyset$.
\end{itemize}
\end{itemize}

Let us see that indeed in case (1) the word $v\eta w$ is allowed in $S_3$. 
We can write $v=v''v'$ and $w=w'w''$ where $\lvert v'\rvert+\lvert w'\rvert=N_1$. Then it is enough to apply item (iv).
Cases (2) and (3) are analogous. 
This proves the claim.		
\end{proof}
	
It remains to prove $S_3\subset\Sigma$, which is an immediate consequence of the following claim.
	
\begin{claim}
	Every $\xi\in S_3$ is admissible for some point in $(0,1)$.
\end{claim}	

\begin{proof}
Note that by the above definition of $S_3$, for $\xi\in S_3$ we have either $\xi\in S_1$ or $\xi\in S_2$ or $\xi=(\ldots \tau_10^k\tau_20^k\ldots)$ with $\tau_n$ being subwords allowed in either $S_1$ or in $S_2$. Without loss of generality, it is enough to assume that $\xi=(\ldots\tau_{-1}0^k.\tau_00^k\tau_10^k\ldots)$ and to show that $\xi$ is admissible for $b$. 

We start by checking $\xi^+$ is admissible for $b$.
 By Claim \ref{cla:eq:nowgo} the word $\tau_0$ is forward admissible for $b$ and we have $a\le f_{[\tau_0]}(b)<1$. 
 Clearly, $0^k$ is forward admissible for $f_{[\tau_0]}(b)$ and by \eqref{eq:admisss1} we have $f_{[\tau_00^k]}(b)>b$. As by Claim \ref{cla:eq:nowgo} the word $\tau_1$ is forward admissible for $b$ and hence for $f_{[\tau_00^k]}(b)$, we have that $\tau_00^k\tau_1$ is forward admissible for $b$ defined in \eqref{eq:admdefb}.
Now we proceed by induction to show that $\xi^+$ is admissible for $b$. 

To check backward admissibility, first recall that by \eqref{eq:admisss1} we have
 $f_0^{-k}(b)<a$. As by Claim \ref{cla:eq:nowgo} the word $\tau_{-1}$ is backward admissible for $a$, we have that $\tau_{-1}$ is also  backward admissible for $f_0^{-k}(b)$. We now argue inductively as before. 
\end{proof}

The proof of the proposition is now complete.	
\end{proof}

\begin{proof}[Proof of Theorem \ref{teo:coded}]
We start by analyzing the 	``heteroclinic part" $\Sigma^{\rm het}$ of $\Sigma$.

\begin{lemma}\label{lemcla:isola}
	Every $\xi\in\Sigma^{\rm het}$ is an isolated point in $\Sigma$.
\end{lemma}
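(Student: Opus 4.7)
The plan is to first establish a fiber rigidity statement, namely $I_\xi=\{1\}$, and then to exploit the degeneracies of $f_1$ at the endpoints $0$ and $1$ to force any sequence $\eta\in\Sigma$ sufficiently close to $\xi$ to equal $\xi$.

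For the fiber rigidity, I would argue as follows. By (H1), each $f_i$ is strictly increasing on its domain, so the composition $f_{[\tau]}$ is strictly monotone on $I_{[\tau]}$. Combined with $f_{[\tau]}(1)=0$ (from the definition of $\Sigma^{\rm het}$), Remark \ref{rem:notationnn} gives $a_{[\tau]}=1$ and hence $I_{[\tau]}=\{1\}$. Since $\xi^+=\tau\,0^\bN$, the only forward admissibility constraint is at the $\tau$-block, so $I_{\xi^+}=I_{[\tau]}=\{1\}$; since $\xi^-=0^{-\bN}$ is trivially admissible for every $y\in[0,1]$, we conclude $I_\xi=\{1\}$, with $(\xi,1)\in\Gamma$ by construction.

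Next, let $n=\lvert\tau\rvert$ and pick $N>n$. I would consider any $\eta\in\Sigma$ satisfying $\eta_k=\xi_k$ for $|k|\le N$ (i.e., lying in a cylinder of radius $e^{-N}$ around $\xi$ in the metric $d_1$) and any $y\in I_\eta$. The first step applied to the forward prefix $(\tau\,0^{N-n})$ forces $y\in I_{[\tau 0^{N-n}]}\subset I_{[\tau]}=\{1\}$, so $y=1$. To force $\eta^+=\xi^+$, observe that after time $n$ the orbit of $(\eta,1)$ sits at $f_{[\tau]}(1)=0$; since $0\notin[d,1]$ is outside the domain of $f_1$, any $\eta_k=1$ with $k\ge n$ would violate admissibility, and inductively $\eta_k=0$ for all $k\ge n$. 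Symmetrically, if some $\eta_{-k}=1$ were the first $1$ encountered going backward (necessarily $k>N$), backward admissibility at $y=1$ would require $1\in I_{[10^{k-1}.]}=[0,f_0^{k-1}(f_1(1))]$; but $f_1(1)<1$ by (H1) and $f_0$ maps $[0,1)$ strictly into $[0,1)$ (since $f_0$ is strictly increasing with $f_0(1)=1$), so $f_0^{k-1}(f_1(1))<1$, a contradiction. Hence $\eta^-=0^{-\bN}=\xi^-$ and $\eta=\xi$, which exhibits the desired isolating neighborhood.

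The one nontrivial ingredient is the fiber rigidity in Step 1, which depends essentially on the strict monotonicity (hence injectivity) of the fiber compositions under (H1); the subsequent propagation to the forward and backward tails is forced purely by the degeneracies $d>0$ and $f_1(1)<1$, so neither (H2) nor (H2+) is needed for this lemma.
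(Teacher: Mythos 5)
Your proof is correct and follows essentially the same route as the paper's: the key point in both is that $f_{[\tau]}(1)=0$ forces $I_{[\tau]}=\{1\}$, after which the restricted domain of $f_1$ (namely $d>0$ and $f_1(1)<1$) pins the forward tail to $0^\bN$ and the backward tail to $0^{-\bN}$, so the cylinder around $\tau$ meets $\Sigma$ only in $\xi$. Your version merely spells out the isolating cylinder and the backward-admissibility computation that the paper compresses into a citation of Remark~\ref{rem:admprehet}; your observation that only (H1) is used also matches the paper.
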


\begin{proof}
It suffices to show that every $\tau\in\eW^{\rm het}(\Sigma)$ we have $[\tau]\cap\Sigma=\{0^{-\bN}.\tau 0^\bN\}$ and, in particular, $\tau$ has a unique continuation to a bi-infinite admissible sequence. By the choice of $\tau$, $f_{[\tau]}(1)=0$ and therefore  is only forward admissible at $1$ and hence $\tau$ can only  be continued to a backward admissible sequence by $0^{-\bN}$. Analogously, again by  $f_{[\tau]}(1)=0$ and also Remark \ref{rem:admprehet}, $\tau$ can only be continued to a forward admissible sequence  by $0^\bN$. This proves the lemma.
\end{proof}

As  every point in $\Sigma^{\rm het}$ is isolated and non-periodic, it is wandering. In particular, there is no invariant measure supported on this set. Further,  $\Sigma^{\rm het}$ is countable and hence its topological entropy is zero. This proves the claimed properties of $\Sigma^{\rm het}$  in the theorem.

The facts that $\Sigma^{\rm cod}$ is compact and $\sigma$-invariant follow from the above derived properties of $\Sigma^{\rm het}$. What remains to show is that it is coded. We start by the following lemma.

\begin{lemma}\label{lem:111}
	For every SFT $S\subset\Sigma$ and every word $\tau$ satisfying $[\tau]\cap S=\emptyset$, $\tau\ne(0\ldots0)$, and $f_{[\tau]}(1)\in(0,1)$, there is a(n infinite) SFT $S'$ satisfying $S'\cap S=\emptyset$, $0^\bZ\not\in S' $, and $[\tau]\cap S'\ne\emptyset$.
\end{lemma}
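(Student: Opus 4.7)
The plan is to build $S'$ as the set of bi-infinite concatenations (and their shifts) of two explicit admissible blocks $A \eqdef \tau 0^N$ and $B \eqdef \tau 0^{N+1}$ for a suitably large $N$. Since every element of $S'$ will contain $\tau$ as a subword, and since $[\tau] \cap S = \emptyset$ together with $\sigma$-invariance of $S$ makes $\tau$ a forbidden subword of $S$, this will automatically give $S' \cap S = \emptyset$.

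The first step is to produce the two admissible periodic orbits $(\tau 0^N)^\bZ$ and $(\tau 0^{N+1})^\bZ$, in the spirit of Remark \ref{rem:someseq}. The map $g_n \eqdef f_0^n \circ f_{[\tau]}$ is continuous and increasing on $[a_{[\tau]}, 1]$ with $g_n(a_{[\tau]}) = 0$ and $g_n(1) = f_0^n(f_{[\tau]}(1)) \to 1$ as $n \to \infty$, using that $f_{[\tau]}(1) \in (0,1)$ and that $1$ is an attracting fixed point of $f_0$ under (H1). Fixing any $\alpha \in (a_{[\tau]}, 1)$, for $N$ large enough we have $g_N(\alpha) > \alpha$ and $g_N(1) < 1$, so the intermediate-value theorem yields a fixed point $x^*_N \in (\alpha, 1)$ of $g_N$. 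Analogously $g_{N+1}$ has a fixed point $x^*_{N+1}$, and since $g_{N+1}(y) = f_0(g_N(y)) > g_N(y)$ for $y \in (0,1)$, one has $x^*_{N+1} > x^*_N$. By Lemma \ref{lem:fixpots} both periodic sequences belong to $\Sigma$.

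The second step upgrades this to arbitrary $\{A, B\}$-concatenations. On $J \eqdef [x^*_N, x^*_{N+1}]$, both $g_N$ and $g_{N+1}$ are monotone self-maps: $g_N(x^*_N) = x^*_N$ while $g_N(x^*_{N+1}) < g_{N+1}(x^*_{N+1}) = x^*_{N+1}$, and symmetrically for $g_{N+1}$. Hence for any bi-infinite choice $(c_i)_{i \in \bZ} \in \{g_N, g_{N+1}\}^\bZ$ there is a realizing orbit in $J$: the forward direction is immediate from the self-map property, while for the backward direction we pick $x_0$ in the nonempty nested intersection
\[
	\bigcap_{k \ge 1} (c_{-1} \circ c_{-2} \circ \cdots \circ c_{-k})(J),
\]
and use monotonicity of each $c_{-k}$ to consistently define preimages $x_{-k} \in J$ with $c_{-k}(x_{-k}) = x_{-k+1}$. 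Thus every bi-infinite $\{A, B\}$-concatenation is admissible for some $x \in J$, hence belongs to $\Sigma$.

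Finally, enlarging $N$ further if necessary to ensure $N \ge \lvert \tau \rvert$, the gaps between consecutive $1$'s in any concatenation of $\{A, B\}$-blocks are either at most $\lvert \tau \rvert - 1$ (within a single copy of $\tau$) or at least $N + 1$ (across one of the $0$-runs), so the two regimes are disjoint and the block partition of any $\xi$ can be recovered locally. Declaring $\cF$ to be the finite list of all words of length $2(\lvert \tau \rvert + N + 1)$ that are not subwords of some finite $\{A, B\}$-concatenation, the SFT $S' \eqdef \Sigma_{\cF}$ then consists exactly of those concatenations and their shifts; verifying this unambiguous-parsing claim is the main technical step. The remaining properties follow at once: $S' \subset \Sigma$ by the previous step; $S'$ is infinite since $\{g_N, g_{N+1}\}^\bZ$ is uncountable and distinct choices yield distinct sequences; $[\tau] \cap S' \ne \emptyset$ because an appropriate shift of $A^\bZ$ starts with $\tau$; $0^\bZ \notin S'$ because $\tau \ne (0\ldots 0)$ forces $0^{2(\lvert \tau \rvert + N + 1)} \in \cF$, so every element of $S'$ carries infinitely many $1$'s; and $S' \cap S = \emptyset$ by the opening remark.
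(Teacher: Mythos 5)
Your proof is correct and follows essentially the same route as the paper's: the paper also takes $S'$ to be the SFT generated by $\{\tau 0^k,\tau 0^{k+1}\}$, with $k$ chosen so that $f_0^k(f_{[\tau]}(a))>a$ for some point $a$ at which $\tau$ is forward admissible, which produces the same forward-invariant interval you obtain via the fixed points of $g_N$ and $g_{N+1}$. Your additional care with backward admissibility and with the unique-parsing issue needed to realize the concatenation language as an SFT only fills in details the paper leaves implicit.
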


\begin{proof}
	Since $f_{[\tau]}(1)\in(0,1)$, there is $a\in(0,1)$ for which $\tau$ is forward admissible. Let $b=f_{[\tau]}(a)$. We can also assume that $b>0$. Choose $k\ge1$ such that $f_0^k(b)>a$. Now it is enough to consider the SFT $S'$ generated by the family of words $\{\tau 0^k,\tau 0^{k+1}\}$. 
\end{proof}

We now inductively construct an increasing countable family $\{S_k\}$ of transitive SFTs such that $\overline{\bigcup_k S_k}=\Sigma^{\rm cod}$.   
First observe that the set $\eW^{\rm cod}(\Sigma)$ is countable and let $\{\tau^{(k)}\}$ be some enumeration of it. 
Now let $S_0=\emptyset$ and for $k=1,2,\ldots$ apply the following iterative procedure:
\begin{itemize}
\item If $[\tau^{(k)}]\cap S_{k-1}\ne\emptyset$ then $S_k\eqdef S_{k-1}$,
\item otherwise, if $[\tau^{(k)}]\cap S_{k-1}=\emptyset$, then
\begin{itemize}
\item first apply Lemma \ref{lem:111} to $S_{k-1}$ to obtain a SFT $S_k'$ such that 
\[
	S_k'\cap S_{k-1}=\emptyset, 
	\quad 0^\bZ\not\in S_k',
	\quad \text{ and }\quad
	[\tau^{(k)}]\cap S_k'\ne\emptyset,
\]
\item thereafter apply Proposition \ref{proclalem:222} to $S_{k-1}$ and $S_k'$ to obtain a transitive SFT $S_k$ such that 
\[
	S_k\supset S_{k-1}\cup S_k'
	\quad\text{ and }\quad 
	0^\bZ\not\in S_k.
\]	 
\end{itemize}
\end{itemize}
Observe that we have 
\[
	S_k\supset S_{k-1},\quad
	0^\bZ\not\in S_k,
	\quad \text{ and }\quad
	S_k\cap[\tau^{(k)}]\ne\emptyset.
\]
This provides an increasing family of transitive SFTs $\{S_k\}_k$ satisfying
\[
	\overline{\bigcup_{k\ge1}S_k}
	\supset\overline{\Sigma^{\rm cod}\setminus\{0^\bZ\}}
	=\Sigma^{\rm cod}.
\]
Moreover, since points from $\Sigma^{\rm het}$ have arbitrarily long subsequences of zeros they do not belong to any $S_k$. Moreover, as by Lemma \ref{lemcla:isola} the points in $\Sigma^{\rm het}$ are isolated, they cannot belong to the closure of $\bigcup S_k$. Thus, $\overline{\bigcup_k S_k}=\Sigma^{\rm cod}$.

Finally, to see that $\sigma$ is topologically mixing on $\Sigma^{\rm cod}$, consider two forward admissible words $(\xi_0\ldots\xi_n)$ and $(\eta_0\ldots\eta_m)$ and points $x$ and $y$ for which these words are admissible, respectively. Note that for every $k\ge1$ sufficiently large it holds $f_{[\xi_0\ldots\,\xi_n0^k]}(x)>y$. Hence, the composed word $( \xi_0\ldots\xi_n0^k\eta_0\ldots\eta_m)$ is admissible at $y$. This immediately implies the mixing property.

This completes the proof of Theorem \ref{teo:coded}. 
\end{proof}

\section{Underlying structures: Homoclinic classes} \label{ss.homoclinicclasses}

In this entire section we only assume (H1)--(H2), hypothesis (H2+) is not required. 

We establish the notion of a \emph{homoclinic class} of a hyperbolic periodic point of the skew-product $F$ induced by the map $\tilde F$  defined in \eqref{eq:parental}, translating it from the differentiable setting. The analogous analysis can be done for $\tilde F$ but will be skipped. We see that there are only two classes: one containing contracting orbits and the other one containing expanding ones, see Propositions \ref{pro.l.homoclinicallyrelated}. These classes may intersect. Moreover, the ``boundary of the set $\Gamma$" has two graph-like parts: one contained in $H(P, F)$ and the other one in $H(Q, F)$, see Proposition \ref{prol.extremalpoints}. We also introduce homoclinic relations for parabolic periodic points, see Section \ref{ss:parabolic}, and see that they are related simultaneously to periodic points of both types of hyperbolicity and, in particular, to $P$ and $Q$, see Proposition \ref{pl.parabolichomoclinic}. This section only discusses the topological structure of homoclinic classes. The study of their hyperbolic and ergodic properties is postponed. 

\subsection{Homoclinic relations and classes}

 In the differentiable setting, the homoclinic class of a hyperbolic periodic point is the closure of the transverse intersections of the stable and the unstable invariant manifolds of its orbit. These homoclinic classes are transitive sets with a dense subset of periodic orbits. In our setting, the definition of a homoclinic class is similar, the only difference is that transversality is not involved (note that here we  can only speak of invariant sets and cannot invoque any differentiable structure for these sets). We will follow closely the presentation in \cite[Sections 2 and 3]{DiaEstRoc:16} where a similar discussion is done for an specific class of skew-product maps (falling in the concave class studied here) and skip some details, see this reference for further details.

In what follows, we denote by $\cO(X)$ the $ F$-orbit of a point $X$. Consider a periodic point $R=((\xi_0\ldots\xi_{n-1})^\bZ, r)$ of $ F$, note that $f_{[\xi_0\ldots\,\xi_{n-1}]}(r)=r$. Recall that its orbit is \emph{hyperbolic} if
$f'_{[\xi_0\ldots\,\xi_{n-1}]}(r) \ne 1$ (by hypothesis, this derivative is
 positive). This orbit is {\emph{contracting}} if  $f'_{[\xi_0\ldots\xi_{n-1}]}(r)\in (0,1)$, otherwise it is \emph{expanding}. When the derivative is equal to one the orbit is called \emph{parabolic}.
We define the \emph{stable set of $R$}, $\cW^\s (R, F)$, as the set of points  $X$ such that $ F^{i}(X) 
\to R$ as $i\to \infty$. 
The \emph{stable set of the orbit of $R$}, $\cW^\s(\cO(R), F)$, is the union of the stable sets
of the points in $\cO(R)$. The \emph{unstable sets of $R$ and $\cO(R)$} are defined 
by $\cW^\u (R, F)=\cW^\s(R, F^{-1})$ and 
$\cW^\u (\cO(R), F)=\cW^\s(\cO(R), F^{-1})$. 

Given now a hyperbolic periodic point $R$ and its orbit $\cO(R)\subset \Gamma$
we consider its \emph{homoclinic points} 
\[
	X
	\in \cW^\s(\cO(R), F)\cap\cW^\u(\cO(R), F)
\]
and its \emph{homoclinic class}
\[
	H(R, F)
	\eqdef \overline{\{\cW^\s(\cO(R), F)\cap\cW^\u(\cO(R), F)\}}.	
\]
Note again that the ``transversality" of the homoclinic intersections is not required. 
Noting that $H(R, F)$ is $ F$-invariant and that $\Gamma$ is a locally maximal invariant set
it follows that 
\[
	H(R, F)\subset \Gamma, 
	\quad\text{ for every hyperbolic periodic point }R\text{ with }\cO(R)\subset\Gamma.
\]
The homoclinic class $H(R, F)$ can be alternatively defined as follows.
First, we say that a pair of hyperbolic periodic points $R_1$ and $R_2$ of the same type of hyperbolicity are {\emph{homoclinically related}} if the un-/stable invariant sets of their orbits  intersect cyclically, 
\[
\cW^\s(\cO(R_1), F)\cap\cW^\u(\cO(R_2), F)\ne\emptyset
\ne 
\cW^\u(\cO(R_1), F)\cap\cW^\s(\cO(R_2), F).
\]
Note that transversality is not required, but it is required that the two orbits have the same type or hyperbolicity%
\footnote{Indeed, we may have periodic orbits with different type of hyperbolicity whose invariant sets intersect cyclically, this leads to a \emph{heterodimensional cycle} involving these orbits, see Remark \ref{rem:herocycle}}. 
It follows that \emph{being homoclinically related} defines an equivalence relation among hyperbolic periodic points of the same type of hyperbolicity. This is due the fact that the fiber dynamics has no critical points and hence the intersections between the invariant sets of $\cO(R_1)$ and $\cO(R_2)$ behave as transverse
ones and have well defined continuations. In Section~\ref{ss:parabolic}, we will extend homoclinic relation to include also parabolic periodic points and will provide the proof that this new relation is an equivalence relation.
Then 
\[
	H(R, F)
	= \mathrm{closure} \{R'\colon R' \text{ is homoclinically related to } R\}.
\]
As in the differentiable setting, $H(R, F)$ is a transitive set. Note that, in general, two homoclinic classes of periodic points may fail to be disjoint (see Remark \ref{rem:examples} item (1c)).
 
We will see that in our setting hyperbolic periodic points of the same type are homoclinically related. Hence there exist only two homoclinic classes (related to $P$ and $Q$, respectively). 

\begin{proposition}\label{pro.l.homoclinicallyrelated} 
	Every hyperbolic periodic point $R\in\Gamma$ of expanding (contracting) type is homoclinically related to $Q$ (to $P$). In particular,  two points $R_1$ and $R_2$ of expanding (contracting) type are homoclinically related and their common homoclinic class coincides with the one of $Q$ (of $P$).
\end{proposition}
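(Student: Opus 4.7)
I focus on the expanding case and write $R=(\xi,r)$ with $\xi=(\xi_0\ldots\xi_{n-1})^\bZ$ periodic and $r$ the expanding fixed point of $g\eqdef f_{[\xi_0\ldots\xi_{n-1}]}$; by Lemma~\ref{lem:fixpots}(1a) the other fixed point $p^->r$ of $g$ is contracting, and the fiber spine is $I_\xi=[r,p^-]$. The strategy is to exhibit explicit heteroclinic points realizing each of the two required intersections $\cW^u(\cO(Q),F)\cap\cW^s(\cO(R),F)\ne\emptyset$ and $\cW^s(\cO(Q),F)\cap\cW^u(\cO(R),F)\ne\emptyset$, and then to conclude the second sentence of the proposition from the equivalence-relation nature of ``being homoclinically related'' discussed in Section~\ref{ss.homoclinicclasses}. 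The contracting case follows verbatim from the time-reversal symmetry of Remark~\ref{rem:time-reversal}.

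For the first intersection I take $X_1\eqdef(0^{-\bN}.\xi^+,r)$. Admissibility is immediate: the past $0^{-\bN}$ acts via $f_0^{-1}$ which keeps any $y\in(0,1)$ inside $(0,1)$, and the future $\xi^+$ fixes $r$ under $g$. Backward iteration gives $F^{-k}X_1=(\sigma^{-k}(0^{-\bN}.\xi^+),f_0^{-k}(r))$, where the base sequence converges to $0^\bZ$ in product topology (the $\xi^+$-block is pushed arbitrarily far to the right) and $f_0^{-k}(r)\to 0$ because $0$ is the attracting fixed point of $f_0^{-1}$ on $[0,1)$; hence $X_1\in\cW^u(Q,F)$. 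Forward iteration along multiples of $n$ gives $F^{mn}X_1=(\sigma^{mn}(0^{-\bN}.\xi^+),g^m(r))=(\sigma^{mn}(0^{-\bN}.\xi^+),r)$, and by $n$-periodicity of $\xi$ the base agrees with $\xi$ on coordinates $\ge -mn$, so this converges to $R$ and $X_1\in\cW^s(\cO(R),F)$.

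For the opposite intersection I take $X_2=(\eta,y^*)$ with $\eta=\xi^-\nu.\tau\,0^\bN$, where $\nu,\tau$ are finite words and $y^*\in(0,1)$ are to be chosen. Backward iteration eventually uses $\xi^-$ acting as $g^{-1}$ on the fiber; convergence to $r$ requires $f_{[\nu]}^{-1}(y^*)\in(r,p^-)$, the stable basin of $r$ under $g^{-1}$. Forward iteration passes through $\tau$ and then stays at $0$ under $0^\bN$; since $f_0$ has $0$ as a repelling fixed point, convergence to $Q$ forces $f_{[\tau]}(y^*)=0$, equivalently $y^*=a_{[\tau]}\in\cO^-(0)$ in the notation of Remark~\ref{rem:notationnn}. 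The task therefore reduces to producing finite words $\nu,\tau$ with $\cO^-(0)\cap f_{[\nu]}\bigl((r,p^-)\bigr)\ne\emptyset$. I would establish this by an IFS computation distinguishing cases for the relative position of $d=f_1^{-1}(0)$ and $(r,p^-)$: if $d\in(r,p^-)$ take $\nu=\emptyset$, $\tau=\xi_0\ldots\xi_{n-1}1$ and $y^*=g^{-1}(d)$; if $d>p^-$ insert $0^N$ before the terminal $1$ to push $d$ down through $f_0^{-N}$ into $(r,p^-)$; if $d<r$ combine several $f_1^{-1}$-branches interlaced with $f_0^{-k}$-branches, using that forward $g$-iterates $g^m(y)$ increase monotonically to $p^-$, so that $f_0^{-k}\circ f_1^{-1}\circ\cdots$ applied to $0$ generates preimages inside $(r,p^-)$.

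Once $X_1$ and $X_2$ are produced, $R$ is homoclinically related to $Q$ by definition, and the ``in particular'' statement follows because two expanding periodic points are both related to $Q$ and the relation is transitive, so they share the common homoclinic class $H(Q,F)$. The main obstacle is the IFS density step in the construction of $X_2$, specifically the case $d<r$; this is where the concavity hypothesis (H2) is genuinely used, as it controls the derivative of $g$ and prevents the backward IFS orbit of $0$ from collapsing to a set disjoint from the spine $(r,p^-)$.
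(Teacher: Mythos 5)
Your first heteroclinic point $X_1=(0^{-\bN}.\xi^+,r)$ is exactly the point the paper uses for the intersection $\cW^\u(Q,F)\cap\cW^\s(\cO(R),F)$, and that half of the argument is fine. The gap is in the second intersection, and it comes from a mis-identification of the unstable set of $r$ for the fiber map $g=f_{[\xi_0\ldots\,\xi_{n-1}]}$. You take the ``stable basin of $r$ under $g^{-1}$'' to be only $(r,p^-)$, i.e.\ the part of the spine to the \emph{right} of $r$. But concavity gives much more: writing $I_{[\xi_0\ldots\,\xi_{n-1}]}=[a,1]$ with $a=a_{[\xi_0\ldots\,\xi_{n-1}]}$ (so $g(a)=0$, Remark~\ref{rem:notationnn}), the graph of $g$ lies below the diagonal on all of $[a,r)$, hence $g^{-k}$ increases every point of $[0,r)$ monotonically up to $r$ and $[a,r')\subset\cW^\u(r,g)$. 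In particular the single point $a$ is simultaneously in $\cW^\u(r,g)$ and a preimage of $0$ under one application of $g$. The point $X=\bigl((\xi_0\ldots\xi_{n-1})^{-\bN}.(\xi_0\ldots\xi_{n-1})0^{\bN},\,a\bigr)$ therefore lies in $\cW^\u(R,F)\cap\cW^\s(Q,F)$ with no further work, which is precisely what the paper does.

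Because you exclude $[a,r)$ from the unstable set, you are forced to hunt for a point of $\cO^-(0)$ inside $f_{[\nu]}\bigl((r,p^-)\bigr)$, and that step is not completed: the case $d<r$ is only gestured at (``combine several $f_1^{-1}$-branches interlaced with $f_0^{-k}$-branches\ldots''), and it is not clear it can always be carried out --- the density of $\cO^-(0)$ in $[0,1]$ is \emph{not} guaranteed under (H1)--(H2) (it is equivalent to separation of the sets $\Sigma^+(x)$, which may fail). So as written the proof of $\cW^\u(\cO(R),F)\cap\cW^\s(Q,F)\ne\emptyset$ is incomplete; replacing your $X_2$ by the point $X$ above closes the gap and makes the whole argument coincide with the paper's. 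The deduction of the ``in particular'' clause from transitivity, and the reduction of the contracting case to the expanding one by time reversal, are both fine.
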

 
\subsection{Proof of Proposition \ref{pro.l.homoclinicallyrelated}}
 
To continue our discussion, we state a simple lemma about homoclinic relations which is just a reformulation of \cite[Corollary 3.1]{DiaEstRoc:16} for the fixed points $P$ and $Q$ of $ F$. For completeness and to illustrate the dynamics, we will sketch its proof.

\begin{lemma}[Characterisation of homoclinic points]\label{l.charHQ}
Consider a point $X=(\xi,x)\in \Gamma$. The point $X$ is a homoclinic point of $Q$ if and only if
\begin{equation}\label{e.cond1}
	\xi= (0^{-\bN} \xi_{-\ell} \ldots \xi_{-1} .\xi_0 \ldots \xi_k 0^\bN),
\end{equation}
where
\begin{equation}\label{e.cond2}
	f_{[\xi_0 \ldots \,\xi_k]}(x)=0 
	\quad\mbox{and}\quad
f_{[\xi_{-\ell} \ldots \,\xi_{-1}.]}(x) \in [0,1).
\end{equation}
The point $X$ is a homoclinic point of $P$ if and only if
\[
	\xi= (0^{-\bN} \xi_{-\ell} \ldots \xi_{-1} .\xi_0 \ldots \xi_k 0^\bN)
\]
where
\[
	f_{[\xi_{-m} \ldots \,\xi_{-1}.]}(x)=1 \quad\mbox{and}\quad
	f_{[\xi_0 \ldots \,\xi_n]}(x) \in (0,1].
\]
\end{lemma}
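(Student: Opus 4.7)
The plan is to unpack the definition of homoclinic point and combine two elementary observations: convergence in the product topology on $\Sigma_2$ to $0^\bZ$ must stabilize in finite time, and the only fiber trajectories of $F$ that converge forwards or backwards to $Q$ or $P$ are those locked to the basins of attraction of the hyperbolic fixed points $0$ and $1$ of $f_0$.

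First I would treat the $Q$-case. Assuming $X = (\xi, x) \in \cW^\s(Q, F) \cap \cW^\u(Q, F)$, continuity of $\pi$ yields $\sigma^n(\xi) \to 0^\bZ$ as $n \to +\infty$ and as $n \to -\infty$. By the definition of $d_1$ in \eqref{eqdef:distanceSigma}, this forces $\xi_n = (\sigma^n\xi)_0 = 0$ for all $n$ large enough (respectively $\xi_{-n} = 0$ for all $n$ large enough), so there exist $k,\ell$ yielding the form \eqref{e.cond1}. For the fiber side, once $F^n(X)$ has reached the forward tail $0^\bN$, its fiber coordinate is $y \eqdef f_{[\xi_0 \ldots \xi_k]}(x)$ and subsequent iterates apply $f_0$; since $f_0'(0) > 1$ by (H1), the fixed point $0$ is repelling and the only point of $[0,1]$ whose forward $f_0$-orbit converges to $0$ is $0$ itself, so $y = 0$. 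Symmetrically, once $F^{-m}(X)$ has reached the backward tail $0^{-\bN}$, the fiber coordinate is $z \eqdef f_{[\xi_{-\ell}\ldots\xi_{-1}.]}(x)$ and subsequent inverse iterates apply $f_0^{-1}$; since $f_0'(1) \in (0,1)$ the point $1$ is attracting for $f_0$, hence repelling for $f_0^{-1}$, and the basin of $0$ under $f_0^{-1}$ is exactly $[0,1)$, forcing $z \in [0,1)$. This establishes \eqref{e.cond2}.

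The converse direction is immediate: under \eqref{e.cond1}--\eqref{e.cond2}, for $n > k$ the forward tail produces the constant fiber orbit $f_0^{n-k-1}(0) = 0$, and for $m > \ell$ the backward tail gives $f_0^{-(m-\ell)}(z)$ with $z \in [0,1)$, which monotonically decreases to $0$. Admissibility along the tails is automatic because $f_0$ is defined on all of $[0,1]$, while admissibility of the middle words follows from $X \in \Gamma$. Hence $F^n(X) \to Q$ as $n \to \pm\infty$, so $X$ is homoclinic to $Q$. The $P$-case is entirely symmetric under the time- and coordinate-reversal of Remark \ref{rem:time-reversal}: now $1$ is attracting for $f_0$ with basin $(0,1]$, forcing $f_{[\xi_0 \ldots \xi_n]}(x) \in (0,1]$, and repelling for $f_0^{-1}$, forcing $f_{[\xi_{-m}\ldots\xi_{-1}.]}(x) = 1$.

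The step I expect to require the most care is the bookkeeping around the degenerate cases ($\ell = 0$ or $k = -1$, i.e., empty middle word and identity map, and in particular when $X = Q$ or $X = P$ itself); otherwise the argument is a direct application of the basin-of-attraction structure of $f_0$ at its hyperbolic fixed points and demands no further machinery.
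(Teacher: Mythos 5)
Your argument is correct and follows essentially the same route as the paper's proof: the symbolic part forces the $0^{\pm\bN}$ tails, and the fiber conditions come down to $\cW^\s(0,f_0)=\{0\}$ and $\cW^\u(0,f_0)\cap[0,1]=[0,1)$ (respectively the basins of $1$), with the converse checked by direct iteration along the tails. Your extra explicitness about why the tails stabilize and about the degenerate/empty-word cases is harmless and fills in details the paper calls "immediate."
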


\begin{proof}
We only prove the first part. Suppose that $X=(\xi,x)$ is a homoclinic point of $Q$. This immediately implies that  $\xi=(0^{-\bN} \xi_{-\ell} \ldots \xi_{-1} .\xi_0 \ldots \xi_k 0^\bN)$. Then the conditions
\[
	f_{[\xi_0 \ldots \,\xi_k]} (x) \in \cW^\s (0,f_0)=\{0\}
	\quad\mbox{and}\quad
	f_{[\xi_{-\ell} \ldots\, \xi_{-1}.]} (x) \in \cW^\u (0,f_0) \cap [0,1]=[0,1)
\]
prove one implication. To prove the converse one note that \eqref{e.cond2} implies that
\[
	f_{[\xi_0 \ldots\, \xi_k0^n]}(x)=0, 
	\quad \mbox{for every $n\ge 0$}
\]
and 
\[
	\lim_{n\to \infty}  f_{[0^{-n}\xi_{-\ell} \ldots\, \xi_{-1}.]}(x) 
	= 0,
\]
which together with \eqref{e.cond1} implies that $X\in \cW^\s(Q, F) \cap \cW^\u(Q, F)$, ending the proof.
\end{proof}

For the next remarks, consider two hyperbolic periodic points $R_1$ and $R_2$, where 
\begin{equation}\label{eq:R1R2}
 	R_1
	=((\xi_0\ldots\xi_{n-1})^\bZ,r_1), \quad
 	R_2
	=((\eta_0\ldots\eta_{m-1})^\bZ,r_2).
\end{equation}

\begin{remark}[Homoclinic relations]\label{rem:homoclinrel}
Assume that the points in \eqref{eq:R1R2} are of expanding type. They are homoclinically related if and only if there are points of the form
\[\begin{split}
	X
	&= \big(((\xi_0\ldots\xi_{n-1})^{-\bN} \tau_{-\ell_1}\ldots \tau_{-1}.\tau_0\ldots \tau_{\ell_2} 
			(\eta_0\ldots \eta_{m-1})^\bN),x\big), 
	\quad \mbox{and}\\
	Y
	&= \big(((\eta_0\ldots \eta_{m-1})^{-\bN} \rho_{-\ell_3} \ldots\rho_{-1}.\rho_0\ldots \rho_{\ell_4} (\xi_0\ldots\xi_{n-1})^{\bN}),y\big), 
\end{split}\]
such that
\begin{equation}\label{eq:sepjoint1}
\begin{split}
	f_{[\tau_0\ldots\, \tau_{\ell_2}]} (x)
	&\in \{r_2\}
	= \cW^\s_{\rm loc} (r_2, f_{[\eta_0\ldots\,\eta_{m-1}]})\\
	f_{[\tau_{-\ell_1}\ldots \,\tau_{-1}.]}(x) 
	&\in \cW^\u_{\rm loc} (r_1, f_{[\xi_0\ldots\,\xi_{n-1}]})
\end{split}
\end{equation}
and
\begin{equation}\label{eq:sepjoint2}
\begin{split}
	f_{[\rho_0\ldots\, \rho_{\ell_4}]} (y)
	&\in \{r_1\}
	= \cW^\s_{\rm loc} (r_1, f_{[\xi_0\ldots\,\xi_{n-1}]})\\
	f_{[\rho_{-\ell_1}\ldots \,\rho_{-1}.]}(y) 
	&\in \cW^\u_{\rm loc} (r_2, f_{[\eta_0\ldots \,\eta_{m-1}]}).
\end{split}
\end{equation}
 Note that $X\in \cW^\s (R_2, F) \cap \cW^\u (R_1, F)$ and 
 $Y\in \cW^\s (R_2, F) \cap \cW^\u (R_1, F)$. 
 
 There is a similar version for homoclinic relations of contracting points.
\end{remark}

\begin{remark}
	An immediate consequence of Lemma \ref{l.charHQ} and Remark \ref{rem:homoclinrel} is that the homoclinic classes $H(P, F)$ and $H(Q, F)$ are both non-trivial and hence $ F$ has infinitely many hyperbolic periodic points (homoclinically related either to $P$ or $Q$). Indeed, as $f_1(d)=0$ it follows that
\[\begin{split}
	&((0^{-\bN}.10^{\bN}),d) \in \cW^\s(Q, F) \cap \cW^\u(Q, F) 
		\subset H(Q, F)
	\quad\mbox{and}\\
	&(0^{-N}1.0^\bN,f_1(1) )\in  \cW^\s(P, F) \cap \cW^\u(P, F)
		\subset H(P, F). 
\end{split}\]
Hence, both homoclinic classes are infinite sets and hence they contain infinitely many hyperbolic periodic points. We need to understand the homoclinic relations among them. We know that some of them are related to $P$ and some to $Q$. The point of Proposition \ref{pro.l.homoclinicallyrelated} is that these are the only two possibilities.
\end{remark}

For completeness, we state the corresponding result for Remark \ref{rem:homoclinrel} for periodic points \eqref{eq:R1R2} of different type of hyperbolicity. 

\begin{remark}[Heterodimensional cycles]\label{rem:herocycle}
Assume now that $R_1$ and $R_2$ in \eqref{eq:R1R2} are of contracting and expanding  type, respectively. Note that in this case
\[
	\cW^\u_{\rm loc} (r_1, f_{[\xi_0\ldots\, \xi_{n-1}]}) = \{r_1\}
	\quad\text{ and }\quad
	\cW^\s_{\rm loc} (r_2, f_{[\eta_0\ldots\, \eta_{m-1}]}) = \{r_2\},
\]
while $\cW^\s_{\rm loc} (r_1, f_{[\xi_0\ldots \,\xi_{n-1}]})$ and $\cW^\u_{\rm loc} (r_2, f_{[\eta_0\ldots \,\eta_{m-1}]})$ are open intervals. Assume that there are points
\[\begin{split}
	X
	&= \big(((\xi_0\ldots\xi_{n-1})^{-\bN}.\tau_0\ldots \tau_{\ell_1} 
			(\eta_0\ldots \eta_{m-1})^\bN),r_1\big), 
	\quad \mbox{and}\\
	Y
	&= \big(((\eta_0\ldots \eta_{m-1})^{-\bN} .\rho_0\ldots \rho_{\ell_2} (\xi_0\ldots\xi_{n-1})^{\bN}),y\big), 
\end{split}\]
such that
\begin{itemize}
\item $f_{[\tau_0\ldots\, \tau_{\ell_1}]} (r_1)\in \{r_2\}
	= \cW^\s_{\rm loc} (r_2, f_{[\eta_0\ldots\, \eta_{m-1}]})$,
\item $y \in \cW^\u_{\rm loc} (r_2, f_{[\eta_0\ldots \,\eta_{m-1}]})$,
\item $f_{[\rho_0\ldots \,\rho_{\ell_2}]} (y)
		\in \cW^\s_{\rm loc} (r_1, f_{[\xi_0\ldots\,\xi_{n-1}]})$.
\end{itemize}
Then 
$X\in \cW^\s (R_2, F) \cap \cW^\u (R_1, F)$ and 
 $Y\in \cW^\s (R_2, F) \cap \cW^\u (R_1, F)$. In this case, following the terminology in the differentiable case, we say that $R_1$ and $R_2$ form a heterodimensional cycle. 
\end{remark}

\begin{proof}[Proof of Proposition \ref{pro.l.homoclinicallyrelated}]
We only consider the expanding case, the other one is analogous. 
Write $R=((\xi_0\ldots\xi_{n-1})^\bZ, r)$. Invoking Lemma \ref{lem:fixpots}, the fact that $r$ is an expanding fixed  point of $f_{[\xi_0\dots\xi_{n-1}]}$ implies that $f_{[\xi_0\dots\xi_{n-1}]}$ has exactly two periodic points in $I_{[\xi_0\dots \xi_{n-1}]}=[a,1]$ (with $a=a_{[\xi_0\dots \xi_{n-1}]}$ and hence $f_{[\xi_0\ldots\,\xi_{n-1}]}(a)=0$, using the notation in Remark \ref{rem:notationnn}): the point $r=p_{[\xi_0\ldots\,\xi_{n-1}]}^+$ and a point $r'=p_{[\xi_0\ldots\,\xi_{n-1}]}^-$ with $r<r'$ and 
$f'_{[\xi_0\dots\xi_{n-1}]}(r')<1$ (using the notation in Lemma \ref{lem:fixpots}). We immediately have
\[
	[a,r') 
	\subset \cW^\u (r, f_{[\xi_0\dots\xi_{n-1}]})
	\quad\mbox{and}\quad
	[0,1) 
	\subset \cW^\u (0, f_{0}).
\]
Consider the points
\[
	X
	= (((\xi_0\dots\xi_{n-1})^{-\bN}.(\xi_0\dots\xi_{n-1}) 0^{\bN}),a)
	\quad\mbox{and}\quad
	Y= ((0^{-N} . (\xi_0\dots\xi_{n-1})^{\bN}),r).
\]
Hence we have that $X\in \cW^\u(R,F)$ and from $r\in [0,1)\subset \cW^\u (0,f_0)$ we get $Y\in \cW^\u(Q,F)$. Similarly, $f_{[\xi_0\dots\xi_{n-1}]}(a)=0$ implies that $X\in \cW^\s (Q,F)$. Finally, $Y\in \cW^\s(R,F)$ is obvious. Therefore, the invariant sets of $Q$ and $\cO(R)$ intersect cyclically and hence the points $Q$ and $R$ are homoclinically related.
\end{proof}

\begin{corollary}\label{c.horseshoes}
Consider two basic sets $\Gamma_1,\Gamma_2\subset\Gamma$ of the same type of hyperbolicity. Then there is a horseshoe $\Gamma_3\subset \Gamma$ containing $\Gamma_1$ and $\Gamma_2$.
\end{corollary}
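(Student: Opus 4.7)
By Remark~\ref{rem:time-reversal} I may assume $\Gamma_1,\Gamma_2$ both have uniform fiber expansion. The plan is to merge their symbolic projections into a single transitive subshift of finite type inside $\Sigma$, and then to lift it back to $\Gamma$ as a horseshoe.

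The starting point is that expanding periodic orbits are dense in each $\Gamma_i$ (standard for uniformly hyperbolic sets), and that by Proposition~\ref{pro.l.homoclinicallyrelated} every such orbit is homoclinically related to $Q$. Hence any two expanding periodic orbits of $\Gamma_1\cup\Gamma_2$ are homoclinically related in the sense of Remark~\ref{rem:homoclinrel}. Set $S_i\eqdef\pi(\Gamma_i)$. A brief case analysis rules out $\Gamma_i\ni (0^\bZ,x)$ for any $x\in(0,1]$, since the forward orbit would converge to the fiber-contracting fixed point $P$, violating uniform expansion; hence either $\Gamma_i=\{Q\}$ (a trivial case disposed of by replacing $\{Q\}$ with any slightly larger horseshoe homoclinically related to $Q$) or $0^\bZ\notin S_i$.

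At the symbolic level I would approximate each $S_i$ from outside by the decreasing sequence of SFTs
\[
	\tilde S_i^{(N)}
	\eqdef \{\xi\in\Sigma_2\colon \text{every length-}N\text{ subword of }\xi\text{ belongs to }\eW_N(S_i)\}\supset S_i,
\]
each of which lies inside $\Sigma$ for $N$ sufficiently large (admissibility becomes visible through a finite sliding window of length determined by the fiber dynamics and the uniform backward contraction of $f_\xi^{-n}$ on expanding cylinders). For $N$ large, $\tilde S_1^{(N)}$ and $\tilde S_2^{(N)}$ are disjoint transitive SFTs that omit $0^\bZ$. Proposition~\ref{proclalem:222} then yields a transitive SFT $S_3\subset\Sigma$ with $S_3\supset \tilde S_1^{(N)}\cup\tilde S_2^{(N)}\supset S_1\cup S_2$ and $0^\bZ\notin S_3$. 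Setting
\[
	\Gamma_3\eqdef \{(\xi,x_{\xi^+})\colon \xi\in S_3\},
\]
Remark~\ref{rem:graphlikestructure} together with the fact that a fiber-expanding orbit must lie on the graph $\xi\mapsto x_{\xi^+}$ gives $\Gamma_1\cup\Gamma_2\subset\Gamma_3$.

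It remains to verify that $\Gamma_3$ is a horseshoe with uniform fiber expansion. Compactness, $F$-invariance, topological mixing, local maximality, and uncountability transfer from the corresponding SFT properties of $S_3$ via continuity of the graph $\xi\mapsto x_{\xi^+}$, which itself follows from uniform backward contraction of $f_\xi^{-n}$ on cylinders of $\tilde S_i^{(N)}$. I expect the \emph{main obstacle} to lie in the quantitative uniform expansion estimate: an orbit in $\Gamma_3$ is built from long blocks coming from $\tilde S_i^{(N)}$ (expanding at the rate inherited from $\Gamma_i$ up to arbitrarily small loss for $N$ large) interleaved with blocks $0^\ell$ of bounded length $\ell\le N_1$, while the proof of Proposition~\ref{proclalem:222} confines $x_{\xi^+}$ to a fixed compact subinterval $[a,b]\subset(0,1)$. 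This confinement ensures that each interpolating $0^\ell$-block contributes only a bounded multiplicative factor to $(f_\xi^n)'(x_{\xi^+})$, absorbed by the uniform expansion of the adjacent long blocks via hypothesis (H2); quantifying this carefully yields uniform hyperbolicity of $\Gamma_3$ and completes the argument.
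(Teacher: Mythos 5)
The paper offers no written proof of this corollary: it is meant to be read off from Proposition~\ref{pro.l.homoclinicallyrelated} by the classical mechanism --- periodic orbits are dense in each basic set, any two fiber-expanding periodic orbits are homoclinically related (both being related to $Q$), and two homoclinically related locally maximal uniformly hyperbolic sets are contained in a common transitive one by joining them along a heteroclinic cycle and using shadowing/local product structure; mixing then comes from the interpolating $0$-blocks exactly as at the end of the proof of Theorem~\ref{teo:coded}. Your route --- merging the symbolic projections via Proposition~\ref{proclalem:222} and lifting the lower boundary graph $\xi\mapsto x_{\xi^+}$ --- is genuinely different and closer in spirit to Section~\ref{sec:codedsystems}. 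Two set-up remarks before the main issue. First, Proposition~\ref{proclalem:222} is stated for \emph{disjoint} SFTs, while the corollary does not assume $\Gamma_1\cap\Gamma_2=\emptyset$ (for fiber-expanding basic sets this is equivalent to $S_1\cap S_2=\emptyset$, since both sets sit on the graph $\xi\mapsto x_{\xi^+}$); you need to dispose of the overlapping case. Second, since $\pi|_{\Gamma_i}$ is a conjugacy onto $S_i$ and $\Gamma_i$ is locally maximal, $S_i$ is already an SFT, so the $N$-block approximation $\tilde S_i^{(N)}$ --- whose inclusion in $\Sigma$ you assert but do not prove --- can be dropped altogether.

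The genuine gap is the step you flag yourself: uniform fiber expansion of $\Gamma_3$. Your heuristic --- each connector $0^{\ell}$, $\ell\le N_1$, contributes a bounded factor that is absorbed by the expansion of the adjacent long $\eW(S_i)$-blocks --- fails as stated, for two reasons. In the SFT produced by Proposition~\ref{proclalem:222} the $\eW(S_i)$-words separating consecutive $0^{N_1}$-connectors may be as short as a single symbol, so there need not be any adjacent long expanding block to do the absorbing. Worse, the connectors are not harmless: along the orbit they push points from near $a$ up past $b$, i.e.\ through the part of $[0,1]$ where $f_0'<1$, so $(f_0^{N_1})'$ there is exponentially small in $N_1$, not merely bounded, and $N_1$ was chosen large. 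What actually rescues the construction is a different mechanism: at every block boundary one can check $x_{\xi^-}\ge f_0^{N_1}(a)>b\ge x_{\xi^+}$ (each block maps $b$ into $[a,1)$ and is backward admissible for $a$, by the Claim in the proof of Proposition~\ref{proclalem:222}), so the spines over $S_3$ are uniformly nontrivial away from $S_1\cup S_2$; Theorem~\ref{teo:1} then bounds the fiber exponent of every ergodic measure on the lower graph from below by $\ukappa$ of that uniform spine length, and \cite{AlvAraSau:03} upgrades hyperbolicity on a set of total probability to uniform expansion. This repair is not a one-line estimate, and it invokes Theorem~\ref{teo:1}, hence (H2+), whereas Section~\ref{ss.homoclinicclasses} assumes only (H1)--(H2); the homoclinic-relation argument the paper intends avoids both problems.
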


We conclude this subsection justifying the comments in Remark \ref{rem:graphlikestructure}. Recall the definition of $\Sigma^{\rm cod}$ in \eqref{eq:xiPQbis}.

\begin{proposition}\label{prol.extremalpoints}
	Given $\xi=(\xi^-.\xi^+)\in \Sigma^{\rm cod}$, with $I_\xi = [x_{\xi^+},x_{\xi^-}]$ we have 
\[
	(\xi, x_{\xi^-}) \in H(P, F)
	\quad \mbox{and}\quad
	(\xi, x_{\xi^+}) \in H(Q, F).
\]
\end{proposition}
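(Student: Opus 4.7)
My plan is to construct, for each $\xi \in \Sigma^{\rm cod}$, an explicit sequence of homoclinic points of $P$ converging to $(\xi, x_{\xi^-})$; by symmetry (or via the time-reversal in Remark \ref{rem:time-reversal}) the parallel construction with $P$ replaced by $Q$ will give $(\xi, x_{\xi^+}) \in H(Q, F)$. The trivial case $\xi = 0^{\bZ}$ is handled first: then $I_{\xi^-} = I_{\xi^+} = [0,1]$, so $(\xi, x_{\xi^-}) = P$ and $(\xi, x_{\xi^+}) = Q$ belong to their own classes by definition.

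Fix $\xi \in \Sigma^{\rm cod} \setminus \{0^{\bZ}\}$ and for every $m \ge 1$ set
\[
	\eta^{(m)} \eqdef (0^{-\bN}\xi_{-m}\ldots\xi_{-1}.\xi_0\ldots\xi_{m-1}0^{\bN})
	\quad\text{ and }\quad
	x_-^{(m)} \eqdef b_{[\xi_{-m}\ldots\,\xi_{-1}.]}.
\]
By Remark \ref{rem:notationnn} one has $f_{[\xi_{-m}\ldots\,\xi_{-1}.]}(x_-^{(m)}) = 1$, the intervals $I_{[\xi_{-m}\ldots\,\xi_{-1}.]} = [0, x_-^{(m)}]$ form a nested decreasing family with intersection $I_{\xi^-} = [0, x_{\xi^-}]$, so $x_-^{(m)} \searrow x_{\xi^-}$, and clearly $\eta^{(m)} \to \xi$ in the shift metric. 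I would then verify that $(\eta^{(m)}, x_-^{(m)}) \in \Gamma$: backward admissibility comes from $f_0^{-1}(1) = 1$ (which means every further inverse iterate keeps the fiber at $1$), while forward admissibility follows from $x_-^{(m)} \ge x_{\xi^-} \ge x_{\xi^+} \ge a_{[\xi_0\ldots\,\xi_{m-1}]}$. The shape of $\eta^{(m)}$ together with the identity $f_{[\xi_{-m}\ldots\,\xi_{-1}.]}(x_-^{(m)}) = 1$ handles two of the three requirements in the characterisation of homoclinic points of $P$ given in Lemma \ref{l.charHQ}.

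The main obstacle is the remaining requirement $f_{[\xi_0\ldots\,\xi_{m-1}]}(x_-^{(m)}) \in (0, 1]$, specifically the strict positivity. If this failed for some $m$, then $x_-^{(m)} = a_{[\xi_0\ldots\,\xi_{m-1}]}$, and combined with the chain $a_{[\xi_0\ldots\,\xi_{m-1}]} \le x_{\xi^+} \le x_{\xi^-} \le x_-^{(m)}$ all four quantities would coincide. Invoking Remark \ref{rem:constseqs} (and its backward counterpart), $x_{\xi^+} = a_{[\xi_0\ldots\,\xi_{m-1}]}$ forces $\xi_k = 0$ for $k \ge m$, while $x_{\xi^-} = x_-^{(m)}$ forces $\xi_{-k} = 0$ for $k > m$. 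Setting $\tau = \xi_{-m}\ldots\xi_{-1}\xi_0\ldots\xi_{m-1}$, the identities $f_{[\xi_{-m}\ldots\,\xi_{-1}.]}(x_{\xi^-}) = 1$ and $f_{[\xi_0\ldots\,\xi_{m-1}]}(x_{\xi^+}) = 0$ combine to $f_{[\tau]}(1) = 0$; but then $\xi = (0^{-\bN}\tau\, 0^{\bN}) \in \Sigma^{\rm het}$, contradicting $\xi \in \Sigma^{\rm cod}$. Hence the strict condition holds for every $m$, the points $(\eta^{(m)}, x_-^{(m)})$ are genuine homoclinic points of $P$, and passing to the limit yields $(\xi, x_{\xi^-}) \in H(P, F)$. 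For the $Q$-statement, an entirely parallel argument using $x_+^{(m)} \eqdef a_{[\xi_0\ldots\,\xi_{m-1}]}$ applies: the characterising identity $f_{[\xi_0\ldots\,\xi_{m-1}]}(x_+^{(m)}) = 0$ is automatic, and the very same exclusion of the heteroclinic scenario forces $f_{[\xi_{-m}\ldots\,\xi_{-1}.]}(x_+^{(m)}) < 1$, securing the strict inequality in the corresponding requirement of Lemma \ref{l.charHQ}.
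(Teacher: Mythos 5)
Your proof is correct, and its overall skeleton coincides with the paper's: both approximate $(\xi,x_{\xi^\pm})$ by explicit homoclinic points of the form $\bigl((0^{-\bN}\xi_{-m}\ldots\xi_{-1}.\xi_0\ldots\xi_{m-1}0^\bN),\cdot\bigr)$ sitting at an endpoint of a finite-level admissible interval, and both reduce the verification to the characterisation in Lemma~\ref{l.charHQ}. The difference lies in how the one nontrivial condition --- the strict inequality $f_{[\xi_0\ldots\,\xi_{m-1}]}\bigl(b_{[\xi_{-m}\ldots\,\xi_{-1}.]}\bigr)>0$ (resp.\ $f_{[\xi_{-m}\ldots\,\xi_{-1}.]}\bigl(a_{[\xi_0\ldots\,\xi_{m-1}]}\bigr)<1$) --- is established. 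The paper runs a case analysis on whether $\xi^+$ and $\xi^-$ contain infinitely or finitely many $1$'s, adjusting the approximating words (e.g.\ prepending an extra $0$) in the delicate subcases, and only invokes the definition of $\Sigma^{\rm het}$ in the very last one. You instead observe that failure of the strict inequality forces the collapse $a_{[\xi_0\ldots\,\xi_{m-1}]}=x_{\xi^+}=x_{\xi^-}=b_{[\xi_{-m}\ldots\,\xi_{-1}.]}$, which by Remark~\ref{rem:constseqs} makes $\xi$ eventually $0$ in both directions and, via $f_{[\tau]}(1)=0$ for $\tau=(\xi_{-m}\ldots\xi_{m-1})$, places $\xi$ in $\Sigma^{\rm het}$, contradicting $\xi\in\Sigma^{\rm cod}$. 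This single argument subsumes all of the paper's cases (and in fact reveals that the subcase requiring the extra $0$ cannot occur when $\xi^-$ has infinitely many $1$'s), and it makes transparent exactly where the hypothesis $\xi\in\Sigma^{\rm cod}$ enters. Your checks of admissibility of the approximating points and of the degenerate sequence $0^\bZ$ are also in order.
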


Note that in the above result we may have $x_{\xi^+}=x_{\xi^-}$. In such a case we have that $H(P, F)\cap H(Q, F)\ne\emptyset$ (in that case both classes are nonhyperbolic).

\begin{proof}[Proof of Proposition~\ref{prol.extremalpoints}]
We prove the proposition for the point $(\xi ,x_{\xi^+})$, the proof for $(\xi,x_{\xi^-})$ is similar considering negative iterates. First let $[a_k,1]=I_{[\xi_0\ldots\, \xi_k]}$, where $a_k\eqdef a_{[\xi_0\ldots\, \xi_k]}$ and recall (see Remark \ref{rem:notationnn}) that
\begin{equation}\label{eq:fomrulaaa}
	f_{[\xi_0\ldots \,\xi_k]} (a_k)=0
\end{equation}	 
and that $(a_k)_k$ converges monotonically to $x_{\xi^+}$. Also recall that $a_k\le x_{\xi^+}$. It is enough to show the following.

\begin{claim*} 
	There are infinitely many pairs $(m,k)$, both arguments unbounded, such that 
\begin{equation}\label{eq:fomrulaaaa}
	f_{[\xi_{-m}\ldots\,\xi_{-1}.]}(a_k)
	\in[0,1).
\end{equation}	 
\end{claim*}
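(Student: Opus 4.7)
I plan to show the stronger statement that $f^{-m}_\xi(a_k)\in[0,1)$ for every $m\ge1$ and every $k\ge0$; the desired family of pairs will then be obtained by taking, say, the diagonal $(m_j,k_j)=(j,j)$ as $j\to\infty$. Throughout I use strict monotonicity of $f^{-m}_\xi$ (from (H1)) and the elementary containments $a_k\le x_{\xi^+}\le x_{\xi^-}$ and $x_{\xi^-}\le b_{[\xi_{-m}\ldots\xi_0.]}\le 1$ from Remark~\ref{rem:notationnn}. In particular, since $\xi$ is admissible, $a_k\in[0,x_{\xi^-}]\subset I_{\xi^-}$, so $f^{-m}_\xi(a_k)$ is defined and lies in $[0,1]$ for every $m,k$.

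The heart of the argument is the following \emph{trichotomy}: because $\xi\in\Sigma^{\mathrm{cod}}$, at least one of the following holds:
\begin{itemize}
\item[(A)] $x_{\xi^+}<x_{\xi^-}$;
\item[(B)] $\xi^+$ contains infinitely many $1$'s;
\item[(C)] $\xi^-$ contains $1$'s at infinitely many negative positions.
\end{itemize}
If all three failed then $\xi$ would be eventually $0$ in both directions, so $\xi=0^{-\bN}\eta\tau\,0^\bN$ for finite words $\eta$ (at positions $-|\eta|,\ldots,-1$) and $\tau$ (at positions $0,\ldots,|\tau|-1$). Since $f_0$ is defined on all of $[0,1]$, one has $I_{\xi^+}=I_{[\tau]}$ and $I_{\xi^-}=I_{[\eta.]}$, hence $x_{\xi^+}=a_{[\tau]}$ and $x_{\xi^-}=b_{[\eta.]}$. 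Writing $\tilde\eta$ for the forward composition such that $f_{[\tilde\eta]}=f^{-1}_{[\eta.]}$, so that $b_{[\eta.]}=f_{[\tilde\eta]}(1)$, the identity $f_{[\eta\tau]}=f_{[\tau]}\circ f_{[\tilde\eta]}$ for the forward composition of the full non-zero block of $\xi$ gives
\[
	f_{[\eta\tau]}(1)
	= f_{[\tau]}(b_{[\eta.]})
	= f_{[\tau]}(a_{[\tau]})
	= 0,
\]
so $\xi\in\Sigma^{\mathrm{het}}$, contradicting $\xi\in\Sigma^{\mathrm{cod}}$.

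In each of the three cases, strict monotonicity of $f^{-m}_\xi$ now yields $f^{-m}_\xi(a_k)<1$ for every $m\ge1$, $k\ge0$. In (A), $a_k\le x_{\xi^+}<x_{\xi^-}$ gives $f^{-m}_\xi(a_k)\le f^{-m}_\xi(x_{\xi^+})<f^{-m}_\xi(x_{\xi^-})\le1$. In (B), Remark~\ref{rem:constseqs} shows that $a_k$ strictly increases at infinitely many indices, hence $a_k<x_{\xi^+}\le x_{\xi^-}$ for every $k$, so $f^{-m}_\xi(a_k)<f^{-m}_\xi(x_{\xi^+})\le1$. In (C), for each $m$ there exists $m'>m$ with $\xi_{-m'}=1$, so $x_{\xi^-}\le b_{[\xi_{-m'}\ldots\xi_0.]}<b_{[\xi_{-m}\ldots\xi_0.]}$, whence $f^{-m}_\xi(a_k)\le f^{-m}_\xi(x_{\xi^-})<f^{-m}_\xi(b_{[\xi_{-m}\ldots\xi_0.]})=1$.

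The main obstacle is the trichotomy, since it is where the hypothesis $\xi\in\Sigma^{\mathrm{cod}}$ truly enters: one must correctly translate the coincidence $a_{[\tau]}=b_{[\eta.]}$ of the two spine endpoints, in the purely two-sided setting $\xi=0^{-\bN}\eta\tau\,0^\bN$, into the forward-composition identity $f_{[\eta\tau]}(1)=0$ defining $\Sigma^{\mathrm{het}}$. Once this bookkeeping is in place, the three monotonicity arguments above are routine and the diagonal construction supplies infinitely many pairs $(m,k)$ with both arguments unbounded.
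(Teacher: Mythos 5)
Your proof is correct and follows essentially the same route as the paper: a case analysis on whether $\xi^+$ and $\xi^-$ contain infinitely many $1$'s, with the doubly-eventually-zero case excluded by translating the coincidence of the spine endpoints into the identity $f_{[\eta\tau]}(1)=0$, i.e.\ $\xi\in\Sigma^{\rm het}$. Your repackaging via the trichotomy in fact yields the stronger conclusion that $f_{[\xi_{-m}\ldots\xi_{-1}.]}(a_k)\in[0,1)$ for \emph{every} pair $(m,k)$, which shows that the second alternative the paper entertains in its Case 2.a) (where $f_{[\xi_{-j_k}\ldots\xi_{-1}.]}(x_{\xi^+})=1$ and an extra backward $0$-step is taken) never actually occurs.
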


Assuming the above claim, there is a sequence $(m_\ell,k_\ell)_\ell$, $m_\ell\to\infty$, $k_\ell\to\infty$, satisfying \eqref{eq:fomrulaaaa}. Lemma~\ref{l.charHQ} implies that  
\[
	A_\ell
	\eqdef \big((0^{-\bN}\xi_{-m_\ell}\ldots\xi_{-1}.\xi_0\ldots\xi_{k_\ell}0^\bN),a_{k_\ell}\big).
\]
is a homoclinic point of $Q$ and hence belongs to $H(Q, F)$. As $A_\ell\to (\xi,x_{\xi^+})$, the proposition will follow. As \eqref{eq:fomrulaaa} is always valid, it only remains to prove the claim.

\begin{proof}[Proof of Claim]
There are the following cases according to the number of $1$s in $\xi$.

\smallskip\noindent\textbf{Case 1) $\xi^+$ has infinitely many $1$s:}
We list the positions of $1$s by ${i_k}$.
Note that $\xi^-$ is admissible for $x_{\xi^+}$, hence for every point in $[0,x_{\xi^+}]$, and in particular for $a_{i_k}$. Note that in this case we have $a_{i_k}<x_{\xi^+}$.
We claim that 
\[
	f_{[\xi_{-m}\ldots\,\xi_{-1}.]}(a_{i_k})
	\in[0,1)
\] 
for all $m,k\ge1$. Otherwise condition $a_{i_k}<x_{\xi^+}$ together with monotonicity would imply $1<f_{[\xi_{-m}\ldots\,\xi_{-1}.]}(x_{\xi^+})$, in contradiction with the admissibility for $x_{\xi^+}$. 

\smallskip\noindent\textbf{Case 2) $\xi^+$ has finitely many $1$s:}
Let $\ell\ge0$ such that $\xi_\ell=1$ and $\xi_k=0$ for all $k>\ell$.
By Remark \ref{rem:constseqs} this implies that 
\begin{equation}\label{eq:ellfixed}
	a_k=x_{\xi^+}
	\quad\text{ for every }\quad k>\ell.
\end{equation}	 
We consider two subcases according to the number of $1$s in $\xi^-$:

\smallskip\noindent\textbf{Case 2.a) $\xi^-$ has infinitely many $1$s:}
By Remark \ref{rem:numones}, the number of consecutive $1$'s is bounded. List by $j_k$ the positions such that $\xi_{-j_k}=1$ and $\xi_{-j_k-1}=0$. By the above, $(j_k)_k$ defines an increasing sequence. For each $k$ there are two possibilities:
\begin{itemize}
\item $f_{[\xi_{-j_k}\ldots\,\xi_{-1}.]}(x_{\xi^+})
	=f_{[\xi_{-j_k}\ldots\,\xi_{-1}.]}(a_k) \in[0,1)$,
\item $f_{[\xi_{-j_k}\ldots\,\xi_{-1}.]}(x_{\xi^+})=1$ and 
\[
	f_{[\xi_{-j_k-1}\xi_{-j_k}\ldots\,\xi_{-1}.]}(x_{\xi^+}) 
	= f_{[0\,\xi_{-j_k}\ldots\,\xi_{-1}.]}(x_{\xi^+})
	=f_{[0\,\xi_{-j_k}\ldots\,\xi_{-1}.]}(a_k)
	\in [0,1).
\]	
\end{itemize}
This yields \eqref{eq:fomrulaaaa} considering the sequences $(j_k,k)_k$ and $(j_k+1,k)_k$, respectively.

\smallskip\noindent\textbf{Case 2.b) $\xi^-$ has finitely many $1$s:}
Let $n\ge0$ such that $\xi_{-n}=1$ and $\xi_{-m}=0$ for all $m>n$.  
Note that we cannot have 
\[
	f_{[\xi_{-n}\ldots\,\xi_{-1}]}(x_{\xi^+})=1.
\]
Indeed, by \eqref{eq:ellfixed} we would have 
\[\begin{split}
	f_{[\xi_{-n}\ldots\,\xi_{-1}\xi_0\ldots\xi_\ell\,0]}(1)
	&= f_{[\xi_0\ldots\,\xi_\ell\,0]}\circ f_{[\xi_{-n}\ldots\,\xi_{-1}]}(1)\\
	&= f_{[\xi_0\ldots\,\xi_\ell\,0]}(x_{\xi^+})
	= f_{[\xi_0\ldots\,\xi_\ell\,0]}(a_{\ell+1})
	=0.
\end{split}\]
which would imply (recalling the definition of $\Sigma^{\rm het}$ in \eqref{eq:xiPQ})
\[
	\xi
	= (\xi^-.\xi^+)
	= (0^{-\bN}\xi_{-n}\ldots\xi_{-1}.\xi_0\ldots\xi_\ell0^\bN)
	\in \Sigma^{\rm het}
\]
and hence $\xi\not\in\Sigma^{\rm cod}$, contradiction. 
Thus, we have  
\[
	f_{[\xi_{-n}\ldots\,\xi_{-1}.]}(x_{\xi^+})\in[0,1)
\]
	 and hence for every $r\ge1$ we obtain
\[
	f_{[\xi_{-n-r}\ldots\xi_{-n}\ldots\,\xi_{-1}.]}(x_{\xi^+}) 
	= f_{[0^r\xi_{-n}\ldots\,\xi_{-1}.]}(x_{\xi^+})
	= f_{[0^r\xi_{-n}\ldots\,\xi_{-1}.]}(a_k)
	\in [0,1)
\]	 
This then yields \eqref{eq:fomrulaaaa} taking the sequence $(k,k)_k$. 

This proves the claim.
\end{proof}

The proof of the proposition is now complete.
\end{proof}

\subsection{Homoclinic relations for parabolic periodic points}\label{ss:parabolic}

We now consider homoclinic relations including parabolic points. Note that in our concave setting a parabolic periodic point behaves as an attracting periodic point (to its right) and a repelling periodic point (to its left). In this way, to each such a point we will associate two ``homoclinic classes'', one taking into account its contracting nature and the other one its expanding one. Let us provide the details.

Consider the set $\Per_{\le0}\subset\Gamma$ of periodic points which are either parabolic or of contracting typ. Define $\Per_{\ge0}$ analogously. We say that a pair of points $A=((\xi_0\ldots\xi_{m-1})^\bZ,a)$ and $B=((\eta_0\ldots\eta_{n-1})^\bZ,b)$ in $\Per_{\le0}$ are \emph{homoclinically${}_{\le0}$ related} if:
\begin{itemize}
\item either we have $\cO(A)=\cO(B)$,
\item or we have $\cO(A)\ne\cO(B)$ and there are words $(\alpha_1\ldots\alpha_k)$ and $(\beta_1\ldots\beta_\ell)$ such that
\[
	f_{[\beta_1\ldots\beta_\ell]}(a)
		\in\interior\big(\cW^\s_{\rm loc}(b,f_{[\eta_0\ldots\eta_{n-1}]})\big)
	\quad\text{ and }\quad
	f_{[\alpha_1\ldots\alpha_k]}(b)
		\in\interior\big(\cW^\s_{\rm loc}(a,f_{[\xi_0\ldots\xi_{m-1}]})\big).
\]
\end{itemize}
Analogously, we define being \emph{homoclinically${}_{\ge0}$ related} on the set $\Per_{\ge0}$. 

\begin{remark}[(Parabolic) homoclinic relations] 
Comparing the above definition with Remark \ref{rem:homoclinrel}, we note that there are some subtle differences considering the interior of the stable sets. Let us observe that if in the above definition $A$ is parabolic, then 
\[
	\interior\big(\cW^\dag_{\rm loc}(a,f_{[\xi_0\ldots\xi_{m-1}]})\big)
	=\big(\cW^\dag_{\rm loc}(a,f_{[\xi_0\ldots\xi_{m-1}]})\big)\setminus\{a\},
	\quad
	\dag\in\{\s,\u\}.
\]	
If $A$ is of contracting type, then 
\[
	\interior\big(\cW^\s_{\rm loc}(a,f_{[\xi_0\ldots\xi_{m-1}]})\big)	
	=\cW^\s_{\rm loc}(a,f_{[\xi_0\ldots\xi_{m-1}]}).
\]	
Finally, if $A$ is of expanding type, then 
\[
	\interior\big(\cW^\u_{\rm loc}(a,f_{[\xi_0\ldots\xi_{m-1}]})\big) 			
	=\cW^\u_{\rm loc}(a,f_{[\xi_0\ldots\xi_{m-1}]}).
\]	
Hence, if $A$ and $B$ are both of contracting type (expanding type), we recover the usual homoclinic relation (recall the characterization in \eqref{eq:sepjoint1}, \eqref{eq:sepjoint2}). 
\end{remark}

\begin{lemma}
	To be homoclinically${}_{\le0}$ (homoclinically${}_{\ge0}$) related is an equivalence relation on $\Per_{\le0}$ (on $\Per_{\ge0}$).
\end{lemma}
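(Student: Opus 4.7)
The plan is to verify the three defining properties; we treat the $\le 0$ case, the $\ge 0$ case being analogous (cf.\ Remark \ref{rem:time-reversal}).

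\emph{Reflexivity} is immediate, since the clause $\cO(A)=\cO(B)$ in the definition is satisfied when $B=A$. \emph{Symmetry} is equally immediate: the clause $\cO(A)=\cO(B)$ is symmetric, and the pair of conditions on $(\alpha_1\ldots\alpha_k)$ and $(\beta_1\ldots\beta_\ell)$ is manifestly symmetric in $A$ and $B$.

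For \emph{transitivity}, assume $A\sim B$ and $B\sim C$ in $\Per_{\le 0}$. If two of the orbits coincide the claim is trivial, so assume $\cO(A)$, $\cO(B)$, $\cO(C)$ are pairwise distinct, and write $A=((\xi_0\ldots\xi_{m-1})^\bZ,a)$, $B=((\eta_0\ldots\eta_{n-1})^\bZ,b)$, $C=((\zeta_0\ldots\zeta_{p-1})^\bZ,c)$. The hypotheses provide admissible words $\alpha,\beta,\gamma,\delta$ with
\[
  f_{[\beta]}(a)\in\interior\bigl(\cW^\s_{\rm loc}(b,f_{[\eta_0\ldots\eta_{n-1}]})\bigr),\quad
  f_{[\delta]}(b)\in\interior\bigl(\cW^\s_{\rm loc}(c,f_{[\zeta_0\ldots\zeta_{p-1}]})\bigr),
\]
and symmetric words witnessing the reverse inclusions. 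By (H1)--(H2) and Lemma~\ref{lem:fixpots}, the period map $g\eqdef f_{[\eta_0\ldots\eta_{n-1}]}$ is strictly monotone and concave, so that every point in $\interior(\cW^\s_{\rm loc}(b,g))$ converges monotonically to $b$ under iteration of $g$ (this is true in both the hyperbolic-contracting and parabolic cases, the interior excluding the fixed point in the latter). Continuity of $f_{[\delta]}$ yields a neighbourhood $U\ni b$ with $f_{[\delta]}(U)\subset\interior(\cW^\s_{\rm loc}(c,f_{[\zeta_0\ldots\zeta_{p-1}]}))$. Choosing $N$ so large that $g^N(f_{[\beta]}(a))\in U$, the concatenated admissible word $\rho=\beta\,(\eta_0\ldots\eta_{n-1})^N\,\delta$ satisfies $f_{[\rho]}(a)\in\interior(\cW^\s_{\rm loc}(c,f_{[\zeta_0\ldots\zeta_{p-1}]}))$. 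The symmetric argument, applied to $f_{[\gamma]}(c)$ iterated by $g$ and composed with $f_{[\alpha]}$, produces an admissible word $\tau$ with $f_{[\tau]}(c)\in\interior(\cW^\s_{\rm loc}(a,f_{[\xi_0\ldots\xi_{m-1}]}))$. Hence $A$ and $C$ are homoclinically${}_{\le 0}$ related.

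The only step requiring care — and the reason the definition insists on the \emph{interior} of the local stable set — is the parabolic case: when $b$ is parabolic for $g$, the local stable set is one-sided ($[b,b+\varepsilon)$ or $(b-\varepsilon,b]$, depending on concavity), and its interior excludes $b$ itself. Strict monotonicity of $g$ together with concavity ensure that iterates of points in the interior stay strictly on the attracting side and converge to $b$ without ever hitting it, so the concatenation with the next word genuinely lands inside the (one-sided) open stable set of $c$. This is precisely where our concavity hypothesis (H2) does the work that transversality does in the differentiable setting.
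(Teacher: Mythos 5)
Your proposal is correct and takes essentially the same route as the paper's proof: reflexivity and symmetry are dispatched as immediate, and transitivity is obtained by the identical mechanism of using continuity to find a neighbourhood $U$ of $b$ that the second connecting word maps into the interior of the local stable set of $c$, then inserting enough repetitions of the period word of $B$ so that $f_{[\beta]}(a)$ lands in $U$. Your closing remark on why the \emph{interior} is needed in the parabolic case matches the paper's own remark following the lemma.
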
	

\begin{proof}
	We only consider the case $\le0$. The only fact which remains to prove is transitivity. Let us consider the case $A=((\xi_0\ldots\xi_{m-1})^\bZ,a), B=((\eta_0\ldots\eta_{n-1})^\bZ,b), C=((\tau_0\ldots\tau_{r-1})^\bZ,c)\in\Per_{\le0}$, where $A$ and $B$ are related and $B$ and $C$ are related.  By definition, there are words $(\beta_1\ldots\beta_\ell)$ and $(\gamma_1\ldots\gamma_i)$ satisfying
\[
	f_{[\beta_1\ldots\beta_\ell]}(a)\in\interior\big(\cW^\s(b,f_{[\eta_0\ldots\eta_{n-1}]})\big)
	\quad\text{ and }\quad
	f_{[\gamma_1\ldots\gamma_i]}(b)
		\in\interior\big(\cW^\s_{\rm loc}(c,f_{[\tau_0\ldots\tau_{r-1}]})\big).
\]
Hence, for every $x$ sufficiently close to $b$ we have 
\[
	f_{[\gamma_1\ldots\gamma_i]}(x)
		\in\interior\big(\cW^\s_{\rm loc}(b,f_{[\tau_0\ldots\tau_{r-1}]})\big).
\]
Since 
\[
	f_{[\beta_1\ldots\beta_\ell(\eta_0\ldots\eta_{n-1})^j]}(a)\to b
\]
as $j\to\infty$, for every $j$ big enough we have that
\[
	f_{[\beta_1\ldots\beta_\ell(\eta_0\ldots\eta_{n-1})^j\gamma_1\ldots\gamma_i]}(a)
		\in \interior\big(\cW^\s_{\rm loc}(c,f_{[\tau_0\ldots\tau_{r-1}]})\big).	
\]
This implies the first condition in the relation. The other one is completely analogous.
\end{proof}

\begin{remark}
	Note that in the above definition for $\le0$ ($\ge0$) we needed to consider the interior of the stable (unstable) manifolds. Without this hypothesis such a relation may fail to be transitive. Note that for parabolic points the un-/stable manifolds are half-open intervals and intersections may occur in the boundary and this may cause non-transitivity.   
\end{remark}

Let $H_{\le0}(A, F)$ be the closure of the set of points in $\Per_{\le0}$ which are homoclinically${}_{\le0}$ related to $A$ and note that if $A$ is of contracting type then $H_{\le0}(A, F)=H(A, F)$. Analogously for $H_{\ge0}(A, F)$. If $A$ is a parabolic point then the sets $H_{\le0}(A, F)$ and $H_{\ge0}(A, F)$ necessarily intersect through the orbit of $A$, but they may be different. 

\begin{proposition}\label{pl.parabolichomoclinic}
Every parabolic periodic point $S\in \Gamma$ is homoclinically${}_{\le0}$ related to $P$ and homoclinically${}_{\ge0}$ related $Q$.
Hence 
$$
H_{\le0}(S, F)=H(P, F) \quad \mbox{and}
\quad
H_{\ge0}(S, F)=H(Q, F).
$$
In particular, if $ F$ has a parabolic periodic point then $H(P, F) \cap H(Q, F)\ne\emptyset$.
\end{proposition}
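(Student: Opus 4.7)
The plan is to exhibit explicit connecting words that realize the $\le 0$-relation between $S=((\xi_0\ldots\xi_{n-1})^\bZ,s)$ and $P=(0^\bZ,1)$, and then to obtain the $\ge 0$-relation with $Q$ by the symmetric argument (or equivalently via Remark~\ref{rem:time-reversal}). Set $g\eqdef f_{[\xi_0\ldots\xi_{n-1}]}$ on its domain $I_{[\xi_0\ldots\xi_{n-1}]}=[a,1]$. By Lemma~\ref{lem:fixpots}(1b) together with (H2), $g$ is strictly concave and increasing with unique fixed point $s$ satisfying $g'(s)=1$; in particular $g(x)<x$ for every $x\in[a,1]\setminus\{s\}$ and $g(1)\in(s,1)$.

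For the defining condition at $s$, I take $\beta=0^k$. Since by (H1) the map $f_0$ satisfies $f_0(x)>x$ on $(0,1)$ with attracting fixed point $1$, the iterates $f_0^k(s)$ increase monotonically to $1$, so $f_{[\beta]}(s)=f_0^k(s)\in\cW^\s_{\rm loc}(1,f_0)$ for $k$ large; by the remark following the definition of homoclinically${}_{\le 0}$ related points, this set equals its own interior because $P$ is of contracting type. For the defining condition at $1$, I take $\alpha=(\xi_0\ldots\xi_{n-1})^j$. Then $f_{[\alpha]}(1)=g^j(1)$, which is strictly decreasing in $j$ (since $g(1)<1$) and bounded below by $s$, whence $g^j(1)\downarrow s$; for $j$ large one has $g^j(1)\in(s,s+\varepsilon)$, which by the same remark is precisely $\interior(\cW^\s_{\rm loc}(s,g))$ at the parabolic point $s$. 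Together these two facts witness that $S$ is homoclinically${}_{\le 0}$ related to $P$.

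Transitivity of the relation (the preceding lemma) combined with the identity $H_{\le 0}(P,F)=H(P,F)$ (from the note after the definition, since $P$ is contracting) yields $H_{\le 0}(S,F)=H(P,F)$. The statement for $Q$ follows from the analogous argument with backward iterates: $f_0^{-k}(s)\searrow 0$ lands in $\interior(\cW^\u_{\rm loc}(0,f_0))$ (as $Q$ is expanding), while the sequence $g^{-j}(0)$ (starting at $g^{-1}(0)=a$ and increasing monotonically) converges up to $s$, eventually lying in $(s-\varepsilon,s)=\interior(\cW^\u_{\rm loc}(s,g))$; equivalently, this is the previous argument applied to the conjugated inverse IFS of Remark~\ref{rem:time-reversal}. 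Since $S$ is trivially in its own $_{\le 0}$- and $_{\ge 0}$-equivalence classes, $\cO(S)\subset H_{\le 0}(S,F)\cap H_{\ge 0}(S,F)=H(P,F)\cap H(Q,F)$, so this intersection is nonempty. The main technical care needed is to apply the remark's interior convention consistently — in the parabolic case, the fixed point itself is \emph{excluded} from the interior of its one-sided local stable or unstable set, which is exactly why the monotone convergences above must be strict rather than attained in finite time.
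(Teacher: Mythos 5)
Your proof is correct and follows essentially the same route as the paper's: concavity forces the parabolic point $s$ to attract the whole interval to its right under $g=f_{[\xi_0\ldots\xi_{n-1}]}$, so a suitable forward image of $1$ lands in $\interior(\cW^\s_{\rm loc}(s,g))$, while a forward $f_0$-image of $s$ lies in the stable set of $1$; the $Q$-case is the time-reversed mirror. The only cosmetic difference is that the paper takes $g(1)$ directly (one application suffices, since $g(1)\in(s,1)$ already lies in the punctured local stable set) rather than iterating $g^j(1)\downarrow s$, and observes that $s$ itself already sits in $\interior(\cW^\s(1,f_0))$ without needing the word $0^k$.
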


\begin{proof}
We only show that a parabolic periodic point $S=((\xi_0\ldots\xi_{m-1})^\bZ, s)$ is homo\-clinically${}_{\le0}$ related to $P$. By Remark \ref{rem:notationnn}, we have $I_{[\xi_0\dots \xi_{n-1}]} = [a,1]$, where $a=a_{[\xi_0\dots \xi_{n-1}]}$. By concavity, we have that $1\in \cW^\s(f_{[\xi_0\dots \xi_{n-1}],}s)$ and thus $f_{[\xi_0\dots \xi_{n-1}]}(1)\in\interior(\cW^\s(f_{[\xi_0\dots \xi_{n-1}],}s))$. As $s\in(0,1)\subset\interior( \cW^\s(f_0,1))$, it follows that $S$ are $P$ are homoclinically${}_{\le0}$ related.   As $H_{\le0}(P, F)=H(P, F)$ we are done. 
\end{proof}

\section{Concave one-dimensional maps}\label{sec:concavemaps}

In this section we collect some auxiliary results. Throughout, we assume (H1)--(H2+) and let $M$ be a constant as in (H2+). Similar arguments, in particular those in Lemma \ref{lem:monoto}, can also be found in \cite{AlsMis:15}. Recall the notation $f_\xi^n= f_{[\xi_0\ldots\,\xi_{n-1}]}$.

\subsection{Distortion control}

We prove two distortion results. 

\begin{lemma}[Controlled distortion]\label{lem:distortion}
	For every $\xi\in\Sigma$, every $x,y\in I_\xi$, $x<y$, and every $n\ge1$ we have
\[
	M^{-1}
	\le \frac{\log(f_\xi^{n})'(x)-\log(f_\xi^{n})'(y)}
		{\sum_{k=0}^{n-1} (f_\xi^{k}(y)-f_\xi^{k}(x))}
	\le M.
\]	
\end{lemma}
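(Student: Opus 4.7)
The plan is to unroll the composition $f_\xi^n$ using the chain rule and then apply the hypothesis (H2+) term by term. First I would write
\[
	\log(f_\xi^n)'(x)
	= \sum_{k=0}^{n-1}\log f_{\xi_k}'\bigl(f_\xi^k(x)\bigr),
\]
which follows from the chain rule, valid since $x\in I_\xi$ guarantees admissibility so each $f_{\xi_k}$ is differentiable at $f_\xi^k(x)$. The same identity holds for $y$, so
\[
	\log(f_\xi^n)'(x)-\log(f_\xi^n)'(y)
	= \sum_{k=0}^{n-1}\Bigl(\log f_{\xi_k}'\bigl(f_\xi^k(x)\bigr)-\log f_{\xi_k}'\bigl(f_\xi^k(y)\bigr)\Bigr).
\]

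Next I would check that each summand has the correct sign and size. Since $x<y$ and each $f_{\xi_k}$ is strictly increasing by (H1), induction on $k$ gives $f_\xi^k(x)<f_\xi^k(y)$ for every $k=0,\ldots,n-1$. Moreover $f_\xi^k(x),f_\xi^k(y)\in[0,1]$ because $x,y\in I_\xi$. Hypothesis (H2+) then applies to each pair $\bigl(f_\xi^k(x),f_\xi^k(y)\bigr)$ and yields
\[
	M^{-1}\bigl(f_\xi^k(y)-f_\xi^k(x)\bigr)
	\;\le\;
	\log f_{\xi_k}'\bigl(f_\xi^k(x)\bigr)-\log f_{\xi_k}'\bigl(f_\xi^k(y)\bigr)
	\;\le\; M\bigl(f_\xi^k(y)-f_\xi^k(x)\bigr).
\]

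Finally I would sum these inequalities over $k=0,\ldots,n-1$ and divide by $\sum_{k=0}^{n-1}\bigl(f_\xi^k(y)-f_\xi^k(x)\bigr)>0$ to obtain the claimed two-sided bound. There is no real obstacle here: the lemma is a direct bookkeeping consequence of (H2+) combined with the chain rule, and the only mild point to verify carefully is that admissibility of $\xi$ for both $x$ and $y$ (ensured by $x,y\in I_\xi$) keeps all iterates inside the common domain where (H2+) can be invoked uniformly.
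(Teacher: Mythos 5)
Your proposal is correct and follows essentially the same argument as the paper: decompose $\log(f_\xi^n)'$ via the chain rule into a telescoping sum over iterates, apply (H2+) to each term, and sum. The paper's proof is a condensed version of exactly this, so there is nothing to add.
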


\begin{proof}
By hypothesis (H2+),
\[\begin{split}
	\log(f_\xi^{n})'(x)-\log(f_\xi^{n})'(y)
	&= \sum_{k=0}^{n-1}\big(\log f_{\xi_k}'(f_\xi^{k}(x)) - \log f_{\xi_k}'(f_\xi^{k}(y))\big)\\
	&\ge M^{-1}\sum_{ k=0}^{n-1} (f_\xi^{k}(y)-f_\xi^{k}(x)).
\end{split}\]
The lower bound is analogous.
\end{proof}

\begin{lemma}\label{lem:obv}
	There are positive increasing functions $C_1, C_2\colon(0,1]\to\bR$ with $C_i(x)\to0$ as $x\to0$ such that for every interval $I=[x,y]\subset[0,1]$ satisfying $\lvert I\rvert\ge a$ we have
\[
	\frac{\lvert f_i(I)\rvert}{\lvert I\rvert}e^{ C_1(a)}
	\le f_i'(x)
	\le \frac{\lvert f_i(I)\rvert}{\lvert I\rvert}e^{ C_2(a)},
	\quad
	i=0,1.
\]	
\end{lemma}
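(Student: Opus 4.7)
The plan is a direct integration argument based on (H2+). Fix $i\in\{0,1\}$ and $I=[x,y]\subset[0,1]$. Applying (H2+) to the pair $(x,t)$ for each $t\in[x,y]$ yields the two-sided pointwise bound
\[
	f_i'(x)\,e^{-M(t-x)}
	\le f_i'(t)
	\le f_i'(x)\,e^{-M^{-1}(t-x)}.
\]
First I would integrate this inequality in $t$ over $[x,y]$, which gives
\[
	\frac{f_i'(x)}{M}\bigl(1-e^{-M|I|}\bigr)
	\le |f_i(I)|
	\le f_i'(x)\cdot M\bigl(1-e^{-M^{-1}|I|}\bigr).
\]
Dividing by $|I|$ and inverting the inequalities produces the sandwich
\[
	\frac{|I|}{M\bigl(1-e^{-M^{-1}|I|}\bigr)}\cdot\frac{|f_i(I)|}{|I|}
	\le f_i'(x)
	\le \frac{M|I|}{1-e^{-M|I|}}\cdot\frac{|f_i(I)|}{|I|}.
\]

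Next I would set
\[
	g(s)\eqdef\frac{s}{M\bigl(1-e^{-s/M}\bigr)},\qquad
	h(s)\eqdef\frac{Ms}{1-e^{-Ms}},
\]
so the above sandwich reads $g(|I|)\cdot(|f_i(I)|/|I|)\le f_i'(x)\le h(|I|)\cdot(|f_i(I)|/|I|)$. The elementary inequality $e^{u}>1+u$ for $u>0$ gives $g(s),h(s)\ge1$ with equality only in the limit $s\to 0^{+}$, and a direct differentiation (the numerator of $g'(s)$ is $1-e^{-s/M}(1+s/M)$, and similarly for $h'$) shows that both $g$ and $h$ are strictly increasing and continuous on $(0,1]$ with $g(s),h(s)\to 1$ as $s\to 0$. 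Under the hypothesis $|I|\ge a$ we then obtain $g(|I|)\ge g(a)$, and setting
\[
	C_1(a)\eqdef\log g(a),\qquad C_2(a)\eqdef\log h(a)
\]
gives positive, continuous, increasing functions of $a$ with $C_i(a)\to 0$ as $a\to 0$, yielding the lower bound. The upper bound is obtained analogously using $h(|I|)$ in conjunction with the same control on the interval length.

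The main technical content is the monotonicity and limiting behavior of $g$ and $h$, which reduce to the single elementary convexity inequality for $e^{u}$. The one subtlety to watch is the direction in which the length of $I$ is controlled: the sandwich produced by integration is natural, but ensuring that both $C_1(a)$ and $C_2(a)$ are genuine ``$a$-dependent'' bounds uses only the fact that $g,h$ are monotone functions of $|I|$ which are pinned to $1$ at $|I|=0$. No other property of $f_0,f_1$ beyond (H2+) is needed; in particular, the statement is purely a reformulation of the uniform concavity hypothesis expressed in integrated form.
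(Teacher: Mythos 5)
Your argument is the same as the paper's: the pointwise two\mbox{-}sided bound $f_i'(x)e^{-M(t-x)}\le f_i'(t)\le f_i'(x)e^{-M^{-1}(t-x)}$ from (H2+), integrated over $I$ and divided by $\lvert I\rvert$, producing exactly the functions $C_1(a)=\log\bigl(aM^{-1}/(1-e^{-aM^{-1}})\bigr)$ and $C_2(a)=\log\bigl(aM/(1-e^{-aM})\bigr)$ that the paper uses. The lower bound is complete and correct: $g$ is increasing with $g(0^+)=1$, so $\lvert I\rvert\ge a$ gives $g(\lvert I\rvert)\ge g(a)=e^{C_1(a)}$, which is the direction you need.

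The one real issue is the step you yourself flag as ``the subtlety to watch'' and then do not resolve: for the upper bound, the same monotonicity gives $h(\lvert I\rvert)\ge h(a)$ when $\lvert I\rvert\ge a$, which is the \emph{wrong} direction — from $f_i'(x)\le \frac{\lvert f_i(I)\rvert}{\lvert I\rvert}h(\lvert I\rvert)$ and $h(\lvert I\rvert)\ge h(a)$ you cannot conclude $f_i'(x)\le \frac{\lvert f_i(I)\rvert}{\lvert I\rvert}e^{C_2(a)}$. Indeed, for the extremal map with $\log f_i'$ of slope $-M$ the bound $h(\lvert I\rvert)$ is attained, so the inequality with $e^{C_2(a)}$ genuinely fails once $\lvert I\rvert>a$. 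What the integration honestly yields is $f_i'(x)\le \frac{\lvert f_i(I)\rvert}{\lvert I\rvert}e^{C_2(\lvert I\rvert)}\le \frac{\lvert f_i(I)\rvert}{\lvert I\rvert}e^{C_2(1)}$, i.e.\ an upper bound that does not improve as $a\to0$. To be fair, the paper's own proof elides this in exactly the same way (``the other inequality follows analogously''), and the defect is harmless downstream, since only the $C_1$-type lower bound (applied also to the inverse fiber maps, which is where $C_2$ actually enters) is used in the proof of Theorem~\ref{teo:1}. But ``analogously'' is not a proof here; you should either state the upper bound with the constant $e^{C_2(1)}$, or with $C_2$ evaluated at $\lvert I\rvert$ rather than at $a$.
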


\begin{proof}
For the first inequality, using (H2+), we have
\[
	\lvert f_i(I)\rvert
	= \int_If_i'(z)\,dz
	\le f_i'(x)\int_Ie^{-M^{-1}(z-x)}\,dz
	= f_i'(x) \frac{1-e^{-M^{-1}\lvert I\rvert}}{M^{-1}}.
\]
Thus, if $\lvert I\rvert\ge a$ then
\[
	f_i'(x)
	\ge \frac{\lvert f_i(I)\rvert}{\lvert I\rvert } 
		\frac{\lvert I\rvert M^{-1}}{1-e^{-M^{-1}\lvert I\rvert}}
	\ge  \frac{\lvert f_i(I)\rvert}{\lvert I\rvert } 
		e^{ C_1(\lvert I\rvert)},
\]
where $ C_1(a)\eqdef \log(a M^{-1}/(1-e^{-a M^{-1}}))$ has the claimed properties. 

The other inequality follows analogously, taking $ C_2(a)\eqdef \log(a M/(1-e^{-a M}))$.
\end{proof}
\subsection{Rescaling of moving intervals}

The following scheme will be used several times. Consider $\xi\in\Sigma$ such that $I_{\xi^+}$ is not a singleton and points $x_1,x_2,x_3\in I_{\xi^+}$ satisfying $x_1<x_2<x_3$. Let $I^0\eqdef [x_1,x_3]$. For $n\ge0$ consider the ``moving intervals"
\[
	I^n
	\eqdef f_\xi^n(I^0)
	= [x_1^n,x_3^n],
	\quad\text{ where }\quad
	x_1^n
	\eqdef f_\xi^n(x_1),\quad
	x_3^n
	\eqdef f_\xi^n(x_3).
\]	
Given $k$, for $\ell > k$ consider the rescaled map $g_k^\ell\colon I^k\to[0,\infty]$ defined by
\begin{equation}\label{eq:defgkl}
	g_k^\ell(x)
	\eqdef \frac{\lvert I^k\rvert}{\lvert I^\ell\rvert} (f_{\xi_{\ell-1}}\circ\ldots\circ f_{\xi_k})(x).
\end{equation}
Observe that
\[
	(g_k^\ell)'(x)
	= (g_{\ell-1}^\ell)'(f_\xi^{\ell-1}(x))\cdot\ldots\cdot (g_k^{k+1})'(f_\xi^k(x)).
\]
Hence, for every $x\in I^0$ we have
\begin{equation}\label{eq:equal}
	(g_0^n)'(x)
	= \frac{\lvert I^0\rvert}{\lvert I^n\rvert}(f_\xi^n)'(x).
\end{equation}
Further, observe that
\[
	\lvert g_k^{k+1}(I^k)\rvert
	= \lvert I^k\rvert.
\]
Note that concavity also implies
\begin{equation}\label{eq:ofcourse}
	(g_k^{k+1})'(x_1^k)
	\ge 1.
\end{equation}

Hence, arguing inductively, for every $n\ge0$ we have
\begin{equation}\label{eq:good}
	\lvert g_0^n(I^0)\rvert
	= \lvert I^0\rvert.
\end{equation}

\begin{lemma}\label{lem:monoto}
	If $\sum_{k=0}^\infty(x_3^k-x_1^k)=\infty$, then for every choice of $x_2\in(x_1,x_3)$ we have 
\[
	\lim_{n\to\infty}(x_3^n-x_2^n)
	=0.
\]
\end{lemma}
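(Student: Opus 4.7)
The plan is to introduce the relative position $\beta_n \eqdef (x_2^n - x_1^n)/(x_3^n - x_1^n) \in [0,1]$ and to show that $\beta_n \to 1$; this suffices because $x_3^n - x_2^n = (1 - \beta_n)\lvert I^n\rvert \le 1 - \beta_n$.

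The first step would be to observe that $(\beta_n)_n$ is non-decreasing. Each fiber map $f_{\xi_k}$ is concave and increasing (by (H2+) and (H1)), and any concave increasing map $f\colon [a,b]\to \bR$ satisfies the chord inequality $(f(c)-f(a))/(f(b)-f(a)) \ge (c-a)/(b-a)$ for $a<c<b$. Applied to $f_{\xi_n}\colon[x_1^n,x_3^n]\to[x_1^{n+1},x_3^{n+1}]$ at $c = x_2^n$, this yields $\beta_{n+1} \ge \beta_n$. Hence $\beta_n \to \beta^* \in [\alpha, 1]$, where $\alpha \eqdef (x_2-x_1)/(x_3-x_1)$.

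The main step would be to rule out $\beta^* < 1$ by contradiction. I would pass to the rescaled map $g_0^n$ from \eqref{eq:defgkl}, which is concave on $I^0$ with $\lvert g_0^n(I^0)\rvert = \lvert I^0\rvert$ by \eqref{eq:good} and whose derivative equals $(\lvert I^0\rvert/\lvert I^n\rvert)(f_\xi^n)'$. Lemma \ref{lem:distortion} then yields
\[
\log\frac{(g_0^n)'(x_1)}{(g_0^n)'(x_3)}
= \log\frac{(f_\xi^n)'(x_1)}{(f_\xi^n)'(x_3)}
\ge M^{-1}\sum_{k=0}^{n-1}(x_3^k - x_1^k) \longrightarrow \infty
\]
by hypothesis. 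On the other hand, concavity of $g_0^n$ gives the chord estimates $(g_0^n)'(x_1) \ge (g_0^n(x_2)-g_0^n(x_1))/(x_2-x_1)$ and $(g_0^n)'(x_3) \le (g_0^n(x_3)-g_0^n(x_2))/(x_3-x_2)$; since $g_0^n(x_2)-g_0^n(x_1) = \lvert I^0\rvert \beta_n$ and $g_0^n(x_3)-g_0^n(x_2) = \lvert I^0\rvert(1-\beta_n)$, their quotient satisfies
\[
\frac{(g_0^n)'(x_1)}{(g_0^n)'(x_3)} \ge \frac{\beta_n}{1-\beta_n}\cdot\frac{x_3-x_2}{x_2-x_1}.
\]
If $\beta^* < 1$ then the right-hand side stays bounded, contradicting the divergence above; hence $\beta^* = 1$.

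The main conceptual obstacle is identifying $\beta_n$ as the right monotone quantity; once this is in hand, Lemma \ref{lem:distortion} and the concavity chord inequalities combine directly to pin the limit at $1$. A minor technical point is verifying that the rescaling $g_0^n$ preserves both the concavity of $f_\xi^n$ and the ratio of its derivatives at distinct points, but both are immediate from $(g_0^n)' = (\lvert I^0\rvert/\lvert I^n\rvert)(f_\xi^n)'$.
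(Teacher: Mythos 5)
Your setup is sound: $\beta_n$ is indeed non-decreasing by the chord inequality for concave increasing maps, and Lemma \ref{lem:distortion} does give $\log\bigl((g_0^n)'(x_1)/(g_0^n)'(x_3)\bigr)\ge M^{-1}\sum_{k=0}^{n-1}(x_3^k-x_1^k)\to\infty$. The gap is in the final step. Your two chord estimates bound the numerator $(g_0^n)'(x_1)$ from \emph{below} and the denominator $(g_0^n)'(x_3)$ from \emph{above}, so together they yield only a \emph{lower} bound on the quotient, namely $(g_0^n)'(x_1)/(g_0^n)'(x_3)\ge \frac{\beta_n}{1-\beta_n}\cdot\frac{x_3-x_2}{x_2-x_1}$. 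A divergent left-hand side is perfectly compatible with a bounded right-hand side when the inequality reads $\ge$; there is no contradiction, and this inequality is in fact consistent with any limit $\beta^*\in[\alpha,1]$. To close the argument along these lines you would need an \emph{upper} bound on the derivative ratio in terms of $\beta_n$, but concavity provides no upper bound on the derivative at the left endpoint and no lower bound at the right endpoint, so no such chord estimate is available.

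The repair essentially forces you into the paper's argument, which detects the blow-up at \emph{interior} points, where a chord on the far side pins the derivative from below. If $\beta^*<1$, then $g_0^n(x_3)-g_0^n(x_2)=\lvert I^0\rvert(1-\beta_n)$ is bounded below, so by concavity $(g_0^n)'(x_2)\ge \lvert I^0\rvert(1-\beta^*)/(x_3-x_2)>0$ for all $n$. Fix $y\in(x_1,x_2)$ and write $x_2=\lambda y+(1-\lambda)x_3$; concavity gives $x_2^k-y^k\ge(1-\lambda)(x_3^k-x_2^k)\ge(1-\lambda)(1-\beta^*)(x_3^k-x_1^k)$, so $\sum_k(x_2^k-y^k)=\infty$. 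Lemma \ref{lem:distortion} applied between $y$ and $x_2$ then yields $(g_0^n)'(y)\ge (g_0^n)'(x_2)\exp\bigl(M^{-1}\sum_{k=0}^{n-1}(x_2^k-y^k)\bigr)\to\infty$, whence by concavity $g_0^n(y)-g_0^n(x_1)\ge(y-x_1)(g_0^n)'(y)\to\infty$, contradicting $\lvert g_0^n(I^0)\rvert=\lvert I^0\rvert$ from \eqref{eq:good}. The monotonicity of $\beta_n$ and the divergence of the endpoint derivative ratio, which you establish correctly, are not by themselves enough; the essential missing ingredient is the lower bound on the normalized derivative at the interior point $x_2$ combined with the distortion estimate between two interior points.
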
 

\begin{proof}
	Note that the sequence $(x_3^n-x_2^n)_n$ is not necessarily monotone, so convergence is not immediate. 	We start by proving a slightly stronger fact.
	
\begin{claim*}
\[
	d
	\eqdef\lim_{n\to\infty}\frac{x_3^n-x_2^n}{x_3^n-x_1^n}
	= 0.
\]	
\end{claim*}	

As  $0< \lvert x_3^n-x_1^n\rvert\le 1$, the lemma is then an immediate consequence of the above claim.

\begin{proof}[Proof of the claim]
	Note that concavity of the maps  implies that the sequence in the claim is monotonically decreasing, hence its limit exists.	 By contradiction, suppose that $d>0$. 
	
	 By monotonicity of the derivatives of $f_0,f_1$ and the choice of $x_2$, by \eqref{eq:equal} we have
\begin{equation}\label{eq:meud}\begin{split}
	\lvert (g_0^n)'(x_2)\rvert
	&= \frac{\lvert I^0\rvert}{\lvert I^n\rvert} (f_\xi^n)'(x_2)
	= \frac{x_3-x_1}{x_3^n-x_1^n} (f_\xi^n)'(x_2)
	\ge \frac{x_3-x_1}{x_3^n-x_1^n} \frac{x_3^n-x_2^n}{x_3-x_2}
	>d.
\end{split}\end{equation}	
Fix some $y\in(x_1,x_2)$ and let $\alpha\in(0,1)$ such that $x_2=\alpha y+(1-\alpha)x_3$. By concavity of $g_0^n$, letting $y^n=f_\xi^n(y)$, we get
\[\begin{split}
	\frac{\lvert I^0\rvert}{\lvert I^n\rvert}(\alpha y^n+(1-\alpha)x_3^n)
	&= \alpha g_0^n(y)+(1-\alpha)g_0^n(x_3)\\
	&\le g_0^n(\alpha y+(1-\alpha)x_3)
	= \frac{\lvert I^0\rvert}{\lvert I^n\rvert}x_2^n
\end{split}\]
and hence 
\[
	\alpha y^n+(1-\alpha)x_3^n\le x_2^n,
\]	 
which implies
\[
	x_3^n-x_1^n
	\ge x_2^n-y^n
	\ge (1-\alpha)(x_3^n-y^n)
	\ge (1-\alpha)(x_3^n-x_2^n).
\]
Hence
\[
	\frac{x_2^n-y^n}{x_3^n-x_1^n}
	\ge (1-\alpha)\frac{x_3^n-x_2^n}{x_3^n-x_1^n}
	\ge (1-\alpha)d>0.
\]
Further, again by monotonicity of the derivatives, \eqref{eq:equal}, and by Lemma \ref{lem:distortion} together with the above we obtain
\[\begin{split}
	\max_{z_1,z_2\in[y,x_2]}\frac{(g_0^n)'(z_1)}{(g_0^n)'(z_2)}
	&= \frac{(g_0^n)'(y)}{(g_0^n)'(x_2)} 
	= \frac{(f_\xi^n)'(y)}{(f_\xi^n)'(x_2)} 
	\ge	\exp\Big({M^{-1}\sum_{k=0}^{n-1}(x_2^k-y^k)}\Big)\\
	&\ge \exp\Big({M^{-1}(1-\alpha)d\sum_{k=0}^{n-1}(x_3^k-x_1^k)}\Big).
\end{split}\]
By hypothesis, the latter diverges as $n\to\infty$. By \eqref{eq:meud}, we obtain $(g_0^n)'(y)\to\infty$. By concavity, $\min_{z\in[x_1,y]}(g_0^n)'(z)\to\infty$ as $n\to\infty$. But this implies that 
\[
		g_0^n(y)-g_0^n(x_1) 
	\to\infty,
\] 
which contradicts \eqref{eq:good}, proving the claim. 
\end{proof}

This proves the lemma.
\end{proof}

\subsection{Distance to fixed points}

The following lemma will be instrumental in the proof of Theorem \ref{teo:homoclinicclasses}.

\begin{lemma} \label{lem:close0}
There exists a function $h\colon(0,\infty)\to(0,\infty)$ with $\lim_{t\to 0} h(t)=0$ satisfying the following property. Let $(\omega_1\ldots\omega_n)\in \{0,1\}^n$, $n\ge1$, be a word such that the map $g=f_{[\omega_1\ldots\,\omega_n]}$ has some fixed point in $[0,1]$. If $\lvert g(x)-x\rvert<\varepsilon$ for some $x\in[a_{[\omega_1\ldots\,\omega_n]},1]$, then the distance of $x$ to the closest fixed point of $g$ is not larger than $h(\varepsilon)$.
\end{lemma}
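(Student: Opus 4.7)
The plan is to prove a uniform quadratic modulus estimate of the form $|g(x) - x| \geq c(x - p)^2$, where $p$ is the fixed point of $g$ nearest to $x$ and $c = c(M) > 0$ depends only on the constant $M$ from (H2+). This immediately yields the lemma with $h(\varepsilon) = \sqrt{\varepsilon/c}$.

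First I would introduce $\varphi(x) \eqdef g(x) - x$; since $g$ is strictly concave by (H2+), $\varphi$ is strictly concave on $[a_{[\omega_1\ldots\,\omega_n]}, 1]$, so Lemma \ref{lem:fixpots} applies: $\mathrm{Fix}(g)$ is either $\{p\}$ (parabolic, $g'(p)=1$) or $\{p^+, p^-\}$ (hyperbolic, $g'(p^+) > 1 > g'(p^-)$). In the hyperbolic case I would apply the intermediate value theorem to $g'-1$ to produce a unique point $x^* \in (p^+, p^-)$ with $g'(x^*) = 1$; in the parabolic case I simply set $x^* \eqdef p$. The key tool throughout is Lemma \ref{lem:distortion}: for $u < v$ in the domain, $\log g'(u) - \log g'(v) \geq M^{-1}(v-u)$, which lets me extract one-sided exponential bounds on $g'$ anchored at any point where the value of $g'$ is known.

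Next I would split the domain into the four subintervals determined by $p^+ \leq x^* \leq p^-$ and handle each with the appropriate anchor. On $[a_{[\omega_1\ldots\,\omega_n]}, p^+]$ I anchor at $p^+$: the distortion bound gives $g'(t) \geq e^{M^{-1}(p^+ - t)}$ for $t \leq p^+$, whence $g(x) \leq p^+ - M(e^{M^{-1}(p^+ - x)} - 1)$, and the elementary estimate $M(e^{u/M} - 1) \geq u + u^2/(2M)$ yields $|\varphi(x)| \geq (p^+ - x)^2/(2M)$. On $[p^-, 1]$ I anchor at $p^-$, obtaining a symmetric bound (with slightly worse constant $1/(3M)$ due to the sign of the Taylor remainder). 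For the interior regions $[p^+, x^*]$ and $[x^*, p^-]$ where $\varphi \geq 0$, I would anchor at $x^*$: distortion gives $\varphi'(t) = g'(t) - 1 \geq M^{-1}(x^* - t)$ for $t \leq x^*$, and integrating from $p^+$ to $x$ produces
\[
	\varphi(x)
	\geq \frac{1}{2M}\big[(x^* - p^+)^2 - (x^* - x)^2\big]
	\geq \frac{(x - p^+)^2}{2M},
\]
where the last step uses $(x^* - p^+) + (x^* - x) \geq x - p^+$; the region $[x^*, p^-]$ is symmetric.

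The main obstacle will be this interior region $(p^+, p^-)$ in the hyperbolic case: $\varphi(x) > 0$ there, so a naive secant argument using only $\varphi(p^+) = \varphi(p^-) = 0$ and concavity returns the useless bound $\varphi \geq 0$. Overcoming it requires the auxiliary anchor $x^*$, which is not a fixed point of $g$ but rather the unique zero of $\varphi'$ supplied by the intermediate value theorem; once $x^*$ is in hand, the same Taylor/distortion recipe used in the exterior regions applies. Combining all four cases gives $\mathrm{dist}(x, \mathrm{Fix}(g)) \leq \sqrt{6M|\varphi(x)|}$, so $h(\varepsilon) \eqdef \sqrt{6M\varepsilon}$ satisfies $h(\varepsilon) \to 0$ as $\varepsilon \to 0$ and depends only on $M$, hence not on the word $(\omega_1 \ldots \omega_n)$.
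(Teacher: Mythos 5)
Your proof is correct, and it rests on the same two ingredients as the paper's own argument: the one-step consequence of (H2+) (the $k=0$ term in Lemma \ref{lem:distortion}), which forces $\log g'$ to vary at rate at least $M^{-1}$, and the unique point where $g'=1$ supplied by strict concavity (your $x^*$, the paper's $z$). The packaging differs. The paper introduces the $\varepsilon$-dependent buffer point $y=z-\sqrt\varepsilon$, shows $(g-\mathrm{id})'\ge e^{M^{-1}\sqrt\varepsilon}-1$ to its left, and closes with a first-order secant/mean-value estimate, arriving at $h(\varepsilon)=(e^{M^{-1}\sqrt\varepsilon}-1)^{-1}\varepsilon+\sqrt\varepsilon$. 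You instead integrate the derivative bound anchored at $p^{\pm}$ and $x^*$ to obtain the $\varepsilon$-free quadratic modulus $\lvert g(x)-x\rvert\ge c(M)\,\mathrm{dist}(x,\mathrm{Fix}(g))^2$, which gives $h(\varepsilon)=\sqrt{6M\varepsilon}$ at once. Both routes yield $h(\varepsilon)$ of order $\sqrt\varepsilon$ with a constant depending only on $M$; yours has the small advantage of a single uniform inequality treating all four regions by one recipe (and of correctly handling the sign of the Taylor remainder on $[p^-,1]$), at the cost of slightly more bookkeeping. Two minor points to make explicit in a write-up: Lemma \ref{lem:distortion} is stated for infinite admissible sequences, so either note that its proof applies verbatim to a finite word on $[a_{[\omega_1\ldots\,\omega_n]},1]$ or quote (H2+) directly as the paper does; and the existence of $x^*$ in the hyperbolic case should be attributed to Rolle's theorem applied to $g-\mathrm{id}$ on $[p^+,p^-]$ (uniqueness then follows from strict monotonicity of $g'$), which also covers the degenerate parabolic case $p^+=x^*=p^-$ where your interior regions are empty.
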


\begin{proof}
Let $z$ be the (unique) point satisfying $g'(z)=1$ (that is, the maximum point for $t\mapsto g(t)-t$). Denote by $x^\pm$ the fixed point(s) of $g$, $x^+\le z\le x^-$. Note that we may have $x^+=x^-=z$. 

We only study the case $x\le z$, the case $x>z$ is analogous.

\begin{claim*} 
For every $y\le z-\sqrt\varepsilon$ we have $g'(y)\ge e^{M^{-1}\sqrt\varepsilon}$.
\end{claim*}

\begin{proof}
 By (H2+), we have $\log (f_{\omega_1}'(y)/ f_{\omega_1}'(z)) \ge M^{-1}\sqrt\varepsilon$. Hence
\[\begin{split}
	g'(y)
	&= f_{\omega_1}'(y) \cdot\ldots\cdot  f_{\omega_n}'(f_{[\omega_1\ldots \,\omega_{n-1}]}(y)) 
	 > \frac {f_{\omega_1}'(y)} {f_{\omega_1}'(z)} \cdot f_{\omega_1}'(z) \cdot\ldots\cdot  f_{\omega_n}'(f_{[\omega_1\ldots \,\omega_{n-1}]}(z))    \\
	 &\geq e^{M^{-1}\varepsilon^{1/2}}g'(z)
	 = e^{M^{-1}\varepsilon^{1/2}},
\end{split}\] 
proving the claim.
\end{proof}

Define 
\[
	h(\varepsilon)
	\eqdef (e^{M^{-1}\sqrt\varepsilon}-1)^{-1}\varepsilon + \sqrt\varepsilon.
\]
Given $\varepsilon>0$ such that $\lvert g(x)-x\rvert<\varepsilon$, denote
\[
	y= z-\sqrt\varepsilon.
\]
There are two cases:

\smallskip
\noindent \textbf{Case $y < x^+\le z$:}
If $y \le x\le z$, then clearly we have $\lvert x-x^+\rvert\le h(\varepsilon)$. 
Assume now $x< y$. By hypothesis and since $x\le x^+$, we have 
\[
	\varepsilon> \lvert g(x)-x\rvert = -(g(x)-x).
\]	 
Since $g(y)-y <0$, by concavity we have
\[\begin{split}
	\varepsilon
	&> 0-(g(x)-x)
	= (g(x^+)-x^+)-(g(y)-y) +(g(y)-y)-(g(x)-x)\\
	&> 0 +(g-{\rm id})'(y)(y-x)
	\ge (e^{M^{-1}\sqrt\varepsilon}-1)(y-x),
\end{split}\]
where for the latter we used the above claim.
Thus, by the above, we obtain 
\[
	\lvert x-x^+\rvert
	\le \lvert x-y\rvert+\lvert y-x^+\rvert
	\le \varepsilon(e^{M^{-1}\sqrt\varepsilon}-1)^{-1} +\sqrt\varepsilon
	\le h(\varepsilon).
\]	

\smallskip
\noindent \textbf{Case $x^+\le y$:}
If $y \le x\le z$, by hypothesis and as $x\ge x^+$, we get 
\[
	\varepsilon> \lvert g(x)-x\rvert = g(x)-x.
\]	 
By concavity, applying the above claim, we have
\[\begin{split}
	\varepsilon
	&> (g(x)-x)-(g(y)-y) +(g(y)-y)-(g(x^+)-x^+)\\
	&> 0 +(g-{\rm id})'(y)(y-x^+)
	\ge (e^{M^{-1}\sqrt\varepsilon}-1)(y-x^+),
\end{split}\]
which implies, as above,
\[
	\lvert x-x^+\rvert
	\le \lvert x-y\rvert+\lvert y+x^+\rvert
	\le h(\varepsilon).
\]

Finally, if $x< y$, we have
\[
	\varepsilon
	> \lvert g(x)-x\rvert 
	= \lvert (g(x)-x)-(g(x^+)-x^+)\rvert.
\]
Letting now $w=\max\{x^+,x\}$, using again the claim, we have
\[
	\varepsilon
	>\lvert (g(x)-x)-(g(x^+)-x^+)\rvert
	\ge (g-{\rm id})'(w)\lvert x-x^+\rvert
	\ge (e^{M^{-1}\sqrt\varepsilon}-1)\lvert x-x^+\rvert,
\]
obtaining $\lvert x-x^+\rvert<h(\varepsilon)$. This proves the lemma.
\end{proof}

\section{Density of periodic points -- Proof of Theorem \ref{teo:homoclinicclasses}}
\label{sec:proofofteo:homoclinicclasses}

\subsection{Approximation by (hyperbolic) periodic points}	

First observe that, as our maps are local diffeomorphisms, every periodic point has positive derivative which might be equal to $1$ (parabolic) or different from $1$ (hyperbolic, either of contracting or of expanding type). The next lemma deals with the approximation of parabolic periodic points -- in case such points do exist -- by hyperbolic ones.

\begin{lemma}\label{lem:parpoiacc}
	Assume (H1)--(H2). Every parabolic periodic point is accumulated by hyperbolic periodic points of either type of hyperbolicity.
	Moreover, every parabolic periodic measure is weak$\ast$ accumulated by hyperbolic periodic measures of either type of hyperbolicity.
\end{lemma}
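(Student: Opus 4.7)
Write $S=(w^\bZ,s)$ with $w=(\xi_0\ldots\xi_{m-1})$ and $g\eqdef f_{[\xi_0\ldots\xi_{m-1}]}$, so that $g(s)=s$ and $g'(s)=1$. Strict concavity of $g$ (Lemma \ref{lem:fixpots} combined with (H2)) gives $g(x)<x$ for every $x\in I_{[\xi_0\ldots\xi_{m-1}]}\setminus\{s\}$, so $s$ is attracting from the right and repelling from the left under $g$. The assertion about approximation of the point $S$ is then immediate from Proposition \ref{pl.parabolichomoclinic}: since $S\in H_{\le0}(S,F)=H(P,F)$ and $S\in H_{\ge0}(S,F)=H(Q,F)$, and each homoclinic class is by definition the closure of hyperbolic periodic points homoclinically related to its base point---necessarily of contracting, resp.\ expanding, type---the point $S$ is accumulated by hyperbolic periodic points of both types.

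For the measure approximation I treat the contracting case; the expanding one follows by applying the same construction to the inverse IFS $\{f_0^{-1},f_1^{-1}\}$, which satisfies (H1)--(H2) by Remark \ref{rem:time-reversal}. Using the homoclinic$_{\le0}$ relation between $S$ and $P$ supplied by Proposition \ref{pl.parabolichomoclinic}, fix admissible connector words $\beta$ and $\alpha$ with
\[
	f_\beta(s)\in\interior\bigl(\cW^\s_{\rm loc}(1,f_0)\bigr)=(0,1)
	\quad\text{and}\quad
	f_\alpha(1)\in\interior\bigl(\cW^\s_{\rm loc}(s,g)\bigr)\subset(s,1).
\]
Fix $K$ sufficiently large; for each pair of integers $N,M$, form the admissible periodic word $\omega_{N,M}\eqdef w^N\beta\,0^K\alpha\,w^M$ and the strictly concave map
\[
	\phi_{N,M}\eqdef f_{\omega_{N,M}}
	=g^M\circ f_\alpha\circ f_0^K\circ f_\beta\circ g^N.
\]
Because $f_0'(1)\in(0,1)$, the factor $\prod_{j=0}^{K-1}f_0'(f_0^j(f_\beta(s)))$ is arbitrarily small once $K$ is chosen large; combined with $(g^N)'(s)=1$ this forces $\phi_{N,M}'$ to be uniformly small on a neighborhood of $s$ for all $N,M$. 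Since $f_\alpha(1)\in(s,1)$ lies in the local stable set of $s$ under $g$, taking $M$ large renders $\phi_{N,M}(s)=g^M(f_\alpha(f_0^K(f_\beta(s))))$ as close to $s$ as we wish. By Lemma \ref{lem:fixpots}, $\phi_{N,M}$ has either one parabolic fixed point or a pair consisting of an expanding and a contracting one; the smallness of $\phi_{N,M}'$ rules out the parabolic case, and a standard contraction-mapping argument locates the contracting fixed point $s_{N,M}\in(s,1)$ at distance $o(1)$ from $s$ as $K,M\to\infty$.

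Finally, choose $M=M(N)\to\infty$ with $M(N)/N\to0$ and set $s_N\eqdef s_{N,M(N)}$, $\omega_N\eqdef\omega_{N,M(N)}$. The $F$-orbit of $(\omega_N^\bZ,s_N)$ has period $p_N=(N+M(N))m+|\beta|+K+|\alpha|$. For any iterate $j$ well inside either of the two $w$-blocks, the base $\sigma^j(\omega_N^\bZ)$ agrees with $\sigma^{j\bmod m}(w^\bZ)$ on arbitrarily many coordinates around the origin as $N\to\infty$, while the fiber value $f_{[\xi_0\ldots\xi_{r-1}]}(g^{\lfloor j/m\rfloor}(s_N))$ (with $r=j\bmod m$) belongs to the interval $f_{[\xi_0\ldots\xi_{r-1}]}([s,s_N])$, which shrinks to $\{f_{[\xi_0\ldots\xi_{r-1}]}(s)\}$ as $s_N\to s$. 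The $|\beta|+K+|\alpha|$ iterations spent in the connector blocks have bounded length and therefore contribute negligibly to $p_N$, so $\mu_{s_N}\to\mu_S$ weak$\ast$. The principal difficulty, resolved here by means of the independent parameters $K$ (controlling hyperbolicity via $f_0'(1)<1$) and $M$ (controlling the location of the fixed point via the attracting side of $s$), is that any bounded-length single connector would produce fixed points of $\phi_{N,M}$ at a distance bounded away from $s$: it is precisely the extra $w^M$-suffix, exploiting the attraction of $s$ from the right under $g$, that pulls the approximating fixed point arbitrarily close to $S$.
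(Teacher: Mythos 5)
Your proof is correct, and its core mechanism coincides with the paper's: you insert a long $f_0$-block to gain uniform fiber contraction (using $f_0'(1)<1$) and pad with powers of the return word $w$ so that, by the one-sided attraction of the parabolic point, the new contracting fixed point is pulled arbitrarily close to $s$. The paper does exactly this with the symmetric word $\omega^\ell 0^k\omega^\ell$ (so your connectors $\beta,\alpha$ are superfluous: one may take $\beta$ empty and $\alpha=w$), and your asymmetric choice $N,M$ with $M(N)/N\to0$ is an equivalent variant. Where you genuinely diverge is the first assertion: instead of reading off the accumulation of the point $S$ from the explicit construction, you deduce it from Proposition \ref{pl.parabolichomoclinic} via $S\in H_{\le0}(S,F)=H(P,F)$ and $S\in H_{\ge0}(S,F)=H(Q,F)$, together with the characterization of a homoclinic class as the closure of the hyperbolic periodic points homoclinically related to its base point; this is a valid and shorter route (no circularity, since that proposition is proved independently under (H1)--(H2)). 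One small imprecision: for $j$ in the \emph{second} $w$-block the fiber value is $f_{[\xi_0\ldots\xi_{r-1}]}(g^i(z))$ with $z=f_\alpha(f_0^K(f_\beta(g^N(s_N))))$ rather than $g^{\lfloor j/m\rfloor}(s_N)$, and these iterates need not lie in $f_{[\xi_0\ldots\xi_{r-1}]}([s,s_N])$ for small $i$; this is harmless precisely because your choice $M(N)/N\to0$ makes that whole block a negligible fraction of the period, but the sentence should be restricted to the first block.
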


\begin{proof}
We only prove the lemma for periodic points of contracting type, the other case of expanding type is similar and hence omitted.

	Let $X=F^n(X)=((\omega_0\ldots\omega_{n-1})^\bZ,x)$ be a parabolic periodic point. Abbreviate the word $(\omega_0\ldots\omega_{n-1})$ simply by $\omega$. Hence  
\[
	f_{[\omega]}(x)=x
	\quad\text{ and }\quad	
	(f_{[\omega]})'(x)=1
\]	 
and therefore $\omega\ne0^n$ and $x\ne0$.
Notice also that $(f_{[\omega]})'<1$ in $(x,1]$. Thus, $f_{[\omega]}((x,1])\subset(x,1)$. As $x\ne0$, there is $k\ge1$ such that $(f_0^k)'(x)<1$. Hence, we have $(f_0^k)'<1$ in $[x,1]$. Note also that $f_0^k((x,1))\subset(x,1)$.
	
	Noting that $1$ is in the basin of attraction of $x$ with respect to $f_{[\omega]}$,  given $\varepsilon>0$ sufficiently small, for every $\ell\ge1$ sufficiently large we have $f_{[\omega^\ell]}(1)\in(x,x+\varepsilon)$. 
	Observe that 
\[
	f_{[\omega^\ell0^k\omega^\ell]}(1)
	\le f_{[\omega^\ell]}(1)
	<x+\varepsilon.
\]	
By the above, we have $f_{[\omega^\ell0^k\omega^\ell]}((x,1])\subset(x,x+\varepsilon)$ and hence  there is a periodic point $p^{(\ell)}$ for $f_{[\omega^\ell0^k\omega^\ell]}$ in $(x,x+\varepsilon)$. Moreover $(f_{[\omega^\ell0^k\omega^\ell]})'<1$ on $(x,1]$.
 Since the periodic sequences 
 \[
 	\eta^{(\ell)}\eqdef(\omega^\ell0^k\omega^\ell)^\bZ 
	= ((\omega^\ell0^k\omega^\ell)^{-\bN}.(\omega^\ell0^k\omega^\ell)^\bN),
\]
satisfy $\eta^{(\ell)}\to (\omega^{-\bN}.\omega^\bN)=\omega^\bZ$ as $\ell\to\infty$, the corresponding $F$-periodic points 
$Y_\ell=(\eta^{(\ell)},p^{(\ell)})$
converge to $X=(\omega^\bZ,x)$. Moreover, each $Y_\ell$ is hyperbolic of contracting type.

The claim about weak$\ast$ approximation of the parabolic measure by hyperbolic periodic ones is immediate by construction.
\end{proof}

\subsection{Proof of Theorem \ref{teo:homoclinicclasses}}

Throughout this section, we assume (H1)--(H2+).
By Proposition \ref{pro.l.homoclinicallyrelated}, we have
\[
	{\rm closure}\{A\in\Gamma\colon A\text{ hyperbolic and periodic}\}
	= H(P,F)\cup H(Q,F).	
\]
Hence, the first claim in the theorem is then a consequence of the following lemma.

\begin{lemma}
We have $\Omega(\Gamma,F)={\rm closure}\{A\in\Gamma\colon A\text{ hyperbolic and periodic}\}$.
\end{lemma}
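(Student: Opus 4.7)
The inclusion $\supseteq$ is immediate: hyperbolic periodic points are non-wandering and $\Omega(\Gamma,F)$ is closed. For the reverse inclusion I first note that $\Omega(\Gamma,F)\subset\Gamma^{\rm cod}$. By Theorem~\ref{teo:coded}, $\Sigma^{\rm het}$ is a countable set of isolated, non-periodic points of $\sigma$ and is therefore wandering for $\sigma$; since $\pi$ sends non-wandering points to non-wandering points and $\Gamma^{\rm het}=\pi^{-1}(\Sigma^{\rm het})$, every point of $\Gamma^{\rm het}$ is wandering for $F$.

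Fix now $X=(\xi,x)\in\Omega(\Gamma,F)$. Non-wandering furnishes $Y_k=(\eta^{(k)},y_k)\to X$ and positive integers $n_k$ with $F^{n_k}(Y_k)=(\sigma^{n_k}\eta^{(k)},z_k)\to X$. If $(n_k)$ has a bounded subsequence I may assume $n_k\equiv n$; then $F^n(X)=X$ by continuity, so $X$ is periodic and, if parabolic, handled by Lemma~\ref{lem:parpoiacc}. Assume henceforth $n_k\to\infty$. Associate to $Y_k$ the word $\omega^{(k)}\eqdef(\eta^{(k)}_0\ldots\eta^{(k)}_{n_k-1})$ and the concave return map $g_k\eqdef f_{[\omega^{(k)}]}$ on $[a_{[\omega^{(k)}]},1]$, noting that $g_k(y_k)=z_k$ and hence $\varepsilon_k\eqdef|g_k(y_k)-y_k|=|z_k-y_k|\to 0$.

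Provided $g_k$ admits a fixed point, Lemma~\ref{lem:fixpots} forces $\tilde\xi^{(k)}\eqdef(\omega^{(k)})^{\bZ}\in\Sigma$ and Lemma~\ref{lem:close0} provides a fixed point $p_k$ of $g_k$ satisfying $|p_k-y_k|\le h(\varepsilon_k)\to 0$. The convergences $\eta^{(k)}\to\xi$ and $\sigma^{n_k}\eta^{(k)}\to\xi$ force $\omega^{(k)}$ to begin and end with ever-longer prefixes and suffixes of $\xi$, so that $\tilde\xi^{(k)}\to\xi$ and the periodic point $P_k\eqdef(\tilde\xi^{(k)},p_k)\in\Gamma$ converges to $X$. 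By Lemma~\ref{lem:fixpots}, each $P_k$ is hyperbolic or parabolic, and a further application of Lemma~\ref{lem:parpoiacc} upgrades parabolic approximants to hyperbolic ones.

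The key step is thus the existence of a fixed point of $g_k$. Since $g_k-{\rm id}$ is strictly concave on $[a_{[\omega^{(k)}]},1]$ with $g_k(a_{[\omega^{(k)}]})-a_{[\omega^{(k)}]}\le 0$ and $g_k(1)-1\le 0$, a fixed point exists if and only if $\max(g_k-{\rm id})\ge 0$. In the easy case $z_k\ge y_k$ this holds at $y_k$, so an intermediate-value argument produces a fixed point in $[a_{[\omega^{(k)}]},y_k]$ and the preceding scheme concludes. The hard case $z_k<y_k$ is where I expect the main difficulty: here one exploits the monotone shrinking of spines given by Lemma~\ref{lem:monoto} and the graph structure of Proposition~\ref{prol.extremalpoints} to show that such recurrent behaviour forces $X$ to accumulate on the upper-boundary graph $\Gamma^-\eqdef\{(\eta,x_{\eta^-})\colon\eta\in\Sigma^{\rm cod}\}\subset H(P,F)$; since by Proposition~\ref{pro.l.homoclinicallyrelated} the class $H(P,F)$ is the closure of hyperbolic periodic points of contracting type, $X$ is approximated by such points. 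The symmetric treatment via Remark~\ref{rem:time-reversal} covers the dual accumulation onto $\Gamma^+\subset H(Q,F)$, completing the proof.
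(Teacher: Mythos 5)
Your reduction to $\Gamma^{\rm cod}$, the treatment of bounded return times, the use of Lemma~\ref{lem:close0} when the return map $g_k$ admits a fixed point, and the upgrade of parabolic approximants via Lemma~\ref{lem:parpoiacc} all match the paper's argument. The genuine gap is exactly where you flag it: the case in which $g_k$ has \emph{no} fixed point (equivalently $g_k<\mathrm{id}$ on all of its domain, which is perfectly compatible with $\lvert g_k(y_k)-y_k\rvert\to0$). Your proposed route --- that Lemma~\ref{lem:monoto} and Proposition~\ref{prol.extremalpoints} force $X$ to accumulate on the graph $\{(\eta,x_{\eta^-})\}\subset H(P,F)$ --- is only a guess, and it is not clear it can be substantiated: non-wandering points need not lie on, or be approached only through, the boundary graphs, since whole spines can sit inside $\Omega(\Gamma,F)$ (see Remark~\ref{rem:graphlikestructure}), and nothing in Lemma~\ref{lem:monoto} converts ``almost-recurrence without a fixed point of the return word'' into membership in the closure of that graph. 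As written, the proof is incomplete at its decisive step.

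The paper resolves this case by a different device. Writing $y$ for the base point and $y_m=g(y)<y$, so that $[y_m,y)$ is a fundamental domain for $g$, one inserts a long block of $0$'s: for large $k$ the point $\tilde y_k=(f_0^k\circ f_{[\eta_0\ldots\,\eta_{2n-1}]})(y_m)$ lies beyond $y$, and a bounded number $\ell=\ell(k)\le\ell_0$ of further applications of $g$ brings it back into $[y_m,y)$. The composed map $h_{(k)}=g^{\ell}\circ f_0^k\circ f_{[\eta_0\ldots\,\eta_{2n-1}]}$ has derivative tending to $0$ on $[y_m,1]$ as $k\to\infty$ (because $f_0^k$ pushes points into the region where $f_0$ is a contraction and $\ell$ stays bounded), hence it \emph{does} have a fixed point, which is then localized in $[y_m,y+\delta)$ and fed into Lemma~\ref{lem:close0}. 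This ``append $0^k$'' trick is also the mechanism behind the paper's separate Cases 1) and 2) (orbit hitting $0$ or $1$), which your uniform set-up quietly absorbs into the same unproved subcase. Incorporating this construction would complete your argument.
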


\begin{proof}
First recall that, by Theorem \ref{teo:coded}, every point in $\Gamma^{\rm het}$ is isolated in $\Gamma$ and non-periodic, hence it is wandering. Hence, to prove the lemma, it is enough to see that every nonwandering point in $\Gamma^{\rm cod}$ is accumulated by hyperbolic periodic points. 
By Lemma \ref{lem:parpoiacc}, every parabolic periodic point is accumulated by hyperbolic periodic ones. Hence it remains to show approximation by (either hyperbolic or parabolic) periodic points.

 Let $X=(\xi,x)\in\Gamma^{\rm cod}$ be a nonwandering point, $X\not\in\{P,Q\}$. For $k\in\bZ$ denote 
 \[
 	x_k=f_\xi^k(x).
\]	 
As the set of nonwandering points $\Omega(\Gamma,F)$ is $F$-invariant, for every $k\in\bZ$ the point $X_k=F^k(X)=(\sigma^k(\xi),x_k)$ is also nonwandering. There are the following cases:
\begin{itemize}
\item[1)] there exists a smallest $m_0\ge1$ so that $x_{m_0}=0$,
\item[2)] there exists a smallest $m_0\ge1$ so that $x_{m_0}=1$.
\item[3)] for every $m\ge1$ we have $x_m\in(0,1)$.
\end{itemize}
 
\noindent \textbf{Case 1):}  Given $n\ge1$, observe that $f_{[\xi_{-n}\ldots\,\xi_{m_0}]}(x_{-n})=0$, hence the only forward admissible continuation at 0 is $0^\bN$, that is, we have $(\xi_{-n}\ldots\xi_m)=(\xi_{-n}\ldots\xi_{m_0}0^{m-m_0})$ for all $m>m_0$. 
Thus, for every $m>m_0$ we have $f_{[\xi_{-n}\ldots\,\xi_m]}(x_{-n})=0$. At the same time, given $\varepsilon>0$ small, $f_{[\xi_{-n}\ldots\,\xi_m]}(x_{-n}+\varepsilon)\to 1$ as $m\to\infty$. 
Thus, we can find $m(\varepsilon)>\max\{0, 2n-m_0\}$ such that for every $m>m(\varepsilon)$ it holds
\[
	f_{[\xi_{-n}\ldots\,\xi_m]}(x_{-n}+\varepsilon)>x_{-n}+\varepsilon,
\]	 
and hence this map has a fixed point in the interval $[x_{-n}, x_{-n}+\varepsilon]$. Note that each such point corresponds to a $F$-periodic point of period $m$, $Y=((\eta_0\ldots\eta_{m-1})^\bZ,y)$, where $(\eta_0\ldots\eta_{m-1})=(\xi_{-n}\ldots\xi_{-1}\xi_0\ldots\xi_{n-1}\xi_n\ldots\xi_m)$ and $y\in[x_{-n},x_{-n}+\varepsilon]$. Hence,
\[\begin{split}
	F^n(Y)
	&\in [\xi_{-n}\ldots\xi_{-1}.\xi_0\ldots\xi_{n-1}]\times 
		f_{[\xi_{-n}\ldots\,\xi_{-1}.]}([x_{-n},x_{-n}+\varepsilon])\\
	&= [\xi_{-n}\ldots\xi_{-1}\xi_0\ldots\xi_{n-1}]\times 
		[x_0,x_0+\tau(\varepsilon)],
\end{split}\]	
where $\tau(\varepsilon)\to0$ as $\varepsilon\to0$.
Passing first with $\varepsilon$ to $0$ and then with $n$ to $\infty$, we see that $X$ is accumulated by $F$-periodic points.

\noindent \textbf{Case 2):} This case is analogous to Case 1 considering backward iterates.
 
\smallskip\noindent \textbf{Case 3):}
Given $n\ge1$ choose $\varepsilon>0$ sufficiently small such that $x_{-n}+\varepsilon<1$ and that the word $(\xi_{-n}\ldots \xi_{n-1})$ is backward admissible for all $t>x_{-n}-\varepsilon$. Indeed, such $\varepsilon$ exists because $f_{[\xi_{-n}\ldots\,\xi_m]}(x_{-n})\in(0,1)$ for every $m=1,\ldots,n$. Consider the neighborhood 
\[
	U
	= [\xi_{-n}\ldots\xi_{n-1}] \times [x_{-n}-\varepsilon, x_{-n}+\varepsilon]  
\]	 
of the nonwandering point $F^{-n}(X)$. There exists a point $Y=(\eta,y)\in U$ and a number $m>2n$ such that $F^m(Y)=(\sigma^m(\eta),y_m)\in U$. 
Note that as $m>2n$, we have  
\begin{equation}\label{eq:diegleichen}
	(\eta_0\ldots\eta_{2n-1})=(\xi_{-n}\ldots\xi_{n-1}).
\end{equation}	  
Note also that $I_{[\eta_0\ldots\,\eta_{2n-1}]}\supset I_{[\eta_0\ldots\,\eta_{m-1}]}$. 

Let $g=f_{[\eta_0\ldots\,\eta_{m-1}]}$. Observe that $y,g(y)\in[x_{-n}-\varepsilon,x_{-n}+\varepsilon]$ implies $\lvert g(y)-y\rvert\le2\varepsilon$. There are two subcases to consider.

\smallskip\noindent \textbf{Case 3a) The map $g$ has a fixed point:} 
By Lemma \ref{lem:close0}, there is a fixed point of $g$  which is $h(2\varepsilon)$-close to $y$. 

\smallskip\noindent \textbf{Case 3b) The map $g$ has no fixed point:} As there is no fixed point for $g$, we have 
\begin{equation}\label{eq:decreasing}
	g(t)<t
	\quad\text{ for all }\quad
	t\in I_{[\eta_0\ldots\,\eta_{m-1}]}.
\end{equation} 
In particular, $y_m\eqdef g(y)$ satisfies $y_m<y$. Moreover, observe that the number of admissible concatenations of $g$ is bounded from above by some number $\ell_0$.
Given any $\delta\in(0,\varepsilon)$, we now find a fixed point within the interval $[y_m,y+\delta)$.
Since $y_m\ge x_{-n}-\varepsilon$, by the choice of $\varepsilon$ and with \eqref{eq:diegleichen}, we have  $y_m\in I_{[\xi_{-n}\ldots\,\xi_{n-1}]}=I_{[\eta_0\ldots\,\eta_{2n-1}]}$, that is, $(\eta_0\ldots\eta_{2n-1})$ is forward admissible at $y_m$. Note that for every $k\ge1$ sufficiently large we have $\tilde y_k\eqdef (f_0^k\circ f_{[\eta_0\ldots\,\eta_{2n-1}]})(y_m)>y$, hence $\tilde y_k\in I_{[\eta_0\ldots\,\eta_{m-1}]}$ and we can apply $g$ to $\tilde y_k$. Observing again \eqref{eq:decreasing} and recalling that $[y_m,y)$ is a fundamental domain for $g$, for every $k$, there is a unique number $\ell=\ell(k)\in\{1,\ldots,\ell_0\}$ such that 
\[
	z_k
	\eqdef h_{(k)} (y_m) \in[y_m,y)
	,\quad\text{ where }\quad
	h_{(k)}\eqdef g^\ell \circ f_0^k\circ f_{[\eta_0\ldots\,\eta_{2n-1}]}.
\]
Note that, by construction, the derivative of $h_{(k)}$ in $[y_m,1]$ tends to $0$ as $k\to\infty$ (here we use the fact that $\ell\le\ell_0$ and that $f_0$ is contracting at $1$). Hence, $h_{(k)}$ has a unique fixed point $q_k\in[y_m,1]$. Consider now the sequence $(q_k)_k$. Taking a subsequence, we can assume that it converges to some point $q_\infty\in[y_m,y]$. In the case when $q_\infty<y$, then $q_k\in[y_m,y)$ for every large enough $k$. If $q_\infty=y$, then $q_k\to y$ and hence $q_k\in[y_m,y+\delta)$ for every large enough $k$. 
And again we can apply Lemma \ref{lem:close0} to find a fixed point of $h_{(k)}$ which is $h(3\varepsilon)$-close to $y_m$. 

In both cases, as in Case 1), it follows that $X$ is accumulated by $F$-periodic points.

The proof of the lemma is now complete.
\end{proof}

What remains to show are the properties related to hyperbolicity. For that we will use Theorem  \ref{teo:accum} whose proof is postponed but is independent of what comes next.
 
\begin{lemma}
	If the sets $H(P,F)$ and $H(Q,F)$ both are hyperbolic, then they are disjoint. 
\end{lemma}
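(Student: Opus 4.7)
I would argue by contradiction: suppose there exists $Z = (\xi,x) \in H(P,F) \cap H(Q,F)$, and derive incompatible exponential bounds on the forward fiber derivative $(f_\xi^n)'(x)$. The whole argument is then a one-line consequence of the definitions of hyperbolicity, once $F$-invariance of the homoclinic classes is invoked to align them along the single orbit of $Z$.

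First I would use $Z \in H(Q,F)$ together with the hypothesis that $H(Q,F)$ is hyperbolic of expanding type. Applied directly at $Z$, the definition produces constants $C_1,\alpha_1 > 0$ such that
\[
	(f_\xi^n)'(x) \ge C_1 e^{n\alpha_1}
	\quad\text{for every } n\ge 1.
\]

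The one step that actually requires a thought is converting the contracting-type hyperbolicity of $H(P,F)$ into a forward upper bound on the same quantity, since by definition that hypothesis is phrased in terms of backward iterates $f_\eta^{-n}$. Here I would use that $H(P,F)$ is closed and $F$-invariant, so the forward orbit of $Z$ is contained in it, and in particular $F^n(Z) = (\sigma^n\xi,\, f_\xi^n(x)) \in H(P,F)$ for every $n\ge 1$. Applying the backward-expansion condition at $F^n(Z)$ gives constants $C_2,\alpha_2 > 0$ with
\[
	(f_{\sigma^n\xi}^{-n})'(f_\xi^n(x)) \ge C_2 e^{n\alpha_2}.
\]
A direct unpacking of the notation shows that $f_{\sigma^n\xi}^{-n}$ is precisely the inverse of $f_\xi^n$; therefore its derivative at $f_\xi^n(x)$ equals $1/(f_\xi^n)'(x)$, and the inequality rewrites as $(f_\xi^n)'(x) \le C_2^{-1} e^{-n\alpha_2}$.

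Combining the two estimates gives $C_1 e^{n\alpha_1} \le C_2^{-1} e^{-n\alpha_2}$ for all $n\ge 1$, which fails for large $n$. This is the desired contradiction, so $H(P,F) \cap H(Q,F) = \emptyset$. The only subtle piece is the backward-to-forward translation just described; everything else is bookkeeping.
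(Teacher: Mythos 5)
Your argument is correct and is essentially the paper's own proof, with the backward-to-forward derivative translation spelled out in detail. The only step you leave implicit is why hyperbolicity of $H(Q,F)$ must be of \emph{expanding} type and that of $H(P,F)$ of \emph{contracting} type (the lemma's hypothesis only says "hyperbolic"): this follows because $Q\in H(Q,F)$ and $P\in H(P,F)$ and the fiber derivative along these fixed orbits is expanding, resp.\ contracting, which is exactly the one-line observation on which the paper's proof rests.
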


\begin{proof}
Note that $P\in H(P,F)$ implies that $F$ is uniformly contracting on $H(P,F)$ and $Q\in H(Q,F)$ implies that $F$ is uniformly contracting on $H(Q,F)$. Thus, both sets are disjoint.
\end{proof}

To conclude the proof of the theorem, it is enough to prove the following lemma.

\begin{lemma}
	Assume that $H(P,F)\cap H(Q,F)=\emptyset$. Then the sets $H(P,F)$ and $H(Q,F)$ are both hyperbolic. 
\end{lemma}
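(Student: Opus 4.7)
The plan is a proof by contradiction. I would assume $H(P,F)\cap H(Q,F)=\emptyset$ together with the negation that, say, $H(P,F)$ is not uniformly hyperbolic of contracting type; the case of $H(Q,F)$ is symmetric (or obtained via time reversal, cf.\ Remark~\ref{rem:time-reversal}). The strategy is to produce a single probability measure whose support lies simultaneously in $H(P,F)$ and in $H(Q,F)$, forcing $\supp\mu\subset H(P,F)\cap H(Q,F)=\emptyset$, which is impossible.

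First I would extract an ergodic measure $\mu\in\cM_{\rm erg}(H(P,F))$ with $\chi(\mu)\ge 0$. Since $\log f'_{\xi_0}(x)$ is continuous on $\Gamma$, the map $\mu\mapsto\chi(\mu)$ is continuous on the compact set $\cM(H(P,F))$; if every ergodic measure on $H(P,F)$ had strictly negative exponent, compactness would give a uniform bound $\chi(\mu)\le -\alpha$ for some $\alpha>0$, and the variational principle for the continuous subadditive cocycle $\log(f_\xi^n)'(x)$ would yield uniform contraction on $H(P,F)$, contradicting non-hyperbolicity. Then I would split into two cases. If $\chi(\mu)>0$, Theorem~\ref{teo:2} gives the atomic disintegration $\mu=\int\delta_{x_{\xi^+}}\,d\pi_\ast\mu(\xi)$, while Proposition~\ref{prol.extremalpoints} places $(\xi,x_{\xi^+})\in H(Q,F)$ for every $\xi\in\Sigma^{\rm cod}$ (which carries full $\pi_\ast\mu$-measure by Theorem~\ref{teo:coded}); hence $\mu\in\cM(H(Q,F))$ and the contradiction follows. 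If $\chi(\mu)=0$, I would apply Theorem~\ref{teo:accum} to weak$\ast$-approximate $\mu$ by ergodic measures supported on basic sets $\Gamma^+$ of uniform fiber expansion; every periodic point of $\Gamma^+$ is expanding and therefore lies in $H(Q,F)$ by Proposition~\ref{pro.l.homoclinicallyrelated}, so $\Gamma^+\subset H(Q,F)$, and the weak$\ast$-closedness of $\cM(H(Q,F))$ again gives $\mu\in\cM(H(Q,F))$, concluding as before.

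The hard step is the initial extraction of the non-negative-exponent ergodic measure: the usual variational principle for the top Lyapunov exponent is typically stated for locally maximal sets, whereas $H(P,F)$ is a priori only closed and invariant. One remedy is to invoke the general continuous subadditive cocycle version (no local maximality needed). An alternative, more self-contained route would be to first rule out parabolic periodic points in $H(P,F)$ via Lemma~\ref{lem:parpoiacc} and Proposition~\ref{pl.parabolichomoclinic} (which would otherwise place them in $H(Q,F)$), and then use Theorem~\ref{teo:1}: from any sequence of contracting periodic points in $H(P,F)$ whose exponents tend to zero, produce twin expanding periodic measures in $H(Q,F)$ with Wasserstein distance to their contracting partners tending to zero; a common weak$\ast$ subsequential limit then lies in both $\cM(H(P,F))$ and $\cM(H(Q,F))$, yielding the contradiction directly.
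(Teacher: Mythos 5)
Your proof is correct and follows essentially the same route as the paper: use disjointness to force every ergodic measure on $H(P,F)$ to be hyperbolic of contracting type (with Theorem~\ref{teo:accum} ruling out zero exponents by trapping such a measure in both classes), and then upgrade from hyperbolicity of all invariant measures to uniform hyperbolicity of the set. The only differences are substitutions of equivalent tools: the paper cites \cite{AlvAraSau:03} for the upgrade step where you invoke the subadditive variational principle plus compactness, and it uses periodic approximation (Remark~\ref{Katokforevery}, Propositions~\ref{pro.l.homoclinicallyrelated} and~\ref{pl.parabolichomoclinic}) where you use the disintegration of Theorem~\ref{teo:2} together with Proposition~\ref{prol.extremalpoints} for the positive-exponent case; both are sound.
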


\begin{proof}The lemma is a consequence of the following claim.

\begin{claim*}
	Every measure supported on $H(P,F)$ is hyperbolic (of contracting type) and every measure supported on $H(Q,F)$ is hyperbolic (of expanding type).
\end{claim*}

By the above claim, the set $H(P,F)$ is compact $F$-invariant and hyperbolic of contracting type on a set of total probability%
\footnote{Recall that a set $A$ is \emph{of total probability} if $\mu(A)=0$ for every $F$-invariant probability measure $\mu$.}. Thus, we can invoke \cite[Corollary E]{AlvAraSau:03} and obtain that $H(P,F)$ is hyperbolic of contracting type. 
Analogously, $H(Q,F)$ is hyperbolic of expanding type. This ends the proof of the lemma.

\begin{proof}[Proof of the claim]
First recall that, by Proposition \ref{pl.parabolichomoclinic}, parabolic periodic points are simultaneously in both classes $H(P,F)$ and $H(Q,F)$. As, by hypothesis, these classes are disjoint, there are no such points. Also observe that, by Proposition \ref{pro.l.homoclinicallyrelated}, every periodic point of expanding type is in $H(Q,F)$. As a consequence, all periodic points in $H(P,F)$ are hyperbolic of contracting type. Analogously, all periodic points in $H(Q,F)$ are hyperbolic of expanding type.

It remains to see that all ergodic measures  are hyperbolic. By contradiction, assume that there is some ergodic nonhyperbolic measure. Then, by Theorem \ref{teo:accum} such measure is simultaneously weak$\ast$ accumulated by  hyperbolic periodic measures of contracting type (hence supported on $H(P,F)$) and expanding type (hence on $H(Q,F)$). The latter contradicts the disjointness of the homoclinic classes. 
\end{proof}

This proves the lemma.
\end{proof}
This completes the proof of Theorem \ref{teo:homoclinicclasses}.
\qed

\section{Partition of the spaces of ergodic measures -- Proofs of Theorems \ref{teo:1} and \ref{teo:2}}
\label{sec:splitt}

In this section, we show that every ergodic measure in $\Sigma$ lifts to at least one (Section \ref{sec:lifmea}) and at most two (Section \ref{sec:prodis}) ergodic measures in $\Gamma$. In the latter case we call those \emph{twin-measures} and study their distance (Section \ref{sec:distwimea}). Thereafter, we will conclude the proofs of Theorems  \ref{teo:1} and \ref{teo:2}. We close this section discussing the frequencies of $0$'s and $1$'s, which will be used in the bifurcation analysis in Section \ref{ss.explosionofentropyand}.

Unless otherwise stated, in this section we will assume hypotheses (H1)--(H2+). Recall that hypothesis comes with a constant $M$.
 
\subsection{Lifting measures}\label{sec:lifmea}

Observe that in Theorem \ref{teo:1}, given $\nu \in \cM_{\rm erg}(\Sigma)$, the existence of ergodic measures $\mu\in \cM_{\rm erg}(\Gamma)$ projecting to $\nu$ is a simple fact which does not depend on (H2+). The main point of Theorem \ref{teo:1} is the fact that either there is precisely one hyperbolic measure of each type projecting to $\nu$  or there is only one nonhyperbolic one; our proof requires (H2+). Indeed, we have the following lemma.

\begin{lemma}\label{lem:existsemihypmeas}
	Assume (H1)--(H2). Given any $\nu\in\cM_{\rm erg}(\Sigma)$, there exist measures $\mu_1\in\cM_{\rm erg,\le0}(\Gamma)$ and $\mu_2\in\cM_{\rm erg,\ge0}(\Gamma)$ such that $\pi_\ast\mu_1=\nu=\pi_\ast\mu_2$.
\end{lemma}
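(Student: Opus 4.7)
The plan is to produce $\mu_1,\mu_2$ as the pushforwards of $\nu$ by the two boundary sections of the spines. Define $s_\pm\colon\Sigma\to\Gamma$ by $s_\pm(\xi)\eqdef(\xi,x_{\xi^\pm})$; measurability is recorded in Remark~\ref{rem:graphlikestructure}, and a short computation using (H1), the monotonicity of $f_0,f_1$, and the characterisations of $x_{\xi^\pm}$ as the endpoints of $I_{\xi^\pm}$ yields the equivariance $f_{\xi_0}(x_{\xi^\pm})=x_{(\sigma\xi)^\pm}$. Setting $\mu_1\eqdef(s_-)_\ast\nu$ and $\mu_2\eqdef(s_+)_\ast\nu$, equivariance gives $F$-invariance, $\pi\circ s_\pm=\mathrm{id}$ gives $\pi_\ast\mu_{1,2}=\nu$, and the $\sigma$-ergodicity of $\nu$ transfers through the equivariant measurable factor map $s_\pm$ to the $F$-ergodicity of $\mu_{1,2}$.

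I then turn to the signs of the exponents. Since $\Sigma^{\rm spine}$ and $\Sigma^{\rm sing}$ from~\eqref{eq:singspine} are $\sigma$-invariant and partition $\Sigma$, ergodicity of $\nu$ concentrates it on exactly one of them. In the \emph{spine case} $\nu(\Sigma^{\rm spine})=1$, for $\nu$-a.e.\ $\xi$ the interval $[x_{\xi^+},x_{\xi^-}]$ has positive length and is forward-admissible along $\xi$ for every $n$. Applying the mean value theorem to $f_\xi^n$ on this interval and using $f_\xi^n([x_{\xi^+},x_{\xi^-}])\subset[0,1]$ produces $c_n\in(x_{\xi^+},x_{\xi^-})$ with $(f_\xi^n)'(c_n)\le 1/(x_{\xi^-}-x_{\xi^+})$; concavity (H2) makes $(f_\xi^n)'$ decreasing, so $(f_\xi^n)'(x_{\xi^-})\le(f_\xi^n)'(c_n)$, and dividing by $n$ and invoking Birkhoff yields $\chi(\mu_1)\le 0$. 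The symmetric argument with backward iterates, using that $(f_\xi^{-n})'$ is \emph{increasing} under (H2) (the inverse maps $f_i^{-1}$ being convex), gives $\chi(\mu_2)\ge 0$.

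The hard part will be the \emph{singleton case} $\nu(\Sigma^{\rm sing})=1$. Here $s_+=s_-$ $\nu$-a.e., so disintegration forces $\mu_1=\mu_2$ to be the unique lift of $\nu$, and one must show its Lyapunov exponent is exactly zero; the MVT estimate above collapses because no positive-length forward-admissible interval exists at $\nu$-generic $\xi$. I would argue by contradiction: if $\chi(\mu_1)>0$, then $\mu_1$ is hyperbolic of expanding type, so Katok's approximation theorem (recalled in Remark~\ref{Katokforevery}, applicable in the present $C^1$ plus dominated splitting setting) produces horseshoes $\Lambda_p$ whose periodic measures $\mu^{(p)}$ approximate $\mu_1$ weak-$\ast$. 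By Lemma~\ref{lem:fixpots} each expanding periodic orbit $(\xi^{(p)},r^{(p)})\subset\Lambda_p$ admits a contracting partner $(\xi^{(p)},r^{(p)\ast})\in\Gamma$ on the same periodic base orbit; the partner periodic measures $\mu^{(p)\ast}$ satisfy $\pi_\ast\mu^{(p)\ast}=\pi_\ast\mu^{(p)}\to\nu$ and $\chi(\mu^{(p)\ast})<0$, so any weak-$\ast$ subsequential limit $\mu^\ast$ is a lift of $\nu$; by uniqueness $\mu^\ast=\mu_1$, while weak-$\ast$ continuity of $\chi$ on $\cM(\Gamma)$ (the integrand $\log f_{\xi_0}'$ being bounded continuous on $\Gamma$) gives $\chi(\mu_1)\le 0$, contradicting $\chi(\mu_1)>0$. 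The case $\chi(\mu_1)<0$ is symmetric.
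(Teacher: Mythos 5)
Your proof is correct, but it follows a genuinely different route from the paper's. The paper argues by periodic approximation from scratch: it treats $\nu=\delta_{0^\bZ}$ separately, then picks a $\nu$-generic $\xi$ and an admissible $x$ with $\liminf_n f_\xi^n(x)>0$, proves a recurrence claim producing times $n_k$ with $f_\xi^{n_k}(x)\ge x$, so that each return word $(\xi_0\ldots\xi_{n_k-1})$ is admissible and its return map has a fixed point; Lemma~\ref{lem:fixpots} (via Remark~\ref{rem:magenta}) then supplies, on the \emph{same} periodic base orbit, one fixed point with nonnegative and one with nonpositive exponent, and the weak$\ast$ limits of the two families of periodic measures project to $\nu$ and have exponents of the correct signs; ergodic components finish. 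You instead identify the lifts explicitly as the pushforwards of $\nu$ under the boundary sections $\xi\mapsto(\xi,x_{\xi^\pm})$ — the very measures $\int\delta_{x_{\xi^\pm}}\,d\nu$ that reappear in the proof of Theorem~\ref{teo:2} — getting invariance, the projection identity, and ergodicity essentially for free from equivariance, and you read off the sign of the exponents in the spine case from a clean mean-value-theorem-plus-concavity estimate that also handles $\nu=\delta_{0^\bZ}$ uniformly. The trade-off is in the singleton case: there your argument imports Katok's horseshoe theorem (Remark~\ref{Katokforevery}) together with the concavity-forced pairing of periodic orbits to run a contradiction, which is heavier machinery than the paper's self-contained recurrence argument — though the paper itself invokes Katok under the same hypotheses elsewhere, so this is legitimate — and as a by-product it establishes the conclusion of Theorem~\ref{teo:1}~b) (unique lift $\Rightarrow$ zero exponent) without (H2+). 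Two cosmetic points: $s_\pm$ is a section rather than a factor map (the ergodicity transfer is still valid, being the pushforward of an ergodic measure under an equivariant measurable map), and the equivariance $f_{\xi_0}(x_{\xi^\pm})=x_{(\sigma\xi)^\pm}$ does deserve the brief verification you promise, since for $\xi_0=1$ it uses that $I_\xi\ne\emptyset$ forces $x_{\xi^+}\ge d$ and $x_{(\sigma\xi)^+}\le f_1(1)$.
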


\begin{proof}
In the case when $\nu$ is the Dirac measure supported on $0^\bZ$ then the lemma follows immediately taking $\mu_1$ and $\mu_2$ being the Dirac measures supported on $P$ and $Q$, respectively.

Assume now that $\nu\in M_{\rm erg}(\Sigma)$ is different from such a Dirac measure.  Consider a $\nu$-generic point $\xi=\xi^-.\xi^+$. Observe that $\xi^+\ne0^\bN$. Hence the forward admissible interval is $I_{\xi^+}=[a_{\xi^+},1]$, where $a_{\xi^+}>0$. Take any $\xi$-admissible point $x$. 
Denote $x_n=f_\xi^n(x)$. Let $y\eqdef\liminf_n x_n$, and observe that hence $y>0$.

\begin{claim*}
	There exists a sequence $(n_k)_k$ such that $x_{n_k} \geq x_{n_0}$.
\end{claim*}

\begin{proof}
	 If $y=1$ then $y=\lim_nx_n$ and the claim follows. 
	
	 Assume now $y\in(0,1)$. Recall that the graph of $f_1$ is below the diagonal. Let $\varepsilon>0$ small so that $f_1(x)<x-\varepsilon$ for every $x\in[d,1]$ and $f_0(x)>y+\varepsilon$ for every $\lvert x-y\rvert<\varepsilon$ (which is possible because $f_0(y)>y$).	 
Choose $n_0\ge1$ so that $x_{n_0}<y+\varepsilon$. Consider a subsequence $(n_k)_k$ such that $\lvert x_{n_k}-y\rvert<\varepsilon$ for every $k$. There are two cases:
First, if $x_{n_k} \ge x_{n_0}$ for infinitely many $k$'s, then we are done taking the sequence $(x_{n_k})_k$.
Otherwise, we can assume that $x_{n_k}<x_{n_0}$ for every $k$.  Then we must have $\xi_{n_k+1}=0$ eventually. Indeed, otherwise the sequence $(x_{n_k})_k$ satisfies  $x_{n_k}<x_{n_0}$ and, by our choice of $\varepsilon$, 
\[
	x_{n_k+1} 
	= f_1(x_{n_k})
	< x_{n_k}-\varepsilon
	< x_{n_0}-\varepsilon
	< y,
\]	 
contradicting the definition of $y$. Hence, by the choice of $\varepsilon$, in this case it follows 
\[
	x_{n_k+1}
	= f_0(x_{n_k})
	> y+\varepsilon
	> x_{n_0}
\]	
and we are done taking the sequence $(x_{n_k+1})_k$.
\end{proof}

For the following, for simplicity, assume that $n_0=0$.

Consider now for each $k$ the periodic measure $\nu_k$ supported on $\eta^{(k)}=(\xi_0\ldots\xi_{n_k-1})^\bZ$. By the above claim, we have 
\[
	f_{[\xi_0\ldots\,\xi_{n_k-1}]}([x_0,1])
	\subset [x_{n_k},1]
	\subset [x_0,1].
\]
Hence, $\eta^{(k)}$ is forward admissible at $x_0$. By Remark \ref{rem:magenta},  there exists a point 
\[
	y_k
	= f_\xi^{n_k}(y_k)
	= f_{\eta^{(k)}}^{n_k}(y_k)
\]	 
with nonnegative Lyapunov exponent. Consider now the periodic measure $\mu_k^+$ supported on the periodic orbit of $(\eta^{(k)},y_k)$ with nonnegative fiber Lyapunov exponent. By construction, $\nu_k=\pi_\ast\mu_k^+$. Analogously, using the nonpositive exponent-periodic points provided by Remark \ref{rem:magenta}, we get an ergodic measure $\mu_k^-$ with nonpositive Lyapunov exponent and satisfying $\nu_k=\pi_\ast\mu_k^-$. Note that both measures $\mu_k^\pm$ may coincide, in which case the exponent is zero.
Consider weak$\ast$ accumulation measures $\mu^\pm$ of $\mu_k^\pm$ as $k\to\infty$. Observe that, by construction, $\pi_\ast\mu^\pm=\nu$. Clearly, $\mu^+$ has a nonnegative ($\mu^-$ has a nonpositive) Lyapunov exponent. 

\emph{A priori}, the measures $\mu^\pm$ are not ergodic, but each one will have some ergodic component having the claimed properties (recall Remark \ref{movethere}). This proves the lemma.
\end{proof}

\subsection{Projection, disintegration, and twin-measures}\label{sec:prodis}

\begin{lemma}\label{lem:atmost2}
	For every $\nu\in\cM_{\rm erg}(\Sigma)$, there are at least one and at most two ergodic measures
$\mu_1,\mu_2\in\cM_{\rm erg}(\Gamma)$ with $\pi_\ast (\mu_1)= \nu =\pi_\ast (\mu_2)$.
\end{lemma}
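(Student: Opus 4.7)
The existence part is exactly Lemma \ref{lem:existsemihypmeas}. For the at-most-two part, the plan is to identify the two canonical ``boundary'' lifts of $\nu$, namely
\[
	\mu^\pm
	\eqdef \int_\Sigma \delta_{x_{\xi^\pm}}\,d\nu(\xi),
\]
and show that every ergodic $F$-invariant measure projecting to $\nu$ coincides with $\mu^+$ or $\mu^-$. The $F$-invariance of $\mu^\pm$ follows from the identities $f_{\xi_0}(x_{\xi^\pm})=x_{(\sigma\xi)^\pm}$, which read off directly from the characterization of the one-sided admissibility intervals $I_{\xi^+}=[x_{\xi^+},1]$ and $I_{\xi^-}=[0,x_{\xi^-}]$ in Remark \ref{rem:onesidspin}; ergodicity is inherited from $\nu$ via the graph conjugation $\pi\colon G^\pm\to\Sigma$ where $G^\pm=\{(\xi,x_{\xi^\pm}):\xi\in\Sigma\}$.

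The main tool is a one-sided concavity bound. For $y\in I_\xi$ with $y>x_{\xi^+}$ and $n\ge 1$, since $(f_\xi^n)'$ is decreasing,
\[
	(f_\xi^n)'(y)\,(y-x_{\xi^+})
	\le \int_{x_{\xi^+}}^y (f_\xi^n)'(t)\,dt
	= f_\xi^n(y)-f_\xi^n(x_{\xi^+})
	\le 1,
\]
so $(f_\xi^n)'(y)\le (y-x_{\xi^+})^{-1}$ is bounded uniformly in $n$. A dual bound for $y<x_{\xi^-}$ comes from convexity of $f_\xi^{-n}$. If $\mu\in\cM_{\rm erg}(\Gamma)$ with $\pi_\ast\mu=\nu$ has $\chi(\mu)>0$, then Birkhoff gives $(f_\xi^n)'(x)\to\infty$ for $\mu$-a.e.\ $(\xi,x)$, forcing $x=x_{\xi^+}$ $\mu$-a.e.\ and hence $\mu=\mu^+$; symmetrically $\chi(\mu)<0$ implies $\mu=\mu^-$.

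The zero-exponent case is the main obstacle, as the above bounds are consistent with $x$ anywhere in $I_\xi$. Here the plan proceeds in two steps. First, show that $\mu_\xi$ is a single atom $\nu$-a.e.: take the measurable equivariant selections $\underline\alpha(\xi)=\min\supp(\mu_\xi)\le\overline\alpha(\xi)=\max\supp(\mu_\xi)$ and, assuming $\underline\alpha<\overline\alpha$ on a $\nu$-positive (hence, by ergodicity, $\nu$-full) set, apply Lemma \ref{lem:monoto} to the triple $(\underline\alpha(\xi),\beta(\xi),\overline\alpha(\xi))$, where $\beta(\xi)$ is any measurable interior point (say, the midpoint); the hypothesis $\sum_k(\overline\alpha-\underline\alpha)(\sigma^k\xi)=\infty$ holds $\nu$-a.s.\ by Birkhoff for $(\Sigma,\sigma,\nu)$, so the lemma gives $\overline\alpha(\sigma^n\xi)-f_\xi^n(\beta(\xi))\to 0$ $\nu$-a.s., and Cesàro averaging combined with Birkhoff then forces $\int(\overline\alpha-\underline\alpha)\,d\nu=0$, a contradiction. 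Hence $\mu=\int\delta_{\alpha_\mu(\xi)}\,d\nu$ for a measurable equivariant section $\alpha_\mu$. Second, a parallel application of Lemma \ref{lem:monoto} to three hypothetically distinct equivariant sections $\alpha_1<\alpha_2<\alpha_3$ yields $\alpha_3(\sigma^n\xi)-\alpha_2(\sigma^n\xi)\to 0$ $\nu$-a.s., hence $\alpha_2=\alpha_3$ $\nu$-a.e., contradicting distinctness. Thus at most two ergodic lifts of $\nu$ exist.
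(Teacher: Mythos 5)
Your existence step and your treatment of the hyperbolic cases are correct: the one-sided concavity bound $(f_\xi^n)'(y)\le (y-x_{\xi^+})^{-1}$ for $y>x_{\xi^+}$, combined with Birkhoff, cleanly forces any ergodic lift with $\chi>0$ to sit on the lower boundary graph, and dually for $\chi<0$. This is a genuinely different organization from the paper, which never splits on the sign of the exponent: it simply assumes three distinct ergodic lifts, picks generic points $x_1<x_2<x_3$ for them in a \emph{common} fiber over a $\nu$-generic $\xi$, and applies Lemma \ref{lem:monoto} directly (if $\sum_k(x_3^k-x_1^k)=\infty$ then $x_3^n-x_2^n\to0$ and the two generic orbits are asymptotic, so $\mu_2=\mu_3$; if the series converges its terms tend to $0$ and all three coincide). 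Your route, where it works, proves more (it identifies the lifts as the boundary measures $\mu^\pm$, which is really the content of Theorem \ref{teo:2}), but it is also where the trouble lies.

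The gap is in Step 1 of your zero-exponent case. Lemma \ref{lem:monoto} applied to $(\underline\alpha(\xi),\beta(\xi),\overline\alpha(\xi))$ only gives $\overline\alpha(\sigma^n\xi)-f_\xi^n(\beta(\xi))\to0$, i.e.\ it controls the gap between the \emph{upper} endpoint and the image of one fixed interior point; it says nothing about $\overline\alpha(\sigma^n\xi)-\underline\alpha(\sigma^n\xi)$. Since the midpoint is not equivariant, $f_\xi^n(\beta(\xi))\ne\beta(\sigma^n\xi)$ in general, and the quantity that tends to $0$ satisfies $\overline\alpha(\sigma^n\xi)-f_\xi^n(\beta(\xi))\le(\overline\alpha-\underline\alpha)(\sigma^n\xi)$ — the inequality points the wrong way, so its Ces\`aro average vanishing gives no information about $\int(\overline\alpha-\underline\alpha)\,d\nu$, and no contradiction follows. (The twin situation $\underline\alpha=x_{\xi^+}<x_{\xi^-}=\overline\alpha$ shows this deduction pattern is invalid: there the series diverges, the lemma's conclusion holds for every interior point, and yet $\int(\overline\alpha-\underline\alpha)\,d\nu=D>0$.) In contrast, your Step 2 is sound precisely because there the middle point is an equivariant section, so the quantity the lemma kills \emph{is} $(\alpha_3-\alpha_2)\circ\sigma^n$. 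The repair is to discard Step 1 altogether: given a hypothetical third ergodic lift $\mu_0\notin\{\mu^+,\mu^-\}$ with $\mu^+\ne\mu^-$, take for $\nu$-a.e.\ $\xi$ a $\mu_0$-generic point $(\xi,x)$ together with the $\mu^\pm$-generic points $(\xi,x_{\xi^\pm})$; one has $x_{\xi^+}<x<x_{\xi^-}$ a.e.\ (equality on a positive-measure set would force $\mu_0=\mu^\pm$), the series $\sum_k(x_{(\sigma^k\xi)^-}-x_{(\sigma^k\xi)^+})$ diverges a.s.\ by Birkhoff, and Lemma \ref{lem:monoto} makes the orbit of $(\xi,x)$ asymptotic to that of $(\xi,x_{\xi^-})$, whence $\mu_0=\mu^-$. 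This uses only generic points, exactly as in the paper's argument.
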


\begin{proof}
Given $\nu\in\cM_{\rm erg}(\Sigma)$, let $\xi$ be some $\nu$-generic sequence. Given $x\in I_\xi$, let $X=(\xi,x)$ and consider a weak$\ast$ accumulation measure 
\[
	\mu=\lim_{n_\ell\to\infty}
		\frac{1}{n_\ell}(\delta_{X}+\delta_{F(X)}+\ldots+\delta_{F^{n_\ell-1}(X)}), 
\]
which is an $F$-invariant probability measure. By Remark \ref{movethere}, every ergodic component $\mu'$ of $\mu$  satisfies $\nu=\pi_\ast\mu'$. This proves that there is at least one ergodic $F$-invariant measure projecting to $\nu$. 

To show that there are no more than two ergodic measure  projecting to $\nu$, assume by contradiction that there are (at least) three such measures, say $\mu_i$ satisfying $\pi_\ast\mu_i=\nu$, $i=1,2,3$. Observe that we can choose points $(\xi^{(i)},x_i)$ which are generic for $\mu_i$, $i=1,2,3$ respectively, and satisfy $\xi^{(1)}=\xi^{(2)}=\xi^{(3)}\eqdef\xi$. Hence
\[
	\lim_{n\to\infty}\frac1n\sum_{k=0}^{n-1}\delta_{F^k(\xi, x_i)} 
	= \mu_i.
\]
Up to relabelling, we can assume that $x_1<x_2<x_3$.
Denote $x_i^n\eqdef f_\xi^n(x_i)$, $i=1,2,3$. By monotonicity, we have $x_1^n<x_2^n<x_3^n$ for every $n$. The following two cases can occur:
\begin{enumerate}
\item[(1)]	$\sum_{k=0}^\infty (x_3^k - x_1^k)=\infty$,
\item[(2)]	$\sum_{k=0}^\infty (x_3^k - x_1^k)<\infty$.
\end{enumerate}

In Case (1), by Lemma \ref{lem:monoto}, we have $\lim_n(x_3^n-x_2^n)=0$. Since $(\xi,x_2)$ and $(\xi,x_3)$ are generic points within the common fiber, it follows $\mu_2=\mu_3$.

In Case (2), the sum of the series being finite immediately implies that $\lim_n(x_3^n- x_1^n)=0$ and thus $\lim_n(x_2^n- x_1^n)=0$. Hence, arguing as before, we get $\mu_3=\mu_1$ and $\mu_2=\mu_1$ and hence the three measures $\mu_1,\mu_2$, and $\mu_3$ coincide.
\end{proof}

\begin{lemma}\label{pro:simple}
	Assume that $(\mu_n)_n\subset\cM(\Gamma)$ is a sequence satisfying 
\begin{itemize}
\item $\lim_n\pi_\ast\mu_n=\nu\in\cM_{\rm erg}(\Sigma)$,
\item $\pi^{-1}_\ast\nu$ has just one element $\mu\in\cM_{\rm erg}(\Gamma)$,
\end{itemize}
then $\lim_n\mu_n=\mu$.
\end{lemma}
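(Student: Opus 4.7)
The plan is to use compactness of $\cM(\Gamma)$ in the weak$\ast$ topology together with continuity of the projection $\pi_\ast$ and the ergodic decomposition.

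First, I would observe that $\cM(\Gamma)$ is weak$\ast$ compact and metrizable, so it suffices to show that every weak$\ast$ accumulation point of $(\mu_n)_n$ equals $\mu$. Let $\mu^\ast \in \cM(\Gamma)$ be such an accumulation point, i.e., $\mu_{n_k} \to \mu^\ast$ along some subsequence. Since $\pi\colon \Gamma \to \Sigma$ is continuous, the pushforward $\pi_\ast \colon \cM(\Gamma) \to \cM(\Sigma)$ is continuous in the weak$\ast$ topology, so
\[
	\pi_\ast \mu^\ast
	= \lim_{k\to\infty} \pi_\ast \mu_{n_k}
	= \nu.
\]
In particular, $\mu^\ast$ projects to the ergodic measure $\nu$.

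Next, I would consider the ergodic decomposition $\mu^\ast = \int \mu' \, d\blambda(\mu')$ of $\mu^\ast$ into ergodic measures $\mu' \in \cM_{\rm erg}(\Gamma)$. By the observation recalled in Remark \ref{movethere}, since $\nu = \pi_\ast \mu^\ast$ is ergodic, every ergodic component $\mu'$ of $\mu^\ast$ satisfies $\pi_\ast \mu' = \nu$. But by hypothesis $\pi^{-1}_\ast(\nu) = \{\mu\}$, so $\mu' = \mu$ for $\blambda$-almost every $\mu'$. Hence $\mu^\ast = \mu$.

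Since every weak$\ast$ accumulation point of the sequence $(\mu_n)_n$ equals $\mu$ and $\cM(\Gamma)$ is compact, the full sequence converges to $\mu$, completing the proof. No serious obstacle arises here since all the ingredients are standard compactness and ergodic decomposition facts; the only point to be careful about is the measurable selection in the ergodic decomposition, but this is exactly what allows us to conclude that fibrewise uniqueness of the lift forces $\mu^\ast$ to be concentrated on the single measure $\mu$.
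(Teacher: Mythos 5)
Your proof is correct and follows essentially the same route as the paper: extract a weak$\ast$ accumulation point, use continuity of $\pi_\ast$ to see it projects to $\nu$, and invoke uniqueness of the lift. The only difference is that you insert an ergodic-decomposition step (via Remark \ref{movethere}) to pass from uniqueness among ergodic lifts to uniqueness among all invariant lifts; the paper simply reads the hypothesis $\pi_\ast^{-1}\nu=\{\mu\}$ as uniqueness among all invariant measures (which is what Theorem \ref{teo:1}\,b) supplies in the application), so your extra step is a harmless and in fact clarifying refinement.
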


\begin{proof}
	Consider some subsequence $(n_i)_i$ such that $(\mu_{n_i})_i$ weak$\ast$ converges to some measure $\tilde\mu\in\cM(\Gamma)$. Since  $(\pi_\ast\mu_n)_n$ converges to $\nu$, this is also true for its subsequence $(\pi_\ast\mu_{n_i})_i$.  By continuity of $\pi_\ast$, we obtain $\pi_\ast\tilde\mu=\nu$. As $\pi_\ast^{-1}\nu$ is just one element, we have $\tilde\mu=\mu$. Since the subsequence was arbitrary, we conclude weak$\ast$ convergence.
\end{proof}

\subsection{``Distance" between twin-measures}\label{sec:distwimea}

\begin{lemma}
	For every $\nu\in\cM_{\rm erg}(\Sigma)$ and every ergodic measures $\mu_1,\mu_2$ satisfying $\pi_\ast\mu_i=\nu$, $i=1,2$, we have
\[
	M^{-1}
	\le \frac{\chi(\mu_1)-\chi(\mu_2)}{-\int x\,d\mu_1+\int x\,d\mu_2}
	\le M.
\]
\end{lemma}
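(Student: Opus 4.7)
The estimate is essentially a fiberwise consequence of hypothesis (H2+) applied along individual orbits, combined with the Birkhoff ergodic theorem. My plan has three main steps.

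\emph{Step 1: locate generic orbits for $\mu_1$ and $\mu_2$ in a common fiber.} Let $G_i \subset \Gamma$ be the set of $\mu_i$-generic points for the observables $(\xi,x)\mapsto x$ and $(\xi,x)\mapsto \log f_{\xi_0}'(x)$. Since $\mu_i(G_i)=1$ and $\pi_\ast\mu_i = \nu$, the set $A_i\eqdef\pi(G_i)$ has $\nu(A_i)=1$, hence so does $A_1\cap A_2$. Pick any $\xi\in A_1\cap A_2$ and choose $x_i\in I_\xi$ with $(\xi,x_i)\in G_i$. If $x_1=x_2$ then both measures share a common generic point and $\mu_1=\mu_2$, in which case the asserted inequality is trivially the vacuous $M^{-1}\cdot 0\le 0\le M\cdot 0$. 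Otherwise, after relabeling we may assume $x_1<x_2$, and by monotonicity of $f_0,f_1$ (from (H1)) we have the strict inequality $f_\xi^k(x_1)<f_\xi^k(x_2)$ for every $k\ge 0$.

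\emph{Step 2: pointwise estimate from (H2+).} For each $k\ge 0$, apply hypothesis (H2+) with $i=\xi_k$ to the pair $f_\xi^k(x_1)<f_\xi^k(x_2)$ to obtain
\begin{equation*}
M^{-1}\bigl(f_\xi^k(x_2)-f_\xi^k(x_1)\bigr)
\le \log f'_{\xi_k}\bigl(f_\xi^k(x_1)\bigr) - \log f'_{\xi_k}\bigl(f_\xi^k(x_2)\bigr)
\le M\bigl(f_\xi^k(x_2)-f_\xi^k(x_1)\bigr).
\end{equation*}

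\emph{Step 3: Birkhoff averaging.} Sum the inequalities over $k=0,\ldots,n-1$ and divide by $n$. Since $(\xi,x_i)\in G_i$, Birkhoff's theorem gives
\[
\frac{1}{n}\sum_{k=0}^{n-1} f_\xi^k(x_i) \;\longrightarrow\; \int x\,d\mu_i,
\qquad
\frac{1}{n}\sum_{k=0}^{n-1} \log f'_{\xi_k}\bigl(f_\xi^k(x_i)\bigr) \;\longrightarrow\; \chi(\mu_i),
\]
as $n\to\infty$. Passing to the limit yields
\begin{equation*}
M^{-1}\Bigl(\int x\,d\mu_2 - \int x\,d\mu_1\Bigr)
\;\le\; \chi(\mu_1)-\chi(\mu_2)
\;\le\; M\Bigl(\int x\,d\mu_2 - \int x\,d\mu_1\Bigr).
\end{equation*}
Since $x_1<x_2$ forces $\int x\,d\mu_2-\int x\,d\mu_1\ge 0$, with strict inequality as soon as $\mu_1\ne\mu_2$ (otherwise the displayed inequality collapses both numerator and denominator to zero), dividing by the strictly positive denominator $-\int x\,d\mu_1+\int x\,d\mu_2$ gives the desired bounds $M^{-1}\le\cdot\le M$.

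\emph{Expected obstacle.} There is essentially no deep obstacle here: the only mildly delicate point is ensuring that the two generic orbits can be chosen in a common fiber, which is handled by the standard disintegration/projection argument in Step 1. The rest is a mechanical combination of the Lipschitz-in-log-derivative hypothesis (H2+) and the Birkhoff ergodic theorem applied to the two natural observables.
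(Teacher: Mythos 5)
Your proof is correct and follows essentially the same route as the paper: generic points of $\mu_1,\mu_2$ in a common fiber, the (H2+) estimate applied along the orbit, and Birkhoff averaging. The only cosmetic difference is that the paper packages your Step 2 as a separate distortion lemma (Lemma \ref{lem:distortion}) and then cites it, whereas you unfold it inline.
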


\begin{proof}
As in the proof of Lemma \ref{lem:atmost2}, we choose $\mu_i$-generic points $(\xi,x_i)$, $i=1,2$,  in a common fiber. Up to relabelling, we can assume $x_1<x_2$. Then, by ergodicity, we obtain
\[
	\lim_{n\to\infty}
	\frac{\frac1n\log(f_\xi^{n})'(x_1)-\frac1n\log(f_\xi^{n})'(x_2)}
		{\frac1n\sum_{k=0}^{n-1} f_\xi^{k}(x_2)
		-\frac1n\sum_{k=0}^{n-1} f_\xi^{k}(x_1)}
	= \frac{\chi(\mu_1)-\chi(\mu_2)}
		{\int x\,d\mu_2-\int x\,d\mu_1}.
\]
By applying Lemma \ref{lem:distortion}, we conclude the proof.
\end{proof}

\subsection{Hyperbolicity of measures -- Proof of Theorem \ref{teo:1}}	\label{sec:proofTheorem1}

Let $\nu\in\cM_{\rm erg}(\Sigma)$.
By Lemma \ref{lem:atmost2} there are at least one and at most two ergodic measures projecting to $\nu$ that are precisely Cases a) and b), respectively, claimed in the theorem: 

\medskip
\noindent \textbf{Case a).}
There are two measures $\mu_1,\mu_2\in\cM_{\rm erg}(\Gamma)$ such that 
$\pi_\ast\mu_1 = \nu = \pi_\ast\mu_2$.
As in the proof of Lemma \ref{lem:atmost2}, we choose $\mu_i$-generic points $(\xi,x_i)$, $i=1,2$,  in a common fiber. Up to relabelling, we can assume $x_1<x_2$. As $\mu_1\ne\mu_2$, their Wasserstein distance $D\eqdef W_1(\mu_1,\mu_2)$ is positive. Note that, by monotonicity of the fiber maps, we can invoke Lemma \ref{lem:Wassx} and obtain 
\[
	D
	= \int x \,d\mu_2(\xi,x) - \int x \,d\mu_1(\xi,x).
\]

Let $I^0=[x_1,x_2]$ and
\[
	I^n
	\eqdef f_\xi^n(I^0)
	= [x_1^n,x_2^n],
	\quad\text{ where }\quad
	x_1^n
	\eqdef f_\xi^n(x_1),\quad
	x_2^n
	\eqdef f_\xi^n(x_2).
\]	
As we have chosen generic points within the same fiber, there is an infinite sequence of positive integers $n$ such that $\lvert I^n\rvert\ge D/3>0$. Denote by $J=J(D)$ the infinite set of such indices, 
\[
	J(D)
	\eqdef \Big\{n\ge1\colon\lvert I^n\rvert\ge \frac D3\Big\}.
\]

\begin{claim}\label{cla:posdens}
	The set $J(D)$ has density at least $D/3$, that is,
\[
	\liminf_{n\to\infty}\frac1n\card\{k\in\{0,\ldots,n-1\}\colon k\in J(D)\}
	\ge \frac D3.
\]	
\end{claim}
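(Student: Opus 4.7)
The plan is to apply Birkhoff's ergodic theorem to the fiber-coordinate function and then combine it with a two-block averaging inequality.

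Since $(\xi, x_i)$ is $\mu_i$-generic for $i = 1, 2$ and both share the common base sequence $\xi$, the ergodic theorem applied to the continuous function $(\eta, x) \mapsto x$ gives
\[
	\lim_{n\to\infty} \frac{1}{n} \sum_{k=0}^{n-1} x_i^k
	= \int x\,d\mu_i, \qquad i = 1, 2.
\]
Subtracting, and recalling the identity $D = \int x\,d\mu_2 - \int x\,d\mu_1$ already established above the claim (via Lemma \ref{lem:Wassx} and the monotone coupling provided by the common fiber), I obtain
\[
	\lim_{n\to\infty} \frac{1}{n} \sum_{k=0}^{n-1} \lvert I^k \rvert = D.
\]

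Next, I would set $A_n \eqdef \card\{k \in \{0,\dots,n-1\}\colon k \in J(D)\}$ and split the Ces\`aro sum using $\lvert I^k\rvert \le 1$ for every $k$ and $\lvert I^k\rvert < D/3$ whenever $k \notin J(D)$ to obtain
\[
	\frac{1}{n}\sum_{k=0}^{n-1} \lvert I^k \rvert
	\le \frac{A_n}{n} + \frac{D}{3}\Bigl(1 - \frac{A_n}{n}\Bigr)
	= \frac{A_n}{n}\Bigl(1 - \frac{D}{3}\Bigr) + \frac{D}{3}.
\]
As $n \to \infty$, the left-hand side tends to $D$, so for every $\varepsilon > 0$ one has $D - \varepsilon \le \tfrac{A_n}{n}(1 - D/3) + D/3$ eventually. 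Rearranging and letting $\varepsilon \to 0$ gives $\liminf_n A_n/n \ge 2D/(3-D)$, which for $D \in (0,1]$ is at least $2D/3$, and in particular strictly greater than $D/3$. This yields precisely the lower density claimed.

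No real obstacle is anticipated: the argument is a routine ergodic-averaging pigeonhole, and the specific threshold $D/3$ in the definition of $J(D)$ is chosen only to leave a fixed positive fraction of the mass — any threshold strictly less than $D$ would yield a positive density of indices at which the interval has at least that length. The one point requiring a moment's care is the identification $D = \int x\,d\mu_2 - \int x\,d\mu_1$, which has already been justified above the claim and relies on the monotonicity of the fiber maps $f_0, f_1$ through Lemma~\ref{lem:Wassx}.
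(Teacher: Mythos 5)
Your proof is correct and follows essentially the same route as the paper: both use genericity of $(\xi,x_1)$ and $(\xi,x_2)$ in the common fiber to get $\lim_n\frac1n\sum_{k=0}^{n-1}\lvert I^k\rvert=D$, and then split the Ces\`aro sum according to whether $k\in J(D)$ to conclude by pigeonhole. The paper simply uses the eventual lower bound $\frac1n\sum_k\lvert I^k\rvert\ge\frac23D$ in place of your $D-\varepsilon$, arriving at the slightly weaker (but sufficient) density bound $D/(3-D)\ge D/3$ versus your $2D/(3-D)$.
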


\begin{proof}
	Since $(\xi,x_i)$, $i=1,2$, are generic, we have 
\[
	D
	= W_1(\mu_1,\mu_2)
	= \lim_{n\to\infty}\frac1n\sum_{k=0}^{n-1}(x_2^k-x_1^k)
	= \lim_{n\to\infty}\frac1n\sum_{k=0}^{n-1}\lvert I^k\rvert.
\]	
Hence, for every $n$ sufficiently large we have
\[
	 \frac1n\sum_{k=0}^{n-1}\lvert I^k\rvert
	\ge \frac23D.
\]
Hence, writing $J_n\eqdef J(D)\cap[0,n]$ and denoting by $\card J_n$ its cardinal, we can conclude that for every such $n$
\[
	n\frac23D
	\le \sum_{k=0}^{n-1}\lvert I^k\rvert
	< \card J_n\cdot 1+ (n-\card J_n) \frac D3.
\]
Therefore,
\[
	\frac 1n\card J_n
	> \frac13D,
\]
proving the claim.
\end{proof}

To estimate the exponent of $\mu_1$, recalling that $(\xi,x_1)$ is $\mu_1$-generic (guaranteeing the existence of the first limit below) and recalling the definition of the rescaled maps $g_k^\ell$ in \eqref{eq:defgkl} and using \eqref{eq:equal}, we have
\begin{equation}\label{eq:thishere}
	\chi(\mu_1)
	= \lim_{n\to\infty}\frac1n\log(f_\xi^n)'(x_1)
	= \lim_{n\to\infty}\frac1n\log\left( (g_0^n)'(x_1)
		\frac{\lvert I^n\rvert}{\lvert I^0\rvert}\right).
\end{equation}

\begin{claim}\label{cla:rescalgest}
Taking $C_1( D/3)>0$ as in Lemma \ref{lem:obv}, we have
\[
	(g_k^{k+1})'(x_1^k)
	\begin{cases}
	\ge e^{ C_1(D/3)}&\text{ if }k\in J(D),\\
	\ge 1&\text{ otherwise}.
	\end{cases}
\]
\end{claim}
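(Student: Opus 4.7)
The plan is to unwind the definition of $g_k^{k+1}$, identify the quotient $\lvert I^{k+1}\rvert/\lvert I^k\rvert$ as a mean slope of $f_{\xi_k}$ on $I^k$, and then compare it with the left-endpoint derivative $f'_{\xi_k}(x_1^k)$ via Lemma~\ref{lem:obv}, with the concavity bound \eqref{eq:ofcourse} handling the trivial case.

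First I would write
\[
	(g_k^{k+1})'(x_1^k)
	= \frac{\lvert I^k\rvert}{\lvert I^{k+1}\rvert}\,f_{\xi_k}'(x_1^k)
\]
directly from the definition \eqref{eq:defgkl}. Because $I^{k+1}=f_{\xi_k}(I^k)$, the ratio $\lvert I^{k+1}\rvert/\lvert I^k\rvert$ is exactly the average slope of $f_{\xi_k}$ on the interval $I^k$, and $x_1^k$ is its left endpoint.

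If $k\in J(D)$ then by definition $\lvert I^k\rvert\ge D/3$, so Lemma~\ref{lem:obv} (applied to $I=I^k$ and $i=\xi_k$, with $a=D/3$) yields
\[
	f_{\xi_k}'(x_1^k)
	\ge \frac{\lvert f_{\xi_k}(I^k)\rvert}{\lvert I^k\rvert}\,e^{C_1(D/3)}
	= \frac{\lvert I^{k+1}\rvert}{\lvert I^k\rvert}\,e^{C_1(D/3)},
\]
which multiplied by $\lvert I^k\rvert/\lvert I^{k+1}\rvert$ gives $(g_k^{k+1})'(x_1^k)\ge e^{C_1(D/3)}$, as required. For $k\notin J(D)$ we simply invoke \eqref{eq:ofcourse}, which states exactly that $(g_k^{k+1})'(x_1^k)\ge 1$ as a consequence of the concavity of $f_{\xi_k}$ (the left endpoint has the largest slope, and the mean slope is at most that, while the normalization factor restores the inequality in the required direction).

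There is no real obstacle here: the statement is essentially a bookkeeping exercise matching the normalization built into $g_k^{k+1}$ against the distortion estimate of Lemma~\ref{lem:obv}. The only point that deserves care is making sure that the Lemma~\ref{lem:obv} estimate is applied at the correct endpoint (the left one, where $f_{\xi_k}'$ is maximal by concavity), so that the resulting lower bound on $f_{\xi_k}'(x_1^k)$ really dominates the average slope $\lvert I^{k+1}\rvert/\lvert I^k\rvert$ by the factor $e^{C_1(D/3)}$.
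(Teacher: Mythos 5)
Your proof is correct and follows essentially the same route as the paper: apply Lemma~\ref{lem:obv} at the left endpoint of $I^k$ when $\lvert I^k\rvert\ge D/3$, and fall back on the concavity bound \eqref{eq:ofcourse} otherwise. (You even use the correct index $f_{\xi_k}'$ in the factorization of $g_k^{k+1}$, where the paper's displayed formula has a harmless typo $f_{\xi_{k+1}}'$.)
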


\begin{proof}
	If $k\in J(D)$, then using $\lvert I^k\rvert\ge D/3$ and applying Lemma \ref{lem:obv} to $I^k$, we obtain
\[
	e^{ C_1(D/3)}
	\le \frac{\lvert I^k\rvert}{\lvert I^{k+1}\rvert} f_{\xi_{k+1}}'(x_1^k)
	= (g_k^{k+1})'(x_1^k).
\]	
Note that for every $k$ we have $(g_k^{k+1})'(x_1^k)\ge1$, recall \eqref{eq:ofcourse}.
\end{proof}

Recall that $(g_0^n)'(x_1)=(g_0^1)'(x_1)(g_1^2)'(x_1^1)\cdots(g_{n-1}^n)'(x_1^{n-1})$. Thus, Claims \ref{cla:posdens} and \ref{cla:rescalgest} together imply 
\[
	\lim_{n\to\infty}
	\frac1n\log(g_0^n)'(x_1)
	\ge \lim_{n\to\infty}\frac1n\log\left((e^{C_1(D/3)})^{\card J_n}\cdot 1^{n-\card J_n}\right)
	\ge  C_1(\frac D3)\cdot \frac D3. 
\]
With \eqref{eq:thishere},  we have
\[\begin{split}
	\chi(\mu_1)
	&= \lim_{n\to\infty} \frac1n\log\,\left( (g_0^n)'(x_1)
		\frac{\lvert I^n\rvert}{\lvert I^0\rvert}\right)
	\ge \lim_{n\to\infty, n\in J}\frac1n\log\,\left( (g_0^n)'(x_1)
		\frac{D/3}{\lvert I^0\rvert}\right)\\
	&\ge \frac D3\cdot  C_1(\frac D 3).
\end{split}\]
Now we take $\kappa_1(D)\eqdef  D/3\cdot  C_1(D/3)$.

The analogous argument for the inverse fiber maps implies that $\chi(\mu_2)\le -\kappa_2(D)<0$ where $\kappa_2(D)= D/3 C_2( D/3)$, and hence completing the proof of Case a). 

\medskip
\noindent \textbf{Case b).}
There is only one ergodic measure $\mu$ such that $\pi_\ast\mu=\nu$. This implies $\nu\ne\delta_{0^\bZ}$ and hence any $\nu$-generic sequence $\xi=\xi^-.\xi^+$ is such that $\xi^+$ contains infinitely many $1$'s.  
To prove the claim, arguing by contradiction, assume that $\chi(\mu)\ne0$, say $\chi(\mu)>0$ (the case $\chi(\mu)>0$ is analogous).
Given $\varepsilon\in(0,\chi(\mu))$, there is a (forward) generic point $(\xi,x)$ so that there exists $n_0\ge1$ such that for every $n\ge n_0$ we have
\begin{equation}\label{eq:whichimplies}
	(f_\xi^n)'(x)\ge e^{n\varepsilon}.
\end{equation}
Note that by the previous comment, we can assume $x\in(0,1)$.
Take any $y\in(x,1]$. Notice that $y\in I_{\xi^+}$ (see Remark \ref{rem:onesidspin}). Considering the sequence of orbital measures $\mu_n$ uniformly distributed on $\{(\xi,y),F(\xi,y),\ldots, F^{n-1}(\xi,y)\}$, there is some subsequence which weak$\ast$ converges to some $F$-invariant measure $\mu'$. Notice that $\pi_\ast\mu'=\pi_\ast\mu$. Hence, by hypothesis, $\mu'=\mu$. In particular, it is unnecessary to consider a subsequence and thus $(\xi,y)$ is in fact $\mu$-forward generic. Thus, taking the limit of Birkhoff sums of the function $(\xi,s)\mapsto s$, we obtain
\[
	\lim_{n\to\infty}\frac1n\sum_{k=0}^{n-1}(f_\xi^k(y)-f_\xi^k(x))
	=0.
\] 
For sufficiently large $n$, by using \eqref{eq:whichimplies} we obtain
\[
	\sum_{k=0}^{n-1}(f_\xi^k(y)-f_\xi^k(x))
	\le n\frac{\varepsilon}{2M}.
\]
Thus, with Lemma \ref{lem:distortion} 
\[
	\max_{u,w\in[x,y]}\lvert \log (f_\xi^n)'(u)-\log (f_\xi^n)'(w)\rvert
	\le M\sum_{k=0}^{n-1}(f_\xi^k(y)-f_\xi^k(x))
	\le Mn\frac{\varepsilon}{2M}
	=n\frac\varepsilon2.
\]
Therefore, for every $z\in[x,y]$, with \eqref{eq:whichimplies} we obtain $(f_\xi^n)'(z) >e^{n\varepsilon/2}$. Hence,
\[
	\lvert f_\xi^n([x,y])\rvert
	 \to\infty
\]
as $n\to\infty$, which is a contradiction. 

This completes the proof of Case b) and hence the proof of Theorem \ref{teo:1}.
\qed

\begin{remark}
	Notice that in the proof of Theorem \ref{teo:1}  Case b), the $\mu$-generic point is of the form $(\xi,x)$ with $x\ne1$. This allowed as to take $y\in(x,1]$. Note also that the only measure having a generic point of the form $(\xi,1)$ is the Dirac measure at $P=(0^\bZ,1)$. This will no longer be true in the setting of Section \ref{sec:bif} studying bifurcation scenarios. This is precisely the place where another measure will  ``appear", see Proposition \ref{p.FSTmeasures} Case c).
\end{remark}	 

\subsection{Proof of Theorem \ref{teo:2}} \label{sec:proof:teo2}

Let $\mu\in\cM_{\rm erg}(\Gamma)$ and $\nu=\pi_\ast\mu$. Observe that, by Remark \ref{movethere}, $\nu$ is ergodic. By invariance of the disjoint subsets $\Sigma^{\rm sing}$ and $\Sigma^{\rm spine}$ in \eqref{eq:singspine}, $\nu$ is supported  on one of them only.  

If $\nu(\Sigma^{\rm sing})=1$ then $I_\xi=\{x_\xi\}$ is a singleton $\nu$-almost everywhere. Hence, by disintegration, $\mu_\xi=\delta_{x_\xi}$ $\nu$-almost everywhere. In particular, there is no other measure projecting to $\nu$. Hence, by Theorem \ref{teo:1} b), $\mu$ is nonhyperbolic.  
	
Otherwise, $\nu(\Sigma^{\rm spine})=1$ and  $I_\xi=[x_{\xi^+},x_{\xi^-}]$, where $x_{\xi^+}<x_{\xi^-}$ $\nu$-almost everywhere.  Consider the measures
\[
	\mu^\pm
	\eqdef \int_\Sigma\delta_{x_{\xi^\pm}}\,d\nu(\xi).
\]
It is clear that both are $F$-invariant. Since $x_{\xi^+}<x_{\xi^-}$ almost everywhere, we have $\mu^+\ne\mu^-$.  What remains to see is that these measures are ergodic and of the claimed type of hyperbolicity. Observe that for every $\mu'\in\cM(\Gamma)$ satisfying $\pi_\ast\mu'=\nu$ we have
\[\begin{split}
	\int x\,d\mu^+(\xi,x)
	&= \int x_{\xi^+}\,d\nu(\xi)
	\le \int x\,d\nu(\xi)
	\le \int x\,d\mu'(\xi,x)\\
	&\le \int x_{\xi^-}\,d\nu(\xi)
	= \int x\,d\mu^-(\xi,x).
\end{split}\]
Hence, the measures $\mu^\pm$ are extremal points in the subspace $\{\mu'\colon\mu'\in\cM(\Gamma),\pi_\ast\mu'=\nu\}$. By Remark \ref{movethere}, any ergodic component of $\mu^\pm$ also projects to $\nu$.  Thus $\mu^\pm$ cannot have a nontrivial ergodic decomposition. Hence $\mu^\pm$ both are $F$-ergodic. 
By Theorem \ref{teo:1} a), $\mu^\pm$ are hyperbolic with opposite type of hyperbolicity and there are no further ergodic measures projecting to $\nu$. Hence, $\mu\in\{\mu^+,\mu^-\}$. 
This proves Theorem \ref{teo:2}.
\qed

\subsection{Frequencies}

We conclude with some consequences of Theorem~\ref{teo:1}. They will be used in Section \ref{sec:bif} when analyzing explosion of entropy and of the space of ergodic measure in bifurcation scenarios. 

\begin{lemma}\label{lem:exponentextreme}
Assume  (H1)--(H2). For every $\nu\in\cM_{\rm erg}(\Sigma)$  we have
\[
	\nu([0]) \, \log f_0'(1) + \nu([1]) \, \log f_1'(1) 
	\le 0.
\]	
\end{lemma}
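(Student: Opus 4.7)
The plan is to lift $\nu$ to an ergodic measure of nonpositive fiber exponent and then estimate that exponent from below using concavity. Concretely, by Lemma~\ref{lem:existsemihypmeas} (which only requires (H1)--(H2)), there exists $\mu\in\cM_{\mathrm{erg},\le 0}(\Gamma)$ with $\pi_\ast\mu=\nu$. By definition of the fiber Lyapunov exponent,
\[
\chi(\mu)=\int\log f_{\xi_0}'(x)\,d\mu(\xi,x)\le 0.
\]

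Next I would exploit concavity (H2) pointwise: since $f_0'$ is strictly decreasing on $[0,1]$ and $f_1'$ is nonincreasing on $[d,1]$, for every $(\xi,x)\in\Gamma$ one has $f_{\xi_0}'(x)\ge f_{\xi_0}'(1)$, and hence
\[
\log f_{\xi_0}'(x)\ge \log f_{\xi_0}'(1).
\]
Integrating against $\mu$ and splitting according to the value of $\xi_0$, using that $\pi_\ast\mu=\nu$ gives $\mu([i]\times[0,1])=\nu([i])$ for $i=0,1$, one obtains
\[
\chi(\mu)\ge \int\log f_{\xi_0}'(1)\,d\mu(\xi,x)=\nu([0])\log f_0'(1)+\nu([1])\log f_1'(1).
\]

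Combining the two displayed inequalities yields the claimed bound. The proof should be essentially immediate once these ingredients are in place; there is no real obstacle, only the need to invoke the existence of the nonpositive-exponent lift (Lemma~\ref{lem:existsemihypmeas}) and the pointwise comparison of $f_i'$ with $f_i'(1)$ coming from (H2). Note that a symmetric argument using a lift in $\cM_{\mathrm{erg},\ge 0}(\Gamma)$ together with the reversed inequality $f_i'(x)\le f_i'(0)$ (or rather $f_i'(x)\ge f_i'(1)$ applied to the inverse system via Remark~\ref{rem:time-reversal}) would give a matching lower bound of a comparable form, should it be needed later.
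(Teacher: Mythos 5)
Your proof is correct and follows essentially the same route as the paper: the pointwise bound $f_{\xi_0}'(x)\ge f_{\xi_0}'(1)$ from (H2) combined with the existence, via Lemma~\ref{lem:existsemihypmeas}, of an ergodic lift with nonpositive fiber exponent. The paper merely phrases the argument as a proof by contradiction, but the content is identical.
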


\begin{proof}
Let $\nu\in\cM_{\rm erg}(\Sigma)$. Arguing by contradiction, suppose that the statement is false. Any ergodic measure $\mu$ projecting to $\nu$ satisfies
\[
	\begin{split}
	\chi(\mu)
	&= \int \log f_{\xi_0} '(x) d\mu (\xi,x) 
	\ge  \int \log f_{\xi_0}' (1) d\mu (\xi,x)\\
	&=\nu([0]) \, \log f_0'(1) + \nu([1]) \, \log f_1'(1) >0.
\end{split}
\]
But this contradicts Lemma \ref{lem:existsemihypmeas} which guarantees the existence of an ergodic measure projecting to $\nu$ with nonpositive Lyapunov exponent. 
\end{proof}

Given $\xi\in\Sigma$ and $a\in\{0,1\}$, for natural numbers $n< m$ we define 
\[
	\freq_n^m(\xi,a)
	\eqdef\frac{1}{m-n+1}\card\{k\in\{n,\ldots,m\}\colon \xi_k=a\}.
\]
Let
\begin{equation}\label{eq:deffreq}
	\overline\freq(\xi,a)
	\eqdef \limsup_{n\to-\infty,m\to\infty}\freq_n^m(\xi,a)
\end{equation}
and define $\underline\freq(\xi,a)$ analogously taking $\liminf$ instead of $\limsup$.
Observe that those functions are measurable and $\sigma$-invariant. Hence, for every $\nu\in\cM_{\rm erg}(\Sigma)$ for $\nu$-almost every $\xi$ we have 
\[
	\overline\freq(\xi,a)
	= \underline\freq(\xi,a)
	= \nu([a]).
\]	

\begin{corollary}\label{cor:exponentextreme-xi}
Assume (H1)--(H2).	For every $\xi\in\Sigma$ we have
\[
	\underline\freq(\xi,0)\log f_0'(1)+\overline\freq(\xi,1)\log f_1'(1)\le 0.
\]	
\end{corollary}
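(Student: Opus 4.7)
The plan is to reduce to Lemma~\ref{lem:exponentextreme} by producing a $\sigma$-invariant measure on $\Sigma$ whose mass on the cylinders $[0]$ and $[1]$ realizes exactly the two frequencies appearing in the statement, and then applying the ergodic decomposition. The one sign observation we need is that $\freq_n^m(\xi,0)+\freq_n^m(\xi,1)=1$ for all $n<m$, which upon passing to the $\limsup$ in $\freq(\xi,1)$ forces the complementary $\liminf$ for $\freq(\xi,0)$; hence
\[
	\underline\freq(\xi,0)+\overline\freq(\xi,1)=1.
\]

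First, by the definition of the double $\limsup$ in \eqref{eq:deffreq}, choose sequences $n_k\to-\infty$ and $m_k\to\infty$ such that
\[
	\lim_{k\to\infty}\freq_{n_k}^{m_k}(\xi,1)
	= \overline\freq(\xi,1).
\]
By the identity above, along the same subsequence $\freq_{n_k}^{m_k}(\xi,0)\to\underline\freq(\xi,0)$. Consider the empirical measures on $\Sigma$,
\[
	\nu_k
	\eqdef \frac{1}{m_k-n_k+1}\sum_{j=n_k}^{m_k}\delta_{\sigma^j(\xi)}.
\]
Each $\nu_k$ is a probability measure supported on the $\sigma$-orbit of $\xi$, which lies in the compact $\sigma$-invariant set $\Sigma$. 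Passing to a weak$\ast$-convergent subsequence (which we do not relabel), let $\nu\eqdef \lim_k\nu_k\in\cM(\Sigma)$. A standard Krylov--Bogolyubov-type computation shows that for any continuous $\phi\colon\Sigma\to\bR$,
\[
	\Big|\int \phi\circ\sigma\,d\nu_k-\int\phi\,d\nu_k\Big|
	\le \frac{2\lVert\phi\rVert_\infty}{m_k-n_k+1}
	\xrightarrow[k\to\infty]{}0,
\]
so $\nu$ is $\sigma$-invariant. Since the cylinders $[0]$ and $[1]$ are clopen in $\Sigma$, weak$\ast$ convergence gives $\nu([a])=\lim_k\nu_k([a])=\lim_k\freq_{n_k}^{m_k}(\xi,a)$, hence
\[
	\nu([0])=\underline\freq(\xi,0)
	\quad\text{ and }\quad
	\nu([1])=\overline\freq(\xi,1).
\]

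Finally, consider the ergodic decomposition $\nu=\int\nu_\omega\,d\lambda(\omega)$ with $\nu_\omega\in\cM_{\rm erg}(\Sigma)$. Applying Lemma~\ref{lem:exponentextreme} to each ergodic component,
\[
	\nu_\omega([0])\log f_0'(1)+\nu_\omega([1])\log f_1'(1)
	\le 0
\]
for $\lambda$-almost every $\omega$. Integrating with respect to $\lambda$ yields
\[
	\underline\freq(\xi,0)\log f_0'(1)+\overline\freq(\xi,1)\log f_1'(1)
	= \nu([0])\log f_0'(1)+\nu([1])\log f_1'(1)
	\le 0,
\]
which is the claim.

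The only delicate point is matching $\underline\freq$ on one symbol with $\overline\freq$ on the other along a \emph{single} subsequence $(n_k,m_k)$; this is precisely what the complementarity $\freq_n^m(\xi,0)+\freq_n^m(\xi,1)=1$ provides, and it is what makes the empirical measure $\nu$ realize both frequencies simultaneously. Once $\nu$ is produced, invariance and ergodic decomposition reduce the statement to the ergodic case handled in Lemma~\ref{lem:exponentextreme}.
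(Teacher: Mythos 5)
Your proof is correct and follows essentially the same route as the paper's: both construct empirical measures along a subsequence realizing the extremal frequencies (exploiting that $\freq_n^m(\xi,0)+\freq_n^m(\xi,1)=1$, so one subsequence captures both $\underline\freq(\xi,0)$ and $\overline\freq(\xi,1)$), pass to a $\sigma$-invariant weak$\ast$ limit, and reduce to Lemma~\ref{lem:exponentextreme} via the ergodic decomposition. Your write-up merely fills in slightly more detail (Krylov--Bogolyubov invariance, clopenness of the cylinders) than the paper does.
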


\begin{proof}
	Given $\xi$, consider subsequences $(n_i)_i$ and $(m_i)_i$ such that
\[
	\lim_{i\to\infty}\freq_{n_i}^{m_i}(\xi,0)
	= \underline\freq(\xi,0)
\]	
and consider the probability measures 
\[
	\frac{1}{m_i-n_i+1}\sum_{k=n_i}^{m_i}\delta_{\sigma^k(\xi)}.
\]
Then any weak$\ast$ accumulation measure $\nu$ of those measures is $\sigma$-invariant and satisfies $\nu([0])=\underline{\freq}(\xi,0)$. Observe that 
\[
	\nu([1])
	= \overline\freq(\xi,1)
	= 1- \underline\freq(\xi,0).
\]	
Now it suffices to consider the ergodic decomposition of $\nu$ and apply Lemma \ref{lem:exponentextreme}.
\end{proof}

\section{Accumulations of ergodic measures in $\cM(\Gamma)$: Proof of Theorem \ref{teo:accum}} \label{sec:accum}

In the entire section we will assume (H1)--(H2). 
We will indicate separately places which require also (H2+). Indeed, we need (H2+) when invoking Lemma \ref{pro:simple} at the very end of the proof of Theorem  \ref{teo:accum}.  Section \ref{sec:perapperg} is essentially only about weak$\ast$ approximation, the remainder of this section discusses approximation also in entropy. One essential step in the proof are so-called skeletons, defined and discussed in Section \ref{sec:skeletons}.

\subsection{Periodic approximation of ergodic measures}\label{sec:perapperg}

\begin{proposition}[Density of hyperbolic periodic measures]\label{ppp.periodicapproximation}
	Assume (H1)--(H2+). Every measure in $\cM_{\rm erg}(\Gamma)$ is weak$\ast$ accumulated by hyperbolic periodic measures in $\cM_{\rm erg}(\Gamma)$. \end{proposition}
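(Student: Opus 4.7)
The plan is to split into the hyperbolic and nonhyperbolic cases, using Katok-type tools for the former and a direct closing construction for the latter.

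If $\mu\in\cM_{\rm erg,<0}(\Gamma)\cup\cM_{\rm erg,>0}(\Gamma)$, I would simply invoke Remark~\ref{Katokforevery}: the associated sequence of Katok horseshoes $\Gamma_n$ satisfies $\cM(\Gamma_n)\to\{\mu\}$ in the weak$\ast$ topology, and since periodic measures are dense in the invariant measure space of any basic set, this yields hyperbolic periodic measures of the correct type converging to $\mu$.

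For $\mu\in\cM_{\rm erg,0}(\Gamma)$ I argue directly. Set $\nu\eqdef\pi_\ast\mu$ and recall from Theorem~\ref{teo:1}~b) that $\pi_\ast^{-1}\nu=\{\mu\}$. Choose a $\mu$-generic point $(\xi,x)$. Since $\chi(\mu)=0$ the fiber orbit $x_n\eqdef f_\xi^n(x)$ cannot accumulate at $0$ or $1$, so $y\eqdef\liminf_n x_n\in(0,1)$; applying the claim inside the proof of Lemma~\ref{lem:existsemihypmeas} gives an index $n_0$ and an infinite sequence $(n_k)_k$ with $x_{n_k}\geq x_{n_0}$, whence $f_{[\xi_0\ldots\,\xi_{n_k-1}]}([x_{n_0},1])\subset[x_{n_0},1]$. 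By Lemma~\ref{lem:fixpots} the periodic word $\eta^{(k)}\eqdef(\xi_0\ldots\xi_{n_k-1})^\bZ$ lies in $\Sigma$ and the corresponding fiber map has at least one fixed point $y_k\in[x_{n_0},1]$. Let $\mu_k$ be the periodic $F$-invariant measure supported on the orbit of $(\eta^{(k)},y_k)$. I would then verify that $\pi_\ast\mu_k\to\nu$ in the weak$\ast$ topology by a standard cylinder-frequency count: since $\xi$ is $\nu$-generic, the empirical distribution of any fixed-length cylinder along the prefix $(\xi_0\ldots\xi_{n_k-1})$ converges to its $\nu$-measure, with the periodization introducing only a boundary error $O(\ell/n_k)$ for cylinders of length $\ell$. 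Then Lemma~\ref{pro:simple} (whose hypothesis $\pi_\ast^{-1}\nu=\{\mu\}$ holds here) forces $\mu_k\to\mu$. Finally, whenever a given $\mu_k$ happens to be parabolic, Lemma~\ref{lem:parpoiacc} supplies hyperbolic periodic measures weak$\ast$-close to $\mu_k$, and a diagonal argument delivers the desired hyperbolic periodic approximating sequence.

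The main obstacle I anticipate is reconciling the sparsity of the return times $(n_k)$, forced by the closing condition $x_{n_k}\geq x_{n_0}$, with the convergence $\pi_\ast\mu_k\to\nu$. This is not a statement about Birkhoff averages along a subsequence of times, but rather about the empirical measure of the entire prefix $(\xi_0\ldots\xi_{n_k-1})$ of the $\nu$-generic sequence $\xi$; only the length $n_k\to\infty$ matters, and the obstacle dissolves. All remaining inputs -- admissibility (Section~\ref{sec:undsymspa}), existence and type of fixed points under concavity (Lemma~\ref{lem:fixpots}), unique lift of nonhyperbolic ergodic measures (Theorem~\ref{teo:1}), parabolic-to-hyperbolic approximation (Lemma~\ref{lem:parpoiacc}), and Katok (Remark~\ref{Katokforevery}) -- are already available.
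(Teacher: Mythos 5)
Your proposal is correct and, for the central nonhyperbolic case, it follows essentially the paper's own route: return times of a generic point give admissible periodic words with fixed points via concavity (Lemma~\ref{lem:fixpots}), the periodic measures on the full prefixes project weak$\ast$ to $\nu$, and the unique lift from Theorem~\ref{teo:1}~b) together with Lemma~\ref{pro:simple} forces convergence to $\mu$, with Lemma~\ref{lem:parpoiacc} absorbing any parabolic cases. The only deviations are minor: the paper obtains \emph{strict} returns $f_\xi^n(x)>x$ by Poincar\'e recurrence between $\Sigma\times(0,a)$ and $\Sigma\times(a,1)$ (Lemma~\ref{lemcla:13}), which makes both fixed points hyperbolic outright and renders the parabolic patch unnecessary, and it gives an intrinsic concavity-based proof of the hyperbolic case (Lemma~\ref{lem:finalpart}) instead of the Katok shortcut, which it explicitly acknowledges as available; also, your parenthetical justification that $\chi(\mu)=0$ prevents the fiber orbit from accumulating at $0$ or $1$ is not a valid deduction, but it is inessential since the cited claim from Lemma~\ref{lem:existsemihypmeas} does the actual work.
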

	
For the above result in the case when the measure is hyperbolic see Remark \ref{Katokforevery}. We will provide a proof, building upon the concavity hypotheses, which has intrinsic interest. It applies to any (also nonhyperbolic) ergodic measure $\mu\not\in\{\delta_P,\delta_Q\}$.  The simple cases $\mu\in\{\delta_P,\delta_Q\}$ we check separately in Lemma \ref{lem:diractmeasuress}. 

\begin{corollary}\label{corppp.periodicapproximation}
	Assume (H1)--(H2).
	Every measure in $\cM_{\rm erg}(\Sigma)$ is weak$\ast$ accumulated by periodic measures in $\cM_{\rm erg}(\Sigma)$.
\end{corollary}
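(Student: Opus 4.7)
The natural strategy is to reduce the corollary to Proposition~\ref{ppp.periodicapproximation} via the projection $\pi$. Given $\nu\in\cM_{\rm erg}(\Sigma)$, Lemma~\ref{lem:existsemihypmeas} provides some $\mu\in\cM_{\rm erg}(\Gamma)$ with $\pi_\ast\mu=\nu$. If one could apply Proposition~\ref{ppp.periodicapproximation} to obtain hyperbolic periodic measures $\mu_k\in\cM_{\rm erg}(\Gamma)$ with $\mu_k\to\mu$ in the weak$\ast$ topology, then, since $\pi_\ast$ is continuous, the measures $\pi_\ast\mu_k$ would be periodic measures in $\cM_{\rm erg}(\Sigma)$ converging weak$\ast$ to $\pi_\ast\mu=\nu$, which is exactly what is claimed.

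The only obstacle with this direct approach is a hypothesis mismatch: Proposition~\ref{ppp.periodicapproximation} requires (H2+), whereas the corollary is stated under just (H1)--(H2). To circumvent this, I would observe that the relevant periodic measures on $\Sigma$ are in fact already produced inside the proof of Lemma~\ref{lem:existsemihypmeas}, which itself only invokes (H1)--(H2). Concretely, for a $\nu$-generic point $\xi=\xi^-.\xi^+$ (the trivial case $\nu=\delta_{0^\bZ}$ being itself periodic and requiring no argument), that proof exhibits an increasing sequence of return times $(n_k)_k$ together with admissible periodic sequences $\eta^{(k)}=(\xi_0\ldots\xi_{n_k-1})^\bZ\in\Sigma$. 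Let $\nu_k\in\cM_{\rm erg}(\Sigma)$ be the $\sigma$-invariant periodic measure supported on the orbit of $\eta^{(k)}$. The proof of Lemma~\ref{lem:existsemihypmeas} produces associated $F$-periodic measures $\mu_k^{\pm}\in\cM_{\rm erg}(\Gamma)$ satisfying $\pi_\ast\mu_k^{\pm}=\nu_k$, whose ergodic weak$\ast$ accumulation points $\mu^{\pm}$ satisfy $\pi_\ast\mu^{\pm}=\nu$.

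To conclude, I would argue that $\nu_k\to\nu$ (not merely along a subsequence). By compactness of $\cM(\Sigma)$, it suffices to show that every weak$\ast$ accumulation point of $(\nu_k)_k$ equals $\nu$. If $\nu_{k_j}\to\tilde\nu$, then by compactness of $\cM(\Gamma)$, passing to a further subsequence, $\mu_{k_j}^{+}\to\tilde\mu$ for some $\tilde\mu\in\cM(\Gamma)$. Continuity of $\pi_\ast$ gives $\tilde\nu=\pi_\ast\tilde\mu$, and the argument in the proof of Lemma~\ref{lem:existsemihypmeas} (which shows that \emph{every} weak$\ast$ limit of the $\mu_k^{+}$ projects to $\nu$) yields $\pi_\ast\tilde\mu=\nu$, hence $\tilde\nu=\nu$. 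Since each $\nu_k$ is a periodic measure in $\cM_{\rm erg}(\Sigma)$, this completes the approximation. The main (minor) subtlety is checking that the whole sequence, and not just a subsequence, converges; this is handled by the uniqueness-of-accumulation argument just sketched.
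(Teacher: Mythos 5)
Your reduction is sound, and you correctly spotted the real issue: Proposition~\ref{ppp.periodicapproximation} needs (H2+) (its proof leans on Theorem~\ref{teo:1} and Lemma~\ref{pro:simple}), so the corollary cannot be obtained by simply pushing that proposition through $\pi_\ast$. Your fix, however, takes a genuinely different route from the paper's. The paper lifts $\nu$ to some $\mu\in\cM_{\rm erg}(\Gamma)$ and then invokes Theorem~\ref{teopro:intermediate} (which only needs (H1)--(H2)): the basic sets $\Upsilon_n$ it produces contain periodic orbits, and any choice of periodic measures $\mu_n\in\cM(\Upsilon_n)$ projects to periodic measures on $\Sigma$ converging weak$\ast$ to $\nu$. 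You instead extract the periodic sequences $\eta^{(k)}=(\xi_0\ldots\xi_{n_k-1})^\bZ$ directly from the return-time construction inside Lemma~\ref{lem:existsemihypmeas}; this is more elementary and avoids the whole skeleton machinery, at the cost of not giving the entropy approximation that the Shadowplay theorem also delivers (irrelevant for this corollary).

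One soft spot: your closing argument that $\nu_k\to\nu$ is phrased in a way that is essentially circular. You derive it from the assertion in the proof of Lemma~\ref{lem:existsemihypmeas} that every weak$\ast$ limit of the $\mu_k^{+}$ projects to $\nu$; but that assertion (the paper's ``by construction'') is itself nothing other than the statement $\nu_k\to\nu$, so you are not gaining anything by routing through $\cM(\Gamma)$. The honest justification is direct and standard: $\nu_k=\cA_{n_k}\eta^{(k)}$ and $\cA_{n_k}\xi$ are $W_1$-close because $\sigma^j\eta^{(k)}$ and $\sigma^j\xi$ agree on the coordinates $-j,\ldots,n_k-1-j$, so for all but a vanishing proportion of $j\in\{0,\ldots,n_k-1\}$ the two points are exponentially close in the metric \eqref{eqdef:distanceSigma}; since $\xi$ is $\nu$-generic, $\cA_{n_k}\xi\to\nu$, hence $\nu_k\to\nu$. (This is exactly the fact the paper ``observes'' without proof in Lemma~\ref{lem:dontknow}.) With that replacement your proof is complete and correct; note also that for the corollary you only need \emph{some} periodic measures accumulating on $\nu$, so even convergence along a subsequence would already suffice.
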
	

Assuming (H1)--(H2+), the above corollary is a consequence of Proposition \ref{ppp.periodicapproximation} together with the fact that for every $\nu\in\cM_{\rm erg}(\Sigma)$ there exists $\mu\in \cM_{\rm erg}(\Gamma)$ with $\pi_\ast\mu=\nu$, see Lemma \ref{lem:atmost2}.
The general case, assuming only (H1)--(H2), we prove at the end of this section.
 
\begin{proof}[Proof of Proposition \ref{ppp.periodicapproximation}]
We first deal with the simplest case.

\begin{lemma}\label{lem:diractmeasuress}
	The measure $\delta_Q$ (the measure $\delta_P$) is weak$\ast$ accumulated by periodic measures.
\end{lemma}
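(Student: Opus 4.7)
The plan is to construct, for each of $\delta_P$ and $\delta_Q$, an explicit sequence of distinct hyperbolic periodic measures converging to it in weak$\ast$ topology. Both are built from the periodic symbolic sequences $\xi^{(n)} \eqdef (0^n 1)^\bZ$, which lie in $\Sigma$ for all sufficiently large $n$ by Remark~\ref{rem:someseq}. The composition $g_n \eqdef f_1 \circ f_0^n$ is concave and increasing on its admissibility interval $[a_n, 1]$, where $f_0^n(a_n) = d$, with $g_n(a_n) = 0$ and $g_n(1) = f_1(1) \in (0, 1)$. By Lemma~\ref{lem:fixpots}, away from the codimension-one parabolic case, $g_n$ has two fixed points $p_n^+ < p_n^-$, the former expanding and the latter contracting, yielding two periodic orbits through $R_n^\pm \eqdef (\xi^{(n)}, p_n^\pm)$ whose uniform measures $\mu_n^\pm \eqdef \frac{1}{n+1} \sum_{j=0}^n \delta_{F^j(R_n^\pm)}$ are hyperbolic periodic measures of expanding and contracting type respectively.

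Next I would establish the key asymptotics. From the concavity estimate $f_0(x) \le f_0'(0) x$ one gets $d = f_0^n(a_n) \le (f_0'(0))^n a_n$, hence $a_n \ge d (f_0'(0))^{-n}$, and a matching upper bound $a_n \le C (f_0'(0))^{-n}$ follows from $f_0(x) \ge f_0'(\delta) x$ on $[0, \delta]$ for small fixed $\delta$. Since $g_n \to f_1(1)$ pointwise on $(0, 1]$, the function $h_n(x) = g_n(x) - x$ satisfies $h_n > 0$ on a fixed interval $[\varepsilon, f_1(1) - \varepsilon]$ and $h_n < 0$ on $[f_1(1) + \varepsilon, 1]$ for large $n$, which forces both $p_n^- \to f_1(1)$ and $p_n^+ < \varepsilon$; the latter yields $p_n^+ \to 0$. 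Then the secant lower bound $g_n(x) \ge g_n(\delta)(x - a_n)/(\delta - a_n)$ applied at $x = p_n^+ < \delta$ gives $p_n^+ \le g_n(\delta) a_n/(g_n(\delta) - \delta + a_n) = O(a_n)$ for any fixed $\delta \in (0, f_1(1))$. On the symbolic side, the shifts $\sigma^j(\xi^{(n)})$ have their nearest $1$ to position~$0$ at distance $\min(n - j,\, j + 1)$, so $d_1(\sigma^j(\xi^{(n)}), 0^\bZ)$ is exponentially small for all but $O(1)$ values of $j$.

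With these estimates in hand, convergence is routine. For $\delta_P$: since $p_n^- \to f_1(1) \in (0, 1)$ and $f_0^j(y) \to 1$ uniformly on compact subsets of $(0, 1]$, there exists $J(\varepsilon)$ with $f_0^j(p_n^-) > 1 - \varepsilon$ for all $j \ge J(\varepsilon)$ and all large $n$; hence a fraction $1 - O(1/n)$ of orbit points of $R_n^-$ are $\varepsilon$-close to $P$, and $\mu_n^- \to \delta_P$ by a standard weak$\ast$ estimate. For $\delta_Q$: using $f_0^j(p_n^+) \le p_n^+ (f_0'(0))^j = O((f_0'(0))^{j - n})$, at least $n - O_\varepsilon(1)$ indices $j$ satisfy $f_0^j(p_n^+) < \varepsilon$, giving $\mu_n^+ \to \delta_Q$ by the same mechanism.

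The main technical step is the exponential upper bound $p_n^+ = O(a_n)$, which exploits the concavity of $g_n$ together with the asymptotic $g_n(\delta) \to f_1(1)$. A minor nuisance is the parabolic case of Lemma~\ref{lem:fixpots} (where $p_n^+ = p_n^-$ coincide), which is a codimension-one phenomenon; should it persist along a subsequence, one replaces the family by $\{(0^n 1 0^m 1)^\bZ\}_{n, m}$, where the extra parameter provides enough flexibility to avoid parabolicity.
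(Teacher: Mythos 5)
Your construction is the same as the paper's: the periodic sequences $(0^n1)^\bZ$, the two fixed points of $g_n=f_1\circ f_0^n$ from Lemma~\ref{lem:fixpots}, and a count of how many orbit points sit near $Q$ (resp.\ $P$). The $\delta_P$ half is correct as written (the paper instead just invokes the symmetry under time reversal and argues only for $\delta_Q$). Your hedge about the parabolic case is unnecessary: you yourself show $g_n-\mathrm{id}>0$ on a fixed subinterval of $(0,f_1(1))$ for large $n$, which together with $g_n(a_n)=0<a_n$ and $g_n(1)=f_1(1)<1$ forces two transversal crossings of the diagonal, so case (1b) of Lemma~\ref{lem:fixpots} cannot occur and no auxiliary family $(0^n10^m1)^\bZ$ is needed.

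The one step that does not hold as written is the index count for $\delta_Q$. Your upper bound on $a_n$ comes from $f_0(x)\ge f_0'(\delta)x$ on $[0,\delta]$, which yields $a_n\le C_\delta\,(f_0'(\delta))^{-n}$, not $C\,(f_0'(0))^{-n}$; since $f_0'(\delta)<f_0'(0)$ by (H2), the chain $f_0^j(p_n^+)\le (f_0'(0))^j\,p_n^+=O((f_0'(0))^{j-n})$ is not justified, and what your estimates actually give is $f_0^j(p_n^+)<\varepsilon$ only for $j\le \rho_\delta\, n-K$ with $\rho_\delta=\log f_0'(\delta)/\log f_0'(0)<1$, not for $n-O_\varepsilon(1)$ indices. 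This still suffices for weak$\ast$ convergence after letting $\delta\to0$, since the proportion of good indices then tends to $1$; but the cleaner repair — and the route the paper takes — is to iterate backwards: $f_0^j(p_n^+)=f_0^{-(n-j)}(q_n^+)$ where $q_n^+=f_0^n(p_n^+)=f_1^{-1}(p_n^+)\to d$, and $f_0^{-k}(d')<\varepsilon$ for every $k\ge K(\varepsilon)$ and any fixed $d'\in(d,1)$, so all but $K(\varepsilon)$ of the $n+1$ orbit points are $\varepsilon$-close to $0$, with no exponential rate or distortion estimate needed at all.
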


\begin{proof}
	Note that the sequence $\xi^{(n)}=(0^n1)^\bZ$ is admissible for any $n$ sufficiently large (see Remark \ref{rem:someseq}). Moreover, we can apply Lemma \ref{lem:fixpots} to obtain two fixed points $p_n^+<p_n^-$ for the map $f_{[0^n1]}$.  Observe that for every $z\in(0,f_1(1))$ there exists $n_0$ such that for every $n\ge n_0$ we have $(f_1\circ f_0^n)(z)>z$ and hence $p_n^+\le z\le p_n^-$. This implies that $\lim_{n\to\infty}p_n^+=0$. Moreover,  we have  
\[
	q_n^+
	\eqdef f_0^n(p_n^+)
	= f_1^{-1}(p_n^+)
	\to d=f_1^{-1}(0).
\]	 
 As the periodic orbit of the point $((0^n1)^\bZ,p_n^+)$ (projected to $[0,1]$) consists of the points $q_n^+$, $f_0^{-1}(q_n^+)$, $\ldots$, $f_0^{-n}(q_n^+)$, almost all of them stay close to $0$. More precisely, given any $z\in(0,1)$, there exists $n_1=n_1(z)$ such that for every $n\ge n_1$ we have $f_0^{-n}(d)<z$.  As $z>0$ can be chosen arbitrarily close to $0$, in this way we construct a sequence $Q_n=(\xi^{(n)},p_n^+)$ of periodic points whose $F$-invariant probability measures supported on its orbit converge weak$\ast$ to $\delta_Q$.
	
	The analogous arguments apply to $\delta_P$ considering $F^{-1}$.
\end{proof}

We collect some preparatory results. Recall that $\varrho(\xi,x)=x$ is the canonical projection to the second coordinate. 

\begin{remark}\label{rem:projmu0}
	For every $\mu\in\cM_{\rm erg}(\Gamma)$, $\mu\not\in\{\delta_P,\delta_Q\}$, there exists $a\in(0,1)$ such that $\varrho_\ast\mu(J)>0$ for both intervals $J=(0,a)$ and $J=(a,1)$. 
\end{remark}

\begin{lemma}\label{lemcla:13}
	For every $\mu\in\cM_{\rm erg}(\Gamma)$, $\mu\not\in\{\delta_P,\delta_Q\}$, there exist $\mu$-generic points $(\xi,x)$ such that there exists infinitely many times $n\ge1$ with $f_\xi^n(x)>x$.
\end{lemma}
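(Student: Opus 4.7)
The plan is to exploit Remark~\ref{rem:projmu0} together with the Birkhoff ergodic theorem. The key observation is that the conclusion only asks for infinitely many returns of $f_\xi^n(x)$ above the \emph{initial} value $x$, not above the supremum of the orbit, so a simple ``large-set/small-start'' argument will suffice.

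First, I would apply Remark~\ref{rem:projmu0} to obtain $a\in(0,1)$ such that, setting
\[
	A\eqdef\varrho^{-1}\big((0,a)\big)\cap\Gamma,\qquad
	B\eqdef\varrho^{-1}\big((a,1)\big)\cap\Gamma,
\]
both $\mu(A)>0$ and $\mu(B)>0$. Since $\mu$ is ergodic, Birkhoff's ergodic theorem provides a set $\Gamma_0\subset\Gamma$ of $\mu$-generic points with $\mu(\Gamma_0)=1$ such that every $(\xi,x)\in\Gamma_0$ satisfies
\[
	\lim_{N\to\infty}\frac{1}{N}\card\{0\le n<N\colon F^n(\xi,x)\in B\}
	=\mu(B)>0.
\]
In particular, for every $(\xi,x)\in\Gamma_0$ the forward orbit of $(\xi,x)$ meets $B$ at infinitely many times $n\ge1$.

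Next, I would choose any generic point $(\xi,x)\in\Gamma_0\cap A$; this intersection has positive $\mu$-measure because $\mu(\Gamma_0)=1$ and $\mu(A)>0$, and in particular it is nonempty. For such a point one has $x<a$, while there exist infinitely many $n\ge1$ with $F^n(\xi,x)\in B$, i.e.\ $f_\xi^n(x)>a>x$. This is exactly the conclusion claimed in the lemma.

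There is no real obstacle here: the only point to keep in mind is that we used Remark~\ref{rem:projmu0} precisely to rule out the two trivial cases $\mu=\delta_P$ and $\mu=\delta_Q$, for which no such constant $a$ exists (the push-forward $\varrho_\ast\mu$ being a Dirac mass at $1$ or at $0$, respectively). Apart from that, the argument uses neither concavity nor hyperbolicity and relies solely on ergodicity of $\mu$ together with the positivity of the measures of $A$ and $B$.
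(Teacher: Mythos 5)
Your proof is correct and follows essentially the same route as the paper: both obtain $a\in(0,1)$ from Remark~\ref{rem:projmu0}, pick a $\mu$-generic point in the fiber-region $\varrho^{-1}((0,a))$, and use ergodicity to produce infinitely many visits of the forward orbit to $\varrho^{-1}((a,1))$, giving $f_\xi^n(x)>a>x$. The only cosmetic difference is that you justify the infinitely many visits via Birkhoff's theorem for the indicator of $B$ (arguably cleaner), whereas the paper cites Poincar\'e recurrence; the substance is identical.
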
	

\begin{proof}
	By Remark \ref{rem:projmu0}, there exists $a\in(0,1)$ such that $\varrho_\ast\mu(J)>0$ for $J=(0,a)$ and $J=(a,1)$. Hence, there exists a $\mu$-generic point $R=(\xi,x)$ in $\Sigma\times(0,a)$. Then, by Poincar\'e recurrence, the orbit of $R$ by $F$ has infinitely many return times $n\ge1$ to $\Sigma\times(a,1)$ and therefore satisfies $f_\xi^n(x)>a>x$, proving the lemma. 
\end{proof}

\begin{lemma}\label{lem:dontknow}
	Let $\mu\in\cM_{\rm erg}(\Gamma)$ and $(\xi,x)$ be any $\mu$-generic point  such that there are infinitely many times $n_i\ge1$ with $f_\xi^{n_i}(x)>x$. Then for every $i\ge1$ the sequence $\xi^{(n_i)}=(\xi_0\ldots\xi_{n_i-1})^\bZ$ is admissible and there exist repelling and contracting points
\[
	p^+_{[\xi_0\ldots\,\xi_{n_i-1}]}
	= f_{[\xi_0\ldots\,\xi_{n_i-1}]}(p^+_{[\xi_0\ldots\,\xi_{n_i-1}]}) 
	<x
	< p^-_{[\xi_0\ldots\,\xi_{n_i-1}]}
	= f_{[\xi_0\ldots\,\xi_{n_i-1}]}(p^-_{[\xi_0\ldots\,\xi_{n_i-1}]}).
\] 
Let 
\[
	P^\pm_{n_i}
	\eqdef ((\xi_0\ldots\xi_{n_i-1})^\bZ,p^\pm_{[\xi_0\ldots\,\xi_{n_i-1}]})
\]	 
and consider the sequence of ergodic measures $(\mu_{n_i}^\pm)_{n_i}$ supported on their orbits 
\[
	\mu^\pm_{n_i}
	= \frac1{n_i}(\delta_{P^\pm_n}+\delta_{F(P^\pm_n)}+\ldots+\delta_{F^{n-1}(P^\pm_n)}).
\]	 
Then we have that 
\[
	\lim_{i\to\infty}\pi_\ast\mu^\pm_{n_i}
	= \nu
	\eqdef \pi_\ast\mu.
\]
Moreover, any weak$\ast$ accumulation point $\mu^+$ of $(\mu_{n_i}^+)_i$ and any weak$\ast$ accumulation point $\mu^-$ of $(\mu_{n_i}^-)_i$ satisfies
\begin{equation}\label{eq:numeeee}
	\chi(\mu^+)
	\ge 0
	\ge \chi(\mu^-)
	\quad\text{ and }\quad
	\chi(\mu^+)
	\ge \chi(\mu)
	\ge \chi(\mu^-).
\end{equation}
Finally, there are ergodic components $\tilde\mu^\pm$ of $\mu^\pm$, respectively, such that $\pi_\ast\tilde\mu^\pm=\nu$ and 
\[
	\chi(\tilde\mu^+)
	\ge 0
	\ge \chi(\tilde\mu^-)
	\quad\text{ and }\quad
	\chi(\tilde\mu^+)
	\ge \chi(\mu)
	\ge \chi(\tilde\mu^-).
\]
\end{lemma}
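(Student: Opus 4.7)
The plan is to argue five things in sequence: admissibility of the periodic sequence, existence of the two fixed points around $x$, weak$\ast$ convergence of the symbolic projections, the exponent inequalities at the measure level, and finally the passage to an ergodic component.

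\textbf{Step 1 (admissibility and fixed points).} Set $g_i \eqdef f_{[\xi_0\ldots\,\xi_{n_i-1}]}$, whose natural domain of definition is $I_{[\xi_0\ldots\,\xi_{n_i-1}]}=[a,1]$ with $g_i(a)=0$ (Remark~\ref{rem:notationnn}). By (H2), $g_i'$ is strictly decreasing as soon as the word contains at least one $0$, which is forced by $f_\xi^{n_i}(x)>x$ and $f_1<{\rm id}$. Hence $g_i-{\rm id}$ is strictly concave. From $g_i(a)-a\le 0$ and $g_i(x)-x>0$ the intermediate value theorem gives a fixed point $p^+_i\in[a,x)$; from $g_i(x)-x>0$ and $g_i(1)-1\le 0$ (using $f_1(1)<1$ whenever the word contains a $1$, and $g_i(1)=1$ otherwise) it gives a fixed point $p^-_i\in(x,1]$. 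Strict concavity of $g_i-{\rm id}$ guarantees there are no others, and Lemma~\ref{lem:fixpots} applies to show $p^+_i$ is repelling and $p^-_i$ is contracting, which in particular proves that the periodic sequence $\xi^{(n_i)}$ is admissible.

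\textbf{Step 2 (symbolic convergence).} The projection $\pi_\ast\mu^\pm_{n_i}$ is the uniformly-distributed atomic measure $\nu_{n_i}$ on the $\sigma$-orbit of $\xi^{(n_i)}$. For any cylinder function $\varphi$ depending on coordinates $-N,\ldots,N$, the values $\varphi(\sigma^k\xi^{(n_i)})$ and $\varphi(\sigma^k\xi)$ agree for every $k\in[N,n_i-N]$ because the two sequences coincide on positions $k-N,\ldots,k+N\subset\{0,\ldots,n_i-1\}$. Thus the two Cesàro averages differ by at most $2N\lVert\varphi\rVert_\infty/n_i$, which vanishes as $n_i\to\infty$. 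By $\nu$-genericity of $\xi$, $\frac{1}{n_i}\sum_{k=0}^{n_i-1}\delta_{\sigma^k\xi}\to\nu$, and hence $\nu_{n_i}\to\nu$ in the weak$\ast$ topology.

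\textbf{Step 3 (Lyapunov inequalities).} The function $(\eta,y)\mapsto\log f'_{\eta_0}(y)$ is bounded and continuous on $\Gamma$ (both $f_0'$ and $f_1'$ are continuous and positive on their compact domains, and admissibility forces $y\in[d,1]$ whenever $\eta_0=1$), so $\chi\colon\cM(\Gamma)\to\bR$ is continuous. Evaluating on the periodic orbit gives
\[
\chi(\mu^+_{n_i})=\frac{1}{n_i}\log(f_\xi^{n_i})'(p^+_i),\qquad \chi(\mu^-_{n_i})=\frac{1}{n_i}\log(f_\xi^{n_i})'(p^-_i).
\]
Because $p^+_i$ is repelling and $p^-_i$ is contracting, $\chi(\mu^+_{n_i})\ge 0\ge\chi(\mu^-_{n_i})$. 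Since $(f_\xi^{n_i})'$ is strictly decreasing (H2) and $p^+_i<x<p^-_i$, one has $(f_\xi^{n_i})'(p^+_i)>(f_\xi^{n_i})'(x)>(f_\xi^{n_i})'(p^-_i)$, which by $\mu$-genericity of $(\xi,x)$ (guaranteeing $\tfrac{1}{n_i}\log(f_\xi^{n_i})'(x)\to\chi(\mu)$) yields, in the limit along a weak$\ast$ convergent subsequence and by continuity of $\chi$, the inequalities $\chi(\mu^+)\ge\max\{0,\chi(\mu)\}$ and $\chi(\mu^-)\le\min\{0,\chi(\mu)\}$.

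\textbf{Step 4 (ergodic component).} Writing the ergodic decomposition $\mu^+=\int\mu'\,d\lambda(\mu')$, Remark~\ref{movethere} (applied to the ergodic measure $\nu=\pi_\ast\mu^+$) forces $\pi_\ast\mu'=\nu$ for $\lambda$-almost every ergodic component. Since $\int\chi(\mu')\,d\lambda(\mu')=\chi(\mu^+)\ge\max\{0,\chi(\mu)\}$, at least one ergodic component $\tilde\mu^+$ realises an exponent not smaller than this average; this gives $\chi(\tilde\mu^+)\ge 0$ and $\chi(\tilde\mu^+)\ge\chi(\mu)$, with $\pi_\ast\tilde\mu^+=\nu$. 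The symmetric argument produces $\tilde\mu^-$. The main delicate points are the strict placement $p^+_i<x<p^-_i$ (which leans on the strict concavity provided by (H2) together with $g_i(x)>x$) and the interplay between concavity and the monotonicity of $(f_\xi^{n_i})'$ used to compare exponents at $p^\pm_i$ and $x$; everything else is direct consequence of genericity, continuity of $\chi$, and ergodic decomposition.
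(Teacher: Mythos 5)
Your proof is correct and follows essentially the same route as the paper's: locate the two fixed points of $f_{[\xi_0\ldots\,\xi_{n_i-1}]}$ straddling $x$ via Lemma~\ref{lem:fixpots} (the graph crossing the diagonal rules out the parabolic case), note that the periodic orbit measures on $\xi^{(n_i)}$ converge to $\nu$ by genericity, compare exponents at $p^\pm_i$ and at $x$ using monotonicity of $(f_\xi^{n_i})'$ from concavity, and finish with Remark~\ref{movethere} plus an averaging argument over the ergodic decomposition. You merely fill in details the paper leaves implicit (the cylinder-function estimate in Step~2, the continuity of $\chi$ on $\cM(\Gamma)$ in Step~3), so there is nothing substantive to flag.
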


\begin{proof}
Let $[a_i,1]$ be the admissible interval for the map $f_{[\xi_0\ldots\,\xi_{n_i-1}]}$. We have $f_{[\xi_0\ldots\,\xi_{n_i-1}]}(a_i)=0$ and $f_{[\xi_0\ldots\,\xi_{n_i-1}]}(x)>x$. Hence this map has at least one fixed point, which cannot be parabolic as its graph crosses the diagonal. Hence Lemma \ref{lem:fixpots} case (1a) proves the first claim providing the repelling point $p^+_{[\xi_0\ldots\,\xi_{n_i-1}]}$ and the contracting point $p^-_{[\xi_0\ldots\,\xi_{n_i-1}]}$. Moreover, the choices of the times $n_i$ prove the inequalities for $x$.

	 Let $\nu=\pi_\ast\mu$. Observe that $\xi$ is $\nu$-generic and that the sequence of ergodic measures $(\nu_{n_i})_i$ supported on the periodic sequences $\xi^{(n_i)}$ converges in the weak$\ast$ topology to $\nu$. 
	Consider the measures $\mu^\pm_{n_i}$ in the statement of the lemma. Observe that $\pi_\ast\mu^+_{n_i}=\nu_{n_i}^+$ converges in the weak$\ast$ topology to $\nu$. 
By the first part of the lemma, $P^+_{n_i}$ is of expanding type and $P^-_{n_i}$ of contracting type. This implies the first part of \eqref{eq:numeeee}. The second part  of \eqref{eq:numeeee} follows from concavity and the relative position of $x$. 

Finally, by Remark \ref{movethere}, $\nu$ is ergodic and any ergodic component $\tilde\mu^+$ of $\mu^+$ also satisfies $\pi_\ast\tilde\mu^+=\nu$. By the ergodic decomposition of $\mu^+$, there is one component with nonnegative exponent and exponent not smaller than $\chi(\mu)$. The argument for $\mu^-$ is analogous.
\end{proof}

\begin{lemma}\label{lempro:0denseperio}
	Assume (H1)--(H2+).
	Every $\mu\in\cM_{\rm erg,0}(\Gamma)$ is the weak$\ast$ limit of a sequence of periodic measures in $\cM_{\rm erg,>0}(\Gamma)$. Analogously, there is a sequence of periodic measures in $\cM_{\rm erg,<0}(\Gamma)$ converging weak$\ast$ to $\mu$. 
\end{lemma}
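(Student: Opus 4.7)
The plan is to directly combine the ingredients already established in this subsection: Lemmas~\ref{lemcla:13} and~\ref{lem:dontknow} (which only require (H1)--(H2)), together with Theorem~\ref{teo:1} and Lemma~\ref{pro:simple}, where hypothesis (H2+) enters through the former. Once these are in place, the argument reduces to a short diagram chase.

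First I would note that the Dirac measures $\delta_P$ and $\delta_Q$ are hyperbolic, so any $\mu\in\cM_{\rm erg,0}(\Gamma)$ automatically satisfies $\mu\notin\{\delta_P,\delta_Q\}$. Hence Lemma~\ref{lemcla:13} applies and yields a $\mu$-generic point $(\xi,x)$ together with an infinite sequence of positive return times $(n_i)_i$ satisfying $f_\xi^{n_i}(x)>x$.

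Next I would feed this data into Lemma~\ref{lem:dontknow}. For each $n_i$, the composition $f_{[\xi_0\ldots\,\xi_{n_i-1}]}$ has a repelling fixed point $p^+_{[\xi_0\ldots\,\xi_{n_i-1}]}<x$ and a contracting fixed point $p^-_{[\xi_0\ldots\,\xi_{n_i-1}]}>x$, giving rise to $F$-periodic points $P^\pm_{n_i}$ of expanding and contracting type, respectively. The associated periodic measures $\mu^+_{n_i}\in\cM_{\rm erg,>0}(\Gamma)$ and $\mu^-_{n_i}\in\cM_{\rm erg,<0}(\Gamma)$ then satisfy $\pi_\ast\mu^\pm_{n_i}\to\nu\eqdef\pi_\ast\mu$ in the weak$\ast$ topology on $\cM(\Sigma)$.

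The final step is to promote this base-level convergence to convergence in $\cM(\Gamma)$. Since $\mu$ is nonhyperbolic, Theorem~\ref{teo:1}(b) tells us that $\pi_\ast^{-1}(\nu)=\{\mu\}$, and hence Lemma~\ref{pro:simple} yields $\mu^\pm_{n_i}\to\mu$ weak$\ast$, which is precisely the assertion. The only delicate point --- ensuring that the entire sequences, and not merely subsequences, converge to $\mu$ --- is bypassed exactly by the singleton-fiber statement of Theorem~\ref{teo:1}(b); this is where hypothesis (H2+) is indispensable. No further compactness or extraction arguments are needed beyond those already packaged into Lemma~\ref{pro:simple}.
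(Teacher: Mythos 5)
Your proposal is correct and follows essentially the same route as the paper's proof: exclude $\delta_P,\delta_Q$, invoke Lemma~\ref{lemcla:13} to verify the hypotheses of Lemma~\ref{lem:dontknow}, obtain the hyperbolic periodic measures $\mu^\pm_{n_i}$ whose projections converge to $\nu=\pi_\ast\mu$, and then conclude via Theorem~\ref{teo:1}(b) and Lemma~\ref{pro:simple}. Your remark that (H2+) enters precisely through the singleton-fiber statement of Theorem~\ref{teo:1}(b), which removes the need for any subsequence extraction, is exactly the point the paper relies on.
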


\begin{proof}
	Given $\mu\in\cM_{\rm erg,0}(\Gamma)$, clearly $\mu\not\in\{\delta_P,\delta_Q\}$. Let $\nu=\pi_\ast\mu\in\cM_{\rm erg}(\Sigma)$. By applying Lemma \ref{lemcla:13}, we have that the hypotheses of Lemma \ref{lem:dontknow} are satisfied and hence there is a sequence $(\mu^\pm_{n_i})_i$ of periodic measures such that $\pi_\ast\mu^\pm_{n_i}=\nu_{n_i}^\pm$ converges to $\nu$. By Theorem \ref{teo:1} b), $\mu$ is the only $F$-invariant (ergodic) measure projecting to $\nu$.  Hence, by Lemma \ref{pro:simple} the sequences $(\mu^\pm_{n_i})_i$ both weak$\ast$ converge to $\mu$. 
\end{proof}

\begin{lemma}\label{lem:finalpart}
	Every $\mu\in\cM_{\rm erg,>0}(\Gamma)$ is the weak$\ast$ accumulation point of a sequence of periodic measures $(\mu_n)_n\subset\cM_{\rm erg,>0}(\Gamma)$. Analogously for measures in $\cM_{\rm erg,<0}(\Gamma)$.
\end{lemma}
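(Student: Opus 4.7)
The plan is to adapt the strategy of Lemma \ref{lempro:0denseperio}, replacing the single-lift structure of the nonhyperbolic case by a two-lift structure and using the sign of the Lyapunov exponent to pick out the correct accumulation. First I would dispose of $\mu=\delta_Q$: Lemma \ref{lem:diractmeasuress} already supplies periodic approximants, and inspection of its proof shows that they are supported on the repelling fixed points $p_n^+$ of $f_{[0^n 1]}$ and hence lie in $\cM_{\rm erg,>0}(\Gamma)$. The substantive case is $\mu\in\cM_{\rm erg,>0}(\Gamma)\setminus\{\delta_P,\delta_Q\}$.

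For such $\mu$, I would apply Lemma \ref{lemcla:13} to produce a $\mu$-generic point $(\xi,x)$ together with times $n_i\to\infty$ such that $f_\xi^{n_i}(x)>x$. Lemma \ref{lem:dontknow} then manufactures, for each $n_i$, an expanding-type periodic point $P_{n_i}^+$ in the fiber over $(\xi_0\ldots\xi_{n_i-1})^\bZ$, whose associated periodic measure $\mu_{n_i}^+$ lies in $\cM_{\rm erg,>0}(\Gamma)$. The same lemma guarantees $\pi_\ast\mu_{n_i}^+\to\nu\eqdef\pi_\ast\mu$ and that every weak$\ast$ accumulation point $\mu^+$ of $(\mu_{n_i}^+)_i$ satisfies $\pi_\ast\mu^+=\nu$ and $\chi(\mu^+)\geq\chi(\mu)>0$.

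The key step, and the main obstacle, is to rule out that $\mu^+$ has a nontrivial ergodic component of contracting type. Here I would invoke Theorem \ref{teo:1}: hyperbolicity of $\mu$ places us in case (a), so there are exactly two ergodic measures projecting to $\nu$, namely $\mu$ itself and its twin $\mu'$ with $\chi(\mu')<0$. By Remark \ref{movethere}, every ergodic component of $\mu^+$ also projects to $\nu$, hence $\mu^+=a\mu+(1-a)\mu'$ for some $a\in[0,1]$. The bound $\chi(\mu^+)\geq\chi(\mu)$ together with $\chi(\mu')<\chi(\mu)$ then forces $a=1$, so $\mu^+=\mu$. Since every weak$\ast$ accumulation point of $(\mu_{n_i}^+)_i$ equals $\mu$, the entire sequence converges weak$\ast$ to $\mu$.

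For the $\cM_{\rm erg,<0}(\Gamma)$ statement, the most economical route is the time-reversal symmetry of Remark \ref{rem:time-reversal}: the inverse IFS (after the coordinate change $x\mapsto 1-x$) again satisfies (H1)--(H2+), and expanding-type periodic measures for it correspond to contracting-type periodic measures for the original system, while $\delta_P$ is handled by the $F^{-1}$ version of Lemma \ref{lem:diractmeasuress}. Alternatively, one can repeat the argument above verbatim, now using the companion contracting periodic points $P_{n_i}^-$ produced simultaneously by Lemma \ref{lem:dontknow}, with $\chi(\mu^-)\leq\chi(\mu)<0$ playing the role of the selecting inequality that collapses the convex combination onto $\mu$.
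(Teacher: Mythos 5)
Your proposal is correct and follows essentially the same route as the paper: handle $\delta_Q$ via Lemma \ref{lem:diractmeasuress}, use Lemmas \ref{lemcla:13} and \ref{lem:dontknow} to produce expanding periodic measures whose projections converge to $\nu=\pi_\ast\mu$, and then use Theorem \ref{teo:1}(a) together with the inequality $\chi(\mu^+)\ge\chi(\mu)$ to exclude any contracting twin component in the limit, forcing the accumulation point to be $\mu$. The paper phrases this last step as a contradiction with a nontrivial convex combination $\alpha\mu+(1-\alpha)\mu'$, which is the same computation as your affinity argument for $a=1$.
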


\begin{proof}
The case $\mu=\delta_Q$ is just Lemma \ref{lem:diractmeasuress}. For $\mu\ne\delta_Q$, by Lemma \ref{lemcla:13} the hypotheses of Lemma \ref{lem:dontknow} are satisfied and hence there is a sequence of periodic measures $(\mu^+_{n_i})_i$ converging to some $F$-invariant measure $\tilde\mu$. 
Clearly $\chi(\tilde\mu)\ge0$ and $\pi_\ast\tilde\mu=\pi_\ast\mu=\nu$. 
By Lemma \ref{lem:dontknow}, indeed we have $\chi(\tilde\mu)\ge\chi(\mu)>0$. 
We claim that $\tilde\mu=\mu$.

If $\tilde\mu$ is ergodic, then by Theorem \ref{teo:1} a) we have $\tilde\mu=\mu$. 
	Otherwise, if $\tilde\mu$ is not ergodic, by Theorem \ref{teo:1} a), then there is exactly one further measure $\mu'\in\cM_{\rm erg,<0}(\Gamma)$ also projecting to $\nu$ such that $\tilde\mu=\alpha\mu+(1-\alpha)\mu'$ for some $\alpha\in(0,1)$ and hence $\chi(\tilde\mu)<\chi(\mu)$, a contradiction.
\end{proof}

  The proposition now follows from Lemmas \ref{lem:diractmeasuress}, \ref{lempro:0denseperio}, and \ref{lem:finalpart}.
\end{proof}

\begin{proof}[Proof of Corollary \ref{corppp.periodicapproximation} assuming only (H1)--(H2)]
Recall again that for every $\nu\in\cM_{\rm erg}(\Sigma)$ there exists $\mu\in \cM_{\rm erg}(\Gamma)$ with $\pi_\ast\mu=\nu$, see Lemma \ref{lem:atmost2}.
Replacing Lemma \ref{lempro:0denseperio} by Theorem \ref{teopro:intermediate} (which only requires (H1)--(H2)), we conclude the proof.
\end{proof}

\subsection{Skeletons}\label{sec:skeletons}	

In this section we first collect some ingredients necessary to prove Theorem \ref{teopro:intermediate}. They are somewhat similar to the ones in \cite[Section 3.1]{DiaGelRam:17}, though here we essentially rely on concavity arguments and neither on minimality nor one expanding/contracting itineraries. The focus in this section is on nonhyperbolic measures, only.  

In this section we adopt the following notation. Given $k\ge1$ and $\xi\in\Sigma$, consider the following probability measure put on the orbit of $\xi$
\begin{equation}\label{eq:defcA}
	 \cA_k\xi
	 \eqdef \frac1k(\delta_{\xi}+\delta_{\sigma(\xi)}+\ldots+\delta_{\sigma^{k-1}(\xi)}).
\end{equation}
The analogous notation we use for one-sided sequences.

\begin{definition}[Skeleton$\ast$ property]\label{def:skeleton}
	By a measure $\mu\in\cM_{\rm erg,0}(\Gamma)$ \emph{having the skeleton$\ast$ property} we mean that for every $\varepsilon\in(0,h(\mu))$ there exist numbers $a\in(0,1)$, $C>0$, and $n_0\ge1$ such that for every $n\ge n_0$ there is $k\in\{n(1-\varepsilon),\ldots,n\}$ and there is a finite set $\sX=\sX(h,\varepsilon,a,C)=\{X_i\}$ of points $X_i=(\xi^{(i)},x_i)$ (a \emph{Skeleton} relative to $F,\mu,\varepsilon$) so that:
\begin{itemize}
\item[(i)] the set $\sX$ has cardinality 
\[
	\card\sX
	\ge \frac{C}{n\varepsilon}e^{n(1-\varepsilon)(h-\varepsilon)}, 
\]
\item[(ii)] the words $(\xi^{(i)}_0\ldots\xi^{(i)}_{k-1})$ are all different,
\item[(iii)]
$\displaystyle
	-k\varepsilon<\log\, (f_{\xi^{(i)}}^k)'(x_i)<k\varepsilon,
$
\item[(iv)] $x_i\in(0,a)$ and $f_{\xi^{(i)}}^k(x_i)\in(a,1)$,
\item[(v)] for $\nu=\pi_\ast\mu$ the Wasserstein distance $W_1$ in $\cM(\Sigma)$ satisfies  
\begin{equation}\label{eq:weakstar}
	W_1\Big(\cA_k\xi^{(i)},\nu\Big)
	<\varepsilon.
\end{equation}
\end{itemize}
\end{definition}

\begin{remark}[Relation with skeletons in other contexts]
All previously used definitions of so-called skeletons (see \cite[Section 4]{DiaGelRam:17} and \cite[Section 5.2]{DiaGelSan:19} have the following essential ingredients: (i) cardinality governed by entropy, (ii) spanning property, (iii) finite-time Lyapunov exponents close to zero, (iv) connecting times, and (v) proximity in the weak$\ast$ topology. In our context, item (iv) is clearly different as we are not using minimality properties but obtain connecting times from concavity. Moreover, item (v) is stated in terms of the Wasserstein distance. 
\end{remark}

\begin{proposition}\label{pro:skeleton}
	Assuming (H1), every $\mu\in\cM_{\rm erg,0}(\Gamma)$ has the skeleton$\ast$ property.
\end{proposition}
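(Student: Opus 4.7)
The plan is a Katok-style construction built from Birkhoff's theorem and the Shannon--McMillan--Breiman theorem, exploiting the zero fiber exponent. Fix $\mu\in\cM_{\rm erg,0}(\Gamma)$; the claim is vacuous if $h(\mu)=0$, so assume $h\eqdef h(\mu)>0$, which in particular forces $\mu\notin\{\delta_P,\delta_Q\}$. Let $\nu\eqdef\pi_\ast\mu$ and fix $\varepsilon\in(0,h)$. By Remark~\ref{rem:projmu0} I choose $a\in(0,1)$ (a continuity point of $\varrho_\ast\mu$) with $p_0\eqdef\varrho_\ast\mu(\Sigma\times(0,a))>0$ and $p_1\eqdef\varrho_\ast\mu(\Sigma\times(a,1))>0$. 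Applying Birkhoff's theorem simultaneously to the log-derivative $\log f_{\xi_0}'(x)$, to $\mathbf{1}_{\Sigma\times(a,1)}$, and to a countable dense family of continuous base functions, together with Shannon--McMillan--Breiman for $\nu$, I obtain a $\mu$-generic set on which $\tfrac1n\log(f_\xi^n)'(x)\to 0$, the visit frequency of the orbit to $\Sigma\times(a,1)$ tends to $p_1$, $W_1(\cA_n\xi,\nu)\to 0$, and $-\tfrac1n\log\nu([\xi_0\ldots\xi_{n-1}])\to h$. By Egorov, I fix $n_0\ge 1$ and a set $G\subset\Gamma$ with $\mu(G)\ge 1-p_0/4$ on which these four convergences are uniform with error at most $\min\{\varepsilon/4,p_1\varepsilon/4\}$ for every $n\ge n_0$. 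Setting $G'\eqdef G\cap(\Sigma\times(0,a))$, one has $\mu(G')\ge p_0/2$.

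A Fubini--pigeonhole step then extracts a common connecting time. For $(\xi,x)\in G'$ and $n\ge n_0$, subtracting the uniform Birkhoff visit counts on $[0,n]$ and on $[0,(1-\varepsilon)n]$ shows that at least $\sim p_1\varepsilon n$ indices $k\in[(1-\varepsilon)n,n]$ satisfy $f_\xi^k(x)\in(a,1)$. Writing $G'(k)\eqdef\{(\xi,x)\in G'\colon f_\xi^k(x)\in(a,1)\}$, Fubini gives $\sum_{k=\lceil(1-\varepsilon)n\rceil}^n\mu(G'(k))\gtrsim p_0p_1\varepsilon n$, and pigeonhole over the $\sim\varepsilon n$ values of $k$ produces a specific $k=k(n)\in[(1-\varepsilon)n,n]$ with $\mu(G'(k))\ge C_0$ for a constant $C_0=C_0(\mu,\varepsilon)>0$ independent of $n$.

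The remaining counting uses Shannon--McMillan--Breiman. Every $k$-cylinder meeting $\pi(G'(k))$ has $\nu$-measure at most $e^{-k(h-\varepsilon/4)}\le e^{-n(1-\varepsilon)(h-\varepsilon/4)}$, so the number of such cylinders is at least $C_0e^{n(1-\varepsilon)(h-\varepsilon/4)}$. The excess $3\varepsilon/4$ in the exponent absorbs the polynomial factor $n\varepsilon$ for large $n$ and yields the bound $\tfrac{C}{n\varepsilon}\,e^{n(1-\varepsilon)(h-\varepsilon)}$ required in (i). Selecting one representative $X_i=(\xi^{(i)},x_i)\in G'(k)$ in each such cylinder defines the skeleton $\sX$: (ii) holds by cylinder-distinctness; (iii) follows from uniform convergence of the finite-time Lyapunov exponent on $G$; (iv) from $G'\subset\Sigma\times(0,a)$ together with the definition of $G'(k)$; and (v) from uniform $W_1$-convergence of $\cA_n\xi$ to $\nu$ on $G$.

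The principal obstacle is coordination: pinning down a single time $k$ that works for sufficiently many generic points while preserving the quantitative entropy lower bound simultaneously with the weak$\ast$ proximity, the near-zero finite-time exponent, and the connecting condition. This coordination is exactly what the Fubini--pigeonhole argument on the slab $[(1-\varepsilon)n,n]$ accomplishes, at the cost of the polynomial factor $1/(n\varepsilon)$ in the cardinality, which is absorbed by the slack $3\varepsilon/4$ left between the SMB exponent $h-\varepsilon/4$ and the target exponent $h-\varepsilon$. Note that only (H1) enters the argument: beyond the existence of $\Gamma$ and the basic IFS structure, neither concavity nor uniform concavity is invoked in this purely ergodic-theoretic construction.
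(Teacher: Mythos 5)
Your proposal is correct and follows essentially the same route as the paper's proof: Egorov-uniformized Birkhoff/Shannon--McMillan--Breiman generic sets, a pigeonhole over the connecting times in the slab $[n(1-\varepsilon),n]$ to fix a single $k$, and a cylinder-counting argument to produce the required cardinality, with items (ii)--(v) read off from the uniform estimates. Your Fubini refinement (counting $\sim p_1\varepsilon n$ visits per point to get a constant rather than $1/n$ lower bound on $\mu(G'(k))$) is a minor sharpening that the statement does not need, since (i) already carries the factor $1/(n\varepsilon)$.
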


\begin{proof}
The proof will use some preliminary standard arguments that we borrow from \cite[Section 3.1]{DiaGelRam:17}. First note that, as by hypothesis we have $\mu\not\in\{\delta_P,\delta_Q\}$, by Remark \ref{rem:projmu0} there is $a\in(0,1)$ such that $\varrho_\ast\mu(J)>0$ for $J=(0,a)$ and $J=(a,1)$, where $\varrho$ is the projection to the second coordinate. Let $A\eqdef\Sigma\times(0,a)$ and $B\eqdef\Sigma\times(a,1]$. By the above, both sets have positive measure $\mu$. Choose $\varepsilon>0$ satisfying
\[
	\varepsilon
	<\min\{\mu(A)/4,\mu(B)/4\}.
\]

\begin{lemma}
	There are a set $A_1\subset\Gamma$ satisfying $\mu(A_1)>1-\varepsilon$ and a number $N_1\ge1$ such that for every $n\ge N_1$ and every $X\in A_1$ we have $F^\ell(X)\in \Sigma\times(a,1]$ for some $\ell\in\{n(1-\varepsilon)-1,\ldots, n\}$.
\end{lemma}

\begin{proof}
By ergodicity, recurrence, and Egorov's theorem, there are a set $A_1\subset\Gamma$ satisfying $\mu(A_1)>1-\varepsilon$ and a number $N_1'\ge1$ such that for every $n\ge N_1'$ and every $X\in A_1$ we have 
\[
	\left\lvert\frac1n\card\{\ell\in\{0,\ldots,n-1\}\colon F^k(X)\in B\}-\mu(B)\right\rvert
	\le \varepsilon^2.
\]
Now let $N_1\ge N_1'$ such that for every $n$ with $n(1-\varepsilon)\ge N_1$ we have
\begin{equation}\label{eq:stupoi}
	n\varepsilon(\mu(B)-2\varepsilon)
	>1.
\end{equation}
Hence, by the above, for every $X\in A_1$ and every $n$ with $n(1-\varepsilon)\ge N_1$
\[\begin{split}
	\card&\{\ell\in\{n(1-\varepsilon),\ldots,n\}\colon F^\ell(X)\in B\}\\
	&=\card\{\ell\in[0,n]\colon F^\ell(X)\in B\}
	-\card\{\ell\in[0,n(1-\varepsilon)-1]\colon F^\ell(X)\in B\}\\
	&\ge(\mu(B)-\varepsilon^2)n-(n(1-\varepsilon)-1)(\mu(B)+\varepsilon^2)\\
	&\ge n\varepsilon(\mu(B)-2\varepsilon)
	>1,
\end{split}\]
where in the last inequality we used \eqref{eq:stupoi}. This proves the lemma.
\end{proof}

\begin{lemma}\label{lcla:waassee}
	There are a set $A_2\subset\Gamma$ satisfying $\mu(A_2)> 1-\varepsilon$ and $N_2\ge1$ such that for every $n\ge N_2$ and every $X=(\xi,x)\in A_2$ and for every $\ell\in\{n(1-\varepsilon),\ldots, n\}$ it holds $W_1(\cA_\ell\xi,\pi_\ast\mu)<\varepsilon$.
\end{lemma}

\begin{proof}
	First, by Remark \ref{movethere}, $\pi_\ast\mu$ is ergodic.
	We now consider a countable dense set of continuous functions and to each of them apply Birkhoff's ergodic theorem to $\pi_\ast\mu$. Using again Egorov's theorem, we conclude the proof.
\end{proof}

\begin{lemma}\label{lcleq:genericderivative}
	There are $A_3\subset\Gamma$ satisfying $\mu(A_3)> 1-\varepsilon$ and  $N_3\ge1$ such that for every $X=(\xi,x)\in A_3$ and every $\ell\ge N_3$ we have 
\[
	- \ell\varepsilon
	<
	\log\,(f_\xi^\ell)'(x) 
	< \ell\varepsilon.
\]
\end{lemma}

\begin{proof}
	The lemma follows from Birkhoff's theorem using 
\[	
	\chi(\mu)
		=\int\log\, f_{\xi_0}'(x) \,d\mu(\xi,x)
		=0
\]	 
and then applying Egorov's theorem.
\end{proof}

\begin{lemma}\label{lcla:BriKat}
	There are $A_4\subset\Gamma$ satisfying $\mu(A_4)>1-\varepsilon$ and $N_4\ge1$ such that for every $n\ge N_4$ and every $X=(\xi,x)\in A_4$ it holds
\[
	e^{-n(h(\mu)+\varepsilon)}
	\le\nu([\xi_0\ldots\xi_{n-1}])
	\le e^{n(h(\mu)-\varepsilon)}.
\]	
\end{lemma}	

\begin{proof}
Note that by Remarks \ref{movethere} and \ref{eqlem:projent}, $\nu\eqdef\pi_\ast\mu$ is ergodic and $h(\nu)=h(\mu)$.
The lemma now follows from Brin-Katok's theorem \cite{BriKat:83} applied to $\mu$ and $\nu$  and then applying Egorov's theorem.
\end{proof}

By construction, the set $A'\eqdef A\cap A_1\cap A_2\cap A_3\cap A_4$ satisfies $\mu(A')>\mu(A)-4\varepsilon>0$. Now let
\[
	n_0
	\eqdef \max\{N_1,N_2,N_3,N_4\}.
\]	
Take $n\ge n_0$ and for every $\ell\in\{n(1-\varepsilon),\ldots,n\}$ denote by $A'(\ell)\subset A'$ the set of points $X\in A'$ such that $F^\ell(X)\in B$. 
Let $k$ be the index of a set $A'(k)$ with maximal measure. Hence, by the pigeonhole principle
\[
	\mu( A'(k))
	\ge \frac{\mu( A')}{n\varepsilon+1}
	>0.
\] 

Let $S'\eqdef\pi(A'(k))$ and observe $\nu(S')\ge\mu(A'(k))>0$. Choose any point $X_1=(\xi^1,x_1)\in A'(k)$. Let $S_1\eqdef S'\setminus[\xi^1_0\ldots\xi^1_{k-1}]$. We continue inductively: choose any $\xi^\ell\in S_{\ell-1}$, let $S_\ell\eqdef S_{\ell-1}\setminus[\xi^\ell_0\ldots\xi^\ell_{k-1}]$ and $X_\ell=(\xi^\ell,x_\ell)$ and repeat. As by Lemma \ref{lcla:BriKat} we have
\[
	\nu(S_\ell)
	\ge \nu(S')-\ell e^{-k(h(\mu)-\varepsilon)},
\]
we can repeat this process at least $m$ times, where
\[\begin{split}
	m
	&\ge \nu(S') e^{k(h(\mu)-\varepsilon)}
	\ge \mu(A'(k)) e^{k(h(\mu)-\varepsilon)}\\
	&\ge \frac{\mu(A')}{n\varepsilon+1} e^{k(h(\mu)-\varepsilon)}
	\ge \frac{\mu(A)-4\varepsilon}{n\varepsilon+1}
		e^{n(1-\varepsilon)(h(\mu)-\varepsilon)}
\end{split}\]
Taking $\sX\eqdef\{ X_i\}_{i=1}^M$, we obtain item (i) taking $C>0$ appropriately. Since, by construction, the cylinders $[\xi^{(i)}_0\ldots\xi^{(i)}_{k-1}]$ are pairwise disjoint, proving item (ii). As $k\ge N_3$, item (iii) now follows from Lemma \ref{lcleq:genericderivative}. Analogously, we have item (v) from Lemma \ref{lcla:waassee}. Finally, item (iv) follows from the very definition of $\sX$. 
\end{proof}

\subsection{Weak$\ast$ and entropy approximation of ergodic measures: Proof of Theorem \ref{teopro:intermediate}}	

	For any hyperbolic measure $\mu\in\cM_{\rm erg,<0}(\Gamma)\cup\cM_{\rm erg,>0}(\Gamma)$ the weak$\ast$ and entropy approximation of $\mu$ is a well known consequence of  \cite{Cro:11,Gel:16}. Thus, in what follows, we will assume $\mu\in\cM_{\rm erg,0}(\Gamma)$ and $\nu=\pi_\ast\mu$. Denote $h=h(\mu)$ and let $\varepsilon\in (0,h)$. 
	
	The steps of this proof are the following. First, we choose an appropriate skeleton with $n$ sufficiently large and $k\in\{n(1-\varepsilon),\ldots,n\}$. Based on that, we construct a certain SFT $S^+$ contained in $\Sigma^+$ which associates a  IFS of contracting interval maps. Thereafter, we estimate the topological entropy of this SFT. And we also prove that every measure supported on $S^+$ is close to $\nu^+$ in the correspondingly defined Wasserstein distance. Here $\nu^+$ is the measure $\nu$ projected to $\Sigma^+$. Finally, we argue that the convergence (in entropy and Wasserstein distance) on $\Sigma^+$ implies the one on $\Sigma$, proving the proposition.
	
\medskip\noindent\textbf{1. Choice of skeletons.}
By Proposition \ref{pro:skeleton}, there are numbers $a\in(0,1)$, $C>1$, and $n_0\ge1$ providing the  claimed skeletons $\sX=\sX(h,\varepsilon,a,C)=\{X_i\}_i$ of finitely many points $X_i=(\xi^i,x_i)$. Below, we will choose $n\ge n_0$ sufficiently large. To that end, fix  
\[
	L\in(0,\lvert\log f_0'(1)\rvert)
\]
 and consider   
\begin{equation}\label{eqdef:Z}
	Z
	\eqdef \{z\in[0,1]\colon f_0'(z)\le e^{-L}\},
\end{equation}
which is a nontrivial closed interval containing $1$.
Let $\ell\ge0$ be the smallest integer such that $f_0^\ell(a)\in Z$.		
Now choose 
\begin{equation}\label{eq:choicek}
	n
	> \max\left\{n_0,\frac{\ell}{(1-\varepsilon)\varepsilon}\cdot\log f_0'(0)\right\}
\end{equation}
so that
\begin{equation}\label{eqdef:n}
	\frac{1}{n+\ell+2n\varepsilon/L}
	\log\left(\frac{C}{n\varepsilon}e^{n(1-\varepsilon)(h-\varepsilon)}\right)
	\ge h-2\varepsilon.
\end{equation}
Let
\begin{equation}\label{eq:matter}
	\varepsilon'
	\eqdef \max_{k\in\{n(1-\varepsilon),\ldots,n\}}
	\frac{1}{k+\ell+2k\varepsilon/L}
		\left( \left(1+  k\varepsilon\right)
			+\left(\ell+\frac{2k\varepsilon}{L}\right)\right).
\end{equation}
Note that 
\[
	\varepsilon'
	= O(\varepsilon)
\]
as $n\to\infty$.

Now take $k\in\{n(1-\varepsilon),\ldots,n\}$ and the skeleton $\sX=\{X_i\}_i$, $X_i=(\xi^{(i)},x_i)\in\Gamma$, as provided by  Proposition \ref{pro:skeleton}. By item (i), it holds
\begin{equation}\label{eqdef:skeletonproof}
	\card\sX
	\ge \frac{C}{n\varepsilon}e^{n(1-\varepsilon)(h-\varepsilon)}.
\end{equation}
Let
\begin{equation}\label{eq:choices}
	s
	\eqdef \ell + \ell',
	\quad \text{ where }\quad
	\ell'\eqdef \left\lceil \frac{2k\varepsilon}{L}\right\rceil .
\end{equation}
For notational simplicity, let us assume that 
\begin{equation}\label{eq:defLLL}
	\ell'=\frac{2k\varepsilon}{L}.
\end{equation}

By property (iv) of the skeleton, for every $i$ we have 
\[
	x_i\in(0,a)
	\quad \text{ and }\quad
	x_i'\eqdef f_{\xi^{(i)}}^k(x_i)
	= f_{[\xi_0^{(i)}\ldots\,\xi_{k-1}^{(i)}]} (x_i)
	\in(a,1)
\]	 
and hence $x_i<a<x_i'$. Note that applying any concatenation of $f_0$ to any point $x\in(0,1)$ moves this point further to the right towards $1$, and hence
\begin{equation}\label{eq:furtherr}
	f_0^s\circ f_{[\xi_0^{(i)}\ldots\,\xi_{k-1}^{(i)}]}(a)
	> f_{[\xi_0^{(i)}\ldots\,\xi_{k-1}^{(i)}]} (a)
	> f_{[\xi_0^{(i)}\ldots\,\xi_{k-1}^{(i)}]}(x_i)
	> a
	> x_i.
\end{equation}
Observe that, in particular, by the choice of $\ell$, we have
\begin{equation}\label{eqLchoiceZell}
	(f_0^\ell\circ f_{[\xi_0^{(i)}\ldots\,\xi_{k-1}^{(i)}]})(x_i)
	\in Z.
\end{equation}

\noindent\textbf{2. Construction of an admissible SFT.}
We now introduce a certain SFT. Instead of describing its set of forbidden words of length $r=k+s$ (with $s$ as in \eqref{eq:choices}), we define its complement.
First, given the set $\{X_i\}_i$ of points $X_i=(\xi^{(i)},x_i)$ from the skeleton, define 
\[
	\eW'
	\eqdef \{(\xi_0^{(i)}\ldots\xi_{k-1}^{(i)})\}_i
\]	 
and prolong each word in $\eW'$ to an allowed word of length $r\eqdef k+s$ in the following way
\begin{equation}\label{eq:defSFTwords}
	\eW
	\eqdef \{(\eta^{(i)}_0\ldots\eta^{(i)}_{r-1})=(\xi_0^{(i)}\ldots\xi_{k-1}^{(i)}0^s)
		\colon (\xi_0^{(i)}\ldots\xi_{k-1}^{(i)})\in \eW'\}.
\end{equation}
Define
\begin{equation}\label{eqdef:Splus}
	S^+
	\eqdef \bigcup_{j=0}^{r-1}(\sigma^+)^j(\eW^\bN),
\end{equation}
where
\[
	\eW^\bN
	\eqdef \{\omega^+=(\eta^{(i_1)}\eta^{(i_2)}\ldots)\colon\eta^{(i_m)}\in\eW, m\ge1\}
	\subset \Sigma_2^+
\]	 
is the one-sided subshift of all one-sided infinite concatenations of such words of lenght $r$. The next result asserts that it is a subshift of forward admissible (one-sided) sequences.

\begin{lemma}\label{lclaim:subshift}
	$\eW^\bN\subset\Sigma^+$.
\end{lemma}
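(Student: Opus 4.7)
The plan is to fix an arbitrary one-sided sequence $\omega^+=(\eta^{(i_1)}\eta^{(i_2)}\ldots)\in\eW^\bN$ and to exhibit a single point $x\in[0,1]$ for which the whole sequence is forward admissible. I will take $x\eqdef x_{i_1}$ and, writing $y_j$ for the iterate of $x_{i_1}$ after the first $j$ blocks, prove by induction that each $y_j$ is well defined and satisfies $y_j>a$, where $a\in(0,1)$ is the threshold from the skeleton construction.

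The base case $j=1$ is immediate from skeleton property~(iv): since $(\xi^{(i_1)},x_{i_1})\in\Gamma$, the word $(\xi_0^{(i_1)}\ldots\xi_{k-1}^{(i_1)})$ is forward admissible at $x_{i_1}$ and sends it to $x_{i_1}'\in(a,1)$; the suffix $0^s$ is trivially admissible since $f_0$ has full domain $[0,1]$, and monotonicity $f_0(x)\ge x$ on $[0,1]$ (hypothesis~(H1)) then gives $y_1\ge x_{i_1}'>a$. For the inductive step $j\mapsto j+1$, the essential observation is that property~(iv) places every $x_{i}$ in $(0,a)$, while admissibility of $x_{i_{j+1}}$ for its own skeleton word yields
\[
	a_{[\xi_0^{(i_{j+1})}\ldots\,\xi_{k-1}^{(i_{j+1})}]}
	\le x_{i_{j+1}}
	<a
	<y_j.
\]
Hence $y_j$ lies in the admissible interval $[a_{[\xi_0^{(i_{j+1})}\ldots\,\xi_{k-1}^{(i_{j+1})}]},1]$, so the next block $(\xi_0^{(i_{j+1})}\ldots\xi_{k-1}^{(i_{j+1})}0^s)$ can be applied; by monotonicity together with $f_0\ge\mathrm{id}$ the result satisfies $y_{j+1}\ge x_{i_{j+1}}'>a$, closing the induction.

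Every finite prefix of $\omega^+$ is therefore admissible at $x_{i_1}$, whence $\omega^+\in\Sigma^+(x_{i_1})\subset\Sigma^+$. The only mildly delicate point, which I anticipate as the main (though modest) obstacle, is the cross-block admissibility between consecutive $\eta^{(i_j)}$'s: a~priori there is no control on how small the endpoints $a_{[\xi_0^{(i)}\ldots\,\xi_{k-1}^{(i)}]}$ can be, but the skeleton is constructed precisely so that each $x_i$ lies below the fixed threshold $a$ and each image $x_i'$ lies above it, so that $a$ serves as a uniform ``separating level'' that is preserved through the appended $0^s$. Neither concavity, nor the stronger conclusion $y_j\in Z$ (which will matter for the derivative estimates in the next lemmas), nor any entropy bound enters at this step; only hypothesis~(H1), skeleton property~(iv), and monotonicity of $f_0,f_1$ are needed.
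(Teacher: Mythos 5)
Your proof is correct and follows essentially the same route as the paper: both arguments rest on the observation (the paper's \eqref{eq:furtherr}) that each block of $\eW$, by monotonicity and skeleton property (iv), maps points at or above the threshold $a$ back above $a$, so blocks can be concatenated indefinitely. The only cosmetic difference is that you track the orbit of the specific point $x_{i_1}$, whereas the paper verifies admissibility at the point $a$ itself by showing $f_{[\eta^{(i)}]}([a,1])\subset(a,1]$.
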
 

\begin{proof}
By \eqref{eq:furtherr}, $(\eta^{(i)}_0\ldots\eta^{(i)}_{r-1})$ is forward admissible for $a$, that is,  $[a,1]\subset I_{[\eta^{(i)}_0\ldots\,\eta^{(i)}_{r-1}]}$, and
\begin{equation}\label{eq:continuo}
	f_{[\eta^{(i)}_0\ldots\,\eta^{(i)}_{r-1}]}([a,1]) 
	\subset (a,1].
\end{equation}
Thus, any pair of maps of the type $f_{[\eta^{(i)}_0\ldots\,\eta^{(i)}_{r-1}]}$, $(\eta^{(i)}_0\ldots\eta^{(i)}_{r-1})\in \eW$, can be concatenated. Hence any one-sided infinite concatenations of words in $\eW$ is forward admissible.
\end{proof}

\smallskip\noindent\textbf{3. A contracting IFS.}
The following is immediate from \eqref{eq:continuo}.

\begin{lemma}
	For every $(\eta^{(i)}_0\ldots\eta^{(i)}_{r-1})\in \eW$ there exists $p_i^-=p_{[\eta^{(i)}_0\ldots\eta^{(i)}_{r-1}]}^-$ satisfying
\[
	p_i^-
	= f_{[\eta^{(i)}_0\ldots\,\eta^{(i)}_{r-1}]}(p_i^-)
	\in(a,1].
\]		
\end{lemma}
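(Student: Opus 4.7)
The plan is to obtain $p_i^-$ as a direct consequence of the inclusion~\eqref{eq:continuo} via the intermediate value theorem, and then to match the notation with the contracting fixed point provided by Lemma~\ref{lem:fixpots}. Write $g_i \eqdef f_{[\eta^{(i)}_0\ldots\,\eta^{(i)}_{r-1}]}$ and view it as a continuous self-map of the compact interval $[a,1]$.

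First, I would read off the endpoint values from~\eqref{eq:continuo}. Since $g_i([a,1]) \subset (a,1]$, we get $g_i(a) > a$ and $g_i(1) \in (a,1]$, in particular $g_i(1) \le 1$. Hence the continuous function $h_i(x) \eqdef g_i(x) - x$ is strictly positive at $x=a$ and non-positive at $x=1$. The intermediate value theorem then yields a zero $p_i^- \in [a,1]$ of $h_i$, that is, $g_i(p_i^-) = p_i^-$. Plugging this equality back into~\eqref{eq:continuo} gives $p_i^- = g_i(p_i^-) \in (a,1]$, which is the claim.

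To justify that the superscript~$-$ is consistent with the contracting fixed point in the notation of Lemma~\ref{lem:fixpots}, I would observe that the word $(\eta^{(i)}_0\ldots\eta^{(i)}_{r-1})$ ends in $0^s$ with $s \ge 1$, so the composition $g_i$ contains at least one factor $f_0$. Hypothesis (H2) then forces the derivative of $g_i$ to be strictly decreasing, placing us in case (1a) of Lemma~\ref{lem:fixpots}: there are exactly two fixed points $p_i^+ < p_i^-$ in $I_{[\eta^{(i)}_0\ldots\,\eta^{(i)}_{r-1}]}$, the larger being contracting. The fixed point produced by the intermediate value argument lies in $(a,1]$, and it must be the contracting one $p_i^-$ since $g_i$ maps $[a,1]$ into the half-open subinterval $(a,1]$, ruling out the repelling fixed point as the larger root.

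No real obstacle is anticipated; the lemma is essentially a bookkeeping step that packages~\eqref{eq:continuo} into the form needed for the forthcoming analysis of the contracting IFS, and all the analytic work has already been done in establishing~\eqref{eq:continuo} and Lemma~\ref{lem:fixpots}.
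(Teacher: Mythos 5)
Your proof is correct and takes essentially the same route as the paper, which states the lemma is ``immediate from \eqref{eq:continuo}'' --- that is, precisely your intermediate value argument applied to $g_i(x)-x$ on $[a,1]$, using $g_i([a,1])\subset(a,1]$. Your extra paragraph identifying the fixed point as the contracting one of Lemma~\ref{lem:fixpots} is a harmless and correct elaboration (concavity plus $g_i(a)>a$ forces $p^+_i<a<p^-_i$) that the paper leaves implicit in the notation.
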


For every $i$, define 
\begin{equation}\label{eq:rem:ffpts-g}
	g_i
	\eqdef f_{[\eta^{(i)}_0\ldots\,\eta^{(i)}_{r-1}]}
	\quad\text{ and let }\quad
	p^-
	\eqdef \min_ip^-_i
	>a.
\end{equation}
The above ingredients will define the announced IFS.

\begin{lemma}\label{lclaeq:rem:ffpts-maps}
	For every $i$, it holds $g_i([p^-,1])
	\subset [p^-,1]$.
\end{lemma}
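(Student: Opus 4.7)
The plan is to exploit the monotonicity and concavity of $g_i$: monotonicity reduces the inclusion to checking the two endpoints of the image interval, and concavity, combined with locating the fixed points of $g_i$, handles the left endpoint. More precisely, each $f_j$ is strictly increasing and takes values in $[0,1]$, so $g_i$ is increasing and $g_i(1)\le 1$. Thus $g_i([p^-,1])=[g_i(p^-),g_i(1)]\subset[g_i(p^-),1]$, and the only nontrivial point is to prove $g_i(p^-)\ge p^-$.

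For this I would analyze the fixed point structure of $g_i$ via Lemma~\ref{lem:fixpots}. Since $s=\ell+\ell'\ge \ell'\ge 1$ (by \eqref{eq:choices} and \eqref{eq:defLLL} for $k$ large enough), the composition defining $g_i$ contains at least one factor $f_0$, so $g_i'$ is strictly decreasing; moreover $g_i$ already has the contracting fixed point $p_i^-\in(a,1]$. Lemma~\ref{lem:fixpots} then gives either (1a) two fixed points $p_i^+<p_i^-$ with $p_i^+$ repelling, or (1b) a single parabolic fixed point, which would force $g_i\le\mathrm{id}$. The latter alternative is ruled out by computing, using property (iv) of the skeleton together with $f_0(y)>y$ for $y\in(0,1)$,
\[
	g_i(x_i)
	= f_0^s\bigl(f_{[\xi^{(i)}_0\ldots\,\xi^{(i)}_{k-1}]}(x_i)\bigr)
	= f_0^s(x_i')
	> x_i'
	> a
	> x_i,
\]
so $x_i$ lies strictly between the two fixed points of $g_i$ in Case~(1a); in particular $p_i^+<x_i<a$.

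To finish I would invoke concavity on the interval $(p_i^+,p_i^-)$, on which the graph of $g_i$ lies above the diagonal. Since $p^-=\min_j p_j^-\le p_i^-$ and $p^->a>p_i^+$, the point $p^-$ belongs to $(p_i^+,p_i^-]$, hence $g_i(p^-)\ge p^-$, as required. The main (mild) subtlety is ruling out Case~(1b) of Lemma~\ref{lem:fixpots} and guaranteeing that $p_i^+$ lies to the left of $a$; both follow cleanly from the skeleton condition $x_i<a<x_i'$ and from the fact that the $0^s$-tail with $s\ge 1$ pushes $x_i'$ strictly further toward $1$, so I do not anticipate any serious obstacle.
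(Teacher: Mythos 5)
Your proof is correct and rests on the same core idea as the paper's: by concavity the graph of $g_i$ lies above the diagonal on the interval between a point where $g_i>\mathrm{id}$ and the fixed point $p_i^-$, and $p^-$ lies in that interval. The paper gets there in one line by noting $g_i(a)>a$ directly from \eqref{eq:furtherr}, whereas you take a slightly longer (but valid) detour through Lemma~\ref{lem:fixpots}, ruling out the parabolic case via $g_i(x_i)>x_i$ and locating $p_i^+<a$; the extra fixed-point analysis is not needed but does no harm.
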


\begin{proof}
By \eqref{eq:furtherr}, for all $i$ it holds $a<g_i(a)$. This together with $p_i^-=g_i(p_i^-)$ implies that the graph of $g_i$ on $(a,p_i^-)$ is above the diagonal.
As $a< p^-\le p_i^-$, we have $p^-\le g_i(p^-)$ for all $i$, proving the lemma. 
\end{proof}

By Lemma \ref{lclaeq:rem:ffpts-maps}, we can consider the IFS generated by the restriction of the maps $g_i$ to the interval $[p^-,1]$. 
		
\begin{lemma}[Uniform contractions]\label{lclaimlem:unicon}
	The IFS $\{g_i\}_i$ is contracting on $[p^-,1]$.
\end{lemma}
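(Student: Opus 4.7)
The plan is to bound $g_i'(y)$ uniformly on $[p^-,1]$ by writing it, via the chain rule, as the product of the derivative of the skeleton map $f_{[\xi_0^{(i)}\ldots\,\xi_{k-1}^{(i)}]}$ (controlled by skeleton property (iii)) and the derivative of the tail $f_0^s$ (strongly contracting because its orbit enters the zone $Z$ defined in \eqref{eqdef:Z} and stays there for at least $\ell'$ iterates).

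First, I will fix $y\in[p^-,1]$ and write $y'\eqdef f_{[\xi_0^{(i)}\ldots\,\xi_{k-1}^{(i)}]}(y)$, so that
\[
	g_i'(y)
	= (f_0^s)'(y')\cdot (f_{[\xi_0^{(i)}\ldots\,\xi_{k-1}^{(i)}]})'(y).
\]
To estimate the second factor, I will observe that by (H2) the derivative of any admissible composition is non-increasing on its domain: each factor $f_{\xi_j}'$ is non-increasing and each partial composition is non-decreasing. Since $p^-> a> x_i$ (by \eqref{eq:rem:ffpts-g} and skeleton property (iv)), this yields
\[
	(f_{[\xi_0^{(i)}\ldots\,\xi_{k-1}^{(i)}]})'(y)
	\le (f_{\xi^{(i)}}^k)'(x_i)
	<e^{k\varepsilon},
\]
where the last inequality is skeleton property (iii).

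To estimate $(f_0^s)'(y')$, I will first note that $Z$ is a closed interval of the form $[z_0,1]$ (since $f_0'$ is strictly decreasing by (H2) and $f_0'(1)<e^{-L}$ by the choice of $L$), and that $f_0(Z)\subset Z$ because $f_0(z)>z\ge z_0$ for $z\in(0,1)$. By monotonicity of $f_{[\xi_0^{(i)}\ldots\,\xi_{k-1}^{(i)}]}$ and skeleton property (iv), $y'\ge f_{[\xi_0^{(i)}\ldots\,\xi_{k-1}^{(i)}]}(x_i)>a$. Combined with \eqref{eqLchoiceZell} and the choice of $\ell$, this gives $f_0^\ell(y')\in Z$, and hence $f_0^j(y')\in Z$ for every $j\ge\ell$ by invariance. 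Using the crude bound $f_0'(\cdot)\le f_0'(0)$ for the first $\ell$ factors and $f_0'(\cdot)\le e^{-L}$ for the remaining $\ell'$ factors, I obtain
\[
	(f_0^s)'(y')
	= \prod_{j=0}^{s-1}f_0'(f_0^j(y'))
	\le (f_0'(0))^\ell\cdot e^{-L\ell'}
	= (f_0'(0))^\ell\cdot e^{-2k\varepsilon},
\]
using \eqref{eq:defLLL}.

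Finally I will combine the two estimates to get $g_i'(y)\le(f_0'(0))^\ell\cdot e^{-k\varepsilon}$. The choice of $n$ in \eqref{eq:choicek} gives $\ell\log f_0'(0)<n(1-\varepsilon)\varepsilon\le k\varepsilon$, so $(f_0'(0))^\ell<e^{k\varepsilon}$ and in fact the gap $\delta\eqdef n(1-\varepsilon)\varepsilon-\ell\log f_0'(0)>0$ is independent of $i$. Hence
\[
	\sup_{y\in[p^-,1]}g_i'(y)
	\le e^{-\delta}
	<1
	\quad\text{uniformly in }i,
\]
which is the desired uniform contraction. The only non-routine step is the verification that $f_0(Z)\subset Z$ and that the orbit $y'$ really enters $Z$ within $\ell$ iterates for every starting point in $[p^-,1]$; everything else is bookkeeping with the constants already fixed in \eqref{eq:choicek}--\eqref{eq:choices}.
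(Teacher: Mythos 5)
Your argument is correct and is essentially the paper's own proof: the same three-block factorization of $g_i'$ (skeleton word bounded by $e^{k\varepsilon}$ via property (iii), the first $\ell$ zeros bounded crudely by $(f_0'(0))^\ell$, the last $\ell'$ zeros bounded by $e^{-L\ell'}=e^{-2k\varepsilon}$ using $Z$), combined with the choice of $n$ in \eqref{eq:choicek}. The only cosmetic difference is that the paper evaluates the bound at $x_i$ and extends to all of $[p^-,1]$ in one stroke by monotonicity of $(g_i)'$, whereas you evaluate at $y$ directly and verify the forward invariance of $Z$ under $f_0$; both yield the same uniform bound.
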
		

\begin{proof}
By \eqref{eqLchoiceZell}, for every $i$, we have $z_i\eqdef (f_0^\ell\circ f_{[\xi^{(i)}_0\ldots\,\xi^{(i)}_{k-1}]})(x_i)\in Z$. Therefore, by \eqref{eqdef:Z} and \eqref{eq:defLLL}, we have
\begin{equation}\label{eq:sacccoch}
	(f_0^{\ell'})'(z_i)
	\le e^{-L\ell'}
	=e^{-2k\varepsilon}.
\end{equation}
Hence,
\[\begin{split}
	(g_i)'(x_i)
	&= (f_0^{\ell'}\circ f_0^\ell\circ f_{[\xi^{(i)}_0\ldots\,\xi^{(i)}_{k-1}]})'(x_i)\\
	&\le ( f_{[\xi^{(i)}_0\ldots\,\xi^{(i)}_{k-1}]})'(x_i) \cdot (\max f_0')^\ell \cdot
		(f_0^{\ell'})'(z_i)\\
	\tiny{\text{(by item (iii) of the skeleton, \eqref{eq:sacccoch})}}\quad
	&\le e^{k\varepsilon}\cdot (f_0'(0))^\ell \cdot e^{-L2k\varepsilon/L}
	= e^{-k\varepsilon}(f_0'(0))^\ell\\
	{\tiny \text{(using $n(1-\varepsilon)\le k$)}}\quad
	&\le e^{-n(1-\varepsilon)\varepsilon}(f_0'(0))^\ell\\
	\tiny{\text{(by \eqref{eq:choicek})}}\quad
	&< 1.
\end{split}\]
Recall again that, by \eqref{eq:furtherr} and \eqref{eq:rem:ffpts-g}, $x_i<a<p^-$. Hence, concavity of the fiber maps implies that the above upper bounds also hold for any point in the interval $[p^-,1]$.
\end{proof}

\begin{lemma}\label{lclaimjustonepoint}
	For every $\omega^+=(\eta^{(i_1)}\eta^{(i_2)}\ldots)\in \eW^\bN$, $\eta^{(i_m)}\in\eW$ for every $m\ge1$, the set
\[
	z(\omega^+)
	\eqdef \bigcap_{j\ge1}z_j(\omega^+),
\quad\text{ where }\quad
	z_j(\omega^+)
	\eqdef \bigcap_{m=1}^j (g_{i_1}\circ\ldots\circ g_{i_m})([p^-,1]),
\]
is just one point.
\end{lemma}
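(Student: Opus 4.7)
The statement is a standard consequence of the contracting-IFS setup established in Lemmas \ref{lclaeq:rem:ffpts-maps} and \ref{lclaimlem:unicon}, so the proof plan is short. Interpreting $g_{i_1} \circ \cdots \circ g_{i_m}$ with $g_{i_m}$ applied first (as dictated by the concatenation $\omega^+ = \eta^{(i_1)}\eta^{(i_2)}\ldots$), I would set $E_j \eqdef (g_{i_1} \circ \cdots \circ g_{i_j})([p^-,1])$ and first observe that these are nested compact intervals. Indeed, each $g_i$ is a composition of increasing maps and hence itself increasing, so each $E_j$ is a closed interval; Lemma \ref{lclaeq:rem:ffpts-maps} gives $g_{i_{j+1}}([p^-,1]) \subset [p^-,1]$, so applying $g_{i_1} \circ \cdots \circ g_{i_j}$ to both sides yields $E_{j+1} \subset E_j$. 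Since the $E_m$ are nested decreasing, the inner intersection in the definition of $z_j(\omega^+)$ collapses to the $m=j$ term, i.e.\ $z_j(\omega^+) = E_j$.

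The second step is the diameter estimate. By the mean value theorem, $|E_j| = (g_{i_1} \circ \cdots \circ g_{i_j})'(\zeta) \cdot (1-p^-)$ for some $\zeta \in [p^-,1]$, and the chain rule together with Lemma \ref{lclaimlem:unicon} gives a uniform bound $g_i'(x) \le \lambda \eqdef e^{-k\varepsilon}(f_0'(0))^\ell < 1$ valid for \emph{all} $i$ and all $x \in [p^-,1]$ (the bound depends only on $n, \varepsilon, k, \ell$, not on the particular index). Concavity ensures the bound propagates to all points in $[p^-,1]$, not just to $x_i$. Consequently $|E_j| \le \lambda^j(1-p^-) \to 0$ as $j \to \infty$.

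The third step concludes: a nested sequence of non-empty compact intervals with diameters tending to zero intersects in exactly one point by Cantor's intersection theorem, giving $z(\omega^+) = \bigcap_{j\ge 1} E_j = \{z^*\}$ for some $z^* \in [p^-,1]$.

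I do not foresee a real obstacle, since all ingredients are already in place. The only mild point to record carefully is the reading of the composition order and the observation that the ``double intersection" in $z_j(\omega^+)$ is redundant in view of the nesting; beyond that, the argument is a routine application of the contracting-IFS machinery.
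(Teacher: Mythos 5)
Your proof is correct and follows essentially the same route as the paper, which simply notes that the $z_j(\omega^+)$ are nested compact intervals and that the uniform contraction of Lemma \ref{lclaimlem:unicon} forces the intersection to be a single point. Your reading of the composition order (innermost map applied first, so that $g_{i_{j+1}}([p^-,1])\subset[p^-,1]$ yields the nesting) is the one under which the statement holds, and your explicit diameter bound $\lvert E_j\rvert\le\lambda^j(1-p^-)$ just spells out the detail the paper leaves implicit.
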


\begin{proof}
By Lemma \ref{lclaimlem:unicon}, the IFS is uniformly contracting.
 Note that $z_j(\omega^+)$ is a nested sequence of compact intervals and hence $z(\omega)$ is either a point or a compact interval. By contraction, it is just one point.
\end{proof}

\begin{lemma}\label{lcla:Splus}
	$S^+\subset\Sigma^+$.
\end{lemma}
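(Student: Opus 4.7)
My plan is to reduce the lemma to a simple shift-invariance property of $\Sigma^+$. By Lemma \ref{lclaim:subshift} we already have $\eW^\bN\subset\Sigma^+$, and by the definition \eqref{eqdef:Splus} the set $S^+$ is the union of the forward shifts $(\sigma^+)^j(\eW^\bN)$ for $j\in\{0,\ldots,r-1\}$. Thus it will suffice to verify that $\Sigma^+$ itself is invariant under the one-sided shift $\sigma^+$; the inclusion $S^+\subset\Sigma^+$ will then follow by iterating this invariance at most $r-1$ times.

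The key step is an essentially tautological observation about the definition of admissibility. Take any $\omega^+=(\omega_0\omega_1\ldots)\in\Sigma^+$ and pick a point $x\in[0,1]$ for which $\omega^+$ is forward admissible. Then $f_{\omega_0}$ is defined at $x$, so the point $y\eqdef f_{\omega_0}(x)\in[0,1]$ is well-defined. I will verify that $y$ witnesses the forward admissibility of $\sigma^+(\omega^+)=(\omega_1\omega_2\ldots)$, using the identity
\[
	f_{\omega_k}\circ\cdots\circ f_{\omega_1}(y)
	= f_{\omega_k}\circ\cdots\circ f_{\omega_0}(x)
\]
valid for every $k\ge 1$; the right-hand side is defined by the admissibility of $\omega^+$ at $x$, and the equality shows that every initial segment of $\sigma^+(\omega^+)$ is forward admissible at $y$. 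Hence $\sigma^+(\omega^+)\in\Sigma^+$.

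Iterating this observation gives $(\sigma^+)^j(\Sigma^+)\subset\Sigma^+$ for every $j\ge 0$. Specializing to $\omega^+\in\eW^\bN\subset\Sigma^+$ (via Lemma \ref{lclaim:subshift}) and $j\in\{0,\ldots,r-1\}$ then yields $S^+\subset\Sigma^+$, completing the proof. I do not anticipate any real obstacle here: the argument is a direct consequence of the definition of forward admissibility and does not require the concavity hypothesis, the uniform contraction on $[p^-,1]$, or any of the sharper information about the skeleton developed earlier in this section.
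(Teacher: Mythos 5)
Your proof is correct and is essentially the argument the paper gives: the paper also reduces the claim to the observation that if $\omega^+$ is forward admissible at a point then $(\sigma^+)^i(\omega^+)$ is forward admissible at the image of that point, using the specific witness $z(\omega^+)$ from Lemma \ref{lclaimjustonepoint} where you use an arbitrary witness $x$. The choice of witness is immaterial for this lemma (the paper needs $z(\omega^+)$ only later, to build the graph $\Upsilon^+$), so there is nothing to add.
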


\begin{proof}
	By Lemma \ref{lclaim:subshift}, every $\omega^+\in\eW^\bN$ is forward admissible and $z(\omega^+)$ defined in Lemma \ref{lclaimjustonepoint} is an admissible point. It follows that for every $i=0,\ldots,r-1$ the map $f_{\omega^+}^i$ is well defined at $z(\omega^+)$  and, in particular, $(\sigma^+)^i(\omega^+)$ is forward admissible. 
\end{proof}

Denote by $\sigma^+$ the one-sided shift on $\Sigma_2^+$. 		
Note that $S^+$ is a $\sigma^+$-invariant subshift which is a SFT. Clearly, the natural extension of $\sigma^+\colon S^+\to S^+$ is the SFT $\sigma\colon S\to S$, where 
\[
	S
	= \bigcup_{i=0}^{r-1}\sigma^i(\eW^\bZ)
\]
are all bi-infinite concatenations of words in $\eW$. It is clear that, by construction, $\sigma\colon S\to S$ is topologically transitive. At the end of the proof, we will use the following fact.

\begin{lemma}
	$S\subset \Sigma$.
\end{lemma}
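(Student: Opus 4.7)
The plan is to prove $S\subset\Sigma$ by approximating each $\omega\in\eW^\bZ$ by periodic concatenations and invoking compactness of $\Sigma$. Since $\Sigma$ is $\sigma$-invariant, the conclusion $\sigma^i(\eW^\bZ)\subset\Sigma$ for every $i=0,\ldots,r-1$ then follows, giving $S\subset\Sigma$.

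Fix $\omega\in\eW^\bZ$, written as $\omega=\ldots\eta^{(i_{-1})}.\eta^{(i_0)}\eta^{(i_1)}\ldots$ with each $\eta^{(i_j)}\in\eW$. For $N\ge1$, set $w_N\eqdef\eta^{(i_{-N})}\ldots\eta^{(i_{N-1})}$ and consider the periodic sequence
\[
	\omega^{[N]}
	\eqdef \sigma^{Nr}\big((w_N)^\bZ\big),
\]
so that $\omega^{[N]}_j=\omega_j$ for all $j\in\{-Nr,\ldots,Nr-1\}$. Consequently $\omega^{[N]}\to\omega$ in $\Sigma_2$ as $N\to\infty$.

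The key step is showing $\omega^{[N]}\in\Sigma$. For this consider the composition
\[
	h_N
	\eqdef g_{i_{0}}\circ g_{i_{1}}\circ\ldots\circ g_{i_{N-1}}\circ g_{i_{-N}}\circ\ldots\circ g_{i_{-1}}.
\]
By Lemma~\ref{lclaeq:rem:ffpts-maps} every $g_i$ maps $[p^-,1]$ into itself, and by Lemma~\ref{lclaimlem:unicon} the IFS $\{g_i\}_i$ is uniformly contracting on $[p^-,1]$. Hence $h_N$ is a contraction of $[p^-,1]$ into itself and possesses a unique fixed point $x_N^*\in[p^-,1]$. Tracing the orbit of $((w_N)^\bZ,y_N^*)$ under $F$ with $y_N^*\eqdef (g_{i_{-N}}\circ\ldots\circ g_{i_{-1}})(x_N^*)$ (which realizes the origin-position fixed point of the cyclically reindexed composition giving $\omega^{[N]}$), one obtains a periodic orbit of $F$ contained in $\Gamma$. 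Projecting via $\pi$, we conclude $\omega^{[N]}\in\Sigma$.

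Finally, since $\Sigma$ is compact (hence closed in $\Sigma_2$) and $\omega^{[N]}\to\omega$, we obtain $\omega\in\Sigma$. As $\omega\in\eW^\bZ$ was arbitrary, $\eW^\bZ\subset\Sigma$, and $\sigma$-invariance of $\Sigma$ yields $S=\bigcup_{i=0}^{r-1}\sigma^i(\eW^\bZ)\subset\Sigma$. No step is delicate: the only thing to verify carefully is that the fixed point of $h_N$ in $[p^-,1]$ indeed corresponds to the admissibility of the \emph{bi-infinite} sequence $\omega^{[N]}$ (not merely of one one-sided half), which is automatic from the fact that a genuine periodic orbit of $F$ realizes both forward and backward admissibility simultaneously.
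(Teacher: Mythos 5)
Your argument is correct, but it takes a genuinely different route from the paper's. The paper does not periodize: it takes $\xi\in S$, observes that each truncated one-sided sequence $(\xi_{-j}\ldots\xi_{-1}\xi^+)$ lies in $S^+$ and hence is forward admissible by Lemma~\ref{lcla:Splus}, pads the past with $0^{-\bN}$ (backward admissibility of $0^{-\bN}$ being automatic since $f_0$ is a bijection of $[0,1]$) to get an admissible bi-infinite sequence in the cylinder $[\xi_{-j}\ldots\xi_{-1}.\xi_0\ldots\xi_j]$, and concludes by intersecting these nested compact cylinders with the compact set $\Sigma$. You instead approximate $\omega$ by periodic concatenations and realize each periodic word by an honest periodic point of $F$ in $\Gamma$, namely the fixed point of the return map built from the $g_i$'s; this is legitimate because every $g_i$ maps $[p^-,1]$ into itself (Lemma~\ref{lclaeq:rem:ffpts-maps}) and every finite concatenation of words from $\eW$ is forward admissible on $[a,1]$ (Lemma~\ref{lclaim:subshift}), so the fixed point together with periodicity yields a full bi-infinite orbit in $\Gamma$ --- exactly the mechanism of Lemma~\ref{lem:fixpots}. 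Both proofs are limit arguments resting on the same admissibility facts; yours invokes in addition the contraction property of Lemma~\ref{lclaimlem:unicon}, which is not actually needed for existence of the fixed point (continuity plus invariance of the compact interval suffices), while the paper's avoids fixed points entirely at the cost of the slightly less transparent $0^{-\bN}$-padding. One small slip: the order of composition in your $h_N$ does not match the cyclic order of the period $\eta^{(i_0)}\ldots\eta^{(i_{N-1})}\eta^{(i_{-N})}\ldots\eta^{(i_{-1})}$ (recall that $f_{[\xi_0\ldots\,\xi_k]}=f_{\xi_k}\circ\cdots\circ f_{\xi_0}$ applies the leftmost symbol first), but since any correctly ordered return map of the period is still a self-map of $[p^-,1]$, this does not affect the argument.
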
 

\begin{proof}
Let $\xi=(\ldots\xi_{-1}.\xi_0\xi_1\ldots)\in S$. By definition, for every $j\ge1$ its ``one-sided infinite remainder" $(\xi_{-j}\ldots\xi_{-1}\xi^+)$  belongs to $S^+$. By Lemma \ref{lcla:Splus}, it is forward admissible. Thus, observing that the two-sided sequence $\eta^{(j)}\eqdef(0^{-\bN}.\xi_{-j}\ldots\xi_{-1}\xi^+)$ is (forward and backward) admissible, we can conclude that the shifted sequence $\sigma^{-j}(\eta^{(j)}) = (\ldots\xi_{-j}\ldots\xi_{-1}.\xi_0\ldots\xi_j\ldots)$ is (forward and backward) admissible. Therefore, we have
\[
	C_j\cap\Sigma\ne\emptyset,
	\quad\text{ where }\quad
	C_j\eqdef [\xi_{-j}\ldots\xi_{-1}.\xi_0\ldots\xi_j].
\]
Observe that $\{C_j\}_j$ is a compact decreasing sequence of compact nonempty sets each intersecting the compact set $\Sigma$. Hence, also $\bigcap_jC_j$ intersects $\Sigma$ and therefore $\xi\in\Sigma$.
\end{proof}

\smallskip\noindent\textbf{4. Estimate of entropy.}

\begin{lemma}\label{lcla:entropy}
	$h_{\rm top}(\sigma^+, S^+)\ge h(\mu)-2\varepsilon$.
\end{lemma}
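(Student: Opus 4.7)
The plan is to exploit the structure of $S^+$ as described in \eqref{eqdef:Splus}: it is the union of $r$ forward shifts of $\eW^\bN$, which is itself the space of one-sided concatenations of words in $\eW$. By Lemma \ref{lclaim:subshift}, any such concatenation is admissible, so that the $r$-th iterate $(\sigma^+)^r$ restricted to $\eW^\bN$ is topologically conjugate to the one-sided full shift on the alphabet $\eW$. Combined with property (ii) of the skeleton (which guarantees that the words in $\eW$ constructed in \eqref{eq:defSFTwords} are pairwise distinct), this gives
\[
	h_{\rm top}((\sigma^+)^r, \eW^\bN)
	= \log \card \eW.
\]

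Next, I would apply the standard power rule $h_{\rm top}((\sigma^+)^r, S^+) = r \cdot h_{\rm top}(\sigma^+, S^+)$ together with the $(\sigma^+)^r$-invariance $\eW^\bN \subset S^+$ to obtain
\[
	h_{\rm top}(\sigma^+, S^+)
	\geq \frac{1}{r} \log \card \eW.
\]

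The final step is routine bookkeeping. Insert the skeleton cardinality bound \eqref{eqdef:skeletonproof}, namely $\card \eW = \card \sX \geq \frac{C}{n\varepsilon} e^{n(1-\varepsilon)(h-\varepsilon)}$, and bound $r = k + s = k + \ell + 2k\varepsilon/L \le n + \ell + 2n\varepsilon/L$ using $k\le n$ together with \eqref{eq:choices}, \eqref{eq:defLLL}. The lower bound then becomes
\[
	\frac{1}{n + \ell + 2n\varepsilon/L}\log\left(\frac{C}{n\varepsilon} e^{n(1-\varepsilon)(h-\varepsilon)}\right),
\]
which by the very choice of $n$ in \eqref{eqdef:n} is at least $h-2\varepsilon$, completing the proof.

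There is no substantial obstacle in this specific lemma; its content is essentially a counting estimate. The real work was already done: first in setting up the skeleton via Proposition \ref{pro:skeleton}, which gives exponentially many distinct $k$-prefixes with near-zero finite-time Lyapunov exponent, and then in choosing the padding length $s$ large enough to force the IFS $\{g_i\}_i$ to contract (Lemma \ref{lclaimlem:unicon}) while small enough that \eqref{eqdef:n} still holds. That tension between retaining enough cardinality and keeping the block length $r$ short is exactly what dictates the choices made in \eqref{eq:choicek} and \eqref{eq:defLLL}, so the proof of the lemma itself is a direct consequence of those earlier calibrations.
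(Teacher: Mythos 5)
Your proof is correct and follows essentially the same route as the paper's: both identify $h_{\rm top}((\sigma^+)^{r},S^+)$ with $\log\card\eW$ via the full-shift structure on the alphabet $\eW$ of pairwise-distinct words of common length $r$, divide by $r=k+s\le n+\ell+2n\varepsilon/L$, and invoke the cardinality bound \eqref{eqdef:skeletonproof} together with the calibration \eqref{eqdef:n}. Your phrasing via the inclusion $\eW^\bN\subset S^+$ and the power rule is, if anything, a slightly more careful rendering of the same computation.
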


\begin{proof}
Take $k\le n$ from the skeleton $\sX$, for $n$ as in \eqref{eq:choicek}, and $s$ as in \eqref{eq:choices}. Recall that, by our choice \eqref{eq:defSFTwords}, we have $\card \eW=\card \sX$. Then 
\[\begin{split}
	h_{\rm top}(\sigma^+, S^+)
	&= \frac{1}{k+s} h_{\rm top}((\sigma^+)^{k+s},S^+)
	= \frac{1}{k+s} \log \card \eW\\
	\tiny{\text{(by \eqref{eq:choices} and \eqref{eqdef:skeletonproof})}}\quad
	&\ge \frac{1}{k+\ell+2k\varepsilon/L}
		\log\left(\frac{C}{n\varepsilon}e^{n(1-\varepsilon)(h-\varepsilon)}\right)\\
	\tiny{\text{(using that $k\le n$)}}\quad
	&\ge \frac{1}{n+\ell+2n\varepsilon/L}
		\log\left(\frac{C}{n\varepsilon}e^{n(1-\varepsilon)(h-\varepsilon)}\right)\\
	\tiny{\text{(by \eqref{eqdef:n})}}\quad
	&\ge h-2\varepsilon 
	= h(\mu)-2\varepsilon	,
\end{split}\]	
proving the lemma.
\end{proof}

\noindent\textbf{5. Weak$\ast$ approximation in $\cM(\Sigma^+)$.}
In the space $\Sigma_2^+$ we adopt the definition of the distance $d_1$ defined in \eqref{eqdef:distanceSigma} in the analogous way and consider the Wasserstein distance $W_1$ on $\cM(\Sigma^+_2)$. Consider the natural projection $\pi^+\colon\Sigma_2\to\Sigma^+_2$ to the space of one-sided sequences and let $\nu^+\eqdef(\pi^+)_\ast\nu$. Recall the notation $\cA_k$ for the finite averages of shifted measures in \eqref{eq:defcA}.

\begin{lemma}\label{lclaimsubclaim}
	For every $\xi^+\in\Sigma_2^+$ and every $\zeta^+\in[\xi_0\ldots\xi_{k-1}]^+\subset\Sigma_2^+$ we have 
\[
	W_1(\cA_k\zeta^+,\cA_k\xi^+)
	<\frac1k.
\]
\end{lemma}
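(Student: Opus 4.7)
The strategy is to exhibit an explicit transport plan between the two atomic measures $\cA_k\xi^+$ and $\cA_k\zeta^+$ that matches orbit points at equal times, and then control the cost by the geometric decay of $d_1$ under shifting.

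First I would define the diagonal coupling
\[
	\pi
	\eqdef \frac{1}{k}\sum_{j=0}^{k-1}
		\delta_{\bigl((\sigma^+)^j(\xi^+),\,(\sigma^+)^j(\zeta^+)\bigr)}
	\in\cM\bigl(\Sigma_2^+\times\Sigma_2^+\bigr).
\]
Its marginals are, by construction, $\cA_k\xi^+$ and $\cA_k\zeta^+$, so the dual characterization of the Wasserstein distance (recalled in Appendix \ref{App:A}) gives
\[
	W_1(\cA_k\zeta^+,\cA_k\xi^+)
	\le \int d_1(\omega,\eta)\,d\pi(\omega,\eta)
	= \frac{1}{k}\sum_{j=0}^{k-1}
		d_1\bigl((\sigma^+)^j(\xi^+),(\sigma^+)^j(\zeta^+)\bigr).
\]

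Next I would use the hypothesis $\zeta^+\in[\xi_0\ldots\xi_{k-1}]$, which means $\zeta_i=\xi_i$ for $i=0,\ldots,k-1$. For each $j\in\{0,\ldots,k-1\}$, the two shifted one-sided sequences $(\sigma^+)^j(\xi^+)$ and $(\sigma^+)^j(\zeta^+)$ therefore coincide on their first $k-j$ coordinates. By the one-sided analogue of the metric in \eqref{eqdef:distanceSigma}, this yields $d_1((\sigma^+)^j(\xi^+),(\sigma^+)^j(\zeta^+))\le e^{-(k-j)}$.

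Finally, summing the geometric series gives
\[
	W_1(\cA_k\zeta^+,\cA_k\xi^+)
	\le \frac{1}{k}\sum_{j=0}^{k-1}e^{-(k-j)}
	= \frac{1}{k}\sum_{m=1}^{k}e^{-m}
	< \frac{1}{k}\cdot\frac{1}{e-1}
	< \frac{1}{k},
\]
which is the claimed bound. There is no real obstacle here: once the diagonal coupling is identified, the argument reduces to summing a geometric series, and the strict inequality $1/(e-1)<1$ delivers the sharp constant.
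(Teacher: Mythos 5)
Your proof is correct and follows essentially the same route as the paper: the paper bounds $W_1(\delta_{\sigma^\ell(\zeta^+)},\delta_{\sigma^\ell(\xi^+)})\le e^{-(k-\ell)}$ and then averages, which is exactly your diagonal coupling, and concludes by the same geometric series. You merely make the coupling and the final estimate $\frac1k\sum_{m=1}^k e^{-m}<\frac{1}{k(e-1)}<\frac1k$ explicit, which the paper leaves implicit.
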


\begin{proof}
We have 
\[
	W_1(\delta_{\sigma^\ell(\zeta^+)},\delta_{\sigma^\ell(\xi^+)})
	\le e^{-(k-\ell)}.
\]	 
For every $m\ge k$ and every $\zeta^+\in[\xi_0\ldots\xi_{k-1}\xi_k\ldots\xi_{m-1}]^+$ we hence have 
\[
	W_1(\cA_k\zeta^+,\cA_k\xi^+)
	\le \frac1k\sum_{\ell=0}^{k-1}e^{-(k-\ell)},
\]
which implies the claim of the lemma.
\end{proof}

\begin{lemma}\label{lclaimlem:pqp}
	For every $\tilde\nu\in\cM( S^+)$ it holds $W_1(\tilde\nu,\nu^+) \le \varepsilon'$, where $\varepsilon'$ is as in \eqref{eq:matter}.
\end{lemma}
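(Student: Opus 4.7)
The plan is a direct Birkhoff-averaging computation that puts the skeleton properties to work. First I would reduce to the ergodic case: by joint convexity of the Wasserstein distance $W_1$ and the fact that every $\tilde\nu \in \cM(S^+)$ is a barycenter of its ergodic components, it suffices to establish the bound for ergodic $\tilde\nu$. Fixing a $\tilde\nu$-generic point $\omega^+ \in S^+$ and using the decomposition $S^+ = \bigcup_{j=0}^{r-1}(\sigma^+)^j(\eW^\bN)$ with $r=k+s$, after shifting by at most $r-1$ I may assume $\omega^+ = \eta^{(i_1)}\eta^{(i_2)}\ldots \in \eW^\bN$; the $O(r/N)$ error this introduces is negligible in the limit $N\to\infty$.

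Next, for $N = m(k+s) = mr$ I would split each block $\eta^{(i_j)} = (\xi_0^{(i_j)}\ldots\xi_{k-1}^{(i_j)}\,0^s)$ into its skeleton prefix of length $k$ and the trailing $0^s$:
\[
	\cA_N\omega^+
	= \frac{k}{k+s}\cdot\frac{1}{m}\sum_{j=0}^{m-1}\cA_k(\sigma^+)^{j(k+s)}\omega^+
	+ \frac{s}{k+s}\cdot\frac{1}{m}\sum_{j=0}^{m-1}\cA_s(\sigma^+)^{j(k+s)+k}\omega^+.
\]

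For the first summand I would observe that $(\sigma^+)^{j(k+s)}\omega^+$ shares its first $k$ coordinates with $\pi^+(\xi^{(i_{j+1})})$, so Lemma \ref{lclaimsubclaim} on $\Sigma_2^+$ gives $W_1\bigl(\cA_k(\sigma^+)^{j(k+s)}\omega^+,\,\cA_k\pi^+(\xi^{(i_{j+1})})\bigr) < 1/k$; property (v) of the skeleton together with the fact that $\pi^+\colon(\Sigma_2,d_1)\to(\Sigma_2^+,d_1)$ is $1$-Lipschitz (hence $\pi^+_\ast$ is nonexpanding for $W_1$) yields $W_1\bigl(\cA_k\pi^+(\xi^{(i_{j+1})}),\nu^+\bigr)<\varepsilon$. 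Triangle inequality and joint convexity of $W_1$ then bound the first summand's distance to $\nu^+$ by $\varepsilon+1/k$. The second summand I would bound trivially by the $d_1$-diameter of $\Sigma_2^+$, which is at most $1$.

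Combining these via joint convexity and recalling $s = \ell + 2k\varepsilon/L$ from \eqref{eq:choices}--\eqref{eq:defLLL}, this yields
\[
	W_1(\cA_N\omega^+,\nu^+)
	\le \frac{k}{k+s}\Bigl(\varepsilon+\frac{1}{k}\Bigr)+\frac{s}{k+s}
	= \frac{(1+k\varepsilon)+(\ell+2k\varepsilon/L)}{k+\ell+2k\varepsilon/L}
	\le \varepsilon',
\]
where the last inequality is exactly the definition of $\varepsilon'$ in \eqref{eq:matter} (note $k\in\{n(1-\varepsilon),\ldots,n\}$). Letting $m\to\infty$, genericity of $\omega^+$ together with weak$\ast$-continuity of $W_1$ (Appendix \ref{App:A}) conclude $W_1(\tilde\nu,\nu^+)\le\varepsilon'$. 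There is no substantive obstacle: the skeleton was engineered for exactly this estimate, with the padding length $s = \ell + 2k\varepsilon/L$ chosen so that the mass wasted on the $0^s$-tails is precisely what $\varepsilon'$ absorbs; the only minor care needed is the $1$-Lipschitz reduction from two-sided to one-sided $W_1$ when invoking property (v) of the skeleton.
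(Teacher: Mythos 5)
Your proposal is correct and follows essentially the same route as the paper's proof: the same block decomposition of the Birkhoff average into skeleton prefixes of length $k$ (controlled via Lemma \ref{lclaimsubclaim}, skeleton property (v), and the $1$-Lipschitz projection $\pi^+_\ast$) and $0^s$-tails (bounded trivially by the diameter $1$), combined by convexity of $W_1$ to give exactly the quantity in \eqref{eq:matter}, then a limit along a generic point. The only cosmetic differences are that you average all $m$ blocks at once rather than first bounding a single $\cA_r$-block and then averaging, and that you reduce to ergodic $\tilde\nu$ via the ergodic decomposition where the paper appeals to the existence of generic points for arbitrary invariant measures.
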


\begin{proof}
Every sequences $\xi^{(i)}$ in the skeleton $\sX=\{X_i\}$ of points $X_i=(\xi^{(i)},x_i)\}$ satisfies
\begin{equation}\label{eq:weakstarbis}\begin{split}
	W_1(\cA_k(\xi^{(i)})^+,\nu^+)
	&= 	W_1((\pi^+)_\ast\cA_k(\xi^{(i)}),(\pi^+)_\ast\nu)\\
	\tiny{\text{by item (v) of the skeleton property, see \eqref{eq:weakstar}}}\quad
	&\le 	W_1(\cA_k(\xi^{(i)}),\nu) 
	<\varepsilon .
\end{split}\end{equation}
Recall the choice of words $(\xi^{(i)}_0\ldots\xi^{(i)}_{k-1}0\ldots 0)$ in \eqref{eq:defSFTwords} which define $S^+$. 
 Lemma \ref{lclaimsubclaim} and \eqref{eq:weakstarbis} then imply that for every $\eta^+\in[\xi^{(i)}_0\ldots\xi^{(i)}_{k-1}]$ we have
\begin{equation}\label{eq:splitbis}
	W_1(\eta^+,\nu^+)
	\le W_1(\eta^+,\cA_k(\xi^{(i)})^+)
		+W_1(\cA_k(\xi^{(i)})^+,\nu^+)
	\le \frac1k +\varepsilon.
\end{equation}

Recall that, by \cite[Corollary to Theorem 4]{Sig:74}, every invariant measure has a generic point. Let now $\tilde\nu\in\cM_{\rm erg}(S^+)$ and take a $\tilde\nu$-generic point $\zeta^+\in S^+$. Recall that $(\sigma^+)^j(\zeta^+)$ is also $\tilde\nu$-generic for every $j\ge1$. By construction of $S^+$ in \eqref{eqdef:Splus}, we can assume that there is some index $i$ so that $\zeta^+\in[\xi^{(i)}_0\ldots\xi^{(i)}_{k-1}0\ldots 0]$. Observe that, by construction of $S^+$, for every $m\ge1$ we then also have
\begin{equation}\label{eq:consequence}
	(\sigma^+)^{mr}(\zeta^+)
	\in [\xi^{(i_m)}_0\ldots\xi^{(i_m)}_{k-1}0\ldots 0]
\end{equation}
for some $i_m$ in the index set of the skeleton.
Recall that $r=k+s=k+\ell+\ell'$. Consider the decomposition
\begin{equation}\label{eq:split}
	\cA_{r}\zeta^+
	= \frac{1}{k+\ell+\ell'} 
	\left( k\cA_k\zeta^+
		+ \left(\ell+\ell'\right) R(k,\zeta^+)\right),
\end{equation}
for some probability measures $R(k,\zeta^+)$. 

By Lemma \ref{lem:Wassx}, the  Wasserstein distance satisfies 
\begin{equation}\label{eq:again}
	W_1(s\nu_1+(1-s)\nu_2,\nu^+)
	= sW_1(\nu_1,\nu^+)+(1-s)W_1(\nu_2,\nu^+)
\end{equation}
for arbitrary  $\nu_1,\nu_2$  probability measures and $s\in[0,1]$.
Hence, with \eqref{eq:split} we conclude 
\[\begin{split}
	W_1(\cA_{r}\zeta^+,\nu^+)
	&= \frac{1}{k+\ell+\ell'}
		\left( k\cdot W_1(\cA_k\zeta^+,\nu^+) 
			+\left(\ell+\ell'\right)W_1(R(k,\zeta^+),\nu^+)\right)\\
	&\le \frac{1}{k+\ell+\ell'}
		\left( k\cdot W_1(\cA_k\zeta^+,\nu^+) 
			+\left(\ell+\ell'\right)\cdot 1\right)\\
			\tiny{\text{(using \eqref{eq:splitbis})}}\quad
	&\le \frac{1}{k+\ell+\ell'}
		\left( \left(1+  k\varepsilon\right)
			+\left(\ell+\ell'\right)\right)
	\\
				\tiny{\text{(using \eqref{eq:choices})}}\quad	
	&= \frac{1}{k+\ell+2k\varepsilon/L}
		\left( \left(1+  k\varepsilon\right) 
			+\left(\ell+2k\varepsilon/L\right)\right)	\\
	&\le \varepsilon',		
\end{split}\]
where $\varepsilon'$ was defined in \eqref{eq:matter}.

Using \eqref{eq:consequence}, we can repeat the above arguments replacing $\zeta^+$ by $\eta^+=(\sigma^+)^{mr}(\zeta^+)$ and $[\xi^{(i)}_0\ldots\xi^{(i)}_{k-1}0\ldots0]$ by $[\xi^{(i_m)}_0\ldots\xi^{(i_m)}_{k-1}0\ldots0]$ and obtain
\[
	W_1(\cA_{r}(\sigma^+)^{mr}(\zeta^+),\nu^+)
\]
 for any $m\ge0$
Hence, since 
\[
	\cA_{jr}\zeta^+
	=\frac{1}{j}\left( \cA_r\zeta^++\cA_r(\sigma^+)^r(\zeta^+)
		+\ldots+\cA_r(\sigma^+)^{(j-1)r}(\zeta^+)\right)
\]	
and using again \eqref{eq:again}, we obtain
\[
	W_1(\cA_{jr}\zeta^+,\nu^+)
	\le \frac{1}{j}\cdot j \varepsilon'	.
\]
By the triangle inequality,
\[
	W_1(\tilde\nu,\nu^+)
	\le W_1(\tilde\nu,\cA_{jr}\zeta^+)+ W_1(\cA_{jr}\zeta^+,\nu^+)
	\le W_1(\tilde\nu,\cA_{jr}\zeta^+)+\varepsilon'.
\]
As $\zeta^+$ is generic, $\cA_{jr}\zeta^+\to\tilde\nu$ as $r\to\infty$ in the weak$\ast$ topology, proving the lemma.
\end{proof}

\begin{lemma}\label{lclaimlem:natext}
	There is a function $\varepsilon\mapsto\delta(\varepsilon)$, $\delta(\varepsilon)\to0$ as $\varepsilon\to0$, such that for every $\nu^+_1,\nu^+_2\in\cM(\Sigma^+)$ satisfying $W_1(\nu^+_1,\nu^+_2)<\varepsilon$ then their natural extensions $\nu_1,\nu_2\in\cM(\Sigma)$ satisfy $W_1(\nu_1,\nu_2)<\delta$.
\end{lemma}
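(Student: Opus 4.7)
The plan is to combine the Kantorovich--Rubinstein dual characterisation of $W_1$ with the shift-invariance of the natural extensions. By duality, it suffices to bound $\lvert\int g\,d(\nu_1-\nu_2)\rvert$ for every $g\colon\Sigma\to\bR$ that is $1$-Lipschitz with respect to $d_1$; after subtracting a constant we may assume $\lVert g\rVert_\infty\le 1$. For each $n\ge 1$ I would approximate $g$ by a function $g_n$ that depends only on the central coordinates $\xi_{-n},\dots,\xi_n$ (obtained by freezing all other entries to a fixed symbol). Since $d_1\le e^{-n}$ on every cylinder of the form $[\xi_{-n}\ldots\xi_n]$, this truncation satisfies $\lVert g-g_n\rVert_\infty\le e^{-n}$ and $\lVert g_n\rVert_\infty\le 2$.

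The key step uses that $\nu_i$ is $\sigma$-invariant (being the natural extension of $\nu_i^+$), so $\int g_n\,d\nu_i=\int g_n\circ\sigma^n\,d\nu_i$; but $g_n\circ\sigma^n$ depends only on coordinates $0,\dots,2n$ and therefore descends via the projection $\pi^+$ to a function $\widetilde g_n\colon\Sigma^+\to\bR$ satisfying
\[
\int g_n\circ\sigma^n\,d\nu_i=\int\widetilde g_n\,d\nu_i^+.
\]
Now $\widetilde g_n$ is constant on one-sided cylinders of length $2n+1$, which are mutually at distance at least $e^{-2n}$ in the one-sided metric, while $\lVert\widetilde g_n\rVert_\infty\le 2$; its Lipschitz constant is therefore at most $4e^{2n}$. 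Combining the approximation error with Kantorovich--Rubinstein on $\Sigma^+$ yields
\[
\Big\lvert\int g\,d(\nu_1-\nu_2)\Big\rvert
\le 2e^{-n}+4e^{2n}\,W_1(\nu_1^+,\nu_2^+)
\le 2e^{-n}+4e^{2n}\varepsilon.
\]
Choosing $n\approx\tfrac{1}{3}\log(1/\varepsilon)$ balances the two terms and gives a bound of order $\varepsilon^{1/3}$, so after taking the supremum over admissible $g$ one may set $\delta(\varepsilon)=C\varepsilon^{1/3}\to 0$ as $\varepsilon\to 0$. No ingredient beyond shift-invariance of the natural extension and the exponential form of the metric is needed; the only mildly technical point is the explicit construction of $g_n$ as a function of finitely many central coordinates without losing control of $\lVert g-g_n\rVert_\infty$, which is precisely what the truncation above achieves.
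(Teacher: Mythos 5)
Your argument is correct, but it takes a genuinely different route from the paper. The paper disposes of this lemma in one line: the natural-extension map is a continuous bijection between the compact metric spaces $\cM(\Sigma^+)$ and $\cM(\Sigma)$, hence a homeomorphism, hence uniformly continuous, and $\delta(\cdot)$ is just its modulus of continuity. Your proof replaces this soft compactness argument by an explicit computation: Kantorovich--Rubinstein duality, truncation of a $1$-Lipschitz test function $g$ to its central coordinates with error $e^{-n}$, and — the genuinely necessary step — the use of $\sigma$-invariance of $\nu_i$ to replace $g_n$ by $g_n\circ\sigma^n$, which depends only on nonnegative coordinates and therefore descends to $\Sigma^+$ with Lipschitz constant $O(e^{2n})$; optimizing $n$ gives $\delta(\varepsilon)=C\varepsilon^{1/3}$. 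All steps check out (the only cosmetic point is that after subtracting a constant the sup-norm bound on $g$ is governed by the diameter of $\Sigma_2$, which only changes your constants). What your approach buys is an explicit polynomial modulus of continuity rather than a mere existence statement; what the paper's approach buys is brevity, since no quantitative rate is needed anywhere in the subsequent argument.
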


\begin{proof}
	It suffices to observe that taking natural extension is a homeomorphism between the two compact metric spaces $\cM(\Sigma^+)$ and $\cM(\Sigma)$.
\end{proof}
	
Now the following is an immediate consequence of Lemmas \ref{lclaimlem:pqp} and \ref{lclaimlem:natext}.
	
\begin{lemma}\label{lclaimlem:pqpbis}
	For every 	$\tilde\nu\in\cM( S)$ it holds $W_1(\tilde\nu,\nu)\le \delta(\varepsilon')$, where $\delta(\cdot)$ is as in Lemma \ref{lclaimlem:natext}.
\end{lemma}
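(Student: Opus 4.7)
The plan is to simply chain the two preceding lemmas, since the statement is advertised as an immediate consequence. Given $\tilde\nu\in\cM(S)$, I first project it to a one-sided measure $\tilde\nu^+\eqdef(\pi^+)_\ast\tilde\nu\in\cM(S^+)$ via the canonical projection $\pi^+\colon\Sigma_2\to\Sigma_2^+$ introduced earlier. The measure $\tilde\nu^+$ is $\sigma^+$-invariant because $\tilde\nu$ is $\sigma$-invariant, and it is supported on $S^+$ because $S^+$ is exactly the image of $S$ under $\pi^+$ by the construction in \eqref{eqdef:Splus}.

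Next I would apply Lemma \ref{lclaimlem:pqp} to $\tilde\nu^+\in\cM(S^+)$, which gives the bound $W_1(\tilde\nu^+,\nu^+)\le\varepsilon'$, where $\nu^+=(\pi^+)_\ast\nu$ as defined just before Lemma \ref{lclaimsubclaim}. Now I invoke Lemma \ref{lclaimlem:natext}: the natural extension of $\tilde\nu^+$ is $\tilde\nu$ itself, and the natural extension of $\nu^+$ is $\nu$ (this is the standard identification, since a shift-invariant two-sided measure is uniquely determined by its one-sided projection and vice versa, and this identification is precisely the homeomorphism $\cM(\Sigma^+)\cong\cM(\Sigma)$ used in the proof of Lemma \ref{lclaimlem:natext}). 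Therefore $W_1(\tilde\nu,\nu)\le\delta(\varepsilon')$, as required.

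There is essentially no obstacle here; the only thing to verify carefully is that the natural extensions of $\tilde\nu^+$ and $\nu^+$ do indeed coincide with $\tilde\nu$ and $\nu$, which is immediate from the definition and the $\sigma$-invariance of the originals. Thus the entire proof fits in one short paragraph and requires no additional machinery beyond the two cited lemmas.
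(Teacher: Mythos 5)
Your proposal is correct and is exactly the argument the paper intends: the paper simply declares the lemma an immediate consequence of Lemmas \ref{lclaimlem:pqp} and \ref{lclaimlem:natext}, and your chain (project $\tilde\nu$ to $\tilde\nu^+\in\cM(S^+)$, apply Lemma \ref{lclaimlem:pqp}, then lift back via the natural-extension homeomorphism of Lemma \ref{lclaimlem:natext}) is precisely that. The only point worth verifying, which you do, is that the natural extensions of $\tilde\nu^+$ and $\nu^+$ are $\tilde\nu$ and $\nu$ themselves.
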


\medskip\noindent\textbf{6. Construction of a basic set.}
Recall $z\colon \eW^\bN\to[0,1]$ defined in Lemma \ref{lclaimjustonepoint} and note that,  letting $z((\sigma^+)^i(\omega^+))\eqdef f_{\omega^+}^i(z(\omega^+))$, it extends to a map  $z\colon S^+\to[0,1]$. Let
\[
	\Upsilon^+
	\eqdef \{(\omega^+,z(\omega^+))\colon\omega^+\in S^+\}
	\subset \Sigma^+\times[0,1].
\]	
Observe that $\Upsilon^+$ is compact and $F^+$-invariant, for $F^+\colon \Sigma^+\times[0,1]\to \Sigma^+\times[0,1]$  defined by
\[
	F^+(\xi^+,x)
	\eqdef (\sigma^+(\xi^+),f_{\xi_0}(x)).
\]
and take the natural extension $\Upsilon$ of $\Upsilon^+$ relative to $F^+$. Note that
$\Upsilon$ is a compact $F$-invariant set is semi-conjugate to the two-sided transitive SFT $\sigma\colon S\to S$.
By construction, on $\Upsilon$ the map $F^r$ is hyperbolic of contracting type in the fiber direction. The set $\Upsilon$ is the claimed basic set.  

\medskip\noindent\textbf{7. Weak$\ast$ and in entropy approximation in $\cM(\Sigma)$.}
The above construction depends on $\varepsilon$ and on the hence chosen quantifiers. Given $\varepsilon=1/n$, let us now denote the correspondingly constructed (two-sided) SFT by $S_n$ and the corresponding basic set by $\Upsilon_n$.
Clearly, any $\mu_n\in\cM(\Upsilon_n)$ projects to a measure $\nu_n=\pi_\ast\mu_n\in\cM(S_n)$. By Lemma \ref{lclaimlem:pqpbis}, it holds $\lim_{n\to\infty}\nu_n=\nu$ in the weak$\ast$ topology in $\cM(\Sigma)$. Recalling that $\nu=\pi_\ast\mu$ proves the first assertion of the proposition.

For the second assertion, note that $h_{\rm top}(F,\Upsilon_n)=h_{\rm top}(\sigma,S_n)$.  By Lemma \ref{lcla:entropy}, we have 
\[
	L
	\eqdef\limsup_nh_{\rm top}(\sigma,S_n)
	\ge h(\mu)
	=h(\nu).
\]	 
We claim that we have in fact equality. Arguing by contradiction, suppose that $L>h(\nu)$, consider the sequence of measures $\nu_n\in\cM(S_n)$ of maximal entropy $h(\nu_n)=h_{\rm top}(\sigma,S_n)$. By Lemma \ref{lclaimlem:pqp}, $\nu=\lim_n\nu_n$. Hence, by upper semi-continuity of the entropy map, we have $\limsup_nh(\nu_n)\le h(\nu)$, a contradiction.

This finishes the proof of Theorem \ref{teopro:intermediate}.
\qed

\subsection{Proof of Theorem \ref{teo:accum}}

By Theorem \ref{teopro:intermediate}, it remains to show that not only the projections converge to the projection, but also the measures themselves converge to.
Note again $\pi_\ast\mu_n\to\pi_\ast\mu=\nu$ in the weak$\ast$ topology. 

By our hypothesis, $\chi(\mu)=0$. As we assume (H2+), we can invoke Theorem \ref{teo:1} Case b). Hence,  $\mu$ is the only measure with $\pi_\ast\mu=\nu$. Thus, applying Lemma \ref{pro:simple}, the sequence $\mu_n$ weak$\ast$ converges to $\mu$.  
\qed

\subsection{Weak$\ast$ and entropy approximation of linear combinations of hyperbolic ergodic measures with the same type of hyperbolicity}\label{sec:weakstarentropy}

Let us first state  some fundamental fact about SFTs. 

\begin{lemma}\label{lem:weakstarentropySFT}
Let $(X,T)$ be a transitive SFT. Then every measure $\mu\in\cM(X)$ can be approximated, weak$\ast$ and in entropy, by a sequence of ergodic measures. That is, $\cM(X)$ is an entropy dense Poulsen simplex.
\end{lemma}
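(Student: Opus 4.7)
The plan is to deduce this from the fact that every transitive SFT has the Bowen specification property, combined with a Sigmund-style orbit concatenation construction. Since SFTs are expansive, the entropy map $\mu\mapsto h(\mu)$ is upper semi-continuous on $\cM(X)$, so all the approximation steps can be carried out with entropy control essentially for free on the ``$\limsup$'' side.

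First I would reduce the problem to approximating finite convex combinations of ergodic measures. By the ergodic decomposition $\mu=\int\mu'\,d\blambda(\mu')$ and affinity of entropy, $h(\mu)=\int h(\mu')\,d\blambda(\mu')$. A standard partition argument (partition $\cM_{\rm erg}(X)$ into finitely many weak$\ast$-small Borel pieces on which $h$ varies by at most $\varepsilon$, pick one representative per piece) yields ergodic measures $\mu_1,\ldots,\mu_k$ and rational weights $\alpha_1,\ldots,\alpha_k$ with $\sum\alpha_i=1$ such that
\[
	W_1\Big(\mu,\sum_{i=1}^k\alpha_i\mu_i\Big)<\varepsilon
	\qquad\text{and}\qquad
	\sum_{i=1}^k\alpha_i h(\mu_i)\ge h(\mu)-\varepsilon.
\]
So it is enough to weak$\ast$-and-entropy approximate any such convex combination $\mu^{(k)}\eqdef\sum\alpha_i\mu_i$ by ergodic measures.

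Second, I would realize $\mu^{(k)}$ via an explicit subsystem construction. Fix a large integer $N$ and set $N_i\eqdef\lfloor\alpha_i N\rfloor$. By the Katok entropy formula for SFTs (or directly by the variational principle together with the Shannon--McMillan--Breiman theorem), for each $i$ there is a set $\cE_i$ of $(N_i,\varepsilon)$-separated words whose empirical measures of length $N_i$ are within $\varepsilon$ of $\mu_i$ in $W_1$, and with
\[
	\card\cE_i\ge e^{N_i(h(\mu_i)-\varepsilon)}.
\]
Let $M$ be the specification constant of $X$. Using specification, freely concatenate words from $\cE_1$, then $\cE_2$, $\ldots$, then $\cE_k$, separated by gap-words of fixed length $M$; the resulting set of concatenations defines a transitive SFT $Y\subset X$ with period $r\eqdef\sum(N_i+M)$. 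By the pairwise-separation of the building blocks and standard counting,
\[
	h_{\rm top}(T,Y)
	\ge \frac{1}{r}\log\prod_{i=1}^k\card\cE_i
	\ge \frac{\sum_i N_i(h(\mu_i)-\varepsilon)}{\sum_i(N_i+M)}
	\longrightarrow \sum_i\alpha_i h(\mu_i)
\]
as $N\to\infty$. Any ergodic measure $\mu_Y$ supported on $Y$ (for instance its measure of maximal entropy, which exists by expansivity) has, by construction, an empirical distribution that weights the blocks proportionally to $N_i/r$; hence $W_1(\mu_Y,\mu^{(k)})=O(\varepsilon+M/N)$.

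The main technical point is to match the entropy bound: by upper semi-continuity of the entropy map, $\limsup h(\mu_Y)\le h(\mu^{(k)})=\sum\alpha_i h(\mu_i)$, while by the above lower bound taking $\mu_Y$ of maximal entropy yields $h(\mu_Y)\ge h_{\rm top}(T,Y)\ge \sum\alpha_i h(\mu_i)-O(\varepsilon+M/N)$. Letting first $N\to\infty$ and then $\varepsilon\to0$, one obtains a sequence of ergodic measures in $\cM_{\rm erg}(X)$ converging to $\mu^{(k)}$ weak$\ast$ and in entropy; combined with the first reduction, this yields the same approximation for $\mu$. The one step requiring real care is the pasting construction verifying that the specification gaps do not contribute more than $O(M/N)$ to the Wasserstein distance, which is where the boundedness of the gap length (and not just of the number of gaps) is essential.
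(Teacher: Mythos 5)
Your argument takes a genuinely different route from the paper: the paper does not prove this lemma at all, but simply cites \cite{Sig:74} for the Poulsen property and \cite[Lemma 3]{JohJorOebPol:10} together with \cite[Proposition 3.3]{BarRamSim:} for entropy density. What you have written is essentially a reconstruction of the standard argument behind those citations (the Sigmund-style orbit-concatenation scheme): reduce to finite rational convex combinations of ergodic measures via the ergodic decomposition and affinity of entropy, realize each ergodic component by an exponentially large family of typical words via Katok/Shannon--McMillan--Breiman, glue the families into a transitive subsystem all of whose invariant measures are close to the target, and take its measure of maximal entropy, using expansivity (upper semi-continuity of the entropy map) to control the limit from above. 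This buys a self-contained proof where the paper only has a pointer, and the reduction, the counting, and the Wasserstein bookkeeping are sound.

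One assertion needs repair: it is not true that every transitive SFT has the Bowen specification property. For SFTs, specification is equivalent to topological mixing, and a transitive SFT of period $p>1$ (e.g.\ the two-point subshift $\{(01)^\bZ,(10)^\bZ\}$) only admits connecting words whose lengths lie in prescribed residue classes mod $p$; in particular you cannot in general concatenate your blocks with gap-words of one \emph{fixed} length $M$. The argument survives with a routine modification: transitivity of the transition graph gives, for any two admissible words, a connecting word of length at most a uniform $M_0$ depending only on $X$, and allowing the gap lengths to vary in $\{1,\ldots,M_0\}$ changes neither the entropy count (still a loss of only $O(M_0/N)$ in the exponent, after the usual bookkeeping to avoid overcounting concatenations) nor the Wasserstein estimate (still $O(\varepsilon+M_0/N)$). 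Alternatively, pass to the spectral decomposition, run the construction in a single mixing component for $T^p$, and average the resulting ergodic measure over the cycle. Either fix leaves the rest of your proof intact.
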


\begin{proof}
That $\cM(X)$ is a Poulsen simplex is well known \cite{Sig:74}. 
The entropy denseness appears first in \cite[Lemma 3]{JohJorOebPol:10} for full shifts, the version for SFT is a simple modification and appears in \cite[Proposition 3.3]{BarRamSim:}.
\end{proof}

We will apply Lemma \ref{lem:weakstarentropySFT} to the following situation.
\begin{remark}
	Given $\Upsilon\subset\Gamma$ a basic set, then $F|_\Upsilon$ is topologically conjugate to a topological Markov chain. In particular, this Markov chain is a SFT and it is topologically transitive (mixing) if $F|_\Upsilon$ is topologically transitive (mixing) (see, for example, \cite[Chapter 18.7]{KatHas:95}). 
\end{remark}

\begin{proposition}\label{cor:Poulsen}
	Given any finite number of ergodic measures $\mu_1,\ldots,\mu_k\in\cM_{\rm erg,\ge0}(\Gamma)$ and positive numbers $\lambda_1,\ldots,\lambda_k$ satisfying $\sum_{i=1}^k\lambda_i=1$, the measure $\mu'=\sum_{i=1}^k\lambda_i\mu_i$ is weak$\ast$ and in entropy approximated by ergodic measures in $\cM_{\rm erg,>0}(\Gamma)$.
\end{proposition}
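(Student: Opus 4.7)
The plan is to approximate $\mu'$ by ergodic measures living inside a single expanding horseshoe in $\Gamma$, obtained by merging basic sets that approximate each of the $\mu_i$.

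First, for each $i=1,\ldots,k$ and each tolerance $\varepsilon>0$, I would produce a basic set $\Gamma_i^+\subset\Gamma$ of expanding type and an ergodic measure $\mu_i^+\in\cM_{\rm erg}(\Gamma_i^+)$ such that $W_1(\mu_i^+,\mu_i)<\varepsilon$ and $\lvert h(\mu_i^+)-h(\mu_i)\rvert<\varepsilon$. If $\mu_i\in\cM_{\rm erg,>0}(\Gamma)$, this is provided by Katok-type horseshoe constructions (Remark \ref{Katokforevery}), choosing for $\mu_i^+$ the measure of maximal entropy on the approximating horseshoe. If $\mu_i\in\cM_{\rm erg,0}(\Gamma)$, this is exactly the content of Theorem \ref{teo:accum}, taking again $\mu_i^+$ to be the measure of maximal entropy on the basic set of expanding type that the theorem produces.

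Second, invoking Corollary \ref{c.horseshoes} iteratively, I would merge $\Gamma_1^+,\ldots,\Gamma_k^+$ (all of the same, expanding, type) into a single horseshoe $\Gamma^+\subset\Gamma$ containing $\bigcup_i\Gamma_i^+$. Each $\mu_i^+$, being supported on $\Gamma_i^+$, now belongs to $\cM(\Gamma^+)$, so the convex combination
\[
  \mu''_\varepsilon \eqdef \sum_{i=1}^k\lambda_i\mu_i^+
\]
lies in $\cM(\Gamma^+)$. By convexity of $W_1$ and affinity of the entropy under ergodic decomposition one gets $W_1(\mu''_\varepsilon,\mu')\le\sum_i\lambda_i W_1(\mu_i^+,\mu_i)<\varepsilon$ and $\lvert h(\mu''_\varepsilon)-h(\mu')\rvert<\varepsilon$. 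Since $\Gamma^+$ is a horseshoe, $F|_{\Gamma^+}$ is topologically conjugate to a transitive SFT, so Lemma \ref{lem:weakstarentropySFT} applies: $\cM(\Gamma^+)$ is an entropy-dense Poulsen simplex, and $\mu''_\varepsilon$ can therefore be approached, weak$\ast$ and in entropy, by ergodic measures $\nu_\varepsilon\in\cM_{\rm erg}(\Gamma^+)$. Because $\Gamma^+$ is uniformly fiber-expanding, automatically $\nu_\varepsilon\in\cM_{\rm erg,>0}(\Gamma)$.

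Letting $\varepsilon\to 0$, the triangle inequality yields $W_1(\nu_\varepsilon,\mu')\to 0$ and $h(\nu_\varepsilon)\ge h(\mu')-o(1)$; the matching inequality $\limsup_\varepsilon h(\nu_\varepsilon)\le h(\mu')$ follows from upper semi-continuity of the entropy map on $\cM(\Gamma)$, concluding the approximation both weak$\ast$ and in entropy. The only delicate step I foresee is the combined basic-set construction, but its two ingredients are already in place: Katok horseshoes (plus Theorem \ref{teo:accum} in the nonhyperbolic case) deliver each individual approximant, and Corollary \ref{c.horseshoes} lets us encase them all in one transitive expanding horseshoe without disturbing the measures $\mu_i^+$ that live on the smaller pieces.
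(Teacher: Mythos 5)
Your proposal is correct and follows essentially the same route as the paper's proof: approximate each $\mu_i$ by expanding basic sets via Remark \ref{Katokforevery} (hyperbolic case) or Theorem \ref{teo:accum} (nonhyperbolic case), take measures of maximal entropy on them, merge the pieces into one expanding horseshoe with Corollary \ref{c.horseshoes}, and finish with the entropy-dense Poulsen property of Lemma \ref{lem:weakstarentropySFT}. The extra details you supply (convexity of $W_1$, affinity of entropy, upper semi-continuity for the matching entropy bound) are all correct and only make explicit what the paper leaves implicit.
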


\begin{proof}
	For every measure $\mu_i\in\cM_{\rm erg,\ge0}(\Gamma)$ there exists a sequence of horseshoes (with uniform fiber expansion) $(\Gamma_n^{(i)})_n$ such that $\cM(\Gamma_n^{(i)})$ converges weak$\ast$ to $\mu_i$ and $h_{\rm top}(F,\Gamma_n^{(i)})$ converges to $h(\mu_i)$. Indeed, apply Remark \ref{Katokforevery} if $\mu_i$ is hyperbolic and Theorem \ref{teo:accum} otherwise.
	In particular, the measure $\mu'$ is weak$\ast$ and in entropy approximated by measures $\mu'_n=\sum_{i=1}^k\lambda_i\mu_i'$, where $\mu_i'\in\cM_{\rm erg}(\Gamma^{(i)}_n)$ can be taken the measure of  maximal entropy for $F$ in $\Gamma_n^{(i)}$. By Corollary \ref{c.horseshoes}, for every $n$ there exists a horseshoe $\Gamma_n'\supset\bigcup_i\Gamma^{(i)}_n$.  In particular, $\mu'_n\in\cM(\Gamma_n')$.
		By Lemma \ref{lem:weakstarentropySFT}, $\mu_n'$ can be weak$\ast$ and in entropy approximated by ergodic measures in $\cM_{\rm erg}(\Gamma_n')$. Clearly, $\cM_{\rm erg}(\Gamma_n')\subset\cM_{\rm erg,>0}(\Gamma)$.
\end{proof}	

\begin{remark}\label{rem:BocAvi}
Given $\mu\in\cM(\Gamma)$ and $U$ some neighborhood of $\mu$ in the weak$\ast$ topology, there exist $\mu_1,\ldots,\mu_k\in\cM_{\rm erg,\ge0}(\Gamma)$ and positive numbers $\lambda_1,\ldots,\lambda_k$ satisfying $\sum_{i=1}^k\lambda_i=1$ such that $\mu'=\sum_{i=1}^k\lambda_i\mu_i\in U$ (see, for example, \cite[Lemma 2.1]{AviBoc:12}). 
\end{remark}

\begin{proof}[Proof of Corollary \ref{cor:accumoneside}]
The Proposition \ref{cor:Poulsen} implies the assertion in Corollary \ref{cor:accumoneside} for measures whose ergodic decomposition has only finitely many components. For the general case recall Remark \ref{rem:BocAvi} and note that for any $\mu'=\sum_i\lambda_i\mu_i$ we have $h(\mu')=\sum_i\lambda_ih(\mu_i)$. 
	The corollary now follows from Proposition \ref{cor:Poulsen}.
\end{proof}

\section{Entropy-dense Poulsen structure of $\cM(\Sigma)$: Proof of Theorem \ref{teo:poulsen}} \label{sec:teo:poulsen}

	By Remark \ref{rem:BocAvi}, it suffices to assume that $\nu=\sum_{i=1}^k\lambda_i\nu_i$ for some $\nu_1,\ldots,\nu_k\in\cM_{\rm erg}(\Sigma)$ and some positive numbers $\lambda_1,\ldots,\lambda_k$ satisfying $\sum_{i=1}^k\lambda_i=1$.
	
	For every $\nu_i$, by Lemma \ref{lem:existsemihypmeas} there exists some measure $\mu_i\in\cM_{\rm erg,\ge0}(\Gamma)$ with non-negative fiber Lyapunov exponent satisfying $\pi_\ast\mu_i=\nu_i$. 
	For each $i$, by Theorem \ref{teopro:intermediate}, there is a sequence of basic sets $\Upsilon^{(i)}_n\subset \Gamma$ such that their measure spaces project to measure spaces weak$\ast$ converging to $\nu_i$. Moreover, there is a sequence of measures $\mu^{(i)}_n$ such that $\pi_\ast\mu^{(i)}_n$ converge weak$\ast$ and in entropy to $\pi_\ast\mu_i=\nu_i$. 
	
	By Corollary \ref{c.horseshoes} for each $n$ there exists a horseshoe  
\[
	\Gamma_n
	\supset\bigcup_{i=1}^k\Upsilon^{(i)}_n.
\]	 
By  \cite[Lemma 2.14]{BarRamSim:}, we can arbitrarily approximate weak$\ast$ and in entropy the measure $\sum_{i=1}^k\lambda_i\mu^{(i)}_n$ by ergodic measures in $\cM_{\rm erg}(\Gamma_n)$,
\[
	\sum_{i=1}^k\lambda_i\mu^{(i)}_n
	= \underset{m\to\infty}{\text{we-}\lim\,}\mu_{n,m},
	\quad\mu_{n,m}\in\cM_{\rm erg}(\Gamma_n),
\]	
where $\welim$ denotes convergence in the weak$\ast$ topology and in entropy.
Hence
\[
	\sum_{i=1}^k\lambda_i\pi_\ast\mu^{(i)}_n
	= \pi_\ast\sum_{i=1}^k\lambda_i\mu^{(i)}_n
	= \pi_\ast\left(\underset{m\to\infty}{\text{we-}\lim\,}\mu_{n,m}\right)
	= \underset{m\to\infty}{\text{we-}\lim\,}\pi_\ast\mu_{n,m},
\]	
	 Hence, by a diagonal argument, we find a sequence of ergodic measures $(\mu_{n,m_n})_n$ such that 
\[
	\nu
	= \sum_i\lambda_i\nu_i
	= \lim_n\sum_i\lambda_i\pi_\ast\mu^{(i)}_n 
	= \lim_n\left(\underset{m\to\infty}{\text{we-}\lim\,}\pi_\ast\mu_{n,m}\right).
\]	 
This concludes the proof of the theorem.
\qed

\section{Bifurcation exit scenarios}\label{sec:bif}

In this section, we put our results in an extended ambient considering bifurcation scenarios. The skew-products lead to globally defined one-parameter families that may have ``explosions of entropy and  of the space of ergodic measures" at some bifurcation associated to a collision of a pair of homoclinic classes. This sort of question was studied in \cite{DiaSan:04,DiaRoc:07}, where the collision occurs through the orbit of a parabolic (saddle-node) point. In our setting the collision may be of different nature. We now proceed to explain the details, not aiming for complete generality but for giving the general ideas.

\begin{figure}[h] 
 \begin{overpic}[scale=.25]{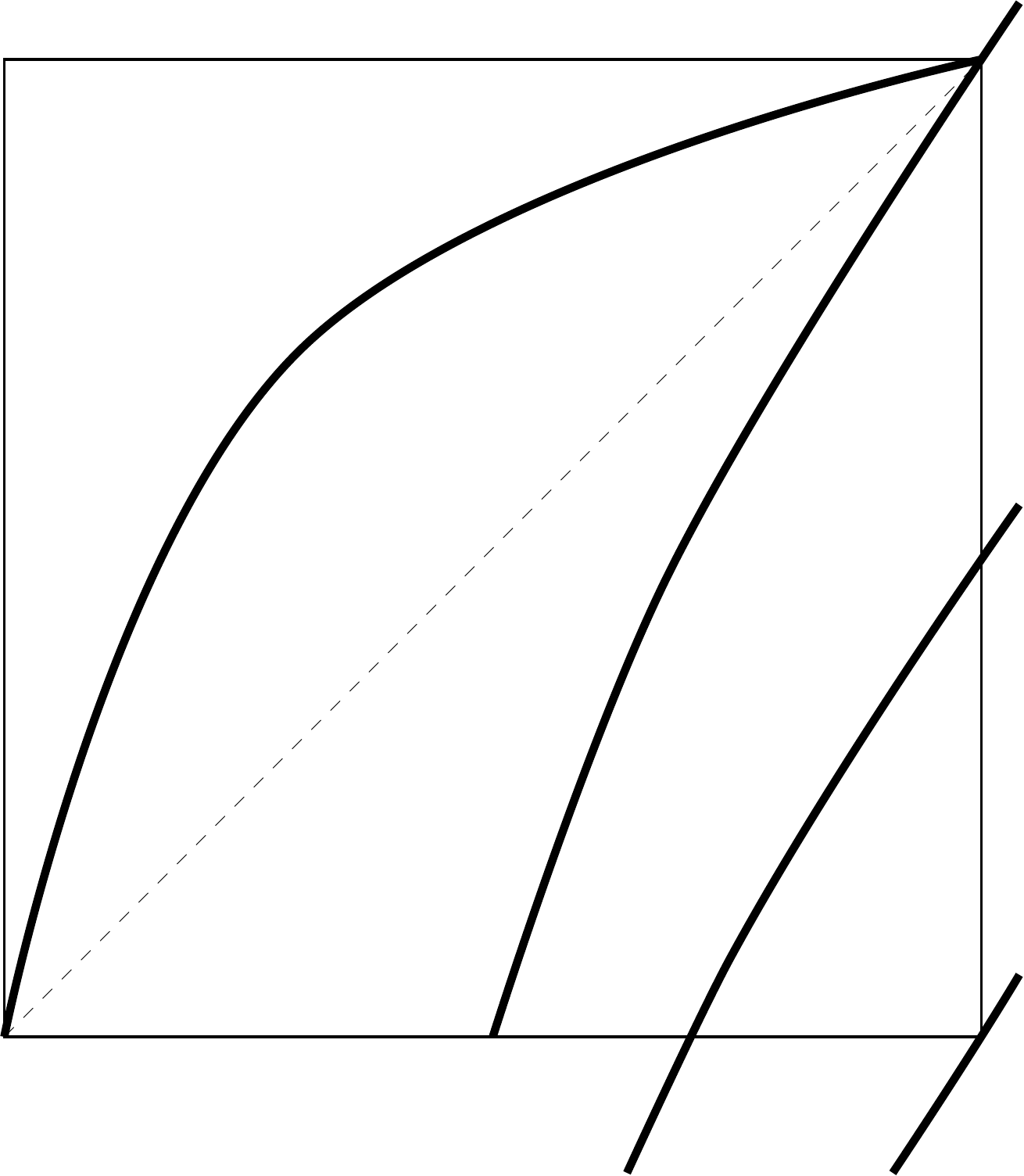}
	\put(0,0){Ia)}
	 \put(27,79){\small $\tilde f_0$}
 	\put(86,10){\small $\tilde f_{1,t_{\rm h}}$}
 	\put(86,50){\small $\tilde f_{1,t}$}
 	\put(86,95){\small $\tilde f_{1,t_{\rm c}}$}
 \end{overpic}\hspace{0.7cm}
 \begin{overpic}[scale=.25]{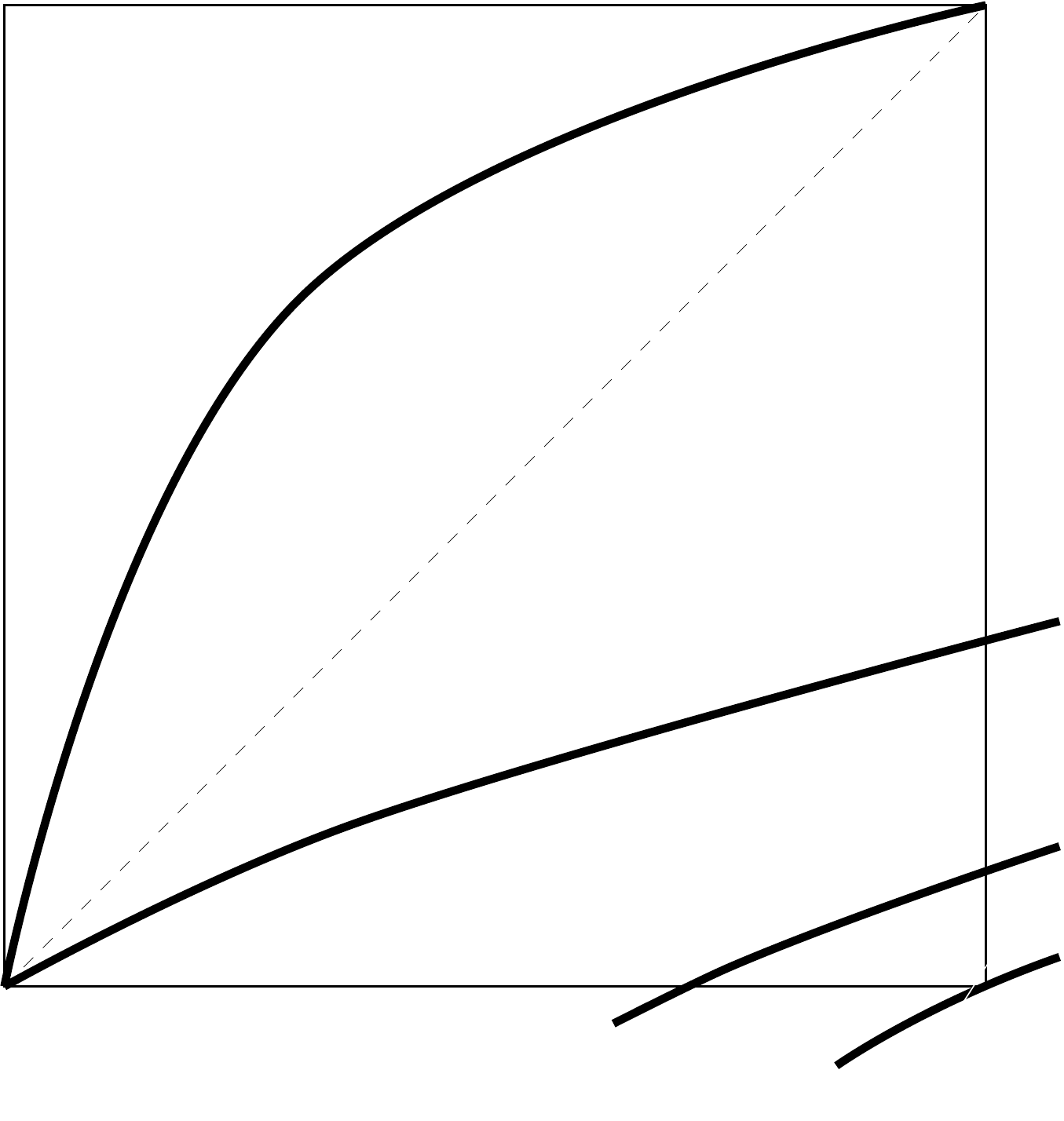}
 	\put(0,0){Ib)}
	\put(27,82){\small $\tilde f_0$}
 	\put(89,8){\small $\tilde f_{1,t_{\rm h}}$}
 	\put(89,27){\small $\tilde f_{1,t}$}
 	\put(89,48){\small $\tilde f_{1,t_{\rm c}}$}
 \end{overpic}\hspace{0.7cm}
 \begin{overpic}[scale=.25]{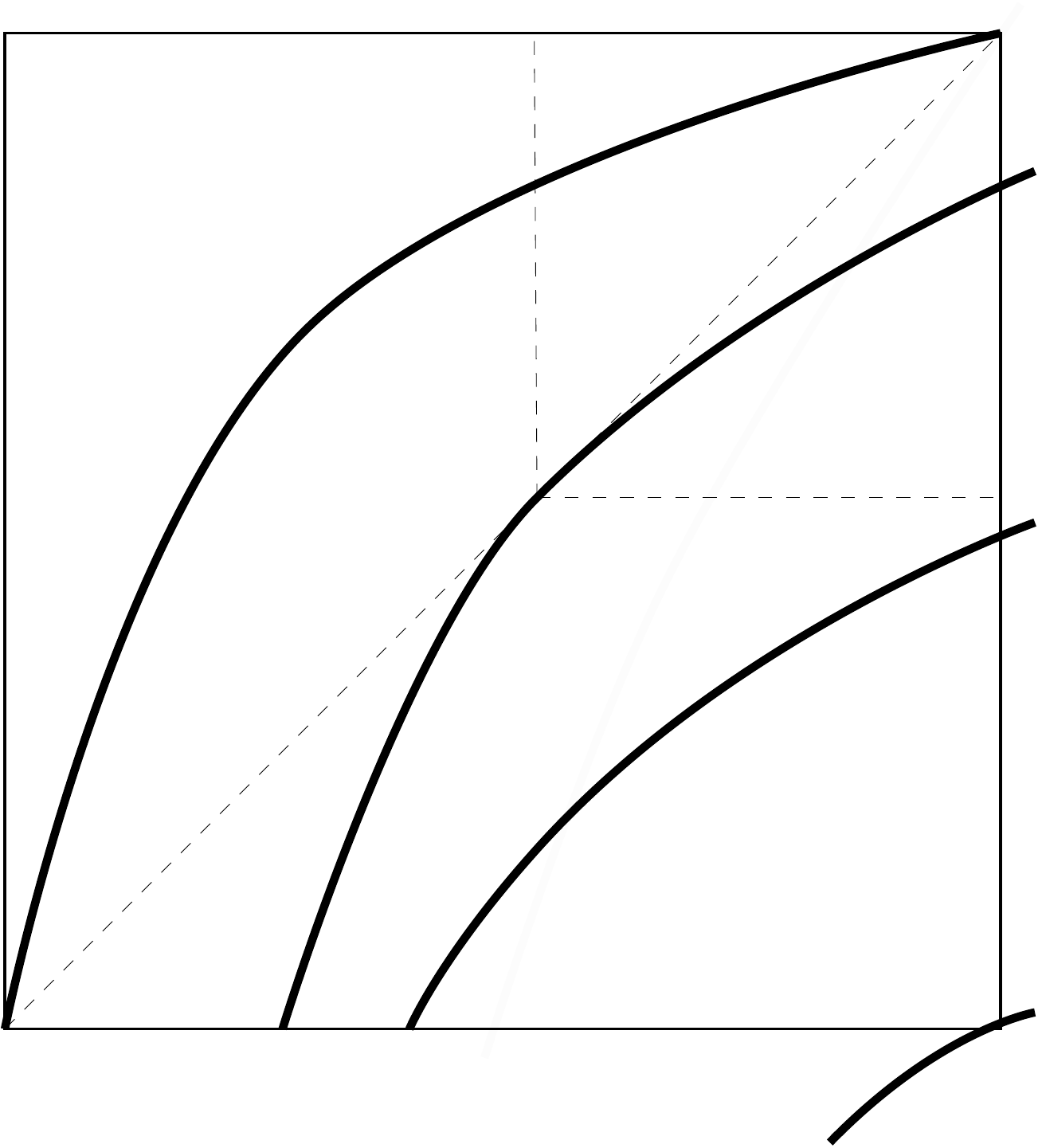}
 	\put(0,0){II)}
	\put(27,80){\small $\tilde f_0$}
 	\put(90,10){\small $\tilde f_{1,t_{\rm h}}$}
 	\put(90,50){\small $\tilde f_{1,t}$}
 	\put(90,85){\small $\tilde f_{1,t_{\rm c}}$}
 \end{overpic}
 \caption{The bifurcation scenarios}
 \label{fig.bifs}
\end{figure}

Returning to the globally defined skew-product map $\tilde F$ in \eqref{eq:parental}, consider one whose fiber maps $\tilde f_0$ and $\tilde f_1$  satisfy conditions (H1)--(H2) in the interval $[0,1]$. 
Our previous study applies to the set $\Gamma$ defined in \eqref{def:Gamma}, which is indeed the locally maximal  invariant set of $\tilde F$ in the open set $\Sigma_2 \times (-\varepsilon, 1+\varepsilon)$ for some small $\varepsilon>0$. Though the dynamics of $\tilde F$ may have other ``pieces" beyond $\Gamma$.%
\footnote{Giving an interpretation using the \emph{filtrations} in Conley theory \cite{Con:78} for the study of chain recurrence classes,  the  set  $\Gamma$ is the maximal invariant set of a filtrating neighbourhood. This means that there are two compact sets with nonempty interiors  $M_1$ and $M_2$, with  $M_1\subset \mathrm{int}(M_2)$ and such that $\tilde F(M_i)\subset \mathrm{int}(M_i)$, $i=1,2$. Then one studies the dynamics in $M_2 \setminus M_1$ which is a level of the filtration. Further levels may also be considered. In some cases, $\Gamma$ may split into two separated parts, and hence there is another filtrating set separating them. Indeed, this occurs when the homoclinic classes $H(P,\tilde F)$ and $H(Q,\tilde F)$ both are hyperbolic (see Theorem \ref{teo:homoclinicclasses}).  
}
To perform a bifurcation analysis, we embed $\tilde F$ in a one-parameter family $\tilde F_t$, $t\in[t_{\rm h},t_{\rm c}]$, defined by means of fiber maps $\{\tilde f_0,\tilde f_{1,t}\}$. Here the parameter $t_{\rm h}$ corresponds to a heterodimensional cycle and $t_{\rm c}$ to a ``collision''. Concerning the latter, there are two qualitatively different scenarios that will be studied below, compare Figure \ref{fig.bifs}. 
Topologically, in Cases Ia) and Ib), there occurs a collision of a homoclinic class in $\Gamma$ with one ``coming from outside" that does not involve parabolic points. In Case II, there occurs an internal collision of the classes of $P$ and $Q$ along  a parabolic orbit. In either case, the maximal invariant set that emerges at the bifurcation has full entropy $\log2$. The further analysis depends on the choice of the concave maps. On one hand, we may observe an explosion of the space of  admissible sequences (Proposition \ref{prop:explosionsequences}). On the other hand, in the ergodic level, we may observe an explosion in the space of measures and in entropy (Proposition \ref{pro:entropyjump}). Finally, we see how the twin-structure of the measures in Theorem \ref{teo:1} changes in Case I) (Proposition \ref{p.FSTmeasures}). In Sections \ref{sec:1134} and \ref{sec:1135} we will give an interpretation of the Cases I) and II), respectively.

\subsection{Formal setting}
Consider real functions $\tilde f_0,\tilde f_1$ whose restrictions to the interval $[0,1]$, that again we denote by $f_0,f_1$, satisfy the following hypothesis that is slightly more general than (H1) in the way that it allows $f_1$ to have a fixed point. 
\begin{itemize}
\item[(\~H1)] $f_0\colon [0,1]\to[0,1]$ is a differentiable increasing map such that $f_0$ is onto and satisfies $f_0(0)=0$, $f_0(1)=1$, $f_0'(0)>1$, $f_0'(1)\in(0,1)$, and $f_0(x)>x$ for every $x\in(0,1)$. 
Moreover, there is $d\in[0,1]$ such that $f_1\colon[d,1]\to[0,1]$ is a differentiable increasing map satisfying $f_1(x)\le x$ for every $x\in[d,1]$ such that $f_1(d)=0$, and $f_1$ has at most one fixed point in $[d,1]$. 
\end{itemize}

Unless stated otherwise, in this section, we fix a map $\tilde f_0$ and consider a one-parameter family of maps $\{\tilde f_{1,t}\}_{t\in[t_{\rm h},t_{\rm c}]}$ such that $\{f_0,f_{1,t}\}$ satisfy (\~H1) and (H2), with the corresponding (possibly degenerate) intervals $[d_t,1]\subset [0,1]$, for every $t\in[t_{\rm h},t_{\rm c}]$.%
\footnote{One could also study a more general case when $\tilde f_0$ also depends on the parameter $t$, hence changing its fixed points. This analysis just would require  straightforward modifications of the domains of the maps.}
We will invoke (H2+) only in Proposition \ref{p.FSTmeasures}.
We assume $f_{1,t}$ to be continuous in $t$ in the $C^1$ topology.  Moreover, the family $\{f_0,f_{1,t}\}$ satisfies (H1) for every $t\in(t_{\rm h},t_{\rm c})$ and at $t=t_{\rm h}$ and $t=t_{\rm c}$ complies the following ``exit bifurcation scenarios"  for a family of maps satisfying (\~H1):
\begin{itemize}
\item (\emph{bifurcation parameter} $t=t_{\rm h}$) $f_{1,t_{\rm h}}(1)=0$,
\item (\emph{bifurcation parameter} $t=t_{\rm c}$) there exists $a\in [0,1]$ such that $f_{1,t_{\rm c}}(a)=a$.
\end{itemize}
Note that for every $t\in(t_{\rm h},t_{\rm c}]$ the interval $[d_t,1]$ is nondegenerate.
By assumption, for every $t\in(t_{\rm h},t_{\rm c})$ the family $\{f_0,f_{1,t}\}$ satisfies (H1)--(H2) and therefore the corresponding results of previous sections hold. 

There are three cases of what can happen at the bifurcation point $t=t_{\rm c}$:
\begin{itemize}
\item[I)] (hyperbolic case)
$f_{1,t_{\rm c}}'(a)\ne1$
\begin{itemize}
\item[a)] $f_{1,t_{\rm c}}'(a)>1$ (and hence $a=1)$,
\item[b)] $f_{1,t_{\rm c}}'(a)<1$ (and hence $a=0$),
\end{itemize}
\item[II)] (parabolic case) $f_{1,t_{\rm c}}'(a)=1$ (in this case there is no \emph{a priori} restriction on the value $a\in[0,1]$).
\end{itemize}
Cases Ia) and Ib) are actually identical up to the time reversal (compare also Remark \ref{rem:time-reversal}), so we only consider Case Ia). 

\begin{remark}
Notice that the induced IFS $\{f_0,f_{1,t_{\rm c}}\}$ on $[0,1]$, corresponding to Case I, was also studied in \cite{FanSimTot:06}, though the focus there was on the stationary measures (and also assuming contraction on average) which represent a special subclass of invariant measures.  Closer to our approach is \cite{AlsMis:14} studying the so-called mystery of the vanishing attractor where concavity properties similar to the ones in Section \ref{sec:concavemaps} are used.
\end{remark}

\begin{example}\label{exa:simple}
One simple example is the family
\[
	 f_0, \quad
	\tilde f_{1,t}=\tilde f_1+t,
\]
where $f_0$ satisfies (H1)--(H2) and $\tilde f_1$ is a differentiable increasing map of $\bR$ such that $\tilde f_1'$ is not increasing. Now consider the associated maps $\{f_0,f_{1,t}\}$, where 
\[
	f_0 = \tilde f_0|_{[0,1]},\quad
	f_{1,t}=\tilde f_{1,t}|_{[\tilde f_{1,t}^{-1}(0),1]}.
\]	 
In this example, in either case  we have $t_{\rm h}=-\tilde f_1(1)$. The value $t_{\rm c}$ depends on the cases:
\begin{itemize}
\item if $\tilde f_1'(1)>1$ then $a=1$ and $t_{\rm c}=1-\tilde f_1(1)$,
\item if $\tilde f_1'(1)\le1$ and there exists $c\in[0,1]$ with $\tilde f_1'(c)=1$ then $a=c$ and $t_{\rm c}=c-\tilde f_1(c)$,  
\item if $\tilde f_1'(0)<1$  then $a=0$ and $t_{\rm c}=-\tilde f_1(0)$.
\end{itemize}
The first and the last case correspond to Ia) and Ib), respectively, while the second case corresponds to II).
\end{example}

Let $\tilde F_t$, $t\in[t_{\rm h}, t_{\rm c}]$, be the corresponding skew-product and denote by $\Gamma^{(t)}$ the analogously defined maximal invariant set of $\tilde F_t$ in $\Sigma_2 \times (-\varepsilon_t, 1+\varepsilon_t)$ for some small $\varepsilon_t>0$. Define 
\[
	F_t
	\eqdef {\tilde F_t}|_{\Gamma^{(t)}}
	\quad\text{ and let }\quad
 	\Sigma^{(t)}
	\eqdef \pi(\Gamma^{(t)})
\]
be the space of admissible sequences for $F_t$.  

\begin{remark}\label{rem:idiot}
Note that in Cases I) and II), we have $\Sigma^{(t_{\rm c})}=\{0,1\}^\bZ$, because for $t=t_{\rm c}$ all sequences are forward and backward admissible at $a$. 
In particular,
\[
	\Sigma_2\times\{a\}\subset\Gamma^{(t_{\rm c})}.
\]
Meanwhile, $\Sigma^{(t_{\rm h})}$ is a countable set consisting of sequences with at most one appearance of the symbol $1$. Thus, the topological entropy of the bifurcating maps $F_{t_{\rm c}}$ and $F_{t_{\rm h}}$ are $\log 2$ and 0, respectively.
\end{remark}

\begin{remark}[Discussion of Figure \ref{fig.bif}]\label{rem:fig.bif}
	Figure \ref{fig.bif} depicts the following points
\[\begin{split}	
	A
	&= ((0^{-\bN}1.0^\bN),0) 
	\in \cW^\s(Q,\tilde F_{t_{\rm h}})\cap\cW^\u(P,\tilde F_{t_{\rm h}}),\\
	B&= ((0^{-\bN}1.0^\bN),0) 
	\in \cW^\s(Q,\tilde F_{t_{\rm c}})\cap\cW^\u(Q,\tilde F_{t_{\rm c}}),
	\quad
	B\in H(Q,\tilde F_{t_{\rm c}}),\\
	C&= ((0^{-\bN}1.0^\bN),\tilde f_{1,t_{\rm c}}(1)) 
	\in \cW^\s(P,\tilde F_{t_{\rm c}})\cap\cW^\u(P,\tilde F_{t_{\rm c}}),
	\quad
	C\in H(P,\tilde F_{t_{\rm c}}),\\
	D&= ((0^{-\bN}1.0^\bN),0) 
	= \cW^\s(Q,\tilde F_{t_{\rm c}})\cap\cW^\u(P,\tilde F_{t_{\rm c}}),
	\quad
	D\in H(Q,\tilde F_{t_{\rm c}}),\\
	E&= ((0^{-\bN}1.0^\bN),1) 
	\in \cW^\s(P,\tilde F_{t_{\rm c}})\cap\cW^\u(P,\tilde F_{t_{\rm c}}),
	\quad
	E\in H(P,\tilde F_{t_{\rm c}}),\\
	S
	&=(1^\bZ,0),\\
	R
	&=(1^\bZ,1).
\end{split}\]	

The point $R$ is a fixed point of $\tilde F_{t_{\rm c}}$ of expanding type and hence cannot be homoclinically related to the fixed points $P$ of contracting type. Moreover, $\Sigma_2\times\{1\}\subset H(P,\tilde F_{t_{\rm c}})\cap H(R,\tilde F_{t_{\rm c}})$ and $R$ and $P$ are involved in a heterodimensional cycle. Analogous arguments apply to $S$ and $Q$ and the set $\Sigma_2\times\{0\}$.

Finally, the difference between the points $C$ and $E$ is that while both are homoclinic points of $P$,  the latter is a contained in the intersection of the ``strong stable'' set and the unstable set of $P$. Analogously for the homoclinic points $B$ and $D$ of $Q$.
\end{remark}

\begin{remark}[Nondecreasing complexity]
For the simple bifurcation family in Example \eqref{exa:simple}, the family of compact sets $\{\Sigma^{(t)}\}_t$ is nondecreasing in $t$. Hence, the topological entropy of $\tilde F_t$ on $\Gamma^{(t)}$ is nondecreasing and there is no ``annihilation" of periodic points as $t$ increases. This may not be the case in a general situation.
As observed before, we pass from zero entropy (for $t=t_{\rm h}$) to full entropy ($t=t_{\rm c}$).
These features resemble somewhat to the ones in the quadratic family of maps $g_\lambda(x)=\lambda x(1-x)$, $\lambda\in[1,4]$, see for example, \cite{MilThu:88}. In the quadratic family the creation of periodic points occurs through saddle-node and flip bifurcations as well as the creation of ``homoclinic tangencies" (in the sense that the critical point is pre-periodic). In our case, periodic points are created either through saddle-node bifurcations or heterodimensional cycles  (see Remark \ref{rem:herocycle}).
In some sense, one may regard this family as a partially hyperbolic version of the quadratic family showing a nondecreasing transition from trivial dynamics to full chaos. 
\end{remark}

\subsection{Bifurcation at $t_{\rm h}$: heterodimensional cycles}

By Remark \ref{rem:herocycle},  the map $\tilde F_{t_{\rm h}}$ has a heterodimensional cycle associated to $P=(0^\bZ,1)$ and $Q=(0^\bZ,0)$. 

\subsection{Bifurcation at $t_{\rm c}$: collisions of sets}\label{ss.explosionofentropyand}

Consider the function $C\colon[t_{\rm h},t_{\rm c}]\to [0,\infty]$,
\begin{equation}\label{eq:entjumpCt}
	C(t)\eqdef 
	\begin{cases}
		\displaystyle\frac{ \lvert \log f_{0}'(1)\rvert}{\log f_{1,t}'(1)}
			&\text{ if }f_{1,t}'(1)>1,\\
		\infty
			&\text{ otherwise}.
	\end{cases}	
\end{equation}
By hypothesis, $C$ is continuous.

\subsubsection{Space of admissible sequences}

Recall the definitions of the upper and lower frequencies $\overline\freq$ and $\underline\freq$ in \eqref{eq:deffreq}.

\begin{proposition}\label{prop:explosionsequences}
Given $C\ge0$, let 
\[
	S_C
	\eqdef \left\{\xi\in\Sigma_2\colon\frac{\overline\freq(\xi,1)}{\underline\freq(\xi,0)}
	\le 	C\right\}.
\]
Then for every $\varepsilon>0$ there exists $\delta>0$ such that for $t\in (t_{\rm c}-\delta,t_{\rm c})$ we have
\[
	\Sigma^{(t)}\subset S_{C(t_{\rm c})+\varepsilon}.
\]
In particular, if $C(t_{\rm c})<\infty$ then there is a jump in the space of admissible sequences.
Moreover, independently of the value $C(t_{\rm c})$, $\Sigma^{(t)}\to\Sigma_2$ in the Hausdorff distance as $t\to t_{\rm c}$.
\end{proposition}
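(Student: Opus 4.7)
My plan is to split the proof into two independent parts: first, the inclusion $\Sigma^{(t)}\subset S_{C(t_{\rm c})+\varepsilon}$ giving both the main statement and the jump; second, the Hausdorff convergence $\Sigma^{(t)}\to\Sigma_2$.

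For the first part I would apply Corollary~\ref{cor:exponentextreme-xi} to the IFS $\{f_0,f_{1,t}\}$, which satisfies (H1)--(H2) for every $t\in(t_{\rm h},t_{\rm c})$: every $\xi\in\Sigma^{(t)}$ satisfies
\[
	\underline\freq(\xi,0)\log f_0'(1)
	+ \overline\freq(\xi,1)\log f_{1,t}'(1)
	\le 0.
\]
Since $\log f_0'(1)<0$ by (H1), when $f_{1,t}'(1)>1$ this rearranges to $\overline\freq(\xi,1)/\underline\freq(\xi,0)\le C(t)$; otherwise $C(t)=\infty$ and the ratio bound is vacuous. Hence $\Sigma^{(t)}\subset S_{C(t)}$ for every $t\in(t_{\rm h},t_{\rm c})$. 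The assumed $C^1$-continuity of $t\mapsto f_{1,t}$ forces continuity of $t\mapsto f_{1,t}'(1)$, so $\limsup_{t\to t_{\rm c}^-}C(t)\le C(t_{\rm c})$, which delivers the stated inclusion $\Sigma^{(t)}\subset S_{C(t_{\rm c})+\varepsilon}$ for $t$ close to $t_{\rm c}$. If $C(t_{\rm c})<\infty$, the constant sequence $1^\bZ$ has $\overline\freq(1^\bZ,1)/\underline\freq(1^\bZ,0)=\infty$ and thus does not belong to $S_{C(t_{\rm c})+\varepsilon}$, whereas $1^\bZ\in\Sigma_2=\Sigma^{(t_{\rm c})}$ (recall Remark~\ref{rem:idiot}); this strict inclusion is the announced jump.

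For the Hausdorff convergence I would argue that, since there are only finitely many cylinders of each finite length, it suffices to show that for every finite word $w=(w_{-n}\cdots w_n)\in\{0,1\}^{2n+1}$ there is $\delta_w>0$ such that for every $t\in(t_{\rm c}-\delta_w,t_{\rm c})$ some $\xi\in\Sigma^{(t)}$ has $w_i$ at position $i$ for all $|i|\le n$. My construction is $\xi=(0^{-\bN}\,w\,0^\bN)$ with $w$ at positions $-n,\ldots,n$. Because $f_0\colon[0,1]\to[0,1]$ is an increasing bijection, both tails $0^{-\bN}$ and $0^\bN$ are always admissible at every point of $[0,1]$, so the only remaining condition is admissibility of the finite word $w$ at some reference point $z^\ast\in[0,1]$. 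I would take $z^\ast=1$ in Cases Ia and Ib and $z^\ast=a$ in Case II. In each of these cases, at $t=t_{\rm c}$ both $f_0$ and $f_{1,t_{\rm c}}$ are injective and preserve $(0,1]$, so the finite orbit $z_0^\ast,\ldots,z_{2n}^\ast$ of $z^\ast$ under $w$ remains strictly positive and $\varepsilon_w\eqdef\min_i z_i^\ast>0$. The $C^1$-continuity of $f_{1,t}$ in $t$ together with $d_t\to d_{t_{\rm c}}$ then guarantees that for $t$ sufficiently close to $t_{\rm c}$ the perturbed iterates $z_i(t)$ of $z^\ast$ under the $t$-IFS satisfy $z_i(t)\ge\varepsilon_w/2>d_t$ at every step where $w_i=1$. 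Thus $w$ is admissible at $z^\ast$ for $F_t$, and the bi-infinite extension $\xi$ lies in $\Sigma^{(t)}$.

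The main technical point will be the Case Ib verification, where $z^\ast=1$ is not a common fixed point of $\{f_0,f_{1,t_{\rm c}}\}$ and accommodating long $1$-blocks requires iterates of $1$ under $f_{1,t}$ to remain above $d_t$; the argument survives precisely because the limit orbit under $f_{1,t_{\rm c}}$ is strictly positive at every finite step (even though it tends to $0$) and the $C^1$-perturbation is uniform on each finite window, whereas Cases Ia and II are essentially automatic since $z^\ast$ is a fixed point of the relevant map at $t=t_{\rm c}$.
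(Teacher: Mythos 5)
Your treatment of the frequency bound and of the jump coincides with the paper's: both rest on Corollary~\ref{cor:exponentextreme-xi} applied to the IFS $\{f_0,f_{1,t}\}$ for $t\in(t_{\rm h},t_{\rm c})$, on the continuity of $t\mapsto C(t)$, and on the identity $\Sigma^{(t_{\rm c})}=\Sigma_2$ from Remark~\ref{rem:idiot}. For the Hausdorff convergence you argue word by word, exhibiting for each finite word $w$ the explicit admissible sequence $(0^{-\bN}w\,0^\bN)$ via a perturbation of the finite orbit of a reference point $z^\ast$; the paper instead reduces everything to the single word $1^n$ (forward admissible at $1$ for $t$ close to $t_{\rm c}$ because the finite $f_{1,t_{\rm c}}$-orbit of $1$ is strictly positive) and then invokes Remark~\ref{rem:admissperio01seq} --- replacing $1$'s by $0$'s preserves admissibility --- together with shift-invariance to cover all cylinders at once. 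Your route is more laborious but self-contained; the paper's is shorter because it exploits the monotone structure of the admissibility constraints, which lets the single ``hardest'' word $1^n$ control all others.

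One point in your construction does fail as written: in Case II you take $z^\ast=a$, and when $a=0$ (which the setting allows, since in the parabolic case there is no a priori restriction on $a\in[0,1]$) the reference orbit is identically $0$, so $\varepsilon_w=0$ and the inequality $z_i(t)\ge\varepsilon_w/2>d_t$ is vacuous; worse, $d_t>0$ for $t<t_{\rm c}$ while $d_{t_{\rm c}}=0$, so the point $0$ is not even in the domain of $f_{1,t}$ and the perturbed orbit never starts. The repair is exactly your Case Ib argument transplanted: take $z^\ast=1$ there as well, noting that its finite forward orbit under any word is strictly positive at $t=t_{\rm c}$ because $f_{1,t_{\rm c}}$ is increasing and fixes $0$. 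This is in effect what the paper's uniform choice of testing forward admissibility of $1^n$ at the point $1$ accomplishes in all three cases simultaneously.
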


\begin{proof}
The first statement is an immediate consequence of Corollary \ref{cor:exponentextreme-xi}. Indeed, the ratio of frequencies cannot be larger than $C(t)$ and $C(t)\to C(t_{\rm c})$ as $t\to t_{\rm c}$.

By Remark \ref{rem:idiot}, we have $\Sigma^{(t_{\rm c})}=\Sigma_2$, which implies the claimed jump if $C(t_{\rm c})<\infty$.

For the last assertion, observe that for any $n\ge1$ there is $\varepsilon_n$ such that for every $t\in(t_{\rm c}-\varepsilon_n,t_{\rm c})$ the word $1^n$ is forward admissible in $\Sigma^{(t)}$. Hence, every word of length $n$ is forward admissible in $\Sigma^{(t)}$, which means that $\Sigma^{(t)}$ intersects every (forward) $n$-th level cylinder for $t$ small enough. This, together with the fact that $\Sigma^{(t)}$ is shift-invariant, implies the convergence in Hausdorff distance.
\end{proof}

\subsubsection{Spaces of measures and entropy}

Let  
\[
	\mathcal H(p) 
	\eqdef -p\log p - (1-p)\log(1-p),
	\quad\text{ for }\quad
	p\in(0,1).
\]	

\begin{proposition}\label{pro:entropyjump}
	For every $t<t_{\rm c}$ and $C(t)$ as in \eqref{eq:entjumpCt} we have that
\[
	\cM(\Sigma^{(t)})
	\subset 
		\Big\{\nu\in\cM(\Sigma_2)\colon\frac{\nu([1])}{\nu([0])}\le C(t)\Big\}
\]	
is a closed proper subset of $\cM(\Sigma_2)$. Moreover, for every $t<t_{\rm c}$ we have
\begin{equation}\label{eq:blabla}
	\sup_{\nu\in\cM_{\rm erg}(\Sigma^{(t)})}h(\nu)
	= h_{\rm top}(\sigma,\Sigma^{(t)})
	\le 
	\mathcal H(p_t),
	\,\text{ where }\,
	p_t\eqdef \min\Big\{\frac12,\frac{C(t)}{1+C(t)}\Big\}.
\end{equation}

In particular, if $C(t_{\rm c})<1$ then the space $\cM(\Sigma_2)$ is not the weak$\ast$ limit of the subspaces $\cM(\Sigma^{(t)})$ as $t\to t_{\rm c}$ and there is a jump in entropy at $t_{\rm c}$, in the sense that
\[	
	\mathcal H(p_{t_{\rm c}})
	= \mathcal H(\frac{C(t_{\rm c})}{1+C(t_{\rm c})})
	< 
	\mathcal H(\frac12)= \log2
	= h_{\rm top}(\sigma,\Sigma_2)
	 = h_{\rm top}(\sigma,\Sigma^{({t_{\rm c}})}).
\]
\end{proposition}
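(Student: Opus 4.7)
The plan is to extract everything from the linear inequality in Lemma \ref{lem:exponentextreme}, which is essentially the only nontrivial input. For each $\nu\in\cM_{\rm erg}(\Sigma^{(t)})$ that lemma reads $\nu([0])\log f_0'(1)+\nu([1])\log f_{1,t}'(1)\le 0$, and since $\log f_0'(1)=-\lvert\log f_0'(1)\rvert<0$ by (H1), it rearranges to $\nu([1])\log f_{1,t}'(1)\le\nu([0])\lvert\log f_0'(1)\rvert$. In the interesting case $f_{1,t}'(1)>1$ (the case $f_{1,t}'(1)\le 1$ being vacuous, since then $C(t)=\infty$), this gives exactly $\nu([1])/\nu([0])\le C(t)$. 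The inequality is affine in $\nu$, so integrating over the ergodic decomposition extends it to every $\nu\in\cM(\Sigma^{(t)})$. I would then observe that the constraint set is weak$\ast$-closed (since $\nu\mapsto\nu([i])$ is continuous, as $[0],[1]$ are clopen in $\Sigma_2$) and a proper subset of $\cM(\Sigma_2)$ (for instance $\delta_{1^\bZ}$ violates it whenever $C(t)<\infty$).

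Next I would derive the entropy bound. The partition $\alpha=\{[0],[1]\}$ is a two-sided generator for the shift on $\Sigma_2$, hence also for every subshift, so by Kolmogorov-Sinai $h(\nu)=h(\sigma,\nu;\alpha)\le H_\nu(\alpha)=\mathcal H(\nu([1]))$. From the constraint already established, $\nu([1])\le C(t)/(1+C(t))$. Since $\mathcal H$ is strictly increasing on $[0,1/2]$ with maximum $\log 2$ at $1/2$, it follows that $h(\nu)\le\mathcal H(p_t)$ with $p_t=\min\{1/2,C(t)/(1+C(t))\}$. The first equality in \eqref{eq:blabla} is then just the variational principle for $\sigma$ on the subshift $\Sigma^{(t)}$.

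For the two jump statements, I would combine these bounds with the identity $\Sigma^{(t_{\rm c})}=\Sigma_2$ from Remark \ref{rem:idiot} (so $h_{\rm top}(\sigma,\Sigma^{(t_{\rm c})})=\log 2$) and the continuity of $t\mapsto C(t)$. When $C(t_{\rm c})<1$ one has $p_{t_{\rm c}}=C(t_{\rm c})/(1+C(t_{\rm c}))<1/2$ and thus $\mathcal H(p_{t_{\rm c}})<\log 2$, which yields the entropy discontinuity. For the measure-space jump, any weak$\ast$ accumulation point $\nu$ of a sequence $\nu_n\in\cM(\Sigma^{(t_n)})$ with $t_n\nearrow t_{\rm c}$ inherits, by weak$\ast$-closedness and continuity of $C$, the limiting constraint $\nu([1])/\nu([0])\le C(t_{\rm c})<1$; since the Bernoulli $(1/2,1/2)$ measure on $\Sigma_2$ has ratio $1$, it cannot be so approximated, although it sits in $\cM(\Sigma_2)=\cM(\Sigma^{(t_{\rm c})})$. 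I do not expect a real obstacle: the main dynamical content is already packaged in Lemma \ref{lem:exponentextreme}, and the remaining arguments are standard variational/entropy bookkeeping together with the elementary behaviour of $\mathcal H$.
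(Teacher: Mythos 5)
Your proposal is correct and follows essentially the same route as the paper: the ratio bound comes from Lemma \ref{lem:exponentextreme} (extended to nonergodic measures by affinity over the ergodic decomposition), and the entropy bound comes from the Kolmogorov--Sinai theorem applied to the generating partition $\{[0],[1]\}$ together with the variational principle and the monotonicity of $\mathcal H$ on $[0,1/2]$. You additionally spell out the closedness/properness of the constraint set and the two "in particular" jump statements, which the paper leaves implicit, but this is the same argument with more bookkeeping.
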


\begin{proof}
First observe that by Lemma \ref{lem:exponentextreme} for any $t<{t_{\rm c}}$ and every $\nu\in\cM(\Sigma^{(t)})$, we have
\[	 \frac{\nu([1])}{\nu([0])}
	 \le 
	 \frac{ \lvert \log f_{0}'(1)\rvert}{\log f_{1,t}'(1)}
	=C(t).
\]
This implies the first statement. 

To prove \eqref{eq:blabla}, observe that $\eA=\{[0],[1]\}$ is a generating partition for $\Sigma^{(t)}$. Hence, by the Kolmogorov-Sinai theorem, for any ergodic measure $\nu\in\cM_{\rm erg}(\Sigma^{(t)})$, letting $p\eqdef\nu([1])$ and hence $1-p=\nu([0])$, we have 
\[
	h(\nu)
	= h(\nu,\eA)
	\le -p\log p-(1-p)\log(1-p)
	=\mathcal H(p).
\]	 
As $p\in[0,p_t]$ and $\frac12$ is the maximum of $\mathcal H$, equation \eqref{eq:blabla} follows from the variational principle for entropy, proving the proposition.
\end{proof}

\subsubsection{Structure of the space of measures}

In this section we will put Theorem~\ref{teo:1} in the context of the bifurcation scenario. 

\begin{proposition}\label{p.FSTmeasures}
Assume (\~H1)--(H2+).
There exist continuous functions $\kappa_1,\kappa_2\colon(0,\infty)\to(0,\infty)$ which are increasing and satisfy $\lim_{D\to0}\kappa_i(D)=0$, $i=1,2$, such that, given any $\nu\in\cM_{\rm erg}(\Sigma_2)$, one of the following three cases occurs:
\begin{enumerate}
\item[a)] There exist exactly two measures $\mu_1,\mu_2\in\cM_{\rm erg}(\Gamma^{(t_{\rm c})})$  such that $\pi_\ast\mu_1=\nu=\pi_\ast\mu_2$. In this case, both measures are hyperbolic and have fiber Lyapunov exponents with different signs. More precisely, 
if $\chi(\mu_1)>0>\chi(\mu_2)$ then
for the Wasserstein distance $D\eqdef W_1(\mu_1,\mu_2)$ between $\mu_1$ and $\mu_2$, we have	
\[
	D
	= \int x\,d\mu_2 (\xi,x)-\int x\,d\mu_1(\xi,x)
	>0
\]	
and 
\[
	-\kappa_2(D)
	<\chi(\mu_2)
	<-\kappa_1(D)
	<0<\kappa_1(D)
	<\chi(\mu_1)
	<\kappa_2(D).
\]
\item[b)] There exists exactly one measure $\mu\in\cM_{\rm erg}(\Gamma^{(t_{\rm c})})$ 
with $\pi_\ast\mu=\nu$ and $\chi(\mu)=0$. 
\item[c)] There exists exactly one measure $\mu\in\cM_{\rm erg}(\Gamma^{(t_{\rm c})})$ with $\pi_\ast\mu=\nu$ and $\chi(\mu)>0$. In this case we are in Case Ia) and this measure is supported on $\Sigma_2\times \{1\}$.
\end{enumerate}
\end{proposition}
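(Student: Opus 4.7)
The plan is to adapt the proof of Theorem \ref{teo:1} to the bifurcation parameter $t=t_{\rm c}$, where the essential novelty is the appearance of an additional invariant set (in Case Ia, this is $\Sigma_2\times\{1\}$, since $f_0(1)=1=f_{1,t_{\rm c}}(1)$, so that $\nu\times\delta_1$ becomes a legitimate $F_{t_{\rm c}}$-ergodic lift of any $\nu\in\cM_{\rm erg}(\Sigma_2)$).

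First I would show that for every $\nu\in\cM_{\rm erg}(\Sigma_2)$ there are at least one and at most two ergodic measures in $\cM_{\rm erg}(\Gamma^{(t_{\rm c})})$ projecting to $\nu$. Existence follows by accumulating orbital averages of any point $(\xi,x)\in\Gamma^{(t_{\rm c})}$ with $\xi$ $\nu$-generic. The bound of two is a verbatim replay of Lemma \ref{lem:atmost2}: if three distinct lifts $\mu_1,\mu_2,\mu_3$ existed, one picks generic points $(\xi,x_1),(\xi,x_2),(\xi,x_3)$ in a common fiber with $x_1<x_2<x_3$ (same $\xi$ is possible since all three project to the ergodic $\nu$), and the concavity-based dichotomy of Lemma \ref{lem:monoto} forces two of the $\mu_i$ to coincide. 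Only (H1)--(H2+) are used, which hold throughout $[t_{\rm h},t_{\rm c}]$.

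Next, if exactly two ergodic lifts $\mu_1\ne\mu_2$ exist, the proof of Theorem \ref{teo:1}~Case a) applies verbatim: monotonicity of the fiber maps and Lemma \ref{lem:Wassx} give $D=W_1(\mu_1,\mu_2)=\int x\,d\mu_2-\int x\,d\mu_1>0$, and the quantitative Lyapunov-exponent bounds follow from Lemma \ref{lem:distortion}, Lemma \ref{lem:obv}, and the positive-density estimate of Claim \ref{cla:posdens}. This yields Case (a).

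Now suppose there is a unique ergodic lift $\mu$ of $\nu$. I would split according to whether $\mu$ charges $\Sigma_2\times\{1\}$. If $\mu(\Sigma_2\times\{1\})=0$, then a $\mu$-generic point $(\xi,x)$ with $x<1$ exists; since $\Sigma^{(t_{\rm c})}=\Sigma_2$, every $y\in(x,1)$ lies in $I_\xi$, so $(\xi,y)\in\Gamma^{(t_{\rm c})}$. By uniqueness, any weak$\ast$ accumulation of the orbital averages of $(\xi,y)$ is $F_{t_{\rm c}}$-invariant with all ergodic components projecting to $\nu$, hence all equal to $\mu$; thus $(\xi,y)$ is $\mu$-forward generic, and the distortion argument from Theorem \ref{teo:1}~Case b) (using (H2+) via Lemma \ref{lem:distortion}) rules out $\chi(\mu)\ne 0$, giving Case (b). Otherwise, $\mu$ is supported on $\Sigma_2\times\{1\}$. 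Since $F_{t_{\rm c}}$ restricted to $\Sigma_2\times\{1\}$ is conjugate to the shift $(\sigma,\Sigma_2)$, the only ergodic lift of $\nu$ supported there is $\nu\times\delta_1$, so $\mu=\nu\times\delta_1$; this forces Case Ia (the only case where $\Sigma_2\times\{1\}\subset\Gamma^{(t_{\rm c})}$), and the exponent is $\chi(\mu)=\nu([0])\log f_0'(1)+\nu([1])\log f_{1,t_{\rm c}}'(1)$, giving Case (c) when it is positive.

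The hard part will be the analysis in the unique-lift case, specifically ruling out the ``missing'' alternative where $\mu=\nu\times\delta_1$ is the unique lift with $\chi(\mu)<0$. The argument is that if the exponent at $\Sigma_2\times\{1\}$ were strictly negative, the local contraction near the invariant boundary would permit constructing a second, expanding-type ergodic lift by the orbital recurrence-plus-periodic-approximation scheme of Lemma \ref{lem:existsemihypmeas} (whose proof has to be adapted, since the strict inequality $f_1(x)<x$ fails at $x=1$): one builds periodic sequences $\eta^{(k)}$ along a $\nu$-generic $\xi$ and extracts from Remark \ref{rem:magenta} a second fixed point for $f_{\eta^{(k)}}$ producing an expanding periodic measure; the weak$\ast$ limit then provides an ergodic lift distinct from $\nu\times\delta_1$, contradicting uniqueness. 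Hence uniqueness in the boundary subcase forces $\chi(\mu)\ge 0$, with equality falling under Case (b) and strict inequality giving Case (c).
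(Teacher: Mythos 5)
Your argument is correct and essentially the paper's: Case a) and the at-most-two-lifts count are imported verbatim from Theorem~\ref{teo:1}, and the sign restriction in the unique-lift case is obtained, as in the paper, by lifting periodic approximations of $\nu$ to repelling-or-parabolic periodic points (the paper packages this as Remark~\ref{rem:bifcases} and applies it uniformly, without your preliminary split on whether $\mu$ charges $\Sigma_2\times\{1\}$, but the contradiction with uniqueness is the same). One correction: your parenthetical claim that Case Ia) is \emph{the only} case with $\Sigma_2\times\{1\}\subset\Gamma^{(t_{\rm c})}$ is false --- in Case II) with $a=1$ the inclusion also holds --- though the implication you need survives for the reason contained in your next clause, namely that $\chi(\nu\times\delta_1)=\nu([0])\log f_0'(1)+\nu([1])\log f_{1,t_{\rm c}}'(1)>0$ forces $f_{1,t_{\rm c}}'(1)>1$, which occurs only in Case Ia).
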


Below we see that case c) in the above proposition indeed occurs (see Remark \ref{rem:seehere}).

\begin{remark}\label{rem:bifcases}
Given $\xi=(\xi_0\ldots\xi_{n-1})^\bZ\in\Sigma^{(t_{\rm c})}$, consider the map $g=f_{\xi_0,t}\circ\ldots\circ f_{\xi_{n-1},t}$ (writing $f_{0,t}=f_0$) defined on $I_\xi$. There are the following possibilities according to the hyperbolic case Ia) and the parabolic case II):
\begin{itemize}
\item[Ia)] 
\begin{itemize}
\item either $g$ has a unique fixed point that is parabolic,
\item  or $g$ has a pair of fixed points $p^+_{[\xi_0\ldots\,\xi_{n-1}]}<p^+_{[\xi_0\ldots\,\xi_{n-1}]}$ that are repelling and contracting, respectively.
\end{itemize}
\item[II)] 
\begin{itemize}
\item either $1$ is the unique fixed point of $g$ (which can be repelling or parabolic),
\item or $g$ has a pair of fixed points $p^+_{[\xi_0\ldots\,\xi_{n-1}]}<p^+_{[\xi_0\ldots\,\xi_{n-1}]}$ that are repelling and contracting, respectively.
\end{itemize}
\end{itemize}
\end{remark}

\begin{proof}[Proof of Proposition \ref{p.FSTmeasures}]
The arguments of the proof of Theorem~\ref{teo:1} essentially work.  Case a) is as before. It remains to consider the cases where there is only one measure $\mu \in\cM_{\rm erg}(\Gamma^{(t_{\rm c})})$  projecting to $\nu$. Let us first see that $\chi(\mu)\ge0$. By contradiction, if $\chi(\mu)<0$ then let $\nu=\pi_\ast\mu$. Observe that, by Corollary \ref{corppp.periodicapproximation}, $\nu$ is accumulated by periodic measures $\nu_n$ supported on periodic orbits of sequences $\xi^{(n)}$. 
By Remark \ref{rem:bifcases}, there is a sequence of periodic points $P_n=(\xi^{(n)},p_n)\in\Gamma^{(t_{\rm c})}$ that are either repelling or parabolic. Consider the sequence $(\mu_n)_n$ of periodic measures supported on those points. Taking,  if necessary, a subsequence, we can assume that $\mu_n\to \mu'$. By construction $\pi_\ast (\mu')=\nu$ and $\chi(\mu') \ge 0$.
Thus $\mu'\ne \mu$, a contradiction.

To see that case c) does not occur in the parabolic case II), we argue as above. Indeed, arguing again by contradiction, by Remark \ref{rem:bifcases}, there is a sequence of periodic points $Q_n$ of contracting or parabolic type whose corresponding measures converge to some measure $\mu''$ satisfying $\pi_\ast\mu''=\pi_\ast\mu$ and $\chi(\mu'')\le0$, which is a contradiction.  
\end{proof}

\subsubsection{Illustration of the hyperbolic case Ia)}\label{sec:1134}

We present an example of a skew-product with two parts leading to a collision of homoclinic classes. The map $\tilde F_t$ in $\Sigma_2 \times [0,1]$ has the dynamics discussed in the previous sections. The dynamics of $\tilde F_t$ in $\Sigma_2 \times [1+\delta_t, 2]$, for some small $\delta_t>0$, is a ``twisted twin copy" of the former one, see Figure~\ref{Figurebiggerpicture}. In this way, we get locally maximal invariant sets $\Gamma^{(t)}\subset \Sigma_2\times(\delta_t,1+\delta_t/2)$ and $\Upsilon^{(t)}\subset\Sigma_2\times(1+\delta_t/2,2+\delta_t)$, respectively, which have qualitatively  ``the same'' dynamics when interchanging the roles of the maps $f_0$ and $f_{1,t}$. The sets $\Gamma^{(t)}$ and $\Upsilon^{(t)}$ are disjoint for $t\in (t_{\rm h},t_{\rm c})$ and collide for $t=t_{\rm c}$. Since $\Sigma^{(t_{\rm c})}=\Sigma_2$, this collision is big as the sets $\Gamma^{(t_{\rm c})}$ and $\Upsilon^{(t_{\rm c})}$ intersect in the ``topological horseshoe" $\Sigma_2 \times \{1\}$ whose hyperbolic-like nature depends on the value $C(t_{\rm c})$ defined in \eqref{eq:entjumpCt}.
Assuming that  $C(t_{\rm c})<1$,
according to the results above, we observe at the bifurcation $t=t_{\rm c}$  an explosion of the symbolic space and of entropy. The example illustrates where a substantial part of the ``additional'' symbolic sequences, in particular periodic sequences, come from. Indeed, the hyperbolic periodic points do not simply appear ``out of thin air" but come from outside, that is, from $\Upsilon^{(t)}$.

\begin{figure}[h] 
 \begin{overpic}[scale=.31]{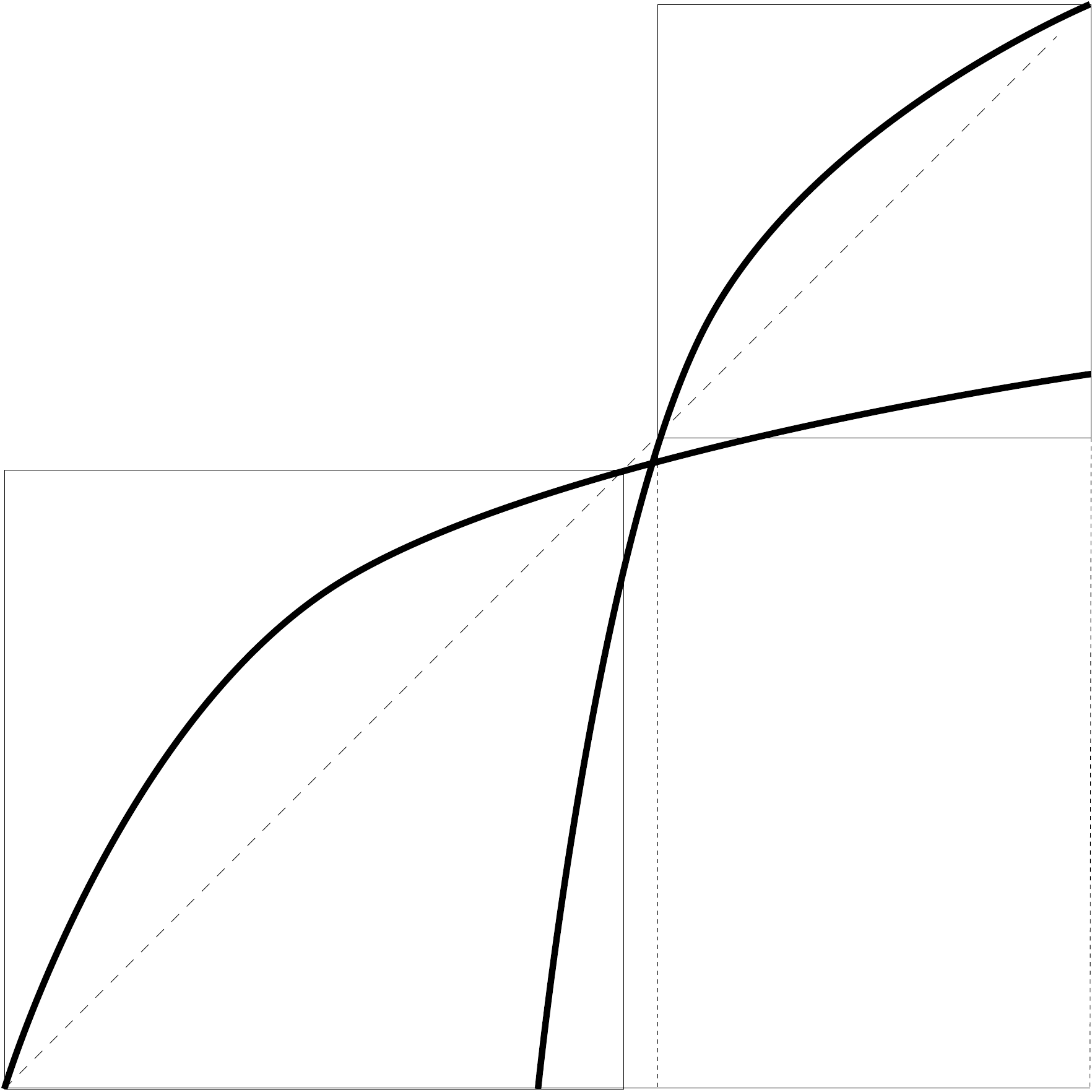}
 	\put(19,43){\small $\tilde f_0$}
 	\put(45,34){\small $\tilde f_{1,t}$}
 	\put(-1,-5){\small $0$}
 	\put(55.5,-5){\small $1$}
 	\put(60,-5){\small $1+\delta$}
 \end{overpic}\hspace{0.9cm}
 \begin{overpic}[scale=.31]{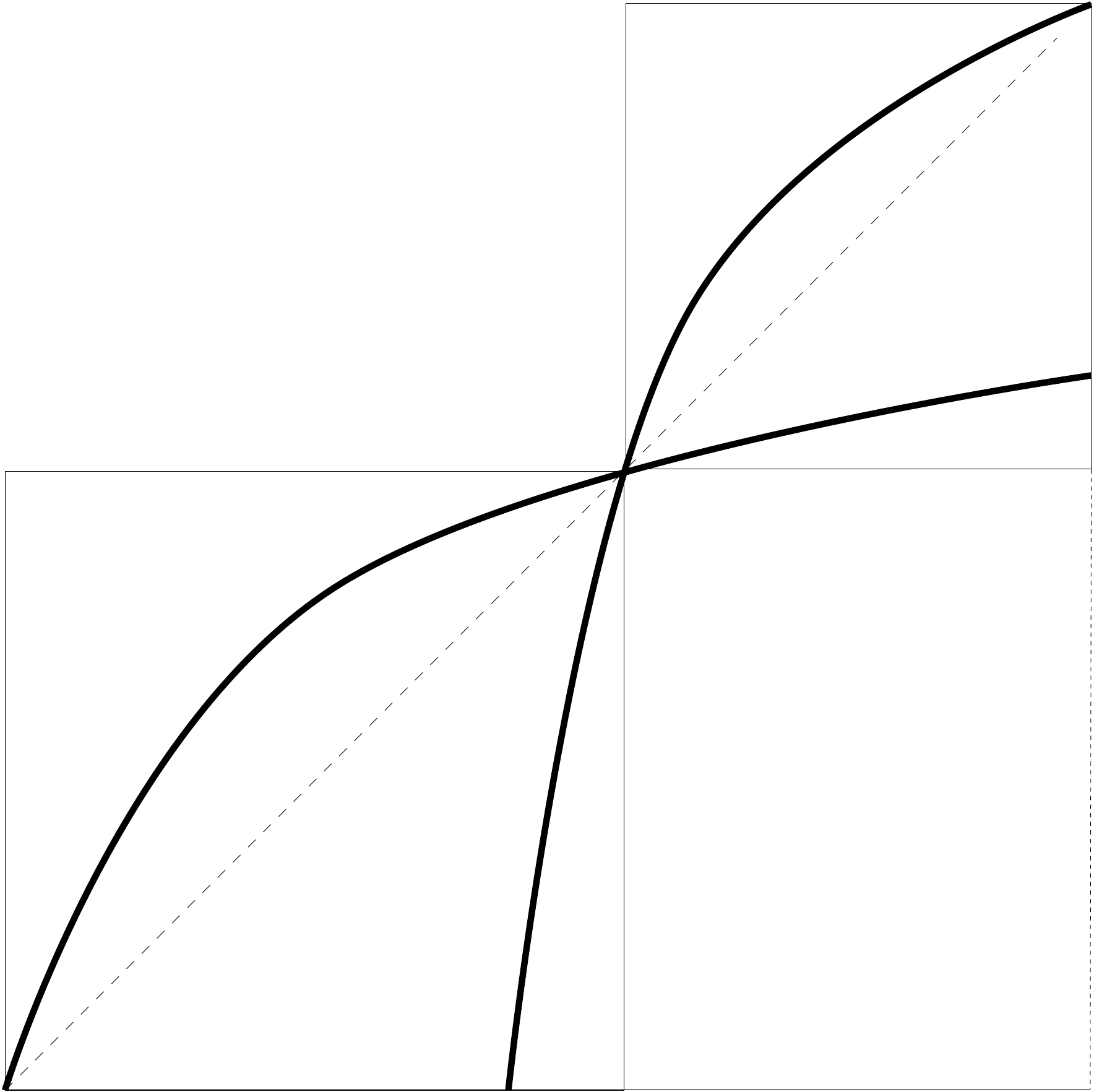}
 	\put(19,43){\small $ f_0$}
 	\put(40,34){\small $ f_{1,t_{\rm c}}$}
 	\put(65,88){\small $ g_{0,t_{\rm c}}^{-1}$}
 	\put(78,66){\small $ g_1^{-1}$}
 	\put(-1,-5){\small $0$}
 	\put(55.5,-5){\small $1$}
 \end{overpic}\hspace{2cm}
  \caption{Bifurcation with collision of homoclinic classes along a topological horseshoe}
 \label{Figurebiggerpicture}
\end{figure}

\begin{remark}[Entropy jump viewed from another side]
\label{rem:threedimensionalexplosions}
Recalling that there is no entropy in the fibers (see \eqref{eqlem:projent}), Proposition \ref{pro:entropyjump} implies that we have the corresponding jump in the topological entropy of $F_t$ at $\Gamma^{(t)}$ at $t=t_{\rm c}$. 

Observe that $\Upsilon^{(t)}$ has a correspondingly defined IFS with fiber maps $g_{0,t}$ and $g_{1}$ given by $g_{0,t}=\tilde f_{1,t}^{-1}$ and $g_1=\tilde f_0^{-1}$ (on appropriately defined domains), compare Figure \ref{Figurebiggerpicture}. 
Analogously, as in \eqref{eq:entjumpCt},  we can define a function $C'(t)$ for the maps $\{g_{0,t},g_1\}$. Note that for the collision parameter $t=t_{\rm c}$ we have $C(t_{\rm c})=C'(t_{\rm c})^{-1}$. Therefore, $C(t_{\rm c})<1$ implies $C'(t_{\rm c})>1$ and, as a consequence of Proposition \ref{pro:entropyjump}, there is no entropy jump in $\Upsilon^{(t_{\rm c})}$. Any entropy jump for $\Gamma^{(t)}$ in fact comes from entropy in $\Upsilon^{(t)}$. 
\end{remark}

\begin{remark}[Explosion of space of sequences and expanding measures without twins]\label{rem:seehere}
We also observe an explosion in the space of \emph{ergodic} measures $\cM_{\rm erg}(\Gamma^{(t)})$ at $t=t_{\rm c}$. The simplest example is the Dirac mass $\delta_R$ at $R=(1^\bZ,1)$ recalling that by hypothesis Ia),
\[
	\chi(\delta_R)
	=\log f_{1,t_{\rm c}}'(1)
	>0.
\]
Indeed, for every periodic sequence $\xi$ with large frequency of $1$s we also have that the measure uniformly distributed in the orbit of  $(\xi,1)$ is of expanding type.  Moreover, any ergodic measure of expanding type supported on $\Sigma_2\times\{1\}$ has no twin. All these measures provide examples of expanding measures without twins, that is, to which Proposition \ref{p.FSTmeasures} Case c) apply. 
None of them can be obtained as weak$\ast$ limits of the sets of invariant measures $\cM_{\rm erg}(\Gamma^{(t)})$ as $t\to t_{\rm c}$.

Arguments similar to the ones in Remark \ref{rem:threedimensionalexplosions} apply and imply that those ``new" measures come from $\cM_{\rm erg}(\Upsilon^{(t)})$.  
\end{remark}

\subsubsection{Illustration of the parabolic case II)}\label{sec:1135}

In what follows, we study the parabolic case recalling the choice of the point $a\in[0,1]$. In fact, we will assume $a\in(0,1)$, when $a\in \{0,1\}$ the statement also is true but the proof resembles methods of the previous subsection.

\begin{proposition}[Convergence to full entropy]
	We have 
\[
	\lim_{t\to t_{\rm c}}h_{\rm top}(\sigma,\Sigma^{(t)})
	= \log 2 
	= h_{\rm top}(\sigma,\Sigma^{(t_{\rm c})}).
\]	
Moreover, $\Sigma^{(t)}$ converges to $\Sigma_2$ in the Hausdorff distance as $t\to t_{\rm c}$.
\end{proposition}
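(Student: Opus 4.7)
The plan is as follows. The upper bound $h_{\rm top}(\sigma,\Sigma^{(t)})\le\log 2$ and one half of the Hausdorff distance are automatic since $\Sigma^{(t)}\subset\Sigma_2$, and by Remark \ref{rem:idiot} these trivial bounds are attained at $t=t_{\rm c}$. The plan is to obtain the reverse one-sided estimates from a single construction: for every $\eta>0$, exhibit a sub-SFT $\Xi\subset\Sigma^{(t)}$ with $h_{\rm top}(\sigma,\Xi)\ge\log 2-\eta$ whose admissible words realize every prescribed pattern of arbitrary finite length, provided $t$ is close enough to $t_{\rm c}$.

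For the geometric setup, note that $f_{1,t_{\rm c}}(d_{t_{\rm c}})=0$ and $f_{1,t_{\rm c}}(a)=a>0$ together with monotonicity force $d_{t_{\rm c}}<a$. I would fix $\varepsilon>0$ so that $d_{t_{\rm c}}<a-\varepsilon$ and $1-\varepsilon>a$, set $V\eqdef[1-\varepsilon,1]$, and choose $K$ with $f_0^K([a-\varepsilon,1])\subset V$; such $K$ exists because $1$ is an attracting fixed point of $f_0$ and $a-\varepsilon>0$. A monotonicity argument in the spirit of Section \ref{sec:undsymspa}, using that $f_0\ge\mathrm{id}$ and $f_{1,t}\le\mathrm{id}$ on its domain and both are increasing, yields: whenever $1^n$ is forward admissible at $y$, every $\omega\in\{0,1\}^n$ is forward admissible at $y$ with all partial iterates bounded below by the $f_{1,t}$-iterates of $y$. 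For $t$ close enough to $t_{\rm c}$ one has $f_{1,t}^k(1-\varepsilon)>a-\varepsilon>d_t$ for $k=0,\ldots,n$, so every $\omega\in\{0,1\}^n$ is forward admissible at every $y\in V$ with $f_{[\omega]}^{(t)}(y)\ge a-\varepsilon$; composing with $f_0^K$ then maps this back into $V$. Let $\Xi\subset\Sigma_2$ be the SFT of sequences that are, up to a shift, bi-infinite concatenations of blocks of the form $\omega 0^K$ with $\omega\in\{0,1\}^n$. To embed $\Xi$ in $\Sigma^{(t)}$, fix $\xi\in\Xi$ with block decomposition $(\ldots W_{-1}W_0W_1\ldots)$ and set $G_i\eqdef f_{[W_i]}^{(t)}$; each $G_i$ sends $V$ into $V$. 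The forward orbit from $y_0=1$ stays in $V$ by direct iteration; for the backward orbit, the compact intervals $A_N\eqdef G_{-1}\circ\cdots\circ G_{-N}(V)$ form a nested decreasing family in $V$, so their intersection is non-empty, and any $y_0$ in it admits consistent pullbacks $y_{-k}\in V$ through the injective monotone maps $G_{-k}$. Hence $I_\xi^{(t)}\ne\emptyset$ and $\Xi\subset\Sigma^{(t)}$.

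Counting yields $h_{\rm top}(\sigma,\Xi)\ge\tfrac{n\log 2}{n+K}\to\log 2$ as $n\to\infty$ with $\varepsilon$ (hence $K$) fixed, which on choosing $n$ large and then $t$ close to $t_{\rm c}$ proves the entropy limit. For Hausdorff convergence, any $w\in\{0,1\}^{2N+1}$ fits inside the block $w\cdot 0^{n-(2N+1)}$ once $n\ge 2N+1$, so the cylinder $[w]$ meets $\Xi\subset\Sigma^{(t)}$, giving $d_H(\Sigma^{(t)},\Sigma_2)\le e^{-N}$. The main obstacle, and the step the construction is engineered to bypass, is bi-infinite admissibility near the parabolic point $a$: no hyperbolic shadowing is available there, but interleaving the $0^K$ blocks exploits the attracting nature of $1$ under $f_0$ to force $V$-invariance of each $G_i$, reducing the backward half to a monotone nested-interval argument requiring no contraction rate.
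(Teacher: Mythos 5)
Your proof is correct, and it reaches the conclusion by the same overall strategy as the paper's---exhibit, for $t$ near $t_{\rm c}$, a freely concatenable family of blocks inside $\Sigma^{(t)}$ carrying nearly full entropy---but the mechanism is genuinely different. The paper anchors at the parabolic fixed point $a$ itself: for every word $\omega\in\{0,1\}^k\setminus\{1^k\}$ one has $f_{[\omega],t_{\rm c}}(a)>a$ (any occurrence of the symbol $0$ pushes strictly to the right of $a$), so by continuity $f_{[\omega],t}(a)>a$ for $t$ close to $t_{\rm c}$, and arbitrary concatenations of such words are then admissible at $a$; this gives $h_{\rm top}(\sigma,\Sigma^{(t)})\ge\frac1k\log(2^k-1)$ with no return block at all. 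You instead anchor at the attracting fixed point $1$ of $f_0$, keep all $2^n$ words of length $n$ (including $1^n$, which the paper must discard), and pay for this with the return block $0^K$, obtaining $\frac{n}{n+K}\log 2$; the admissibility of $1^n$ on $V$ rests on $f_{1,t_{\rm c}}(x)\ge a$ for $x\ge a$ together with $C^1$-continuity in $t$ (with $n$ fixed before $t$, as you correctly order the quantifiers), and your domination claim---replacing $1$'s by $0$'s only raises the partial orbit---is exactly the content of Remark \ref{rem:admissperio01seq}. Your route is slightly longer but buys robustness: it never uses the strict displacement $f_{[\omega],t}(a)>a$, it would transfer essentially verbatim to the hyperbolic exit scenarios, and it makes explicit the backward-admissibility step (the nested intervals $G_{-1}\circ\cdots\circ G_{-N}(V)$) that the paper leaves implicit. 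One cosmetic point: the set $\Xi$ you build is a block-coded (sofic) subshift rather than literally a subshift of finite type, but the injectivity of the block coding is all that the entropy count requires.
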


\begin{proof}
Choose $k\in \bN$. For every word $\omega \in \Sigma_k' := \{0,1\}^k\setminus \{1^k\}$ there exists $\varepsilon_\omega$ such that for $t_{\rm c}-\varepsilon_\omega<t<t_{\rm c}$ we have $f_{[\omega],t}(a)>a$ (adopting the corresponding notation). As there are only finitely many words of length $k$, we can find $\varepsilon_k>0$ valid for every $\omega\in\Sigma_k'$. Thus, for $t\in(t_{\rm c}-\varepsilon_k,t_{\rm c})$ any concatenation of words from $\Sigma_k'$ is allowed in $\Sigma^{(t)}$. This allows us to estimate the entropy by
\[
	h_{\rm top}(\sigma,\Sigma^{(t)})
	\ge\frac{1}{k}\log (2^{k}-1),
\] 
which  implies the first claim and also provides the convergence in Hausdorff distance. 
\end{proof}

Note that in the parabolic case, at $t=t_{\rm c}$ we have $C(t_{\rm c})=\infty$ and hence the second claim of Proposition \ref{prop:explosionsequences} does not apply.

\begin{remark}
Arguing  as in the proof of Proposition \ref{pl.parabolichomoclinic}, one sees that the homoclinic classes $H(P,\tilde F_{t_{\rm c}})$ and $H(Q,\tilde F_{t_{\rm c}})$ both contain the parabolic fixed point $A=(1^\bZ,a)$. Hence, under certain conditions implying that the homoclinic classes of $P$ and of $Q$ are hyperbolic for $t<t_{\rm c}$ sufficiently close to $t_{\rm c}$, this  leads to a collision of hyperbolic homoclinic classes.
\end{remark}

\section{Discussion: Homoclinic scenarios beyond concavity}\label{sec:homscen}

Under conditions (H1)--(H2), Proposition~\ref{pro.l.homoclinicallyrelated} (together with Proposition~\ref{pl.parabolichomoclinic} in the case when there are parabolic points) allows us to consider  $H(P,\tilde F)$ and $H(Q,\tilde F)$ as the only two homoclinic classes of $\tilde F$ in $\Sigma_2 \times [0,1]$. According to the choice of $\tilde F$, there are two possibilities for the sets $H(P,\tilde F)$ and $H(Q,\tilde F)$: either they are disjoint or they have nonempty intersection (in this latter case, the sets may be equal or not). 

The constructions in \cite{DiaEstRoc:16} provide an explicit two-parameter family (with parameters $a$ and $t$)  of fiber maps $f_0=g_a$ (concave) and $f_1=g_{1,t}$ (affine) such that the corresponding skew-product map $\tilde F_{a,t}$ falls into one of the following cases, according to the choices of the parameters $a$ and $t$:
\begin{enumerate}
\item The sets $H(P,\tilde F_{a,t})$ and $H(Q,\tilde F_{a,t})$ are pairwise disjoint and hyperbolic and their union is the limit set of $F_{a,t}$ in $\Sigma_2\times [0,1]$ (\cite[Theorem 2.7 case (B)]{DiaEstRoc:16}).
\item $H(P,\tilde F_{a,t})\cap H(Q,\tilde F_{a,t})$ is the orbit of a parabolic point of $\tilde F_{a,t}$  (\cite[Theorem 2.7 case (C.c)]{DiaEstRoc:16}).
\item $H(P,\tilde F_{a,t})= H(Q,\tilde F_{a,t})=\Gamma_{a,t}$   (\cite[Theorem 2.7 case (A)]{DiaEstRoc:16}).
\end{enumerate}
Regarding the above scenarios, the corresponding space of ergodic measures splits into two parts, corresponding to the measures of contracting and expanding type:
\begin{enumerate}
\item These parts are disjoint.
\item Their closures intersect in a measure supported on a parabolic periodic orbit.
\item Their closures intersect in nonhyperbolic ergodic measures, some of them with positive entropy (see \cite{BocBonDia:16}).
\end{enumerate}
In each of these cases, we can choose the fiber dynamics in a way that the each class is locally maximal. By Proposition \ref{pro.l.homoclinicallyrelated}, any pair of saddles of the same type of hyperbolicity are homoclinically related, hence we can apply \cite{GorPes:17} and conclude that the corresponding parts of the space of ergodic measures each are arcwise connected and have closures which are a Poulsen simplex. Compare also with Corollary \ref{cor:arcwiseconn}.

We now discuss possible configurations of homoclinic classes for skew-products as in \eqref{eq:parental} assuming (H1) but not \emph{a priori}  $(H2)$ (that is, without the concavity assumptions). 
The following remark indicates that without the concavity assumption the scenery can be vast, with many possibilities for the interrelation between those classes (and hence for the resulting topological and ergodic properties). 

\begin{remark}[Homoclinic scenarios when (H2) is not satisfied]\label{rem:examples}
The map $\tilde F$ may have other hyperbolic periodic points $R$ which may fail to be homoclinically  related to $P$ or $Q$. In this setting, it is fundamental to understand how these periodic points and their homoclinic classes are inserted in the dynamics
of $\tilde F$. Indeed, the following (possibly non-exhaustive) list of dynamical scenarios may occur: 
\begin{itemize}
\item[(1)] $H(P,\tilde F)$ and $H(Q,\tilde F)$ are the only homoclinic classes of $\tilde F$ in  $\Sigma_2\times [0,1]$  (that is, any other homoclinic class of $\tilde F$ in $\Sigma_2\times [0,1]$ is equal to one of these two classes) and these two classes are:
\begin{itemize}
\item[(a)] (transitivity) 
$H(P,\tilde F)=H(Q,\tilde F)=\Gamma$ and hence $F=\tilde F|_\Gamma$ is nonhyperbolic and $\Gamma$ is a transitive set, see \cite{Dia:95}.
\item[(b)] (hyperbolicity) 
$H(P,\tilde F)\cap H(Q,\tilde F)=\emptyset$, each of them is hyperbolic, and the limit set of $\tilde F$ in $\Sigma_2\times [0,1]$ is the union $H(P,\tilde F)\cup H(Q,\tilde F)$, see
\cite{DiaRoc:97}.
\item[(c)] (overlapping)
$H(P,\tilde F)\ne H(Q,\tilde F)$ but $H(P,\tilde F)\cap H(Q,\tilde F)\ne\emptyset$ and hence $F=\tilde F|_\Gamma$ is nonhyperbolic, see \cite{DiaSan:04,DiaRoc:07}.
\end{itemize}
\item[(2)] The homoclinic classes $H(P,\tilde F)$ and $H(Q,\tilde F)$ are disjoint and hyperbolic, but there are other homoclinic classes (which are different as sets),
see  \cite[Theorem (2)(i)]{DiaRoc:02}. In this case, there are periodic orbits (say expanding) $\cO(R_1)$ and
\[
	\cO(R_2)\cap  (H(P,\tilde F)\cup H(Q,\tilde F))=\emptyset
\]
such that
\[
	\pi (\cO(R_1))
	= \pi (\cO(R_2))
	= \cO (\omega)
	\in \Sigma
	= \pi(\Gamma).
\]
Hence, the ergodic  measure supported on the periodic orbit $\cO(\omega)$ has (at least) two lifts to different ergodic measures in $\cM_{\rm{erg}, >0}$ (the ones supported on $\cO(R_1)$ and $\cO(R_2)$), thus failing Theorem~\ref{teo:1}. 
There is a similar construction replacing periodic orbits by nontrivial basic sets.
\end{itemize}

Note that, the scenarios in case (1) are compatible with our concavity assumptions (see for instance \cite{DiaEstRoc:16}) while case (2) is not (see Theorem~\ref{teo:homoclinicclasses}). 
\end{remark}

\appendix
\section{Wasserstein distance}\label{App:A}

Let us remind that for two probability measures $\mu_1, \mu_2$ supported on a compact metric space $M$ we define their \emph{couplings} as measures on $M\times M$ with marginals $\mu_1$ on the first coordinate and $\mu_2$ on the second. Denoting by $\Gamma(\mu_1,\mu_2)$ the space of all couplings of $\mu_1$ and $\mu_2$, we can define a metric on $\cM(M)$ (the \emph{Wasserstein distance}) by
\begin{equation}\label{def:W1}
	W_1(\mu_1, \mu_2) 
	\eqdef \inf_{\gamma\in\Gamma(\mu_1,\mu_2)} \int_{M\times M} \,d(x,y) \,d\gamma(x,y).
\end{equation}
As a special case of the duality theorem of Kantorovich and Rubinstein (see \cite{KanRub:58}), one can give an equivalent definition as follows
\begin{equation}\label{def:W2}
	W_1(\mu_1,\mu_2) 
	= \sup \Big\{\int_M f(x) \,d\mu_1(x) - \int_M f(x) \,d\mu_2(x)\colon \Lip(f) \leq 1\Big\}.
\end{equation}
Here $\Lip(f)$ denotes the Lipschitz constant, and the supremum is taken over Lipschitz functions only. It is well known that the Wasserstein distance is a metric on the space $\cP(M)$ of probability measures supported on $M$ and that it induces the weak$\ast$ topology on $\cP(M)$.

We will use the following lemma.

\begin{lemma}\label{lem:Wassx}
	Assume that $d$ is a metric on $M\subset X \times\bR$ satisfying $d((x_1, y_1),(x_2, y_2)) \geq \lvert x_2-x_1\rvert$, with equality if $y_1=y_2$. Assume also that $\mu_1$ and $\mu_2$ have a special coupling $\gamma\in \Gamma(\mu_1, \mu_2)$ such that $\gamma(\{((x_1, y_1), (x_2, y_2))\colon y_1=y_2, x_2 \geq x_1\})=1$. Then
\[
	W_1(\mu_1, \mu_2) 
	= \int_M x \,d\mu_2(x,y) - \int_M x \,d\mu_1(x,y).
\]
\end{lemma}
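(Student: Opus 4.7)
The plan is to prove both inequalities using the two equivalent formulations \eqref{def:W1} and \eqref{def:W2} of the Wasserstein distance.

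For the upper bound $W_1(\mu_1,\mu_2) \le \int x\,d\mu_2 - \int x\,d\mu_1$, I would use the primal definition \eqref{def:W1} applied to the special coupling $\gamma$ provided by hypothesis. On the set where $\gamma$ is concentrated we have $y_1 = y_2$ and $x_2 \ge x_1$, so by the metric assumption $d((x_1,y_1),(x_2,y_2)) = |x_2 - x_1| = x_2 - x_1$. Therefore
\[
W_1(\mu_1,\mu_2) \le \int_{M\times M} d((x_1,y_1),(x_2,y_2))\,d\gamma = \int (x_2 - x_1)\,d\gamma(x_1,y_1,x_2,y_2),
\]
and using the marginal property of $\gamma$ this equals $\int x\,d\mu_2 - \int x\,d\mu_1$.

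For the lower bound, I would use the Kantorovich--Rubinstein dual \eqref{def:W2} with the test function $f(x,y) \eqdef -x$. This function is $1$-Lipschitz with respect to $d$ because, by the metric assumption,
\[
|f(x_1,y_1) - f(x_2,y_2)| = |x_1 - x_2| \le d((x_1,y_1),(x_2,y_2)).
\]
Hence $f$ is admissible in \eqref{def:W2} and yields
\[
W_1(\mu_1,\mu_2) \ge \int f\,d\mu_1 - \int f\,d\mu_2 = \int x\,d\mu_2 - \int x\,d\mu_1,
\]
completing the proof.

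There is no real obstacle here: the two halves match exactly once one observes that (i) the lower bound on $d$ forces the projection onto the $x$-coordinate to be $1$-Lipschitz, and (ii) the equality $d = |x_2-x_1|$ on the support of $\gamma$ makes the given coupling optimal. Note that no nonnegativity assumption is needed beyond the fact that $\gamma$-a.s.\ we have $x_2 \ge x_1$, which in particular guarantees the right-hand side is nonnegative as $W_1$ must be.
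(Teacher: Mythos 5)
Your proof is correct and follows essentially the same route as the paper: the special coupling gives the upper bound via the primal definition, and the $1$-Lipschitz test function $(x,y)\mapsto \pm x$ gives the matching lower bound via Kantorovich--Rubinstein duality. You have merely spelled out the details that the paper leaves implicit.
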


\begin{proof}
We have
\[
	\int_{M\times M} d((x_1,y_1),(x_2,y_2)) \,d\gamma((x_1,y_1),(x_2,y_2)) 
	= \int_{M} x \,d\mu_2(x,y) - \int_M x \,d\mu_1(x,y).
\]
By definition \eqref{def:W1} of the Wasserstein distance the left hand side of the above formula is an upper bound for $W_1(\mu_1, \mu_2)$. By definition \eqref{def:W2} of the Wasserstein distance the right hand side of the above formula is a lower bound for $W_1(\mu_1, \mu_2)$.
\end{proof}

\bibliographystyle{plain}

\begin{thebibliography}{10}

\bibitem{AlsMis:14}
Llu\'{i}s Alsed\`a and Micha\l\ Misiurewicz.
\newblock Random interval homeomorphisms.
\newblock {\em Publ. Mat.}, 58(suppl.):15--36, 2014.

\bibitem{AlsMis:15}
Llu\'{i}s Alsed\`a and Micha\l\ Misiurewicz.
\newblock Skew product attractors and concavity.
\newblock {\em Proc. Amer. Math. Soc.}, 143(2):703--716, 2015.

\bibitem{AlvAraSau:03}
Jos\'{e}~F. Alves, V\'{\i}tor Ara\'{u}jo, and Beno\^{\i}t Saussol.
\newblock On the uniform hyperbolicity of some nonuniformly hyperbolic systems.
\newblock {\em Proc. Amer. Math. Soc.}, 131(4):1303--1309, 2003.

\bibitem{AviBoc:12}
Artur Avila and Jairo Bochi.
\newblock Nonuniform hyperbolicity, global dominated splittings and generic
  properties of volume-preserving diffeomorphisms.
\newblock {\em Trans. Amer. Math. Soc.}, 364(6):2883--2907, 2012.

\bibitem{AviBocYoc:10}
Artur Avila, Jairo Bochi, and Jean-Christophe Yoccoz.
\newblock Uniformly hyperbolic finite-valued {${\rm SL}(2,\Bbb R)$}-cocycles.
\newblock {\em Comment. Math. Helv.}, 85(4):813--884, 2010.

\bibitem{BalBonSch:99}
Viviane Baladi, Christian Bonatti, and Bernard Schmitt.
\newblock Abnormal escape rates from nonuniformly hyperbolic sets.
\newblock {\em Ergodic Theory Dynam. Systems}, 19(5):1111--1125, 1999.

\bibitem{BarRamSim:}
Bal\'azs B\'ar\'any, Micha\l\ Rams, and K\'aroly Simon.
\newblock Dimension of the repeller for a piecewise expanding affine map.
\newblock arXiv:1803.03788.

\bibitem{BarDemEltGer:88}
Michael~F. Barnsley, Stephen~G. Demko, John~H. Elton, and Jeffrey~S. Geronimo.
\newblock Invariant measures for {M}arkov processes arising from iterated
  function systems with place-dependent probabilities.
\newblock {\em Ann. Inst. H. Poincar\'{e} Probab. Statist.}, 24(3):367--394,
  1988.

\bibitem{BocBonDia:16}
Jairo Bochi, Christian Bonatti, and Lorenzo~J. D\'{i}az.
\newblock Robust criterion for the existence of nonhyperbolic ergodic measures.
\newblock {\em Comm. Math. Phys.}, 344(3):751--795, 2016.

\bibitem{BocBonGel:18}
Jairo Bochi, Christian Bonatti, and Katrin Gelfert.
\newblock Dominated {P}esin theory: convex sum of hyperbolic measures.
\newblock {\em Israel J. Math.}, 226(1):387--417, 2018.

\bibitem{BonDia:96}
Christian Bonatti and Lorenzo~J. D\'{\i}az.
\newblock Persistent nonhyperbolic transitive diffeomorphisms.
\newblock {\em Ann. of Math. (2)}, 143(2):357--396, 1996.

\bibitem{Bow:72}
Rufus Bowen.
\newblock Entropy-expansive maps.
\newblock {\em Trans. Amer. Math. Soc.}, 164:323--331, 1972.

\bibitem{BriKat:83}
Michael Brin and Anatole Katok.
\newblock On local entropy.
\newblock In {\em Geometric dynamics ({R}io de {J}aneiro, 1981)}, volume 1007
  of {\em Lecture Notes in Math.}, pages 30--38. Springer, Berlin, 1983.

\bibitem{CaoLuzRio:06}
Yongluo Cao, Stefano Luzzatto, and Isabel Rios.
\newblock Some non-hyperbolic systems with strictly non-zero {L}yapunov
  exponents for all invariant measures: horseshoes with internal tangencies.
\newblock {\em Discrete Contin. Dyn. Syst.}, 15(1):61--71, 2006.

\bibitem{Con:78}
Charles Conley.
\newblock {\em Isolated invariant sets and the {M}orse index}, volume~38 of
  {\em CBMS Regional Conference Series in Mathematics}.
\newblock American Mathematical Society, Providence, R.I., 1978.

\bibitem{Cro:11}
Sylvain Crovisier.
\newblock Partial hyperbolicity far from homoclinic bifurcations.
\newblock {\em Adv. Math.}, 226(1):673--726, 2011.

\bibitem{DiaFree:99}
Persi Diaconis and David Freedman.
\newblock Iterated random functions.
\newblock {\em SIAM Rev.}, 41(1):45--76, 1999.

\bibitem{DiaHorRioSam:09}
L.~J. D\'{\i}az, V.~Horita, I.~Rios, and M.~Sambarino.
\newblock Destroying horseshoes via heterodimensional cycles: generating
  bifurcations inside homoclinic classes.
\newblock {\em Ergodic Theory Dynam. Systems}, 29(2):433--474, 2009.

\bibitem{Dia:95}
Lorenzo~J. D\'{i}az.
\newblock Robust nonhyperbolic dynamics and heterodimensional cycles.
\newblock {\em Ergodic Theory Dynam. Systems}, 15(2):291--315, 1995.

\bibitem{DiaEstRoc:16}
Lorenzo~J. D\'{i}az, Salete Esteves, and Jorge Rocha.
\newblock Skew product cycles with rich dynamics: from totally non-hyperbolic
  dynamics to fully prevalent hyperbolicity.
\newblock {\em Dyn. Syst.}, 31(1):1--40, 2016.

\bibitem{DiaFisPacVie:12}
Lorenzo~J. D\'{i}az, Todd Fisher, Maria~Jos\'{e} Pacifico, and Jos\'{e}~L.
  Vieitez.
\newblock Entropy-expansiveness for partially hyperbolic diffeomorphisms.
\newblock {\em Discrete Contin. Dyn. Syst.}, 32(12):4195--4207, 2012.

\bibitem{DiaGelMarRam:}
Lorenzo~J. D\'{\i}az, Katrin Gelfert, Tiane Marcarini, and Micha\l\ Rams.
\newblock The structure of the space of ergodic measures of transitive
  partially hyperbolic sets.
\newblock {\em Monatsh. Math.}, 190(3):441--479, 2019.

\bibitem{DiaGelRam:17}
Lorenzo~J. D\'{i}az, Katrin Gelfert, and Micha\l\ Rams.
\newblock Nonhyperbolic step skew-products: ergodic approximation.
\newblock {\em Ann. Inst. H. Poincar\'{e} Anal. Non Lin\'{e}aire},
  34(6):1561--1598, 2017.

\bibitem{DiaGelRam:17b}
Lorenzo~J. D\'iaz, Katrin Gelfert, and {Micha\l} Rams.
\newblock Topological and ergodic aspects of partially hyperbolic
  diffeomorphisms and nonhyperbolic step skew products.
\newblock {\em Tr. Mat. Inst. Steklova}, 297(Poryadok i Khaos v Dinamicheskikh
  Sistemakh):113--132, 2017.

\bibitem{DiaGelRam:19}
Lorenzo~J. D\'{\i}az, Katrin Gelfert, and Micha\l\ Rams.
\newblock Entropy spectrum of {L}yapunov exponents for nonhyperbolic step
  skew-products and elliptic cocycles.
\newblock {\em Comm. Math. Phys.}, 367(2):351--416, 2019.

\bibitem{DiaGelSan:19}
Lorenzo~J. D\'{\i}az, Katrin Gelfert, and Bruno Santiago.
\newblock Weak$\ast$ and entropy approximation of nonhyperbolic measures: a
  geometrical approach.
\newblock arXiv:1804.05913, To appear in Camb. Phil. Soc.

\bibitem{DiaRoc:97}
Lorenzo~J. D\'{i}az and Jorge Rocha.
\newblock Large measure of hyperbolic dynamics when unfolding heteroclinic
  cycles.
\newblock {\em Nonlinearity}, 10(4):857--884, 1997.

\bibitem{DiaRoc:01}
Lorenzo~J. D\'{\i}az and Jorge Rocha.
\newblock Partially hyperbolic and transitive dynamics generated by
  heteroclinic cycles.
\newblock {\em Ergodic Theory Dynam. Systems}, 21(1):25--76, 2001.

\bibitem{DiaRoc:02}
Lorenzo~J. D\'{i}az and Jorge Rocha.
\newblock Heterodimensional cycles, partial hyperbolicity and limit dynamics.
\newblock {\em Fund. Math.}, 174(2):127--186, 2002.

\bibitem{DiaRoc:07}
Lorenzo~J. D\'{i}az and Jorge Rocha.
\newblock How do hyperbolic homoclinic classes collide at heterodimensional
  cycles?
\newblock {\em Discrete Contin. Dyn. Syst.}, 17(3):589--627, 2007.

\bibitem{DiaSan:04}
Lorenzo~J. D\'{i}az and Bianca Santoro.
\newblock Collision, explosion and collapse of homoclinic classes.
\newblock {\em Nonlinearity}, 17(3):1001--1032, 2004.

\bibitem{FanSimTot:06}
Ai~Hua Fan, K\'{a}roly Simon, and Hajnal~R. T\'{o}th.
\newblock Contracting on average random {IFS} with repelling fixed point.
\newblock {\em J. Stat. Phys.}, 122(1):169--193, 2006.

\bibitem{Gel:16}
Katrin Gelfert.
\newblock Horseshoes for diffeomorphisms preserving hyperbolic measures.
\newblock {\em Math. Z.}, 283(3-4):685--701, 2016.

\bibitem{GhaHom:16}
Masoumeh Gharaei and Ale~Jan Homburg.
\newblock Skew products of interval maps over subshifts.
\newblock {\em J. Difference Equ. Appl.}, 22(7):941--958, 2016.

\bibitem{GorPes:17}
Anton Gorodetski and Yakov Pesin.
\newblock Path connectedness and entropy density of the space of hyperbolic
  ergodic measures.
\newblock In {\em Modern theory of dynamical systems}, volume 692 of {\em
  Contemp. Math.}, pages 111--121. Amer. Math. Soc., Providence, RI, 2017.

\bibitem{GorIly:00}
A.~S. Gorodetski\u{\i} and Yu.~S. Ilyashenko.
\newblock Some properties of skew products over a horseshoe and a solenoid.
\newblock {\em Tr. Mat. Inst. Steklova}, 231(Din. Sist., Avtom. i Beskon.
  Gruppy):96--118, 2000.

\bibitem{GorIlyKleNal:05}
A.~S. Gorodetski\u{\i}, Yu.~S. Ilyashenko, V.~A. Kleptsyn, and M.~B.
  Nalski\u{\i}.
\newblock Nonremovability of zero {L}yapunov exponents.
\newblock {\em Funktsional. Anal. i Prilozhen.}, 39(1):27--38, 95, 2005.

\bibitem{JohJorOebPol:10}
Anders Johansson, Thomas~M. Jordan, Anders \"{O}berg, and Mark Pollicott.
\newblock Multifractal analysis of non-uniformly hyperbolic systems.
\newblock {\em Israel J. Math.}, 177:125--144, 2010.

\bibitem{KanRub:58}
Leonid~V. Kantorovi\v{c} and Gennadi\v{i}~\v{S}. Rubin\v{s}te\v{i}n.
\newblock On a space of completely additive functions.
\newblock {\em Vestnik Leningrad. Univ.}, 13(7):52--59, 1958.

\bibitem{KatHas:95}
Anatole Katok and Boris Hasselblatt.
\newblock {\em Introduction to the modern theory of dynamical systems},
  volume~54 of {\em Encyclopedia of Mathematics and its Applications}.
\newblock Cambridge University Press, Cambridge, 1995.
\newblock With a supplementary chapter by Katok and Leonardo Mendoza.

\bibitem{KleVol:14b}
Victor Kleptsyn and Denis Volk.
\newblock Physical measures for nonlinear random walks on interval.
\newblock {\em Mosc. Math. J.}, 14(2):339--365, 428, 2014.

\bibitem{Kri:00}
Wolfgang Krieger.
\newblock On subshifts and topological {M}arkov chains.
\newblock In {\em Numbers, information and complexity ({B}ielefeld, 1998)},
  pages 453--472. Kluwer Acad. Publ., Boston, MA, 2000.

\bibitem{Kud:10}
Yury~G. Kudryashov.
\newblock Bony attractors.
\newblock {\em Funktsional. Anal. i Prilozhen.}, 44(3):73--76, 2010.

\bibitem{LedWal:77}
Fran\c{c}ois Ledrappier and Peter Walters.
\newblock A relativised variational principle for continuous transformations.
\newblock {\em J. London Math. Soc. (2)}, 16(3):568--576, 1977.

\bibitem{LinMar:95}
Douglas Lind and Brian Marcus.
\newblock {\em An introduction to symbolic dynamics and coding}.
\newblock Cambridge University Press, Cambridge, 1995.

\bibitem{LinOlsSte:78}
Joram Lindenstrauss, Gunnar~H. Olsen, and Yaki Sternfeld.
\newblock The {P}oulsen simplex.
\newblock {\em Ann. Inst. Fourier (Grenoble)}, 28(1):vi, 91--114, 1978.

\bibitem{Man:78}
Ricardo Ma\~{n}\'{e}.
\newblock Contributions to the stability conjecture.
\newblock {\em Topology}, 17(4):383--396, 1978.

\bibitem{MilThu:88}
John Milnor and William Thurston.
\newblock On iterated maps of the interval.
\newblock In {\em Dynamical systems ({C}ollege {P}ark, {MD}, 1986--87)}, volume
  1342 of {\em Lecture Notes in Math.}, pages 465--563. Springer, Berlin, 1988.

\bibitem{PalTak:93}
Jacob Palis and Floris Takens.
\newblock {\em Hyperbolicity and sensitive chaotic dynamics at homoclinic
  bifurcations}, volume~35 of {\em Cambridge Studies in Advanced Mathematics}.
\newblock Cambridge University Press, Cambridge, 1993.
\newblock Fractal dimensions and infinitely many attractors.

\bibitem{Rob:95}
Clark Robinson.
\newblock {\em Dynamical systems}.
\newblock Studies in Advanced Mathematics. CRC Press, Boca Raton, FL, 1995.
\newblock Stability, symbolic dynamics, and chaos.

\bibitem{RodRodTahUre:12}
Federico Rodriguez~Hertz, M.~A. Rodriguez~Hertz, Ali Tahzibi, and Raul Ures.
\newblock Maximizing measures for partially hyperbolic systems with compact
  center leaves.
\newblock {\em Ergodic Theory Dynam. Systems}, 32(2):825--839, 2012.

\bibitem{Sig:74}
Karl Sigmund.
\newblock On dynamical systems with the specification property.
\newblock {\em Trans. Amer. Math. Soc.}, 190:285--299, 1974.

\bibitem{Sim:11}
Barry Simon.
\newblock {\em Convexity}, volume 187 of {\em Cambridge Tracts in Mathematics}.
\newblock Cambridge University Press, Cambridge, 2011.

\bibitem{Sri:98}
Shashi~Mohan Srivastava.
\newblock {\em A course on {B}orel sets}, volume 180 of {\em Graduate Texts in
  Mathematics}.
\newblock Springer-Verlag, New York, 1998.

\bibitem{Wal:82}
Peter Walters.
\newblock {\em An introduction to ergodic theory}, volume~79 of {\em Graduate
  Texts in Mathematics}.
\newblock Springer-Verlag, New York-Berlin, 1982.

\bibitem{YanZha:}
Dawei Yang and Jinhua Zhang.
\newblock Non-hyperbolic ergodic measures and horseshoes in partially
  hyperbolic homoclinic classes.
\newblock arXiv:1803.06572, To appear in J. Inst. Math. Jussieu.

\end{thebibliography}

\end{document}